\definecolor{dukeblue}{rgb}{0.0, 0.0, 0.61}
\newcommand{\diff}{\, \mathrm{d}}
\newcommand{\del}{\partial}
\newcommand{\Z}{\mathbb{Z}}
\newcommand{\norm}[1]{\left\Vert#1\right\Vert}
\newcommand{\R}{\mathbb{R}}
\newcommand{\T}{\mathbb{T}}
\newcommand{\rd}{{\rm d}}
\def\*#1{\mathbf{#1}}
\theoremstyle{plain}
\numberwithin{equation}{section}
\newtheorem{remark}{Remark}
\newtheorem{proposition}{Proposition}[section]
\newtheorem{theorem}{Theorem}[section]
\newtheorem{lemma}{Lemma}[section]
\newtheorem{corollary}{Corollary}[section]
\title 
	[Control of the Stefan problem in a periodic box]{Control of the Stefan problem in a periodic box}
	\author{Borjan Geshkovski}	
	\thanks{A major part of this work was done while B.G. was affiliated with Departamento de Matemáticas, Universidad Autónoma de Madrid, and the Chair of Computational Mathematics, Fundación Deusto.}
	\author{Debayan Maity}
	\thanks{}
\date{\today}
\begin{document}

		\begin{abstract}
		{\color{black}
In this paper we consider the one-phase Stefan problem with surface tension, set in a two-dimensional strip-like geometry, with periodic boundary conditions respect to the horizontal direction $x_1\in\T$.  
We prove that the system is locally null-controllable in any positive time, by means of a control supported within an arbitrary open and non-empty subset. 
We proceed by a linear test and duality, but quickly find that the linearized system is not symmetric and the adjoint has a dynamic coupling between the two states through the (fixed) boundary. Hence, motivated by a Fourier decomposition with respect to $x_1$, we consider a family of one-dimensional systems and prove observability results which are uniform with respect to the Fourier frequency parameter. 
		The latter results are also novel, as we compute the full spectrum of the underlying operator for the non-zero Fourier modes. The zeroth mode system, on the other hand, is seen as a controllability problem for the linear heat equation with a finite-dimensional constraint. 
		The complete observability of the adjoint is derived by using a Lebeau-Robbiano strategy, and the local controllability of the nonlinear system is then shown by combining an adaptation of the source term method introduced in \cite{tucsnak_burgers} and a Banach fixed point argument.
		Numerical experiments motivate several challenging open problems, foraying even beyond the specific setting we deal with herein.}
\end{abstract}
			
	\maketitle	
	
	\setcounter{tocdepth}{1}
	
	\tableofcontents
	
	{\small {\bf Keywords.} Stefan problem, controllability, free boundary problem,  surface tension.}

	{\small {\href{https://mathscinet.ams.org/msc/msc2010.html}{{\bf 	\color{black}{AMS Subject Classification}}}}. 93B05, 35R35, 35Q35, 93C20.}

\section{Introduction and main results}
	
		The Stefan problem is the quintessential macroscopic model of phase transitions in liquid-solid systems. The physical setup thereof typically consists in considering a domain $\Omega\subset\mathbb{R}^d$, which is occupied by water (the liquid phase), a part of whose boundary is some interface $\Gamma$, describing contact with a deformable solid such as ice (the solid phase). 
		Due to melting or freezing, the regions occupied by water and ice will change over time and, consequently, the interface $\Gamma$ will also change its position and shape. 
		This leads to a free boundary problem.
		Albeit classical (see \cite{pruss2016moving, figalli2019regularity} for an overview of the mathematical literature), the Stefan problem continues to be of use in many contemporary applications, such as additive manufacturing of alloys (\cite{koga2021state}), ice modeling for video rendering in computer graphics (\cite{kim2006modeling}), and, reaching even beyond its original fluid-mechanical nature, in the context of mathematical biology, for modeling the spread of various infectious diseases (\cite{lin2017spatial, du2015spreading}).
		
		\subsection{Setup} We shall focus on the strong formulation of the \emph{one-phase} Stefan problem (i.e., where the temperature of the ice is a known constant), with surface tension effects, following \cite{escher2003analytic, hadvzic2012orthogonality, pruss2013qualitative, HS17}. 
		 We shall focus on the problem in two spatial dimensions ($d=2$).	 
		To describe the geometrical setup, let 
		$
		\T:=\R/(2\pi\Z)
		$
		denote the one-dimensional flat torus, which we identify with $[0,2\pi]$. 
	Set
	\begin{equation} \label{eq: def.omega}
	\Omega := \T \times (-1,1).
	\end{equation}
	The domain $\Omega$ will serve as the reference configuration. In the one-phase Stefan problem, a heat-conducting liquid fills a time-varying domain $\Omega(t) \subset \mathbb{R}^{2}$ for $t\geqslant0$. We will suppose that the boundary $\del\Omega(t)$ of the liquid consists of two components: an unknown, time-dependent component (the free boundary $\Gamma(t)$), and a fixed and static component. More specifically, for any $t\geqslant0$, $\Omega(t)$ is assumed to have a flat, rigid bottom, while the free boundary will be parametrized by an unknown function $h(t,z_1)$, representing the former's displacement away from the reference boundary $\T\times\{z_2=1\}$ (see \Cref{fig1}), and thus described by the equation $z_2 =1+h(t,z_1)$. 
	
	In other words,
	\begin{equation*}
	\Omega(t) :=  \left\{ z=(z_{1}, z_{2}) \in \T \times \R\, \Bigm|\, -1 < z_{2} < 1+h(t,z_{1}) \right\},
	\end{equation*}
	where $h=h(t,z_{1})$ is the unknown height function, while the free boundary is given by
	\begin{equation*}
	\Gamma(t) := \left\{ z=(z_{1}, z_{2}) \in \T \times \R\, \Bigm|\,  z_{2} = 1+h(t,z_{1}) \right\}.
	\end{equation*}
	Given a time horizon $T>0$ the strong formulation of the one-phase Stefan problem takes the form\footnote{We make use of the standard notation
	$(0, T)\times\Omega(t):=\bigcup_{0\leqslant t\leqslant T} \{t \} \times \Omega(t)$ and the analog for $(0, T) \times \Gamma(t)$.}
	\begin{equation} \label{eq: stefan.surface.tension}
	\boxed{
	\begin{cases}
	\del_t \varrho-\Delta \varrho = u1_{\omega} &\text{ in }(0, T)\times \Omega(t),\\
	\del_t h +\sqrt{1+|\del_{z_1}h|^2} \, \nabla \varrho_{|_{\Gamma(t)}}\cdot{\bf{n}}=0 &\text{ on }(0, T)\times \T,\\
	\varrho(t,z_1,-1)=0&\text{ on }  (0, T)\times\T\\ 
	\varrho(t,z_1,z_2)= -\sigma \kappa(h(t,z_1)) &\text{ on }(0, T)\times\Gamma(t),\\
	(\varrho, h)_{\mid_{t=0}} = \left(\varrho^0, h^0\right) &\text{ in } \Omega(0) \times \T.
	\end{cases}
	}
	\end{equation}
	This is a coupled system, where the unknown state is the pair $(\varrho,h)$, and $u$ is the control, actuating within $\omega\subset\Omega$. 
	The initial domain $\Omega(0)$ is given by
	\begin{equation*}
	\Omega(0) := \left\{ z = (z_1, z_2) \in \T \times \R\, \Bigm|\, -1 < z_2 < 1+ h_0(z_1)\right\}.
	\end{equation*}
	The constant $\sigma>0$ represents the surface tension coefficient\footnote{The condition involving the surface tension and the mean curvature is referred to as the \emph{Gibbs-Thomson correction}. The physical reason for introducing the Gibbs-Thomson correction stems from the need to account for possible coarsening or nucleation effects (\cite{perez2005gibbs}). When $\sigma=0$, we are dealing with the \emph{classical Stefan problem}. The mesoscopic limit $\sigma\searrow0$ has been addressed in \cite{HS17} (without control).}, whereas $\kappa(h(t,z_1))$ denotes the mean curvature of the free boundary $\Gamma(t)$, defined as
	\begin{equation*}
	\kappa(h):=\frac{\del_{z_1}^2 h}{\left(1+\left|\del_{z_1}h\right|^2\right)^{\sfrac{3}{2}}}.
	\end{equation*}	
	As seen later on, the assumption $\sigma>0$ will be a critical part of our study.
	Finally, ${\bf{n}}={\bf{n}}(t, z_1)$ denotes the unit normal to $\Gamma(t)$ outward $\Omega(t)$ and is given by
	\begin{equation*} \label{eq: normal.fb}
	{\bf{n}} := \displaystyle \frac{1}{\sqrt{1 +|\del_{z_1} h|^{2}}}
	\begin{bmatrix}
	-\del_{z_1} h \\
	1
	\end{bmatrix}.
	\end{equation*}
	
		\begin{figure}[h]
	\centering
		\includegraphics[scale=0.85]{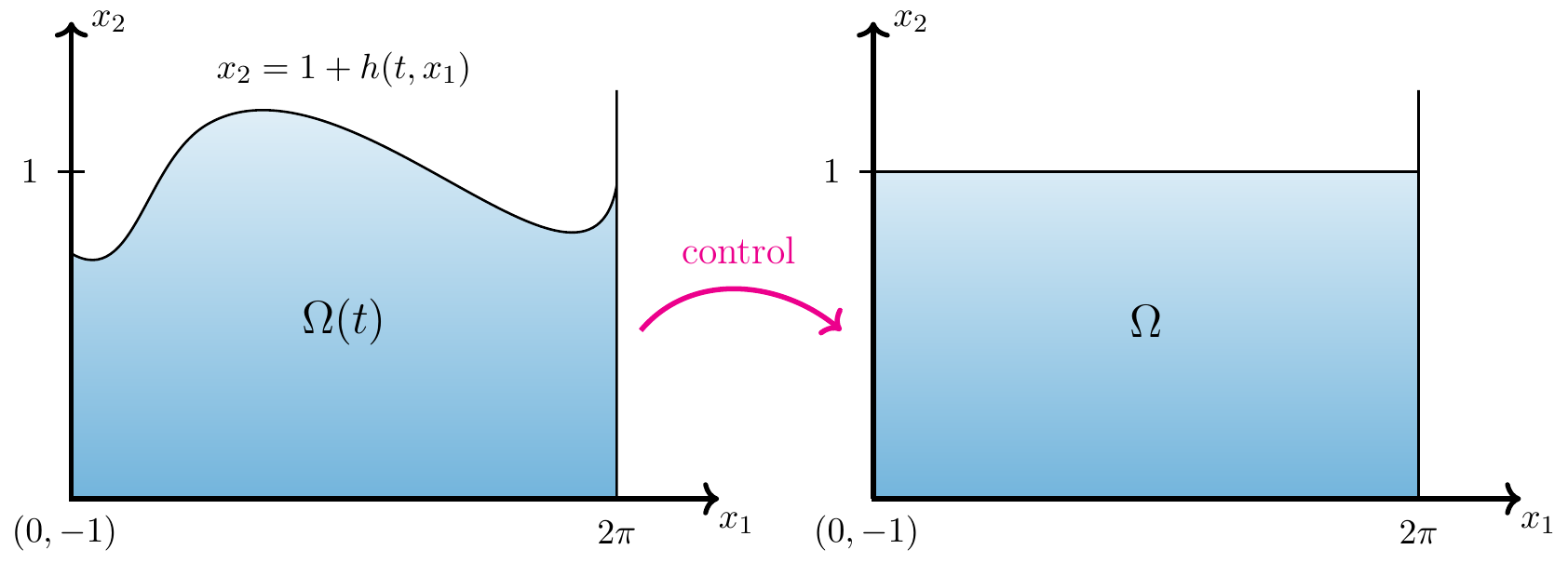}
		\caption{(\emph{Left}) The moving domain $\Omega(t)$, and the free boundary $\Gamma(t)$, parametrized by the height function $h(t,z_1)$. Our goal is to steer both the temperature $\varrho(t)$, and the free boundary $\Gamma(t)$, to rest in time $T>0$ (\emph{right}).
		\label{fig1}}
	\end{figure}

	\subsection{Main results} 
	In view of the applications presented just before, analyzing the controlled evolution of trajectories to \eqref{eq: stefan.surface.tension} is rather natural. 
	Our interest is the problem of \emph{null-controllability} for \eqref{eq: stefan.surface.tension}: given a time horizon $T>0$, we seek to steer the temperature $\varrho(t)$, as well as the interface height $h(t)$, to the equilibrium position $(0,0)$, by means of some control $u(t,z_1,z_2)1_\omega$ actuating   within\footnote{We shall focus on distributed controls since, as it is well known for parabolic equations, a simple extension-restriction argument allows one to obtain results for boundary controls (actuating along the bottom boundary $\T\times\{z_2=-1\}$, replacing the Dirichlet boundary condition in \eqref{eq: stefan.surface.tension}) as well.} the fluid domain $\Omega(t)$. 
	Note that, due to the presence of the free boundary parametrized by $h$, such a controllability problem would amount to also controlling the domain $\Omega(t)$ to the reference configuration $\Omega$. This is seen in \Cref{fig1}. 
	
	{\color{black}
	Our main result yields a positive answer to the null-controllability problem for \eqref{eq: stefan.surface.tension}, under smallness assumptions for the initial data.
	
	\begin{theorem} \label{thm:main-nonlinear}
Suppose $T>0$ and $\sigma>0$ are fixed, and suppose that $\omega\subset\Omega$ is open and non-empty. 
There exists some small enough constant $r>0$ such that for any initial data $\left(\varrho^{0}, h^0\right) \in H^{1}(\Omega(0))\times H^{\sfrac52}(\mathbb{T})$ satisfying 
\begin{align*}
h_{0}(x_1) > -1  \quad &\text{ for }  x_1\in\mathbb{T}, \notag\\
\varrho^{0}(x_1, -1) = 0  \quad &\mbox{ for } x_1\in\mathbb{T}, 
\\ 
\varrho^{0}\left(x_1, 1 + h^{0}(x_1)\right) + \sigma \kappa\left(h^{0}(x_1)\right) = 0 \quad &\mbox{ for } x_1\in\mathbb{T}, \notag
\end{align*}
and 
\begin{equation*} 
\left\|\varrho^{0}\right\|_{H^{1}(\Omega(0))} + \norm{h^{0}}_{H^{\sfrac52}(\mathbb{T})}\leqslant r, 
\end{equation*}
there exists some control $u \in L^{2}((0, T)\times\omega)$ such that the unique solution $(y,h)$ to \eqref{eq: stefan.surface.tension}, where\footnote{Solutions are actually more regular -- see \Cref{thm:main-nonlinear-2}.} 
\begin{equation*}
h \in  C^0\left([0,T];H^{\sfrac{5}{2}}(\T)\right)\cap L^{2}\left((0,T);H^{\sfrac{7}{2}}(\T)\right)
\end{equation*}
and
\begin{equation*}
\varrho \in C^0\Big([0, T]; H^{1}(\Omega(\cdot))\Big)\cap L^{2}\Big((0, T); H^{2}(\Omega(\cdot))\Big),
\end{equation*}
satisfies $\varrho(T, \cdot) \equiv 0 \mbox{ in } \Omega(T)=\Omega  \mbox{ and  } h(T, \cdot)\equiv0  \mbox{ on } \mathbb{T}.$
\end{theorem}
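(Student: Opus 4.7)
The plan is to follow the linearization-plus-fixed-point scheme announced in the abstract. First I would flatten the unknown geometry: choose a diffeomorphism $\Phi_h\colon\Omega\to\Omega(t)$ (e.g.\ $(z_1,z_2)\mapsto (z_1, z_2 + \tfrac{1+z_2}{2}h(t,z_1))$) that fixes the bottom boundary and maps $\mathbb{T}\times\{1\}$ to $\Gamma(t)$. Writing \eqref{eq: stefan.surface.tension} in these fixed coordinates recasts it as a quasilinear system on the fixed domain $\Omega$ whose equilibrium $(0,0)$ one can linearize around. The linearization is a heat equation for $\varrho$ on $\Omega$ with homogeneous Dirichlet on the bottom, the linearized Gibbs--Thomson condition $\varrho = -\sigma\,\partial_{z_1}^2 h$ on top, the dynamic law $\partial_t h = -\partial_{z_2}\varrho\big|_{z_2=1}$, and a forcing $f$ encoding the higher-order remainder.

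Second, by duality, local null-controllability of the linear system reduces to a final-state observability inequality for the associated adjoint, which is non-self-adjoint because of the dynamic coupling at $z_2=1$. Since $x_1\in\mathbb{T}$ I would Fourier-decompose, obtaining a one-parameter family of 1D problems on $(-1,1)$ indexed by $k\in\mathbb{Z}$. For $k\neq 0$, I expect the operator to have a discrete, simple, real spectrum $\{\lambda_n(k)\}$ with a Weyl-type gap (for large $n$ the spectrum looks like that of the Dirichlet Laplacian, while the $\sigma k^2$ curvature term pushes the principal eigenvalue away from zero as $|k|\to\infty$); a Fattorini--Russell biorthogonal/moment problem on the explicit spectrum then yields observability from $\omega$ with a cost that is \emph{uniform} in $k$. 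For $k=0$ the adjoint decouples into a scalar ODE (reflecting a conserved mass) plus a Dirichlet heat equation on $(-1,1)$, and observability becomes a finite-dimensional-constraint problem handled by combining standard heat observability with invertibility of a small auxiliary matrix.

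Third, a Lebeau--Robbiano patching of these mode-by-mode observabilities produces a global spectral inequality and, from it, null-controllability of the full linearized system with control cost $e^{C/(T-t)^{\alpha}}$ as $t\to T$. To absorb the nonlinear remainder I would implement the source-term method of \cite{tucsnak_burgers}: build weighted spaces $X,Y$ in which states decay like $e^{-C/(T-t)^{\alpha}}$ near $T$ and define the map $\mathcal{F}\colon(\varrho,h,u)\mapsto(\tilde\varrho,\tilde h,\tilde u)$, where $(\tilde\varrho,\tilde h)$ solves the linearization with source equal to the nonlinear remainder evaluated at $(\varrho,h)$ and $\tilde u$ is the control of minimal cost driving it to zero. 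Because every nonlinearity (the curvature $\kappa$, the quadratic convection from $\Phi_h$, the transformed Laplacian) vanishes to second order at the equilibrium, the map $\mathcal{F}$ is a contraction on a small ball of $X\times Y$; Banach's fixed-point theorem then yields the controlled trajectory.

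The main obstacle is the non-self-adjoint spectral analysis underlying the observability of the $k\neq 0$ modes, and the uniformity of the resulting biorthogonal-family estimates in $k$: without this uniformity the Lebeau--Robbiano assembly breaks down. A secondary difficulty, purely technical but delicate, is calibrating the weighted spaces so that the Gibbs--Thomson boundary condition, the transported Laplacian, and the curvature nonlinearity are all Lipschitz on a small ball with no loss of regularity, which is needed to close the fixed point in $H^{\sfrac52}(\mathbb{T})\times H^1(\Omega)$.
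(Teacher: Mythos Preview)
Your overall strategy matches the paper's proof exactly: flatten the domain, linearize, Fourier-decompose the adjoint in $x_1$, prove mode-by-mode observability uniform in the frequency, assemble via Lebeau--Robbiano, then close with the source-term method of \cite{tucsnak_burgers} and a Banach fixed point.

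The one place where you undersell your own approach is what you label the ``main obstacle'': the non-self-adjoint spectral analysis for the $k\neq 0$ modes. In fact the projected operator \emph{is} self-adjoint once you equip $L^2(-1,1)\times\mathbb{R}$ with the weighted inner product
\[
\big\langle(f_1,g_1),(f_2,g_2)\big\rangle \;=\; \langle f_1,f_2\rangle_{L^2(-1,1)} + \sigma k^2\, g_1 g_2,
\]
and this is exactly where the hypothesis $\sigma>0$ enters. With that observation the spectrum is real and negative, the eigenfunctions form an orthonormal basis, and an explicit computation (the eigenvalues are $-\nu_j^2-k^2$ with $\nu_j$ the positive roots of $(\nu^2/k^2+1)\tan(2\nu)=\sigma\nu$, plus one exceptional eigenvalue when $\sigma\leqslant 2$) delivers the uniform-in-$k$ gap and Weyl asymptotics you need for the moment estimates. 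This dissolves the obstacle you flag and is the paper's key technical device for the nonzero modes; everything else in your outline---including the treatment of $k=0$ as a finite-dimensional constraint on the heat control, and the weighted-space calibration for the fixed point---is precisely what the paper carries out.
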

}
	
	As a first but necessary step in solving this problem, we shall focus on the system obtained by linearizing around the equilibrium $(0,0)$. As discussed in \Cref{sec: fixing.domain}, this linearized system will take the form
\begin{equation} \label{sys-lin-con.intro}
\boxed{
	\begin{cases}
	\del_t y - \Delta y = u1_\omega &\text{ in }(0, T)\times \Omega,\\
	\del_t h(t, x_1) - \del_{x_2} y(t, x_1, 1)=0 &\text{ on }(0, T)\times\T,\\
	y(t, x_1, -1) = 0 &\text{ on }(0, T)\times \T,\\
	y(t, x_1, 1) = \sigma \del_{x_1}^2 h(t, x_1) &\text{ on }(0, T)\times \T,\\
	(y, h)_{\mid_{t=0}} = \left(y^0, h^0\right) &\text{ in }\Omega \times \T.
	\end{cases}
	}
	\end{equation}
	The controllability properties of \eqref{sys-lin-con.intro} have not been addressed in the literature, to the best of our knowledge. 
	In fact, even the well-posedness may appear tricky at a first glance, due to the peculiar coupling of the states $y$ and $h$. 
	Nonetheless, we may note the energy dissipation law (when $u\equiv0$)
\begin{equation} \label{eq: energy.dissipation}
\frac{\diff}{\diff t}\left\{\int_\Omega|y(t)|^2\diff x + \sigma\int_{\T}\big|\del_{x_1}h(t)\big|^2\diff x_1\right\} = -2\int_\Omega\big|\nabla y(t)\big|^2\diff x,
\end{equation} 
	which will aid in ensuring that the system is well-posed when considered on the energy space $L^2(\Omega)\times H^1(\T)$. As a matter of fact, we show that the governing operator of \eqref{sys-lin-con.intro} generates an analytic semigroup on this energy space (\Cref{prop: analytic.semigroup} \& \Cref{cor: linear.well.posed}).
	Identity \eqref{eq: energy.dissipation} also clearly illustrates the strength of the coupling between $y$ and $h$, and the impact of $\sigma>0$.  

	We show the following result for \eqref{sys-lin-con.intro}.
	
	{\color{black}
	\begin{theorem} \label{thm: main.result.linear}
		Suppose $T>0$ and $\sigma>0$ are fixed, and suppose that $\omega\subset\Omega$ is open and non-empty. 
		Then for any $\left(y^{0}, h^{0}\right)\in L^2(\Omega)\times H^1(\T)$, there exists some control $u\in L^{2}((0,T) \times \omega)$ such that the unique solution $(y,h)\in C^0([0,T];L^{2}(\Omega)\times H^1(\T))$ to \eqref{sys-lin-con.intro} satisfies $y(T,\cdot)\equiv0$ in $\Omega$ and $h(T,\cdot)\equiv0$ on $\T$, {\color{black} and there exists some constant ${\mathcal{K}}(T,\sigma)>0$ such that 
		\begin{equation*}
		\norm{u}_{L^{2}((0,T)\times \omega)} \leqslant {\mathcal{K}}(T,\sigma) \norm{\left(y^{0}, h^{0}\right)}_{L^2(\Omega)\times H^1(\T)}.
		\end{equation*}
		Furthermore, $T\longmapsto{\mathcal{K}}(T,\sigma)$ is a decreasing function for fixed $\sigma>0$, and if $T\leqslant1$, there exist a constant $M>0$ independent of $T$ (but depending on $\sigma$) such that 
		\begin{equation*}
		{\mathcal{K}}(T,\sigma)=Me^{\frac{M}{T}}.
		\end{equation*}}
\end{theorem}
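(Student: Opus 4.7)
The proof proceeds via the Hilbert Uniqueness Method: granted the well-posedness of \eqref{sys-lin-con.intro} on the energy space $L^{2}(\Omega)\times H^{1}(\T)$, null-controllability is equivalent to an observability inequality for the corresponding adjoint system. A direct integration by parts shows that the adjoint state $(\varphi, g)$ satisfies the backward problem $-\partial_t\varphi-\Delta\varphi=0$ in $\Omega$, $\varphi=0$ on $\{x_2=-1\}$, coupled through the fixed top boundary by $\varphi = g$ on $\{x_2=1\}$ and the boundary ODE $-\partial_t g + \sigma\partial_{x_1}^2\partial_{x_2}\varphi|_{x_2=1} = 0$, with terminal data $(\varphi(T), g(T))$. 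The target inequality is to dominate the energy of $(\varphi(0), g(0))$ by $\|\varphi\|_{L^{2}((0,T)\times\omega)}$.

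Since $x_1\in\T$, I would decompose the adjoint in Fourier series with respect to $x_1$, reducing the problem to a family of one-dimensional systems $(\hat\varphi_k(t, x_2), \hat g_k(t))$, $k\in\Z$, in which $\sigma\partial_{x_1}^2$ becomes multiplication by $-\sigma k^2$. The global inequality will follow by Parseval from a mode-wise observability inequality with a constant \emph{uniform in $k$}; this uniformity is the principal technical obstacle.

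For $k\neq 0$, I would carry out an explicit spectral analysis of the governing one-dimensional operator $\mathcal{A}_k$: imposing the boundary conditions reduces the eigenvalue problem to a transcendental dispersion relation of the form $\alpha^{2} + \sigma k^{2}\alpha\coth(2\alpha) - k^{2} = 0$ with $\lambda = \alpha^{2}-k^{2}$, from which one extracts a sequence of real negative eigenvalues enjoying Weyl-type asymptotics $\lambda_{j,k} \sim -j^{2}\pi^{2}/4 - k^{2}$, together with eigenfunctions forming a Riesz basis of the mode-wise energy space. A uniform lower bound on the spectral gap and careful moment-method / biorthogonal estimates in the spirit of Fattorini--Russell then deliver a mode-wise observability inequality whose constant is controlled uniformly in $k$. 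The zeroth mode $k=0$ is of a different nature, since the ODE for $\hat g_0$ decouples: the problem there reduces to an observability inequality for the $1$D heat equation with Dirichlet conditions on $(-1,1)$, augmented by a finite-dimensional recovery of $\hat g_0(0)$, which can be obtained by combining a standard Carleman estimate with an argument in the spirit of \cite{tucsnak_burgers}.

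Summing over $k$ via Parseval yields the global observability inequality, and the cost bound $\mathcal{K}(T,\sigma)=Me^{M/T}$ for small $T$, together with its monotone decrease in $T$, is obtained by wrapping the uniform-in-$k$ observability into a Lebeau--Robbiano-type iteration: one observes the finitely many low Fourier modes on short time intervals with an explicit cost, exploits the parabolic dissipation \eqref{eq: energy.dissipation} on the high-frequency orthogonal complement between observation intervals, and sums on a geometric sequence. The hardest step, as anticipated above, is the uniform-in-$k$ spectral and biorthogonal analysis, since the non-self-adjointness of the coupling between $\varphi$ and $g$ precludes any direct appeal to symmetric Riesz-basis techniques.
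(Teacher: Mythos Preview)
Your overall architecture matches the paper's: Fourier decomposition in $x_1$, uniform-in-frequency observability for the resulting one-dimensional mode systems, and a Lebeau--Robbiano iteration to conclude. Two points deserve correction.

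First, and most importantly, your claim that ``the non-self-adjointness of the coupling between $\varphi$ and $g$ precludes any direct appeal to symmetric Riesz-basis techniques'' misses the paper's central technical observation. For $k\neq 0$, after the rescaling $s_k := -\hat g_k/(\sigma k^2)$, the governing operator $\mathbf{A}_k$ of the mode system \emph{is} self-adjoint on $L^2(-1,1)\times\mathbb{R}$ once the second component carries the weight $\sigma k^2$ (\Cref{prop:resolvent-An}). This yields an honest orthonormal eigenbasis and reduces the uniform-in-$k$ observability to explicit bounds on the eigenvalue gaps and on $\|\varphi_{k,j}\|_{L^2(c,d)}$ from below (\Cref{lem: spectrum}), after which the Tenenbaum--Tucsnak moment estimate applies directly. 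Your proposed route through Riesz bases of a genuinely non-self-adjoint operator is not wrong in principle, but it is substantially harder, and you give no indication of how to secure the basis constants uniformly in $k$; the paper's weighted self-adjointness is precisely the device that removes this difficulty.

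Second, the sentence ``Summing over $k$ via Parseval yields the global observability inequality'' is not correct as written: Parseval only recovers $\|\varphi\|_{L^2((0,T)\times\T\times(c,d))}$, not $\|\varphi\|_{L^2((0,T)\times\omega)}$ when $\omega=(a,b)\times(c,d)$ with $(a,b)\subsetneq\T$. The Lebeau--Robbiano spectral inequality on $\T$ (\Cref{lem:LR}) is what bridges this spatial gap, and in the paper it is used not merely to sharpen the cost but as an essential ingredient to obtain low-frequency observability on $\omega$ in the first place (\Cref{thm:obs-filtered}); only then does the iteration combine this with the exponential decay of high frequencies (\Cref{lem:exp-projection}). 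Your description conflates these two roles of the Lebeau--Robbiano machinery.
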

}
		
	\subsection{Strategy of proof} \label{sec: strategy.proof} 
	
	We shall now present the broad ideas of our proof.
	We postpone further comments regarding the results to \Cref{sec: discussion}.
	\smallskip
	
	{\color{black}
	\noindent
	\textit{\textbf{Part 1.} Duality, and Fourier decomposition of the adjoint.}
	We shall take a bottom-up approach to the proof of \Cref{thm:main-nonlinear}, and thus start with \Cref{thm: main.result.linear}.
	And when one looks to prove the controllability of \eqref{sys-lin-con.intro}, the primal instinct would be to first  write the adjoint system, which reads as 
	\begin{equation} \label{eq: adjoint.stefan.intro}
	\begin{cases}
	-\del_t\zeta-\Delta\zeta=0 &\text{ in }(0,T)\times\Omega,\\
	-\del_t\ell(t,x_1)-\sigma\del_{x_1}^2\del_{x_2}\zeta(t,x_1,1)=0 &\text{ on }(0,T)\times\T,\\
	\zeta(t,x_1,-1)=0&\text{ on }(0,T)\times\T,\\
	\zeta(t,x_1,1)=\ell(t,x_1)&\text{ on }(0,T)\times\T,\\
	(\zeta,\ell)_{|_{t=T}}=(\zeta_T,\ell_T) &\text{ in} \Omega.
	\end{cases}
	\end{equation}
	Note that, since the natural energy space for \eqref{sys-lin-con.intro} is $L^2(\Omega)\times H^1(\T)$, the adjoint problem \eqref{eq: adjoint.stefan.intro} ought to be analyzed in the dual space. Proceeding by the Hilbert Uniqueness Method (HUM, \cite{lions1988exact}), one would look to show an observability inequality of the form
	\begin{equation} \label{eq: obs.ineq.intro}
	\|\zeta(0)\|_{L^2(\Omega)}^2+\|\ell(0)\|_{H^{-1}(\T)}^2\leqslant \mathcal{K}(T,\sigma) \int_0^T\int_\omega |\zeta(t,x)|^2\diff t\diff x,
	\end{equation}
	for all $(\zeta_T,\ell_T)\in L^2(\Omega)\times H^{-1}(\T)$. The standard way to proceed in such an endeavor would be through the use of Carleman inequalities. At this stage, we are not aware of existing Carleman inequalities which come even close to being adapted to the specific nature of the adjoint problem \eqref{eq: adjoint.stefan.intro}, due to the asymmetric and dynamic nature of the coupling between both states.
	
	There is however a simpler way in which the problem of proving \eqref{eq: obs.ineq.intro} can be tackled.\footnote{Such procedures have been used in the literature, typically in the context of hypoelliptic operators (\cite{beauchard2009some, beauchard2014null, Beau14, beauchard20152d}).}  
	We may exploit the periodicity and decompose all functions appearing in \eqref{eq: adjoint.stefan.intro} into Fourier series with respect to $x_1\in\T$. Namely, we write
	\begin{equation} \label{eq: fourier.y}
	\zeta(t,x_1,x_2)=\frac{1}{\sqrt{2\pi}}\sum_{n\in\Z}\zeta_n(t,x_2)e^{inx_1}
	\end{equation}
	with 
	\begin{equation} \label{eq: yn.fourier}
	\zeta_n(t,x_2)=\frac{1}{\sqrt{2\pi}}\int_{\T}\zeta(t,\xi,x_2)e^{in \xi}\diff \xi,
	\end{equation}
	with analogous decompositions for $\ell$ and the data $(\zeta_T,\ell_T)$.  
	We then find ourselves with a family of systems for the Fourier coefficients, parametrized by $n\in\Z$:
	\begin{equation} \label{eq: fourier.projected.system.new}
		\begin{cases}
		-\del_t \zeta_n - \del_{x_2}^2 \zeta_n + n^2 \zeta_n = 0 &\text{ in }(0, T) \times (-1, 1), \\
		-\ell_n'(t)+\sigma n^2\del_{x_2} \zeta_n(t, 1)=0 &\text{ in }(0, T), \\
		\zeta_n(t, -1) = 0 &\text{ in } (0, T), \\
		\zeta_n(t, 1) = \ell_n(t) &\text{ in } (0, T), \\
		\left(\zeta_n, \ell_n\right)_{|_{t=T}} = \left(\zeta_{T,n}, \ell_{T,n}\right) &\text{ in } (-1, 1).
		\end{cases}
		\end{equation}
		We shall prove the following result for \eqref{eq: fourier.projected.system.new}.
		
		\begin{theorem} \label{thm: frequency.obs} 
		Suppose $T>0$ and $\sigma>0$ are fixed. Suppose $(c,d)\subset(-1,1)$. 
		There exists a constant $\mathcal{K}(T,\sigma)>0$ such that 
		\begin{equation} \label{eq: frequency.by.frequency.obs}
		\int_{-1}^1|\zeta_n(0)|^2\diff x_2 + \frac{|\ell_n(0)|^2}{1+\sigma n^2} \leqslant\mathcal{K}(T,\sigma)\int_0^T \int_c^d |\zeta_n(x_{2})|^2\diff x_2\diff t,
		\end{equation}
		holds for any $n\in\mathbb{Z}$, and for any pair $(\zeta_{T,n},\ell_{T,n})\in L^2(-1,1)\times\mathbb{R}$, where the pair $(\zeta_n,\ell_n)\in C^0([0,T];L^2(-1,1)\times\mathbb{R})$ denotes the unique solution to \eqref{eq: fourier.projected.system.new}.
		Furthermore, $T\longmapsto\mathcal{K}(T,\sigma)$ is a decreasing function for fixed $\sigma>0$, and if $c\neq\pm d$ and $T\leqslant1$, there exists a constant $M>0$, independent of $T$ and $n$, such that 
		\begin{equation} \label{eq: zero.mode.estimate.cost}
		\mathcal{K}(T,\sigma)=Me^{\frac{M}{T}}.
		\end{equation} 
		\end{theorem}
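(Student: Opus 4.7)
My plan is to split the proof by Fourier frequency, treating the zeroth mode and the modes $n\neq 0$ by qualitatively different arguments, since the coupling between $\zeta_n$ and $\ell_n$ behaves quite differently in the two regimes.

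\textbf{Zero mode.} When $n=0$ the second equation becomes $\ell_0'(t)\equiv 0$, so $\ell_0$ is a constant in time determined by its terminal value. Writing $\zeta_0(t,x_2)=\tilde\zeta_0(t,x_2)+\ell_0(0)\psi(x_2)$ with the harmonic lift $\psi(x_2)=(x_2+1)/2$ decouples the system into a backward heat equation for $\tilde\zeta_0$ with homogeneous Dirichlet conditions, plus the scalar unknown $\ell_0(0)$. The classical Fursikov–Imanuvilov global Carleman estimate immediately yields observability of $\tilde\zeta_0(0)$ with cost $Me^{M/T}$. To recover $|\ell_0(0)|$, I would exploit the fact that $\psi_{|(c,d)}$ is linearly independent from the Dirichlet-Laplacian eigenfunctions restricted to $(c,d)$: combining this with a finite-dimensional unique-continuation argument (or, equivalently, adapting the Carleman estimate to the enlarged state $(\tilde\zeta_0,\ell_0)$) supplies the missing scalar observation without degrading the $T$-dependence.

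\textbf{Non-zero modes.} Equipping the phase space with the weighted inner product
\begin{equation*}
\langle(\varphi_1,m_1),(\varphi_2,m_2)\rangle_{n}:=\int_{-1}^{1}\varphi_1\overline{\varphi_2}\diff x_2+\frac{1}{\sigma n^2}m_1\overline{m_2}
\end{equation*}
renders the generator $A_n$ self-adjoint and positive. Solving the eigenvalue problem with the ansatz $\varphi(x_2)=A\sin(\mu(x_2+1))$, where $\lambda=n^2+\mu^2$, reduces to the transcendental equation
\begin{equation*}
\tan(2\mu)=\frac{\sigma n^2\mu}{n^2+\mu^2}.
\end{equation*}
Elementary analysis places exactly one root $\mu_k^{(n)}\in\bigl((k-1)\pi/2,k\pi/2\bigr)$ for each $k\geqslant 1$, hence a uniform spectral gap $\lambda_{k+1}^{(n)}-\lambda_k^{(n)}\gtrsim 1$, with the smallest eigenvalue $\lambda_1^{(n)}\asymp 1+\sigma n^2$. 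Expanding solutions in the orthonormal eigenbasis reduces the observability \eqref{eq: frequency.by.frequency.obs} to a moment problem on $(c,d)$. I would then invoke the Fattorini–Russell biorthogonal construction for sequences with positive gap, combined with a uniform lower bound $\|\varphi_k^{(n)}\|_{L^2(c,d)}\gtrsim 1$ — obtained from the explicit sinusoidal form of the eigenfunctions, and requiring $c\neq\pm d$ so that no cancellation over $(c,d)$ can occur uniformly in $k,n$ — to get the $Me^{M/T}$ cost.

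\textbf{Main obstacle.} The key technical difficulty is making every step of the spectral argument quantitatively uniform in $n$: the Riesz-basis constants of $\{\varphi_k^{(n)}\}_k$, the spectral gap, the lower bound on the eigenfunctions over the observation interval, and the cost of the biorthogonal family. The precise weight $1/(1+\sigma n^2)$ on $|\ell_n(0)|^2$ is itself dictated by this analysis — it is exactly the scale of $1/\lambda_1^{(n)}$, matching the norm induced by $\langle\cdot,\cdot\rangle_n$ above — and this matching is what will later permit the frequency-by-frequency estimates to be assembled via a Lebeau–Robbiano slicing to produce the full adjoint observability \eqref{eq: obs.ineq.intro}.
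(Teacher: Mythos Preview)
Your overall architecture matches the paper's, but there are two genuine gaps.

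\textbf{Zero mode.} The claim that a finite-dimensional unique-continuation argument recovers $|\ell_0(0)|$ ``without degrading the $T$-dependence'' is unsubstantiated, and in fact this is exactly where the paper has to work hardest. A compactness--uniqueness argument gives observability but loses any explicit constant; the paper says so verbatim and instead runs a moment method on the dual control problem: one writes the controllability conditions $y_0(T)=0$ and $h_0(T)=0$ as moment equations against $\{e^{\lambda_j t}\}_{j\geq 0}$ (with $\lambda_0=0$), builds the control from a biorthogonal family, and then the $c\neq\pm d$ hypothesis enters \emph{here} --- not in the nonzero-mode eigenfunction bound as you suggest --- to guarantee that the denominator $\sum_j \partial_{x_2}\phi_j(1)\lambda_j^{-1}\langle 1,\phi_j\rangle_{L^2(c,d)}=\tfrac14(c^2-d^2)$ is nonzero. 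Your harmonic-lift decomposition is fine for qualitative observability, but you have not explained how to extract $\ell_0(0)$ from $\int_0^T\int_c^d|\tilde\zeta_0+\ell_0(0)\psi|^2$ with an $Me^{M/T}$ cost; the two pieces are coupled in the observation and a Carleman estimate for $\tilde\zeta_0$ alone does not see $\ell_0(0)$.

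\textbf{Nonzero modes.} Your spectral analysis is incomplete. The sinusoidal ansatz $\varphi=A\sin(\mu(x_2+1))$ only captures eigenvalues with $\lambda>n^2$; when $\sigma<2$ there is an additional bottom eigenvalue $\lambda_0^{(n)}\in(0,n^2)$ with a \emph{hyperbolic} eigenfunction $\varphi_{n,0}(x_2)=C(e^{\nu_0 x_2}-e^{-\nu_0(2+x_2)})$, and when $\sigma=2$ the bottom eigenfunction is affine. Missing this eigenvalue would leave a one-dimensional subspace unobserved. Your localization $\mu_k^{(n)}\in((k-1)\pi/2,k\pi/2)$ is also off by a quarter-period: since the right-hand side of your transcendental equation is positive, the roots lie in $(k\pi/2,\,k\pi/2+\pi/4)$. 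Once these are fixed, your uniform gap, the growth $\lambda_k^{(n)}=rk^2+n^2+O(k)$, and the $L^2(c,d)$ lower bound on the (normalized) eigenfunctions all go through exactly as in the paper, and the biorthogonal/Fattorini--Russell step you cite is the same as the paper's appeal to the Tenenbaum--Tucsnak inequality.
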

		
		We shall distinguish two parts in the proof of \Cref{thm: frequency.obs}.
		\smallskip
		
	\noindent
	\textit{\textbf{Part 2.} Observing frequency by frequency: the non-zero modes}. Suppose $n\neq0$. Then if we set 
	\begin{equation*}
	s_n(t):=-\frac{\ell_n(t)}{\sigma n^2}, 
	\end{equation*}
	we see that $(\zeta_n, s_n)$ satisfies
	\begin{equation} \label{eq: fourier.projected.system.intro}
		\begin{cases}
		-\del_t \zeta_n - \del_{x_2}^2 \zeta_n + n^2 \zeta_n = 0 &\text{ in }(0, T) \times (-1, 1), \\
		-s_n'(t)-\del_{x_2} \zeta_n(t, 1)=0 &\text{ in }(0, T), \\
		\zeta_n(t, -1) = 0 &\text{ in } (0, T), \\
		\zeta_n(t, 1) = -\sigma n^2s_n(t) &\text{ in } (0, T), \\
		\left(\zeta_n, s_n\right)_{|_{t=T}} = \left(\zeta_{T,n}, s_{T,n}\right) &\text{ in } (-1, 1).
		\end{cases}
		\end{equation}
		A curiosity with regard to \eqref{eq: fourier.projected.system.intro} is that the governing operator is self-adjoint when the metric for the second component is weighted by $\sigma n^2$. Then, by computing the spectrum of this operator (this  may be found in \Cref{lem: spectrum} -- note that this is a nontrivial computation, as the operator is not a linear shift of the Laplacian), for \eqref{eq: fourier.projected.system.intro} we can show that
		\begin{equation} \label{eq: frequency.by.frequency.obs.2}
		\int_{-1}^1|\zeta_n(0)|^2\diff x_2 + \sigma n^2|s_n(0)|^2\leqslant\mathcal{K}(T,\sigma)\int_0^T \int_c^d |\zeta_n(t)|^2\diff t\diff x_2
		\end{equation}
		holds for all $n\neq0$, leading to \eqref{eq: frequency.by.frequency.obs} by reverting from $s_n$ to $\ell_n$. 
		\smallskip
			
	\noindent
	\textit{\textbf{Part 3.} Observing the zeroth mode}. On another hand, when $n=0$, we see that \eqref{eq: fourier.projected.system.intro} becomes uncoupled, with a non-homogeneous boundary condition for $\zeta_0$. 
	In terms of the dual, control problem, the system is also uncoupled: we may control the heat component $y_0$ independently of $h_0$. The constraint $h_0(T)=0$ can be seen as a one-dimensional constraint on the control $u_0$ -- this can then be covered via a compactness-uniqueness argument, and is a rather classical procedure for one-dimensional free boundary problems (\cite{fdez-cara_doubova_burgers, geshkovski2021controllability, tucsnak_burgers}). Hence, these results reinterpreted for the adjoint system will yield \eqref{eq: frequency.by.frequency.obs} for $n=0$.
	The exponential form of $\mathcal{K}(T,\sigma)$ for small times is then derived by means of a moment method argument.
	\smallskip
	
	\noindent
	\textit{\textbf{Part 4.} A Lebeau-Robbiano argument and \Cref{thm: frequency.obs} yield \Cref{thm: main.result.linear}.} We begin by noting that it suffices to show \Cref{thm: main.result.linear} when $\omega=(a,b)\times(c,d)$ (a rectangle); indeed, if $\omega$ were not in such a form, we could always find $\omega_\circ\subset\omega$ which is such a rectangle, and apply \Cref{thm: main.result.linear} to $\omega_\circ$. 
	With this remark in hand, we may combine the observability inequality \eqref{eq: frequency.by.frequency.obs} and a classical inequality for the eigenfunctions of the Laplacian\footnote{Here, we consider the Laplacian on the torus $\T$, whose eigenfunctions are the complex exponentials which span $L^2(\T)$.} (due to Lebeau and Robbiano, \cite{lebeau1995controle}) to obtain \eqref{eq: obs.ineq.intro} (with $\omega$ a rectangle) for low-frequency solutions (namely, corresponding to $|n|\leqslant\mu$, for any $\mu>0$) to \eqref{eq: adjoint.stefan.intro}. To cover the high-frequency components and complete the proof, following the Lebeau-Robbiano argument, we exploit the exponentially stable character of the high-frequencies of \eqref{eq: adjoint.stefan.intro} -- when the zero-mode is removed from \eqref{eq: adjoint.stefan.intro}, the governing operator is actually self-adjoint and generates an exponentially decaying semigroup.
	\smallskip
	
	\noindent
	\textit{\textbf{Part 5.} Source-term method and Banach fixed-point yield \Cref{thm:main-nonlinear}.}
	We finally prove \Cref{thm:main-nonlinear} in a sequence of steps. 
	We first add source terms mimicking the nonlinearities (including along the boundary) in the setting of \Cref{thm: main.result.linear}, over which a fixed point argument will be developed. The controllability of the resulting non-homogeneous system is preserved by an adaptation of the source term method of \cite{tucsnak_burgers}, for which the special, exponential form of the control cost $\mathcal{K}(T,\sigma)$ for small times is crucial.
	We then present an appropriate change of variables which fixes the domain in \eqref{eq: stefan.surface.tension}, and show that the nonlinearities are of a quadratic nature, all of which leads us naturally to the framework of a Banach fixed point argument, which will lead to the desired conclusion.
	}
	
	{\color{black}
	\subsection{Outline} 
	
	In {\bf \Cref{sec: discussion}}, we provide an in-depth comparison of our results  with existing works, as well as a commentary on the potential limitations and extensions of our results to more general settings, {\color{black}such as arbitrary geometries, global nonlinear results, and the case $\sigma=0$ -- the latter is also corroborated by numerical experiments.} {\bf \Cref{sec-linear-sg}} presents the basic functional setting and well-posedness results used throughout. 
	 In {\bf \Cref{sec: projected.systems}} we provide the proof to \Cref{thm: frequency.obs}, while {\bf \Cref{sec: proof.main.result.linear.new}} contains the proof to \Cref{thm: main.result.linear}.
	{\color{black} In {\bf\Cref{sec: nonlinear.section}} we provide the proof to our main nonlinear result, namely \Cref{thm:main-nonlinear}.}
	 Finally, in {\bf \Cref{sec: conclusion}}, we conclude with a selection of related open problems. 
	}
	
	\subsection{Notation} 
	Whenever the dependence on parameters of a constant is not specified, we will make use of Vinogradov notation and write $f \lesssim_S g$ whenever a constant $C>0$, depending only on the set of parameters $S$, exists such that $f \leqslant Cg$.

	\section{Discussion} \label{sec: discussion} 
	
	\subsection{Previous work}
	The null-controllability results we prove in this work are among the first of their kind for multi-dimensional free-boundary problems in which the free boundary depends on the spatial variable -- even in the linearized regime.
		In this sense, our setup differs from existing works on the controllability of multi-dimensional fluid-structure interaction models with rigid bodies (\cite{Ta07, boulakia_jems}), and the controllability of one-dimensional free boundary problems (\cite{fdez-cara_doubova_burgers, tucsnak_burgers, tucsnak_burgers_jmpa, geshkovski2021controllability, ramaswamy2020remark}, \cite{barcena2022exact}, \cite{colle2022controllability} see also \cite{dunbar2003motion}), as therein, the free boundary is parametrized by the graph of a \emph{time-only} dependent function, modeling a rigid body.
		In particular, the spatial regularity of the height function $h$ plays a crucial role in the analysis (or even well-posedness) results. 
	
		A partial controllability result for the two-dimensional classical Stefan problem ($\sigma=0$) is shown in \cite{demarque2018local} -- only the temperature $\varrho$ is controlled to $0$ without any consideration of the height function $h(t,z_1)$ defining the free boundary $\Gamma(t)$. In fact, the geometrical setting is also different, as the free boundary $\Gamma(t)$ manifests as the entire boundary of the fluid domain $\Omega(t)$.
		Moreover, the Stefan law governing the velocity of the height function is regularized by adding a Laplacian term, which significantly simplifies the analysis. 
		
		Albeit for a system of different nature to ours, we also refer to \cite{Al18} (and \cite{alazard2017stabilization, Al18b, su2020stabilizability} for related results) for an exact-controllability result of the velocity and the free surface elevation of the water waves equations in two dimensions, by means of a single control actuating along an open subset of the free surface. 
		In the aforementioned works, the two-dimensional geometrical strip-like setting of the free boundary problem is the same as ours.  These results are extended to the three dimensional context in \cite{zhu2020control}. 
	
	\subsection{The (curious) case of $\sigma=0$}
	
	We were unsuccessful in applying our techniques to cover the case $\sigma=0$. In this case, the linearized system \eqref{sys-lin-con.intro} is uncoupled, and proceeding by writing the adjoint system directly might appear as an arid endeavor. 
	We provide more insight into some of the possible obstacles.
	\begin{itemize}
	\item
	Since \eqref{sys-lin-con.intro} is uncoupled, one can first control the heat equation for $y$ to $0$ through HUM, and then see the null-controllability for $h$ as a linear constraint of the form
	\begin{equation} \label{eq: h.constraint}
	0=h(T,x_1) = h^0(x_1) + \int_0^T \partial_{x_1} y(t,x_1,1)\diff t
	\end{equation}
	 for all $x_1\in\mathbb{T}$. 
	 The difference with respect to the one-dimensional case (see, e.g., \cite{Er19, geshkovski2021controllability}) is that \eqref{eq: h.constraint} is not a finite-dimensional constraint anymore, due to the fact that $h$ depends on the spatial variable. Hence, the compactness-uniqueness arguments of these works are not directly applicable.
	 \smallskip
	 \item
	In this spirit, one can rather proceed by Fourier series decompositions to derive \eqref{eq: fourier.projected.system.intro} with $\sigma=0$. Now, for any fixed frequency $n$, the compactness-uniqueness arguments of the above-cited works can be used to derive the null-controllability of the full system, since \eqref{eq: h.constraint} will transform into a one-dimensional constraint for the heat control. The caveat is that, due to the compactness-uniqueness argument used for addressing this finite-dimensional constraint,  the controllability cost will depend on $n$, with an explicit dependence on $n$ being uncertain. Consequently, we cannot paste the controls for all $n$ to derive the controllability of \eqref{sys-lin-con.intro} with $\sigma=0$. 
	\end{itemize}
	
	\begin{figure}[h!]
	\includegraphics[scale=0.3]{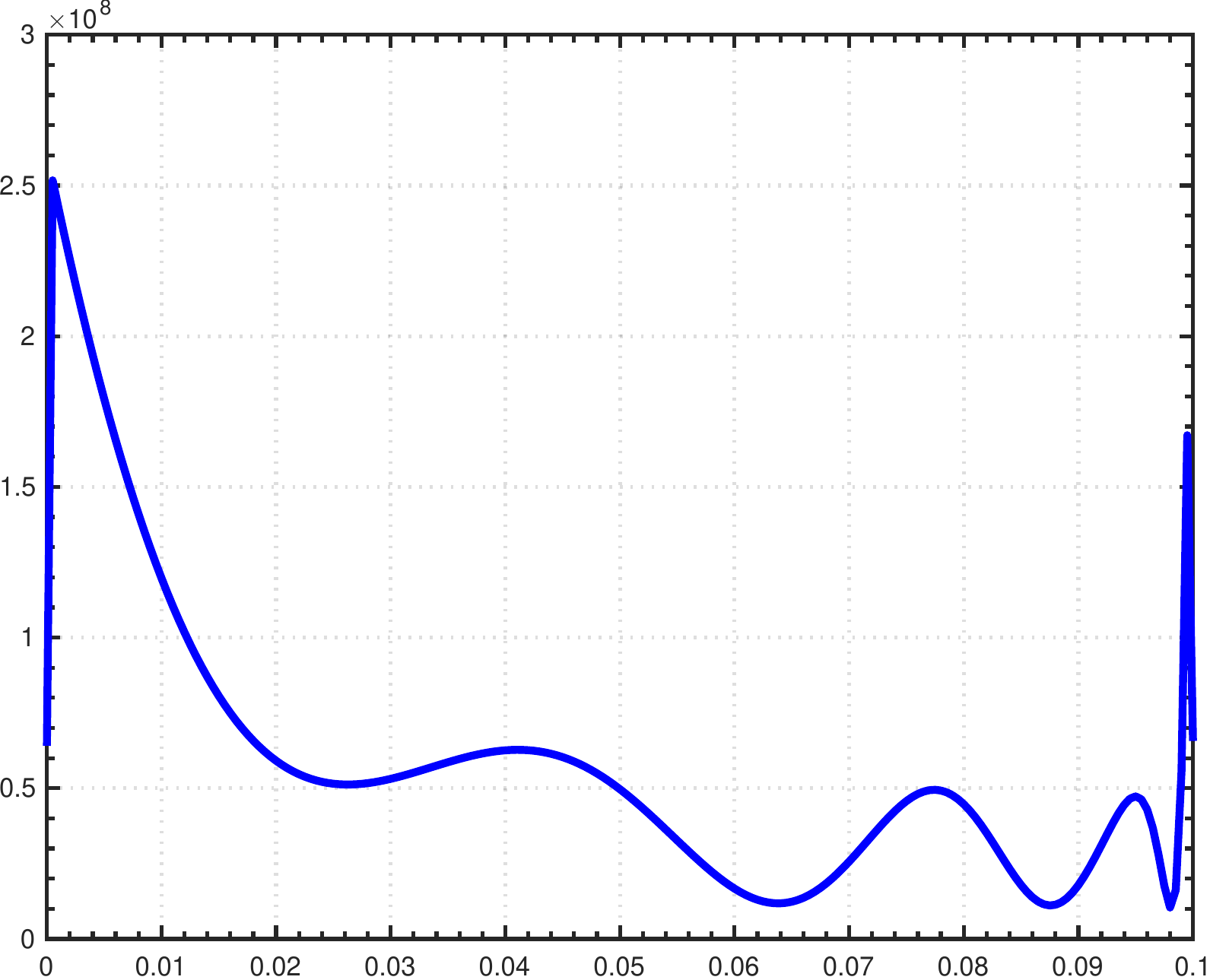}\\
	\vspace{0.2cm}
	\begin{tabular}{c}
	$t=0$ \\
	\includegraphics[scale=0.25]{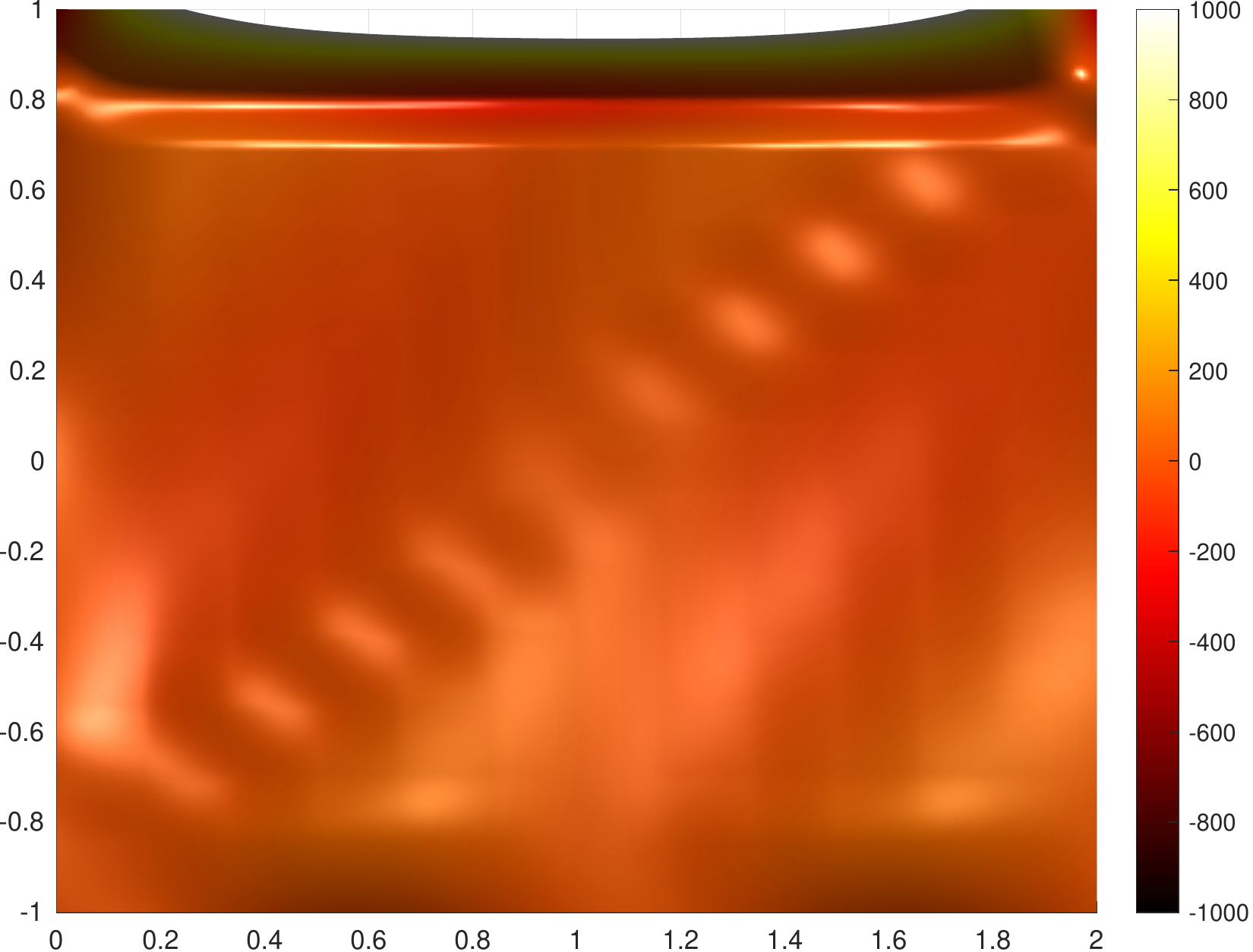}\\
	\includegraphics[scale=0.25]{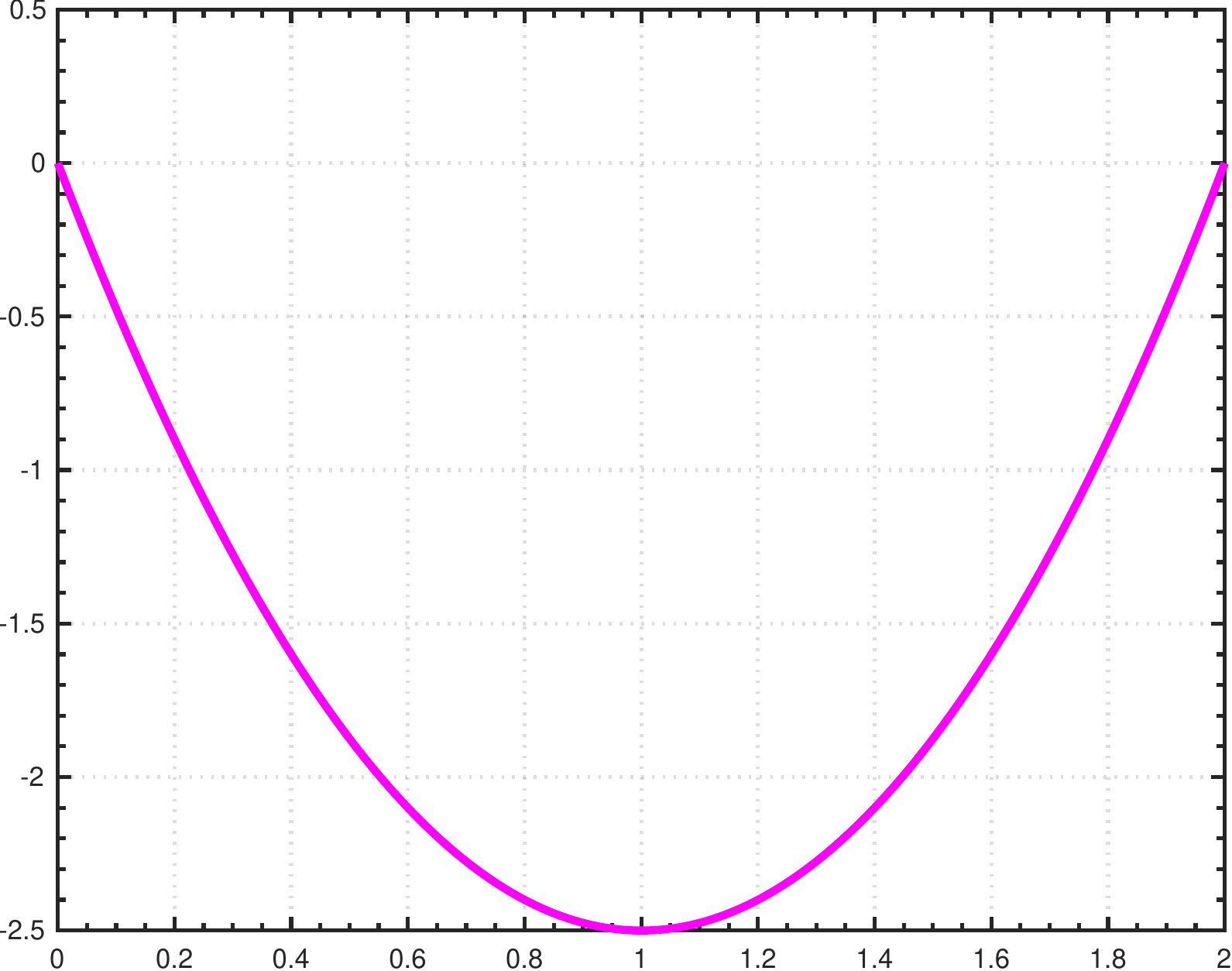}
	\end{tabular}
	\begin{tabular}{c}
	$t=0.025$ \\
	\includegraphics[scale=0.25]{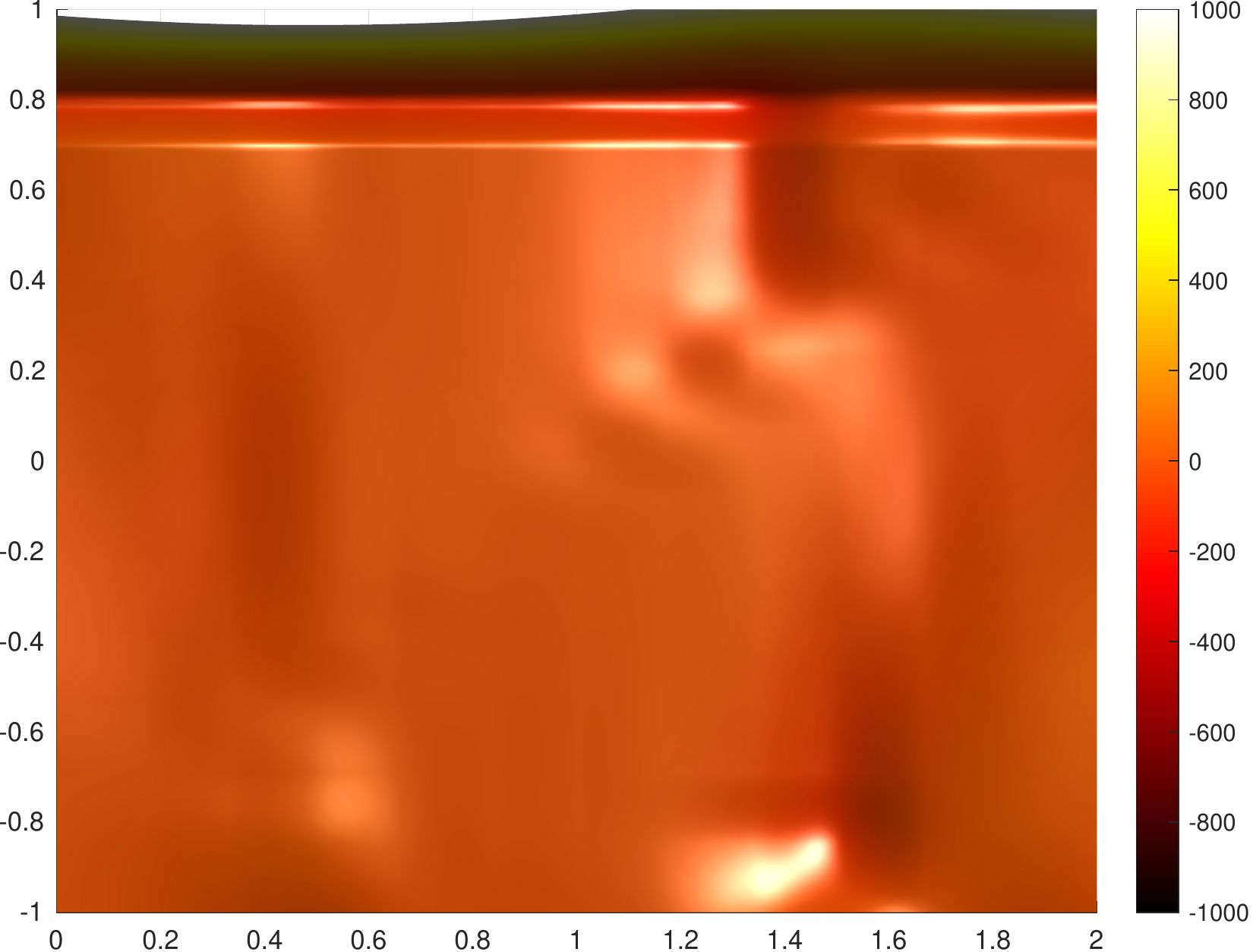}\\
	\includegraphics[scale=0.25]{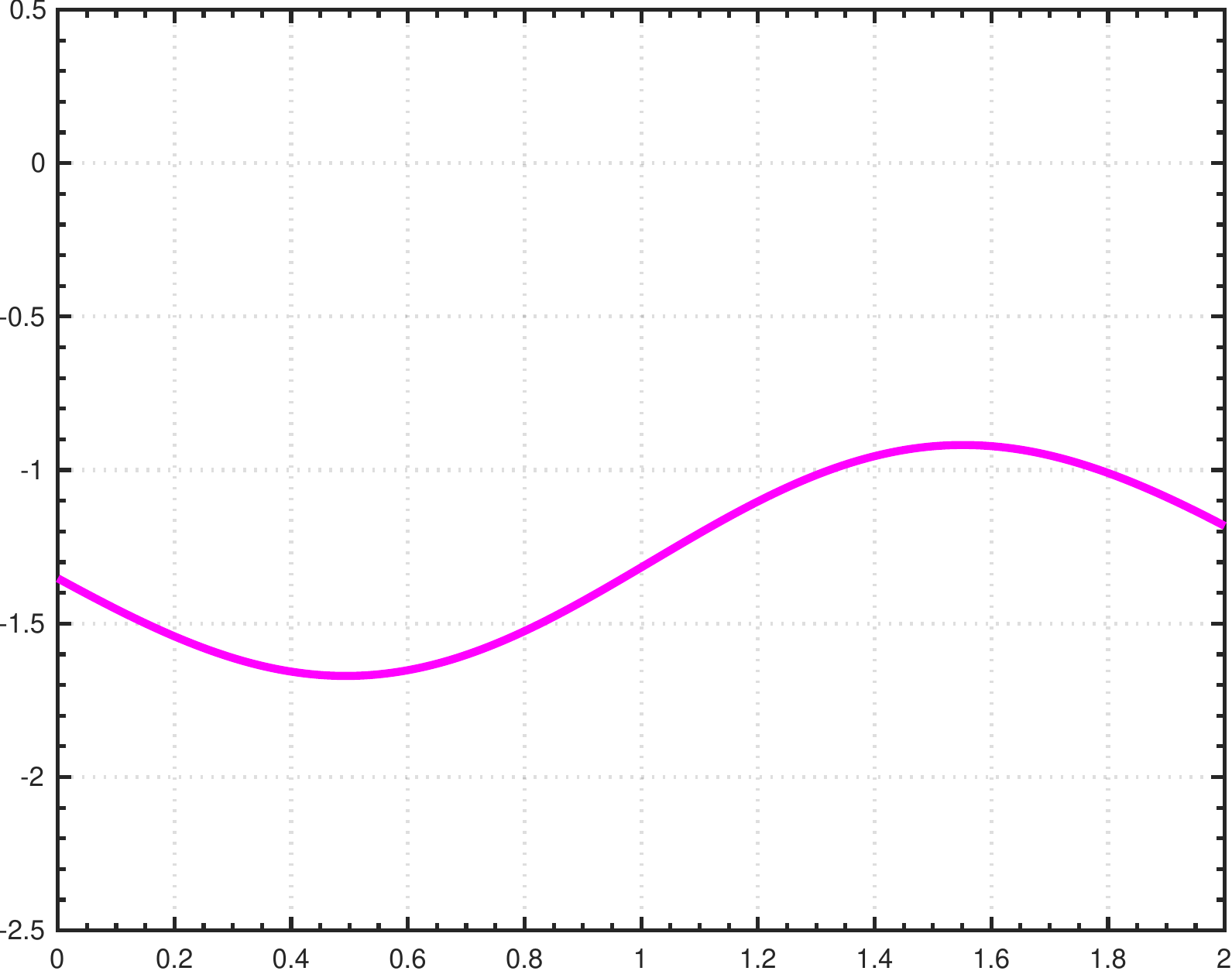}
	\end{tabular}
	\begin{tabular}{c}
	$t=0.1$ \\
	\includegraphics[scale=0.25]{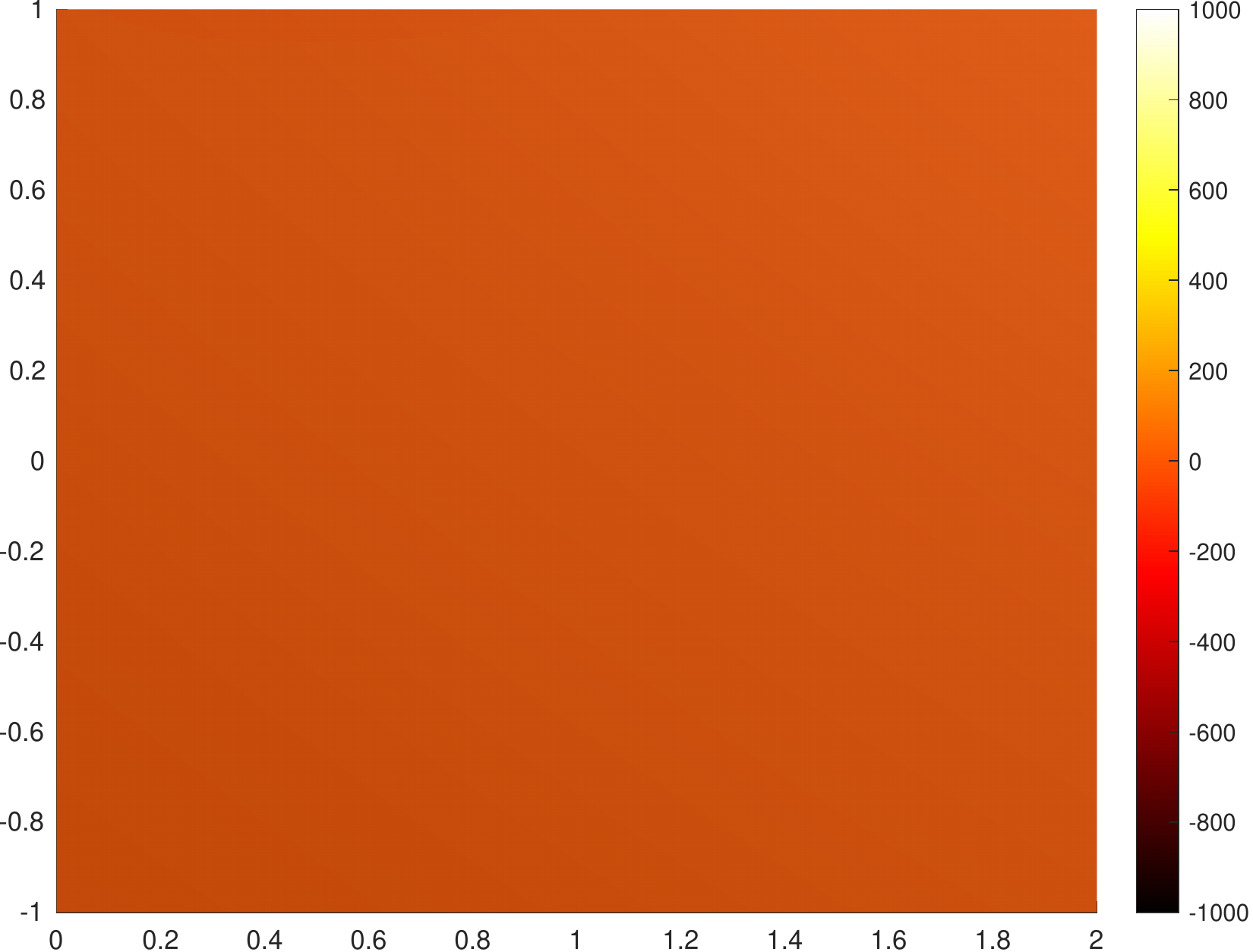}\\
	\includegraphics[scale=0.25]{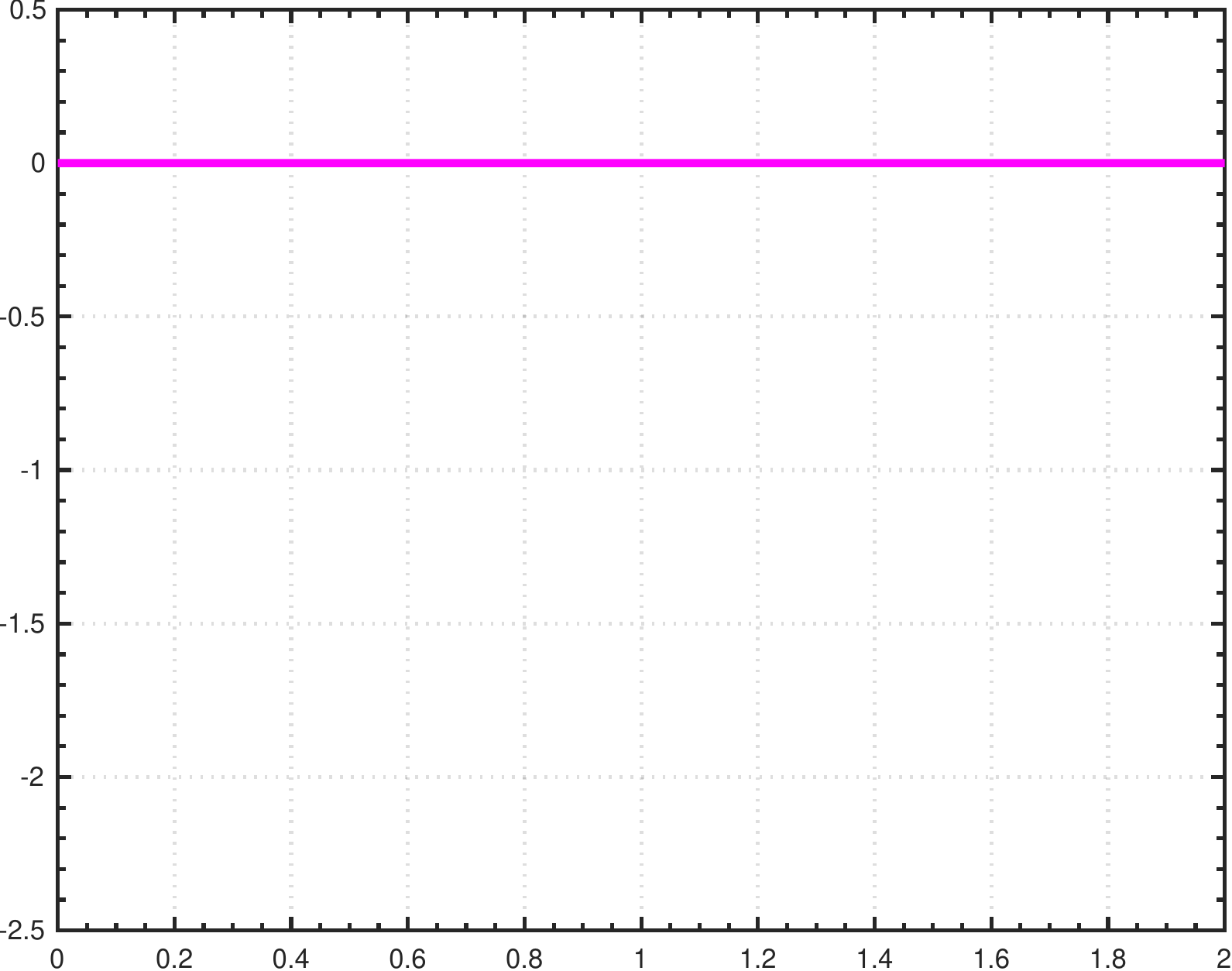}
	\end{tabular}
	\caption{{\bf Controllability of \eqref{sys-lin-con.intro} with $\sigma=10$.}
	 (\emph{Top}) The $L^2(\omega)$-norm, over time, of the minimal $L^2((0,T)\times\omega)$-norm control; (\emph{Middle}) The temperature $y(t,x_1,x_2)$; (\emph{Bottom}) the height $h(t,x_1)$. The numerical experiment reflects \Cref{thm: main.result.linear}.}
	\label{fig: hum.1}
	\end{figure}
	
	To motivate future work in this direction, we provide illustrations of numerical experiments for finding the minimal $L^2((0,T)\times\omega)$-norm control for \eqref{sys-lin-con.intro}, in the cases $\sigma>0$ (specifically, $\sigma=10$) and $\sigma=0$. The results are displayed in \Cref{fig: hum.1} and \Cref{fig: hum.2} respectively.
	Numerical discretization and computing details may be found in \Cref{sec: numerics}.  
	For simplicity of the implementation, we worked on a rescaled domain $\Omega=(0,2)\times(-1,1)$, whilst keeping periodic boundary conditions with respect to $x_1$.
	In both cases, we took $T=0.1$\footnote{Note that the simulations yield the expected results even when $T\gg1$. We present experiments with $T\ll1$ since this is somewhat the "interesting" regime in the context of null-controllability of heat-like equations.}, with  $h^0(x_1) = x_1(2-x_1)$ and $y^0(x_1,x_2)=70\sin(\pi x_1)\sin(\pi x_2)$ (up to $x_2=1$ when $\sigma=0$, whereas we impose the compatibility condition $y(0,x_1,1)=\sigma\partial_{x_1}^2 h^0(x_1)$ when $\sigma=10$). 
	The numerical experiments depicted in \Cref{fig: hum.2} insinuate that null-controllability might also hold when $\sigma=0$. For the time being, a rigorous analytical proof (or disproof) of such a result remains an open problem.
	
	\begin{figure}[h!]
	\includegraphics[scale=0.3]{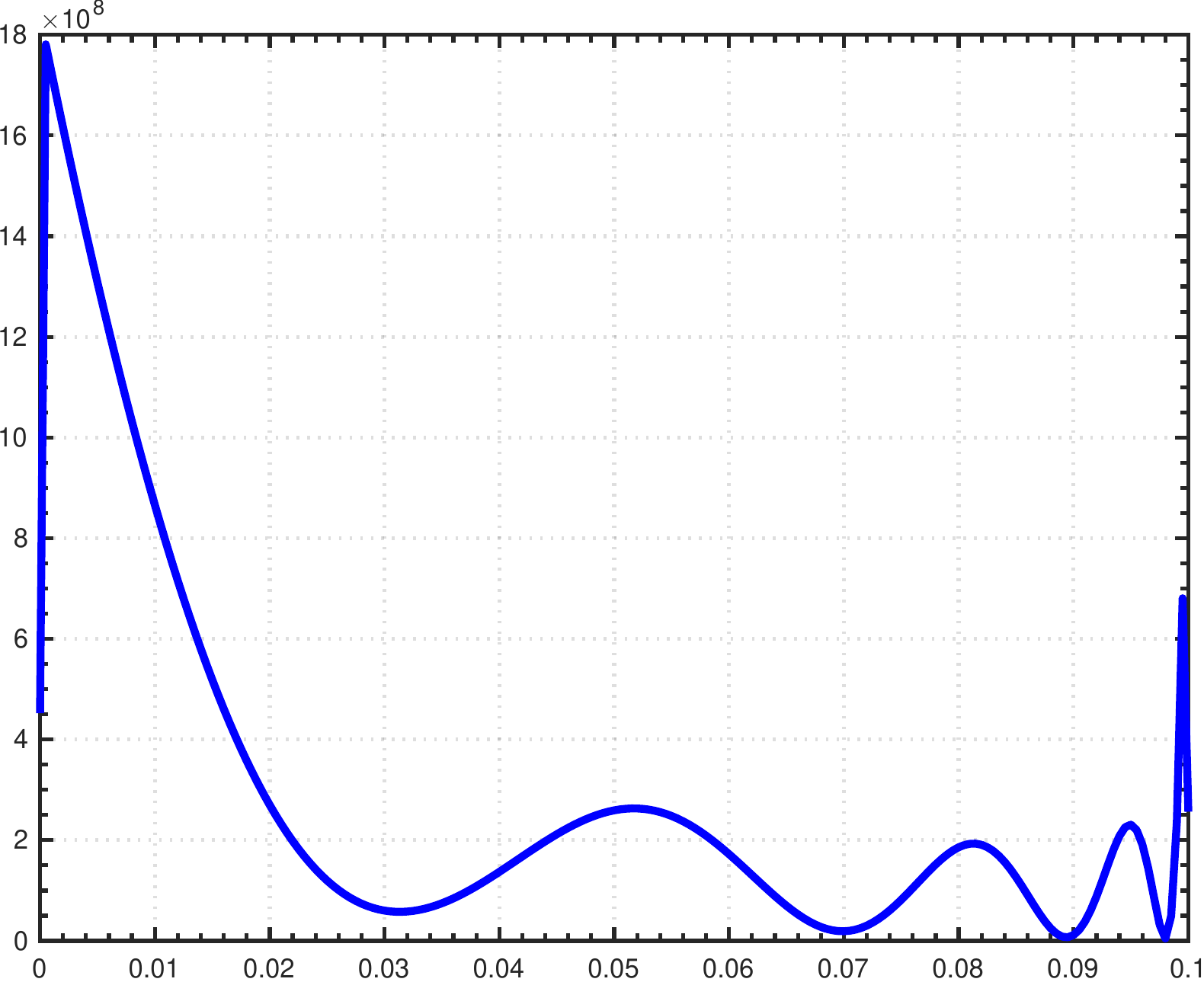}\\
	\vspace{0.2cm}
	\begin{tabular}{c}
	$t=0$ \\
	\includegraphics[scale=0.25]{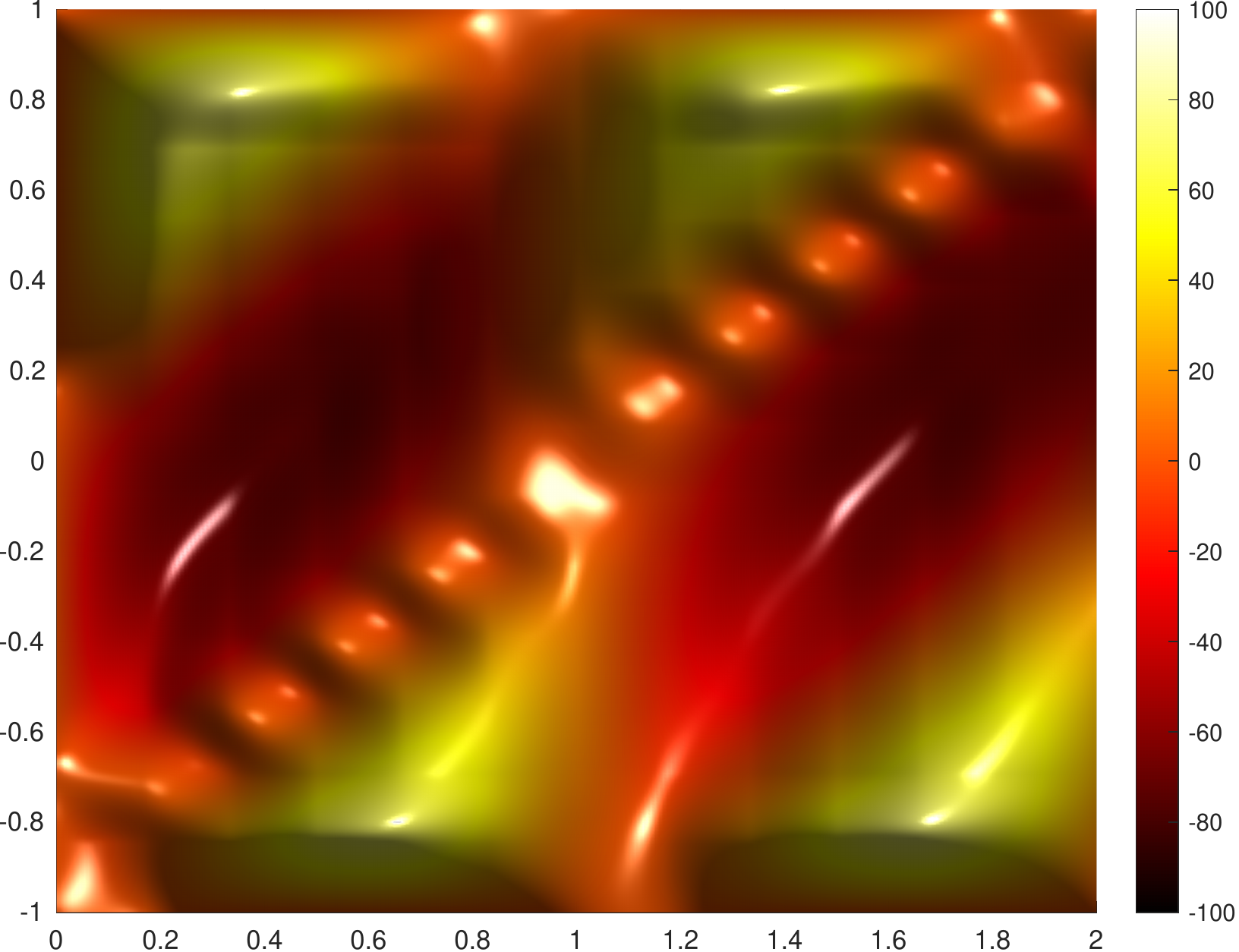}\\
	\includegraphics[scale=0.25]{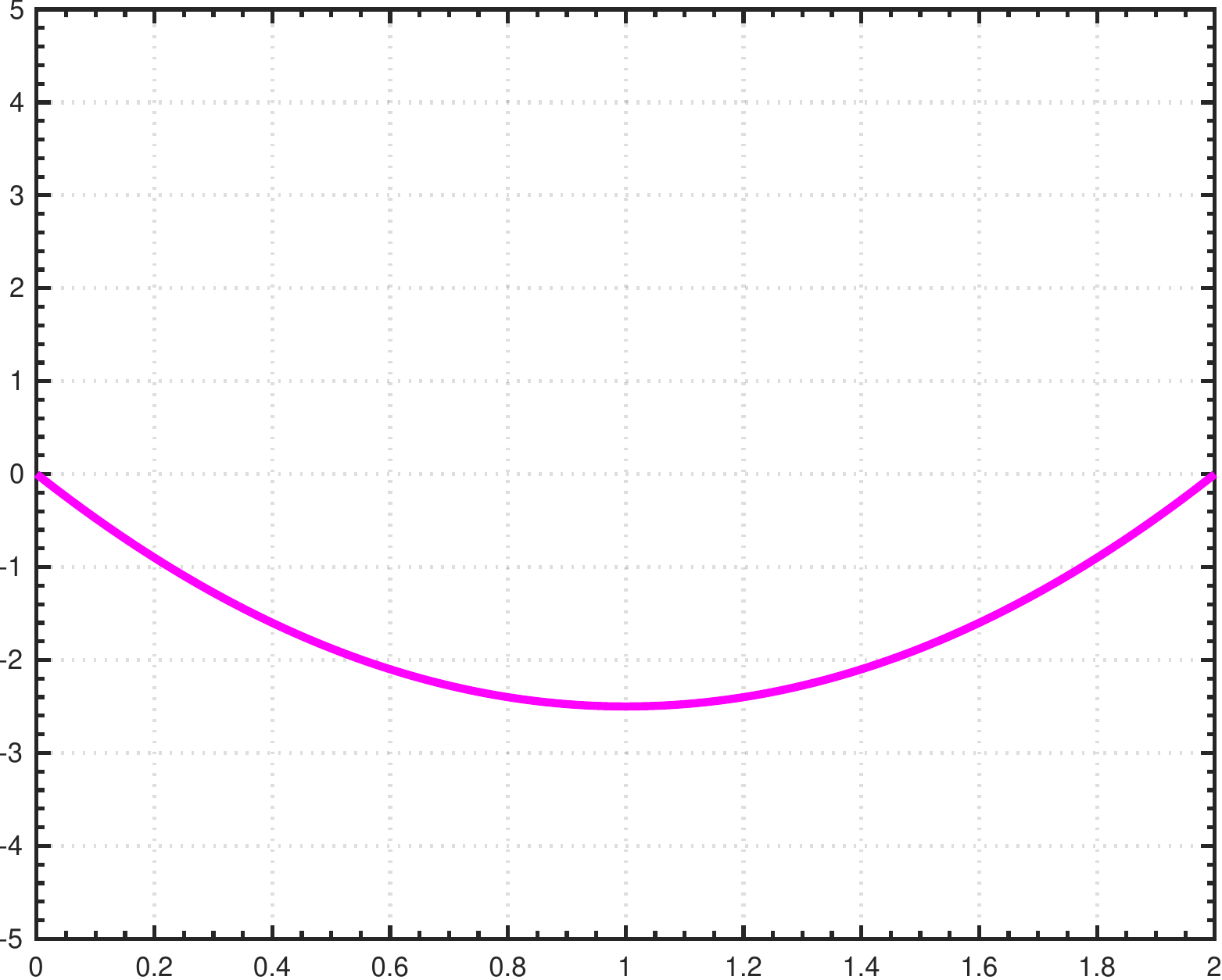}
	\end{tabular}
	\begin{tabular}{c}
	$t=0.05$ \\
	\includegraphics[scale=0.25]{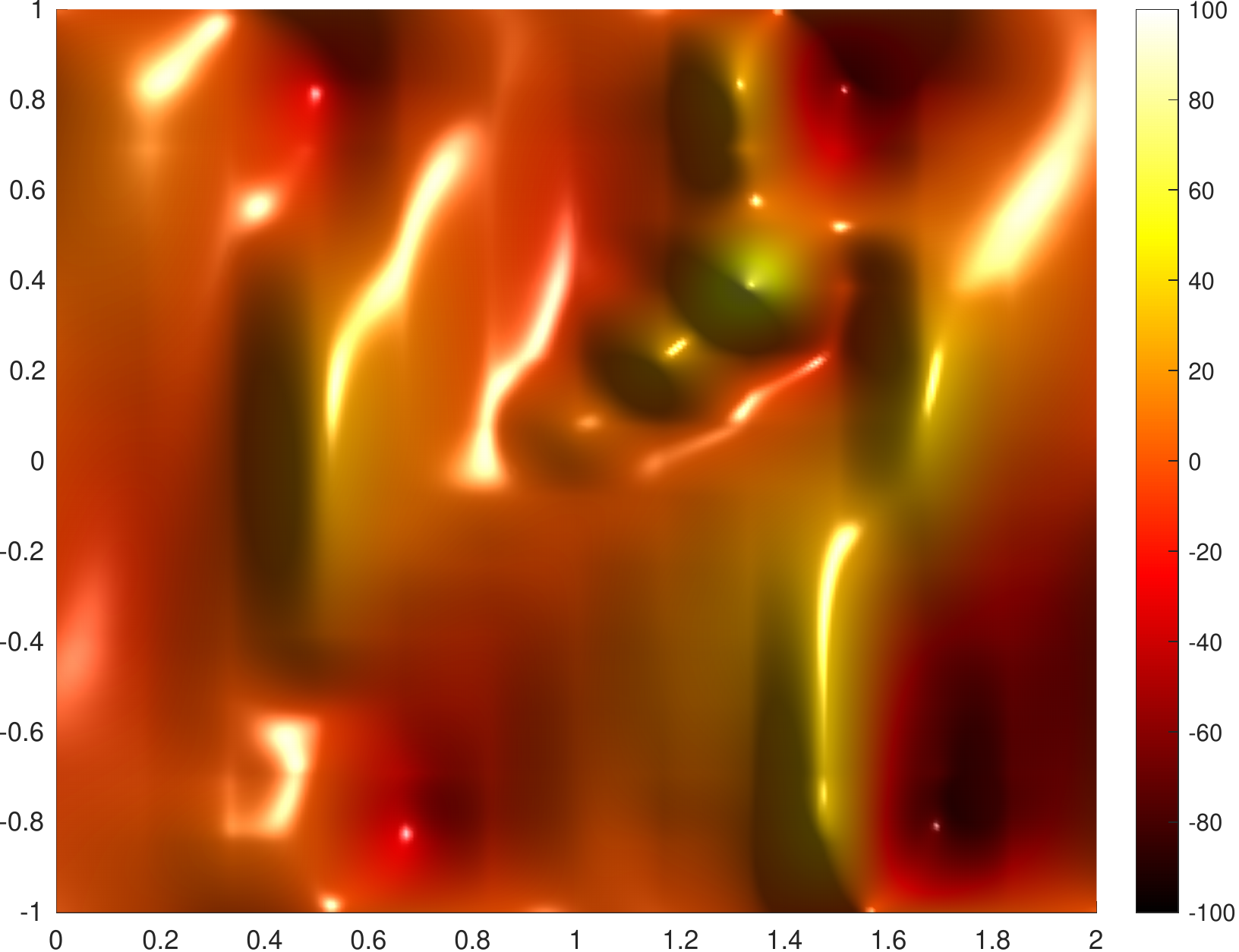}\\
	\includegraphics[scale=0.25]{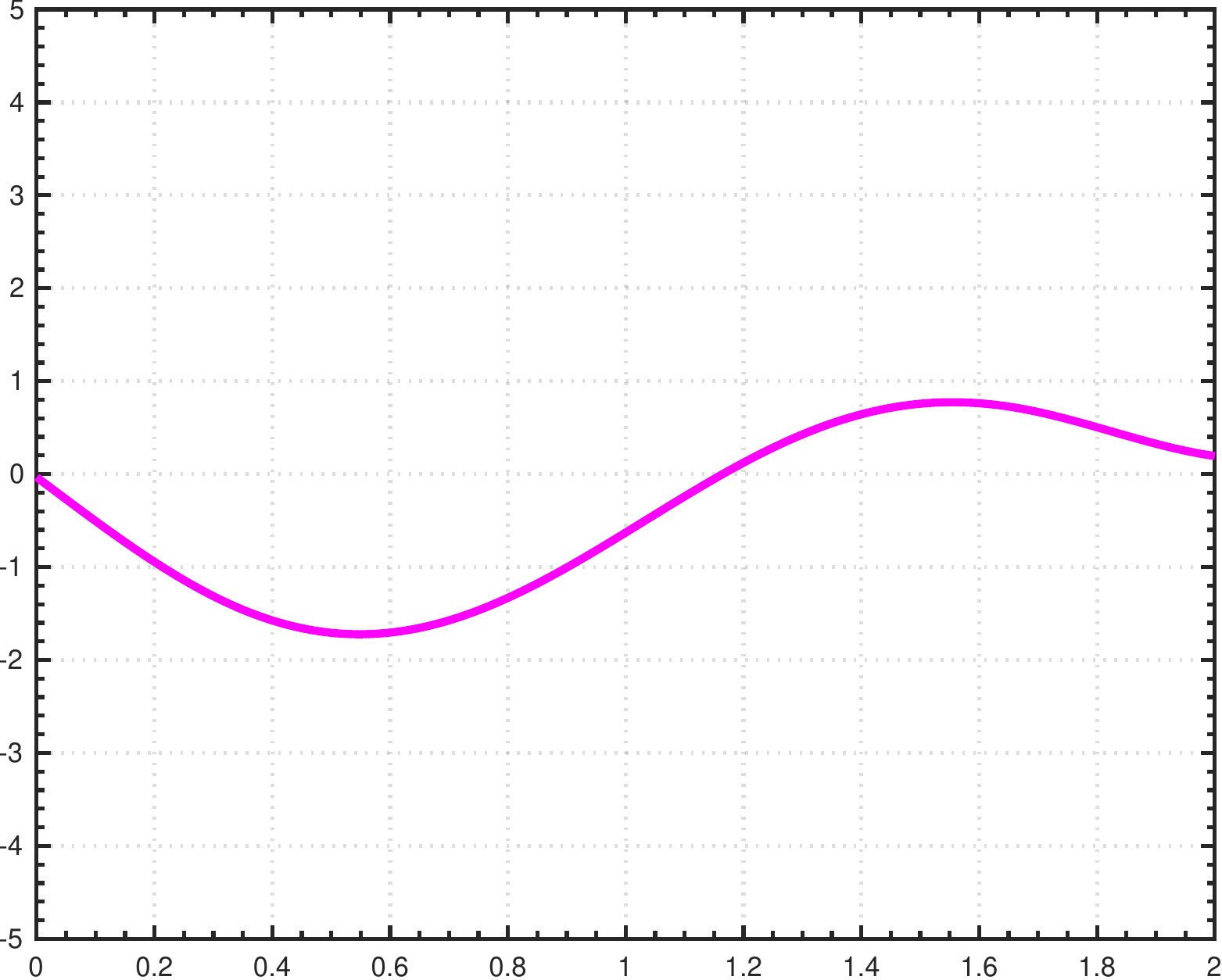}
	\end{tabular}
	\begin{tabular}{c}
	$t=0.1$ \\
	\includegraphics[scale=0.25]{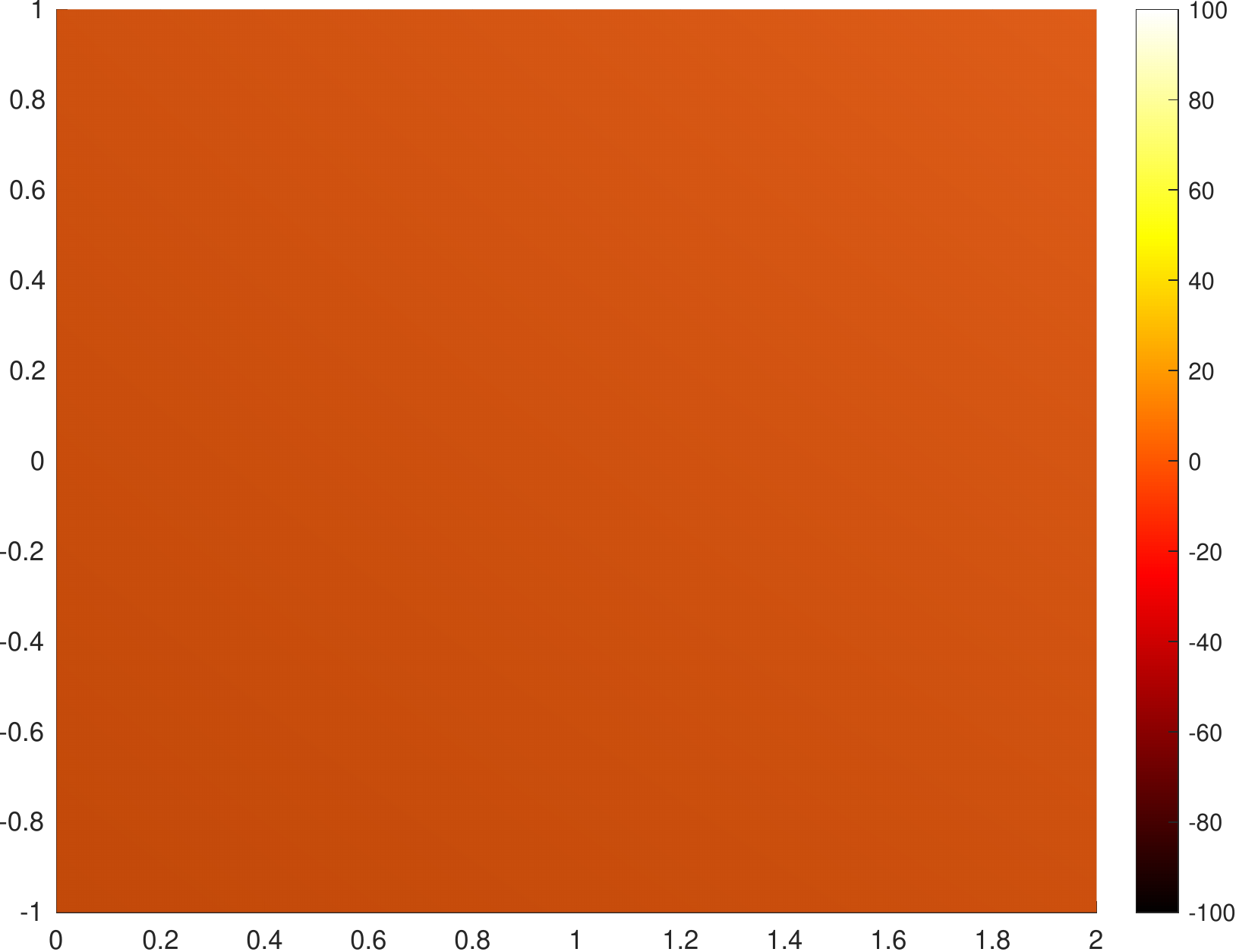}\\
	\includegraphics[scale=0.25]{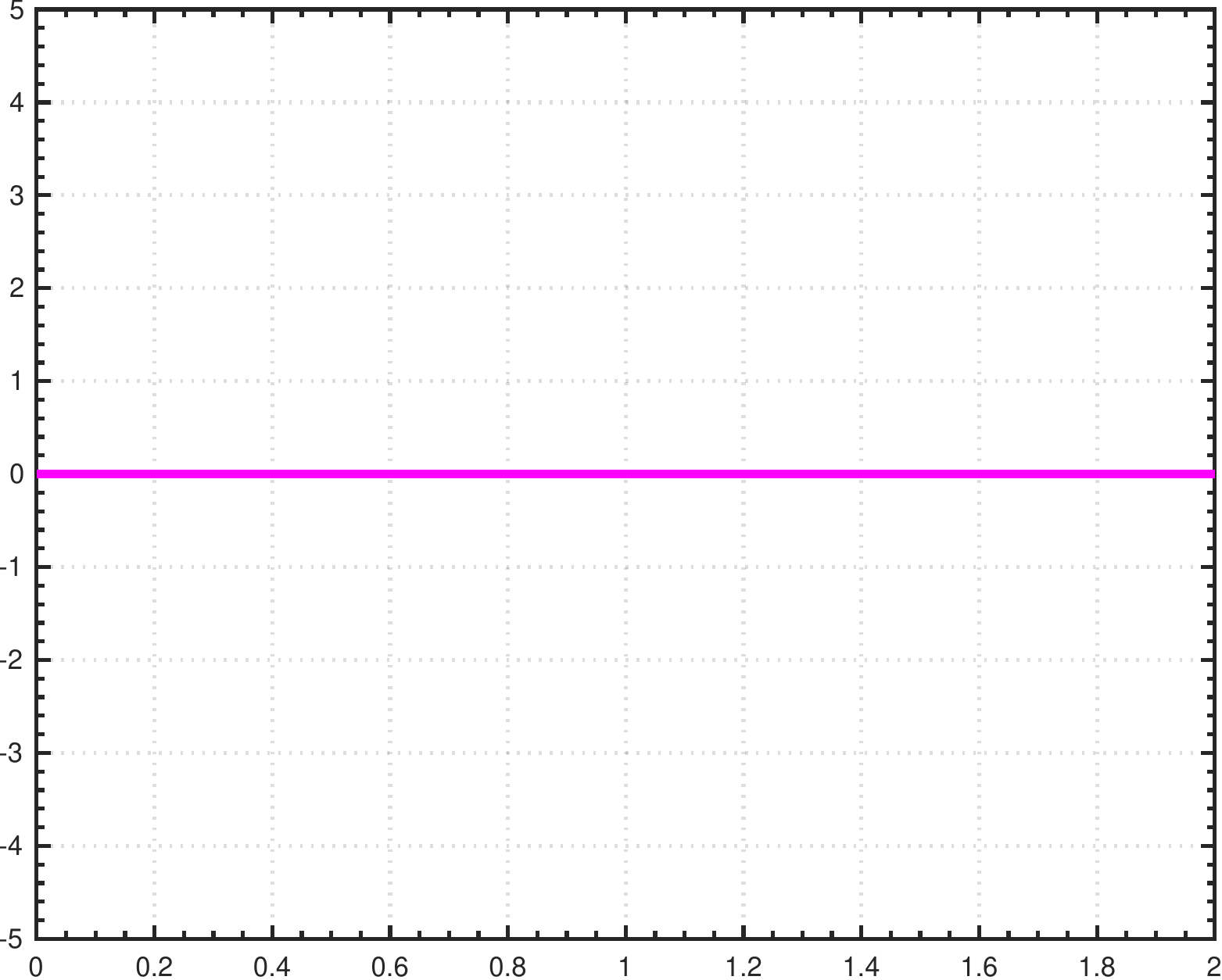}
	\end{tabular}
	\caption{{\bf Controllability of \eqref{sys-lin-con.intro} with $\sigma=0$.} 
	(\emph{Top}) The $L^2(\omega)$-norm of the minimal $L^2((0,T)\times\omega)$-norm control; (\emph{Middle}) The temperature $y(t,x_1,x_2)$; (\emph{Bottom}) the height $h(t,x_1)$. The numerical experiment stipulates that null-controllability holds even when the components are uncoupled, and thus $h(t,x_1)$ is only controlled indirectly, through the time-average of the Neumann trace of $y$, evaluated at $x_2=1$. Note that here, the scale is $10$ times lower than in \Cref{fig: hum.1}, to better capture the controlled evolution.}
	\label{fig: hum.2}
	\end{figure}

	\subsection{Extensions}
	Let us conclude this section with a brief discussion on the current limitations and possible extensions of our results.
	
	{\color{black}
	\begin{remark}[A global version of \Cref{thm:main-nonlinear}] 
	In the context of parabolic systems, it is typical to use a stabilizing property of the nonlinear dynamics to reach the smallness regime, beyond which the local null-controllability result can be applied, with the effect of obtaining a global null-controllability result (albeit in large time). This is much less obvious in the setting of \eqref{eq: stefan.surface.tension}. Indeed, the strongest results we are aware of regarding the well-posedness of \eqref{eq: stefan.surface.tension} are those of \cite{HS17}, in which the authors show local-in-time existence and uniqueness of solutions (under stronger regularity and sign assumptions on the initial data), and also \cite{hadvzic2012orthogonality}, in which global-in-time well-posedness and exponential decay is shown under smallness assumptions on the initial data. 
	In particular, we are not aware of any global-in-time or decay result for \eqref{eq: stefan.surface.tension} which could allow us to obtain a global analog of \Cref{thm:main-nonlinear}.
	\end{remark}
	}
	
	\begin{remark}[Geometric setup] 
	\begin{enumerate}
	\item We parametrize the horizontal variable $x_1$ of any $x=(x_1,x_2)\in\Omega(t)$ (or in $\Gamma(t)$) over the torus $\T$ for convenience.
	This is actually standard in the literature on free boundary problems.
	This choice allows us to avoid rather complicated ensuing arguments regarding the regularity of the moving domains. Moreover, our geometrical setting is also amenable to Fourier analysis, which gives us a natural blueprint, based on Fourier decomposition and spectral analysis for one-dimensional problems, for tackling the control problem. 
	\smallskip 
	\item There is no strict need of working in the two-dimensional setting regarding the linear problem \eqref{sys-lin-con.intro}. We do so mainly for convenience.
	The one-dimensional torus $\T$ may be replaced by $\T^{d-1}$, $d\geqslant3$, throughout, without changing the strategy in the slightest. Indeed, just like when $d=2$, Fourier decomposition can be performed with respect to the first $d-1$ variables $(x_1,\ldots,x_{d-1})$, when $d\geqslant3$ -- only the "vertical" one  (namely $x_d$) will remain in the projected, one-dimensional system, and the strategy of proof for \Cref{thm: main.result.linear} can be maintained.
	\smallskip  
	\item 
	The higher-dimensional ($d\geqslant3$) setting may pose an obstacle in a prospective extension of \Cref{thm: main.result.linear} to the nonlinear setting, where the dimension would play a role regarding the spatial regularity of solutions (in particular that of the height function $h(t,\cdot)$). More specifically, one might have to ensure that the controls for the linearized system are more regular than simply $L^2_{t}L^2_x$. We leave this open for a future study -- a possible roadmap would include using the penalized-HUM method, as done in \cite{le2020local}. 
	\end{enumerate}
	\end{remark}
	
\section{The linear semigroup} \label{sec-linear-sg}
		
	\subsection{Functional setting}	
	Recalling that $\Omega:=\T\times(-1,1)$, let us consider the Hilbert space 
\begin{equation*} \label{space-H}
\mathscr{H}:= L^{2}(\Omega) \times H^{1}(\T),
\end{equation*}
which, by Plancharel's theorem, may be endowed with the norm
\begin{equation} \label{eq: H.norm}
\left\|\big(f,g\big)\right\|_{\mathscr{H}}^2:=\sum_{n \in \mathbb{Z}} \Bigg\{\big\|f_{n}\big\|_{L^{2}(-1,1)}^{2} + \Big(1+ \sigma n^{2}\Big)\big|g_{n}\big|^{2}\Bigg\},
\end{equation}
where the Fourier coefficients $(f_n, g_n)$ are defined as in \eqref{eq: yn.fourier}.
The inner product on $\mathscr{H}$ can then be inferred from \eqref{eq: H.norm}.

We look to rewrite \eqref{sys-lin-con.intro} in a canonical first-order form evolving on $\mathscr{H}$. The well-posedness thereof will follow by studying the governing operator generating the semigroup of \eqref{sys-lin-con.intro}. 
We introduce the unbounded operator $\*A: \mathfrak{D}(\*A) \to \mathscr{H}$, defined by 
\begin{equation*}
\*A\big(y, h\big):= \big(\Delta y, \partial_{x_{2}} y(\cdot,1)\big),
\end{equation*}
with domain
\begin{align} \label{eq:DA}
\mathfrak{D}(\*A) = \Bigg\{(y, h) \in H^{2}(\Omega) \times H^{\sfrac{7}{2}}(\T)\,\,\Biggm|\,\,&y(x_1,1) = \sigma \del_{x_1}^2 h(x_1) \hspace{0.15cm} \text{ on } \T,\\
&y(x_1,-1) = 0 \hspace{0.15cm} \text{ on } \T,\nonumber \\
&\partial_{x_{2}} y(\cdot,1) \in H^{1}(\T) \Bigg\}\nonumber.
\end{align}
It can readily be seen that the operator $\*A: \mathfrak{D}(\*A) \to \mathscr{H}$ is closed and densely defined. 
By using the above definitions, we can rewrite \eqref{sys-lin-con.intro} (for a general source term $f$ instead of $u1_\omega$) as
\begin{equation} \label{eq: state.space.form}
\begin{cases}
\partial_t\big(y, h\big)=  \*A\big(y,h\big)+\big(f,0\big) &\text{ in } (0,T), \\ 
\big(y,h\big)_{\mid_{t=0}} = \left(y^{0}, h^{0}\right).
\end{cases}
\end{equation}

\subsection{Well-posedness of \eqref{eq: state.space.form}}

To study the well-posedness of \eqref{eq: state.space.form}, we shall rely on a Fourier decomposition in the horizontal ($x_1\in\T$) variable. This will lead to a system akin to \eqref{eq: fourier.projected.system.intro}. Specifically, for any $n\in\Z$ we consider the Hilbert space
		\begin{equation*}
			\mathscr{H}_{n}:=L^2(-1, 1)\times\R,
		\end{equation*}
		which we endow with the inner product
		\begin{equation*}
		\big\langle (f_1,g_1),(f_2, g_2)\big\rangle_{\mathscr{H}_{n}} := \big\langle f_1, f_2\big\rangle_{L^2(-1,1)} + \sigma n^2 g_1 g_2,
		\end{equation*}
		when $n\neq0$, and the canonical inner product when $n=0$. 
		We then define, for any $n\in \Z$, the operator $\*A_n: \mathfrak{D}(\*A_n) \rightarrow \mathscr{H}_{n}$ 
		by
		\begin{equation*}
			 \*A_n\big(y, h\big):= \Big(\del_{x_2}^2 y  -n^2 y,\, \del_{x_2} y(1)\Big),
		\end{equation*}
		with domain 
		\begin{equation*}
			\mathfrak{D}(\*A_n)= \left\{ \big(y, h\big)\in H^2(-1, 1) \times\R\, \Biggm| \, y(-1) = 0, \, y(1) = -\sigma n^2 h \right\}.
		\end{equation*}
We will show that the operators $\*A$ and $\*A_{n}$ ($n\in\Z$) generate analytic semigroups on $\mathscr{H}$, and $\mathscr{H}_{n}$, respectively. The fact that $\*A_{n}$ generates an analytic semigroup on $\mathscr{H}_{n}$ actually follows directly from \cite[Theorem 1.25]{maity_tucsnak_chapter}. 
To show that $\*A$ generates an analytic semigroup, we shall express its resolvent in terms of the resolvent of the operators $\*A_{n}$. Thus, we shall need resolvent estimates for $\*A_{n}$ which are uniform with respect to $n\in\Z$.

For $\theta \in\left(\frac{\pi}{2}, \pi\right)$ and $\beta>0$, we define the sector 
\begin{equation*}
\Sigma_{\theta, \beta} := \Big\{ \lambda \in \mathbb{C} \setminus \{ 0\} \, \Bigm| \, |\mathrm{arg}(\lambda) | < \theta, \, |\lambda|\geqslant \beta \Big\}.
\end{equation*}
The following result then holds. 

\begin{proposition}[Regarding $\*A_n$] 
\label{prop:resolvent-An} 
Suppose $n\in\Z$ and $\sigma>0$.
\begin{enumerate}
\item If $n\neq 0$, the operator $\*A_n: \mathfrak{D}(\*A_n)\to\mathscr{H}_{n}$ is self-adjoint, has compact resolvents, and its spectrum $ \mathrm{spec}(\*A_n)$ consists only of negative eigenvalues.
\smallskip

\item There exist $\theta \in\left(\frac{\pi}{2},\pi\right)$, $\beta > 0$ and $C_{\theta, \beta} > 0$, all independent of $n$, such that 
\begin{equation} \label{est:resolvent-An}
|\lambda| 
\left\|\left(\lambda\,\mathrm{Id} - \*A_{n}\right)^{-1}\right\|_{\mathscr{L}(\mathscr{H}_{n})}
 \leqslant C_{\theta, \beta}  
\end{equation}
holds for all $\lambda \in \Sigma_{\theta, \beta}$.
\end{enumerate}
Consequently, for any $n\in\mathbb{Z},$ the operator $\*A_{n}$ generates an analytic semigroup $\left\{e^{t\*A_{n}}\right\}_{t\geqslant0}$ on $\mathscr{H}_{n}$.
\end{proposition}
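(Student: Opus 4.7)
\emph{Plan for part (1).} Fix $n\neq 0$. The cornerstone is to verify that $\*A_n$ is symmetric. Given $(y_i,h_i)\in\mathfrak{D}(\*A_n)$ for $i=1,2$, two integrations by parts using $y_i(-1)=0$ together with the dynamic boundary condition $y_i(1)=-\sigma n^2 h_i$ yield
\begin{equation*}
\big\langle \*A_n(y_1,h_1), (y_2,h_2)\big\rangle_{\mathscr{H}_n}
= -\int_{-1}^{1}\del_{x_2}y_1\,\del_{x_2}\overline{y_2}\diff x_2 - n^2 \int_{-1}^{1} y_1\overline{y_2}\diff x_2,
\end{equation*}
because the boundary trace $\del_{x_2}y_1(1)\,\overline{y_2(1)}=-\sigma n^2 \del_{x_2}y_1(1)\,\overline{h_2}$ is exactly cancelled by the contribution $\sigma n^2\,\del_{x_2}y_1(1)\,\overline{h_2}$ coming from the second component of the inner product on $\mathscr{H}_n$. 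The right-hand side is manifestly symmetric in $(y_1,h_1)\leftrightarrow(y_2,h_2)$ and, evaluated at $(y_1,h_1)=(y_2,h_2)$, shows $-\*A_n \geqslant 0$; moreover $\*A_n(y,h)=0$ forces $y\equiv 0$, whence $-\sigma n^2 h = y(1) = 0$ and thus $h=0$ (using $n\neq 0$). To promote symmetry to self-adjointness, I would realize $-\*A_n$ as the Friedrichs extension of the coercive symmetric form above, defined on $V_n := \{(y,h)\in H^1(-1,1)\times\R : y(-1)=0,\, y(1)=-\sigma n^2 h\}$; elliptic regularity identifies the operator domain with $\mathfrak{D}(\*A_n)$, and the compact embedding $V_n\hookrightarrow\mathscr{H}_n$ (Rellich--Kondrachov) gives compact resolvents, hence a discrete spectrum of strictly negative eigenvalues.

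\emph{Plan for part (2) and the semigroup consequence.} For $n\neq 0$, once $\*A_n$ is self-adjoint and non-positive on a Hilbert space, the spectral theorem immediately yields
\begin{equation*}
\big\|(\lambda\,\mathrm{Id}-\*A_n)^{-1}\big\|_{\mathscr{L}(\mathscr{H}_n)} \leqslant \frac{1}{\mathrm{dist}\big(\lambda,\mathrm{spec}(\*A_n)\big)} \leqslant \frac{1}{\mathrm{dist}\big(\lambda,(-\infty,0]\big)}.
\end{equation*}
An elementary geometric computation shows $\mathrm{dist}(\lambda,(-\infty,0])\geqslant |\lambda|\sin\theta$ for every $\lambda\in\Sigma_{\theta,\beta}$ with $\theta\in(\pi/2,\pi)$ (distinguishing $\mathrm{Re}\,\lambda\geqslant 0$, where the closest point is the origin, from $\mathrm{Re}\,\lambda<0$, where it is the orthogonal projection), so \eqref{est:resolvent-An} holds with $C_{\theta,\beta}=1/\sin\theta$, independent of $n$. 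The case $n=0$ is treated separately: $\*A_0$ is no longer self-adjoint but triangular, decoupling into the Dirichlet Laplacian on $(-1,1)$ acting on $y$ and a scalar equation $\lambda h = \del_{x_2}y(1) + g$; solving the resolvent equation explicitly and bounding $|\del_{x_2}y(1)|$ via the elliptic estimate $\|y\|_{H^2(-1,1)}\leqslant C\|f\|_{L^2(-1,1)}$ and the continuous trace $H^2(-1,1)\ni y\mapsto\del_{x_2}y(1)\in\R$ yields the same sectorial bound. Generation of an analytic semigroup on $\mathscr{H}_n$ then follows from the classical sectorial operator criterion, as already recorded for this abstract setting in \cite[Theorem 1.25]{maity_tucsnak_chapter}.

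\emph{Expected obstacle.} The only non-routine moment is the symmetry computation: one must recognize that the weighting $\sigma n^2$ in the inner product on $\mathscr{H}_n$ is chosen \emph{precisely} to absorb the boundary trace produced by integration by parts against the dynamic condition $y(1)=-\sigma n^2 h$, thereby turning the dynamic coupling into a self-adjoint structure. Once this algebraic miracle is identified, the remainder of the argument is classical Friedrichs--Hilbert space theory combined with the elementary geometry of complex sectors, and the $n$-independence of the constants is automatic.
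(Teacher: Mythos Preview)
Your proposal is correct. The treatment of part~(1) matches the paper's (integration by parts revealing symmetry, with the weighted inner product absorbing the boundary term; compact resolvent and strict negativity of the spectrum by a direct argument), though you are more explicit about invoking the Friedrichs extension to upgrade symmetry to self-adjointness.

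The genuine divergence is in part~(2) for $n\neq 0$. The paper argues directly on the resolvent equation: writing $\lambda=\beta_0 e^{i\theta_0}$, it multiplies the PDE by $e^{-i\theta_0/2}$, integrates by parts, and takes the real part to obtain an energy identity from which the bound follows. You instead exploit the self-adjointness established in part~(1) and invoke the spectral-theoretic bound $\|(\lambda-\*A_n)^{-1}\|\leqslant 1/\mathrm{dist}(\lambda,(-\infty,0])$, reducing the problem to the elementary sector geometry $\mathrm{dist}(\lambda,(-\infty,0])\geqslant|\lambda|\sin\theta$. Your route is shorter and makes the $n$-independence of $C_{\theta,\beta}=1/\sin\theta$ transparent, since the only input is $\mathrm{spec}(\*A_n)\subset(-\infty,0]$. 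The paper's energy computation, while longer, yields as a byproduct control of $\|\partial_{x_2}y_n\|_{L^2}$ and $n^2\|y_n\|_{L^2}$ in addition to the $\mathscr{H}_n$-norm; this extra information is not needed for the proposition as stated, but is the kind of quantitative estimate one would want for higher-regularity statements. For $n=0$, the paper simply cites \cite[Theorem~1.25]{maity_tucsnak_chapter}, whereas you sketch the triangular structure directly; both are adequate.
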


\begin{proof} The fact that both claims imply that $\*A_n$ generates an analytic semigroup follows from \cite[Theorem 2.11 (p. 112), Proposition 2.11 (p. 122)]{benoussan_book}. 

Let us begin by proving the first claim. If $n \neq 0,$ through standard integration by parts, is readily seen that $\*A_{n}$ is self-adjoint and has compact resolvents. Therefore, its spectrum $\mathrm{spec}(\*A_n)$ is a discrete subset of $\mathbb{R}$. Let us conclude the proof the first claim by showing that $\mathrm{spec}(\*A_n) \subset (-\infty,0)$. We argue by contradiction. 
		Let $\lambda \in\mathrm{spec}(\*A_n)$ with $\lambda \geqslant 0.$
		Thus there exists a vector $(y,h) \in \mathfrak{D}(\*A_n)\setminus\{0\}$ such that
		\begin{equation*}
		\begin{cases}
		\lambda y- \del_{x_2}^2 y + n^2 y = 0 &\text{ in }(-1, 1),\\
		\lambda h - \del_{x_2}y(1) = 0,\\
		y(-1) = 0,\\
		y(1) = -\sigma n^2 h.
		\end{cases}
		\end{equation*}
		We now multiply the first equation by $y$ and integrate by parts to obtain
		\begin{equation*}
		\lambda \int_{-1}^1 |y|^2 \diff x_2 + \int_{-1}^1 |\del_{x_2}y|^2 \diff x_2 - \del_{x_2}y(1)y(1) + n^2 \int_{-1}^1 |y|^2 \diff x_2 = 0.
		\end{equation*}
		Using the boundary conditions, this identity entails
		\begin{equation*} 
		\lambda \int_{-1}^1 |y|^2 \diff x_2 + \int_{-1}^1 |\del_{x_2} y|^2 \diff x_2 + \lambda \sigma n^2 h^2 + n^2 \int_{-1}^1 |y|^2 \diff x_2 = 0.
		\end{equation*}
		Since $\lambda\geqslant0$ and $n\neq0$, from this identity, we may readily conclude that $y\equiv h\equiv 0.$ This is a contradiction, and hence $\lambda\in(-\infty, 0)$.
		
		We now look to prove the second claim. Suppose that $n \in \mathbb{Z}$ and $\left(f_{n},g_{n}\right)\in\mathscr{H}_{n}$ are arbitrary. Let us consider the resolvent problem 
\begin{equation} \label{sys-resolvent-n.1}
\begin{cases}
\lambda y_n - \partial_{x_{2}}^2 y_n + n^{2} y_n =  f_{n} & \mbox{ in } (-1,1), \\
\lambda h_{n} =  \partial_{x_{2}} y_{n}(1)  +  g_{n}\\
y_{n}(-1) =  0 \\
y_{n}(1) = -\sigma n^{2} h_{n}. 
\end{cases}
\end{equation}
If $n =0,$ the operator $\*A_{0}$ generates an analytic semigroup (see \cite[Theorem 1.25]{maity_tucsnak_chapter}). Therefore, there exist $\theta \in \left(\frac{\pi}{2}, \pi\right)$, $\beta >0$, and $C_{\theta,\beta}>0$ such that 
\begin{equation} \label{est:res-0}
|\lambda| 
\left\|\big(y_0, h_0\big)\right\|_{L^2(-1,1)\times \mathbb{C}}
 \leqslant C_{\theta,\beta} \norm{\big(f_{0}, g_{0}\big)}_{L^{2}(-1,1) \times \mathbb{C}},
\end{equation}
holds for all $\lambda\in\Sigma_{\theta, \beta}$. Now let $n\neq0$ be fixed. Since $\mathrm{spec}(\*A_n)\subset(-\infty,0)$, we also have 
\begin{equation*}
\Sigma_{\theta, \beta} \subset \mathrm{res}(\*A_{n}), 
\end{equation*}
where $\mathrm{res}(\*A_{n})$ denotes the resolvent set of $\*A_{n}.$ Consequently, for any $n\neq 0,$ $\lambda \in \Sigma_{\theta, \beta}$, and $(f_{n},g_{n}) \in \mathscr{H}_{n},$ \eqref{sys-resolvent-n.1} admits a unique solution $(y_{n}, h_{n})\in\mathfrak{D}(\*A_{n}).$
Let us now take any $\lambda  = \beta_0 e^{i\theta_0} \in \Sigma_{\theta, \beta}.$
%
Multiplying the first equation in $\eqref{sys-resolvent-n.1}$ by $e^{-\sfrac{i\theta_0}{2}}$ and taking the inner product with $y_{n},$ we obtain 
\begin{align*}
\beta_0 e^{i\frac{\theta_0}{2}} \int_{-1}^{1} \left|y_{n}\right|^{2} \diff x_2 + e^{-i\frac{\theta_0}{2}} \int_{-1}^{1} \left|\partial_{x_{2}}y_{n}\right|^{2} \diff x_2 - e^{-\frac{i\theta_0}{2}} \partial_{x_{2}}y_{n}(1)y_{n}(1)\\
+ e^{-i\frac{\theta_0}{2}}n^2\int_{-1}^1 |y_n|^2\diff x_2
= e^{-\frac{i\theta_0}{2}} \int_{-1}^{1} f_{n} y_{n} \diff x_2.
\end{align*}
Using the boundary conditions, the above identity can be rewritten as 
\begin{align*}
\beta_0 e^{i\frac{\theta_0}{2}} \int_{-1}^{1} \left|y_{n}\right|^{2} \diff x_2 + e^{-i\frac{\theta_0}{2}} \int_{-1}^{1} \left|\partial_{x_{2}}y_{n}\right|^{2} \diff x_2 &+ \beta_0 e^{i\frac{\theta_0}{2}} \sigma n^{2} \left|h_{n}\right|^{2}+ e^{-i\frac{\theta_0}{2}}n^2\int_{-1}^1 |y_n|^2\diff x_2 \\
&= e^{-i\frac{\theta_0}{2}} \int_{-1}^{1} f_{n}y_{n} \diff x_2 + e^{-i\frac{\theta_0}{2}} \sigma n^{2} h_{n} g_{n}\\
&=e^{-i\frac{\theta_0}{2}}\Big\langle\big(y_n,h_n\big), \big(f_n, g_n\big)\Big\rangle_{\mathscr{H}_{n}}.
\end{align*}
By taking the real part on both sides in the above identity, and subsequently using Cauchy-Schwarz, we find 
\begin{multline*}
\beta_0 \left(  \int_{-1}^{1} \left|y_{n}\right|^{2} \diff x_2 + \sigma n^{2}  \left|h_{n}\right|^{2}  \right)+\int_{-1}^{1}\left|\partial_{x_{2}}y_{n}\right|^{2}+n^2\int_{-1}^1|y_n|^2\diff x_2 \\ 
\leqslant\left\|\big(y_n, h_n\big)\right\|_{\mathscr{H}_{n}}
 \left\|\big(f_n,g_n\big)\right\|_{\mathscr{H}_{n}}
\end{multline*}
Taking into account the fact that $|\lambda|=\beta_0$, we deduce that
\begin{equation*}
|\lambda| \left\|\big(y_{n}, h_{n}\big)\right\|_{\mathscr{H}_{n}}  \leqslant  \norm{\big(f_{n}, g_{n}\big)}_{\mathscr{H}_{n}}.
\end{equation*}
Since $n\neq0$ and $\lambda\in\Sigma_{\theta_0,\beta_0}$ were taken arbitrary, the above estimate in junction with \eqref{est:res-0} leads us to \eqref{est:resolvent-An}.
\end{proof}

The above result then leads us to the following.
			
\begin{proposition}[Regarding $\*A$] \label{prop: analytic.semigroup}
Suppose $\sigma>0$. There exist $\theta \in \left(\frac{\pi}{2}, \pi\right),$ $\beta > 0$ and $C_{\theta, \beta} > 0$ such that 
\begin{equation} \label{est:resolvent}
|\lambda| 
\left\|\left(\lambda\,\mathrm{Id}- \*A\right)^{-1}\right\|_{\mathscr{L}(\mathscr{H})}
\leqslant C_{\theta, \beta}
\end{equation}
holds for all $\lambda \in \Sigma_{\theta, \beta}$.
Consequently, the  operator $\*A:\mathfrak{D}(\*A)\to\mathscr{H}$ generates an analytic semigroup $\left\{e^{t\*A}\right\}_{t\geqslant0}$ on $\mathscr{H}$. {\color{black} Moreover,  the resolvent of $\*A$ is compact in $\mathscr{H}.$}
\end{proposition}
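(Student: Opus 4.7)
The plan is to deduce the resolvent estimate for $\*A$ by reducing the resolvent equation to the frequency-by-frequency equations $(\lambda\,\mathrm{Id}-\*A_n)(y_n,h_n)=(f_n,g_n)$ via Fourier decomposition in $x_1\in\T$, and then applying the \emph{uniform-in-$n$} estimate \eqref{est:resolvent-An} from \Cref{prop:resolvent-An}.

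First, I would fix $\theta\in\left(\frac{\pi}{2},\pi\right)$, $\beta>0$ and $C_{\theta,\beta}>0$ as in \Cref{prop:resolvent-An}, so that $\Sigma_{\theta,\beta}\subset\mathrm{res}(\*A_n)$ for every $n\in\Z$. Given $(f,g)\in\mathscr{H}$ with Fourier coefficients $(f_n,g_n)\in\mathscr{H}_n$ and $\lambda\in\Sigma_{\theta,\beta}$, I set $(y_n,h_n):=(\lambda\,\mathrm{Id}-\*A_n)^{-1}(f_n,g_n)\in\mathfrak{D}(\*A_n)$ and define $(y,h)$ by Fourier synthesis. Invoking \eqref{est:resolvent-An} frequency-by-frequency, squaring and summing via the Plancherel-type identity \eqref{eq: H.norm} yields
\begin{equation*}
|\lambda|^2\,\|(y,h)\|^2_{\mathscr{H}} = \sum_{n\in\Z}|\lambda|^2\,\|(y_n,h_n)\|^2_{\mathscr{H}_n} \leqslant C_{\theta,\beta}^2\sum_{n\in\Z}\|(f_n,g_n)\|^2_{\mathscr{H}_n} = C_{\theta,\beta}^2\,\|(f,g)\|^2_{\mathscr{H}},
\end{equation*}
which is exactly \eqref{est:resolvent}.

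Next, I would verify that $(y,h)\in\mathfrak{D}(\*A)$ and solves $(\lambda\,\mathrm{Id}-\*A)(y,h)=(f,g)$, so that the reconstructed pair is genuinely the resolvent of $\*A$ applied to $(f,g)$. The boundary conditions $y(x_1,\pm 1)$ in \eqref{eq:DA} are inherited by Fourier synthesis from the corresponding conditions on $(y_n,h_n)$, and $\partial_{x_2}y(\cdot,1)=\lambda h-g\in H^1(\T)$ follows directly from the second resolvent equation. The $H^2(\Omega)\times H^{\sfrac72}(\T)$-regularity I would obtain by multiplying the frequency-wise equation by $y_n$ and $\partial_{x_2}^2 y_n$ (as in the proof of \Cref{prop:resolvent-An}), extracting uniform bounds on $\sum_n n^{2\ell}\|\partial_{x_2}^k y_n\|^2_{L^2(-1,1)}$ for $k+\ell\leqslant 2$, and then using the trace identity $y(\cdot,1)=\sigma\partial_{x_1}^2 h$ together with the boundary coupling $y_n(1)=-\sigma n^2 h_n$ to promote $h$ to $H^{\sfrac72}(\T)$. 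Once this is in hand, the resolvent estimate \eqref{est:resolvent} in the sector $\Sigma_{\theta,\beta}$ with $\theta>\pi/2$, together with the already-noted fact that $\*A$ is closed and densely defined, implies by standard sectorial operator theory (cf.\ \cite[Theorem 2.11 (p. 112), Proposition 2.11 (p. 122)]{benoussan_book}) that $\*A$ generates an analytic semigroup on $\mathscr{H}$. The compactness of the resolvent follows at once from the regularity step via the compact embedding $H^2(\Omega)\times H^{\sfrac72}(\T)\hookrightarrow L^2(\Omega)\times H^1(\T)=\mathscr{H}$ provided by Rellich--Kondrachov.

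The main obstacle I anticipate is the Sobolev regularity upgrade in the middle step: the uniform $\mathscr{H}_n$ estimates control $y_n$ only in $L^2(-1,1)$ and $h_n$ in the weighted scalar sense, whereas membership in $\mathfrak{D}(\*A)$ demands $H^2\times H^{\sfrac72}$-regularity across horizontal frequencies (in particular $\sum_n n^4|h_n|^2<\infty$). This requires revisiting the energy estimates from the proof of \Cref{prop:resolvent-An} and tracking the $n$-dependence carefully through the trace identity; the bookkeeping is routine but must be done. Importantly, it is needed only for identifying the reconstructed pair as an element of $\mathfrak{D}(\*A)$ and for the compactness of the resolvent, not for the estimate \eqref{est:resolvent} itself.
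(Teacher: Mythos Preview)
Your proposal is correct and follows essentially the same route as the paper: Fourier decomposition in $x_1$, application of the uniform resolvent bound \eqref{est:resolvent-An} frequency by frequency, and Plancherel to recombine. The one place where the paper proceeds differently is the regularity step you flag as ``the main obstacle'': rather than tracking $n$-weighted energy estimates on $y_n$, the paper argues directly on the full problem by noting that $\partial_{x_2}y(\cdot,1)=\lambda h-g\in H^1(\T)$, invoking standard elliptic regularity for $\lambda y-\Delta y=f$ with this Neumann datum and the Dirichlet condition at $x_2=-1$ to get $y\in H^2(\Omega)$, then reading off $h\in H^{\sfrac72}(\T)$ from $y(\cdot,1)=\sigma\partial_{x_1}^2 h$ and the trace $y(\cdot,1)\in H^{\sfrac32}(\T)$. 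This sidesteps the frequency bookkeeping entirely and is quicker, though your approach would also work.
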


\begin{proof}[Proof of \Cref{prop: analytic.semigroup}]
Let $\theta \in \left(\frac{\pi}{2},\pi\right)$ and $\beta > 0$ be the constants stemming from \Cref{prop:resolvent-An}. 
For $\lambda \in \Sigma_{\theta, \beta}$ and $(f, g) \in \mathscr{H},$ we consider the eigenvalue problem 
\begin{equation} \label{sys-resolvent}
\begin{cases}
\lambda y- \Delta y =  f &\text{ on } \Omega,\\
\lambda  h - \partial_{x_{2}} y(\cdot,1)  +  g & \mbox{ on } \T,\\
y(\cdot,-1) = 0 &  \mbox{ on } \T,\\
y(\cdot,1) = \sigma \del_{x_1}^2 h(\cdot) & \mbox{ on } \T. 
\end{cases}
\end{equation}
We decompose all functions appearing in the above eigenvalue problem in Fourier series with respect to the periodic, $x_1$--variable, as in \eqref{eq: fourier.y} -- \eqref{eq: yn.fourier}.
We see that for any $n \in \mathbb{Z},$ the pair $\left(y_n, h_n\right)\in\mathscr{H}_n$ of Fourier coefficients solves \eqref{sys-resolvent-n.1}, and moreover, because of the Fourier series expansion of $(y,h)$ as in \eqref{eq: fourier.y}, 
\begin{equation*}
\left(\lambda\, \mathrm{Id}- \*A\right)^{-1} \big(f, g\big) =   \frac{1}{\sqrt{2\pi}}\sum_{n \in \mathbb{Z}} \left(\lambda\,\mathrm{Id} - \*A_{n}\right)^{-1} \big(f_{n}, g_{n}\big) e^{in \cdot}
\end{equation*}
also holds. Combining the above relation with \eqref{est:resolvent-An}, we immediately obtain \eqref{est:resolvent}. This completes the first part of the  proof. 
{\color{black} 
We consider the elliptic boundary value problem $\eqref{sys-resolvent}_{1-3}$ satisfied by $y$. Note that 
\begin{equation*}
\partial_{x_{2}}y(\cdot, 1) = \lambda h - g \in H^{1}(\mathbb{T}). 
\end{equation*}
Therefore, $y \in H^{2}(\Omega)$ and $y(\cdot, 1) \in H^{\sfrac{3}{2}}(\mathbb{T}).$ Finally, using $\eqref{sys-resolvent}_{4}$ we obtain $h \in H^{\sfrac{7}{2}}(\mathbb{T})$. Whence $(y,h)\in\mathfrak{D}(\*A)$, and the compactness readily follows.
}
\end{proof}

 \begin{remark}[Fourier decomposition of the semigroup]
In view of \Cref{prop:resolvent-An} and \Cref{prop: analytic.semigroup}, we have 
\begin{equation*} \label{semigroup.formula}
e^{t\*A} \left(y^{0}, h^{0}\right)  = \frac{1}{\sqrt{2\pi}}\sum_{n \in \mathbb{Z}}  e^{t\*A_{n}} \left(y_{n}^{0}, h_{n}^{0}\right) e^{in\cdot},
\end{equation*}
where $\left(y^{0}_{n}, h^{0}_{n}\right)$ denote the Fourier coefficients of an initial datum $\left(y^0,h^0\right)$, defined as in \eqref{eq: yn.fourier}.
\end{remark}
 
Taking stock of \Cref{prop: analytic.semigroup}, and using standard results from parabolic equations (see e.g. \cite[Thm. 2.12, Sect. 2]{benoussan_book}), we deduce the well-posedness of the linear system \eqref{eq: state.space.form}.

\begin{corollary} \label{cor: linear.well.posed}
Suppose $T>0$ and $\sigma>0$. 
For every $\left(y^0, h^0\right)\in L^2(\Omega)\times H^1(\T)$ and $f \in L^2(0,T; L^2(\Omega))$, there exists a unique solution $(y,h)\in C^0([0,T]; L^2(\Omega)\times H^1(\T))$ to \eqref{eq: state.space.form}.
\end{corollary}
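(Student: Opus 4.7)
The plan is to deduce \Cref{cor: linear.well.posed} directly from \Cref{prop: analytic.semigroup} via standard analytic semigroup theory, so essentially no new analysis is required. Writing $U(t):=(y(t),h(t))$, $U^0:=(y^0,h^0)$, and $F(t):=(f(t),0)$, the system \eqref{eq: state.space.form} takes the abstract form $U'(t)=\*A U(t)+F(t)$ on $\mathscr{H}$ with $U(0)=U^0$. Since $f\in L^2(0,T;L^2(\Omega))$ and the zero function sits trivially in $H^1(\T)$, we have $F\in L^2(0,T;\mathscr{H})$, so the hypotheses of the abstract inhomogeneous Cauchy problem theory for analytic semigroups are met.

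First, I would define the mild solution via the Duhamel formula
\begin{equation*}
U(t) := e^{t\*A}U^0 + \int_0^t e^{(t-s)\*A}F(s)\diff s, \qquad t\in[0,T].
\end{equation*}
The first term is in $C^0([0,T];\mathscr{H})$ by the strong continuity of $\{e^{t\*A}\}_{t\geqslant0}$ on $\mathscr{H}$, which is ensured by \Cref{prop: analytic.semigroup}. For the convolution term, one invokes the maximal $L^2$-regularity property of analytic semigroups (this is precisely the content of \cite[Thm.~2.12, Sect.~2]{benoussan_book}), which yields that the convolution belongs to $C^0([0,T];\mathscr{H})$ and in fact enjoys additional regularity (notably $U\in L^2(0,T;\mathfrak{D}(\*A))\cap H^1(0,T;\mathscr{H})$), whence $U$ is a strong solution to \eqref{eq: state.space.form}.

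Uniqueness then follows from the usual semigroup argument: if $U_1,U_2$ are two such solutions, their difference $V=U_1-U_2$ satisfies $V'=\*A V$ with $V(0)=0$, and by the semigroup representation of solutions to the homogeneous problem (using e.g.\ $\tfrac{\diff}{\diff s}(e^{(t-s)\*A}V(s))=0$), we get $V\equiv0$ on $[0,T]$.

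There is no real obstacle here: all the analytical substance has already been established in \Cref{prop:resolvent-An} and \Cref{prop: analytic.semigroup}, and \Cref{cor: linear.well.posed} is the anticipated black-box consequence of those two results together with the cited theorem from \cite{benoussan_book}. The only point that merits mention in a written proof is verifying that the source term $F$ indeed lives in $L^2(0,T;\mathscr{H})$, which is immediate from the product structure of the norm \eqref{eq: H.norm}.
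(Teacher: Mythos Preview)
Your proposal is correct and follows the same route as the paper: both simply invoke \Cref{prop: analytic.semigroup} and defer to the standard inhomogeneous Cauchy theory for analytic semigroups (the paper cites exactly the same reference, \cite[Thm.~2.12, Sect.~2]{benoussan_book}, and gives no further detail). One small overstatement: the maximal regularity $U\in L^2(0,T;\mathfrak{D}(\*A))\cap H^1(0,T;\mathscr{H})$ holds for the convolution term alone, not for the full solution when $U^0$ is merely in $\mathscr{H}$, but this does not affect the $C^0([0,T];\mathscr{H})$ conclusion actually claimed in the corollary.
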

 
{\color{black}
\subsection{Adjoint of $\*A$}
In this subsection we determine the adjoint of $\*A$. Let 
\begin{equation*}
\mathscr{H}^{*}=L^{2}(\Omega) \times H^{-1}(\T) 
\end{equation*}
denote the dual space of $\mathscr{H},$ with $H^{-1}(\T)$ being the dual of $H^{1}(\T)$ with respect to the pivot space $L^{2}(\T).$ We may endow this space with the norm 
\begin{equation*}
\left\|\big(f,g\big)\right\|_{\mathscr{H}^{*}}^2:=\sum_{n \in \mathbb{Z}} \Bigg\{\big\|f_{n}\big\|_{L^{2}(-1,1)}^{2} + \Big(1+ \sigma n^{2}\Big)^{-1}\big|g_{n}\big|^{2}\Bigg\}.
\end{equation*}

\begin{proposition} \label{prop-main-adj}
The adjoint $\*A^{*}$ of $\*A$ is defined by 
\begin{equation*}
\*A^{*}\big(\zeta, \ell\big):= \big(\Delta \zeta, \sigma \partial_{x_{1}}^{2}\partial_{x_{2}} \zeta(\cdot,1)\big),
\end{equation*}
with domain
\begin{align} \label{eq:DA*}
\mathfrak{D}(\*A^{*}) = \Bigg\{(\zeta, \ell) \in H^{2}(\Omega) \times H^{\sfrac{3}{2}}(\T)\,\,\Biggm|\,\,&\zeta(x_1,1) = \ell \hspace{0.15cm} \text{ on } \T, \quad \zeta(x_1,-1) = 0 \hspace{0.15cm} \text{ on } \T,\nonumber \\
&\sigma \partial_{x_{1}}^{2} \partial_{x_{2}} \zeta(\cdot,1) \in H^{-1}(\T) \Bigg\}\nonumber.
\end{align}
\end{proposition}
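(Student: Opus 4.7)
The plan is to determine $\*A^*$ from the defining adjoint identity
\begin{equation*}
\big\langle \*A(y,h), (\zeta,\ell)\big\rangle_{\mathscr{H}, \mathscr{H}^*} = \big\langle (y,h), \*A^*(\zeta,\ell)\big\rangle_{\mathscr{H}, \mathscr{H}^*},
\end{equation*}
which must hold for every $(y,h) \in \mathfrak{D}(\*A)$ and which characterizes the pairs $(\zeta,\ell)\in\mathfrak{D}(\*A^*)$. Taking $L^2(\Omega) \times L^2(\T)$ as the pivot space, this dual pairing splits as $\int_\Omega y\zeta\diff x + \langle h, \ell\rangle_{H^1(\T), H^{-1}(\T)}$, so I would begin by substituting $\*A(y, h) = (\Delta y,\, \partial_{x_2} y(\cdot, 1))$ and applying Green's identity to the interior integral $\int_\Omega (\Delta y)\zeta\diff x$, moving both Laplacian derivatives onto $\zeta$.

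Since $x_1 \in \T$ is periodic, the surface term in Green's identity localizes on the two pieces $\T \times \{\pm 1\}$. The bottom piece is annihilated by the condition $y(\cdot,-1)=0$ from $\mathfrak{D}(\*A)$, provided I impose the matching condition $\zeta(\cdot, -1)=0$ on $\mathfrak{D}(\*A^*)$. On the top piece, I would use the boundary condition $y(\cdot, 1) = \sigma\partial_{x_1}^2 h$ of $\mathfrak{D}(\*A)$ to trade one of the two surviving boundary terms for an expression of the form $\sigma\int_\T (\partial_{x_1}^2 h)\, \partial_{x_2}\zeta(\cdot, 1)\diff x_1$, and then integrate by parts twice in $x_1$ (periodic, with no boundary contributions) to transfer both derivatives onto $\partial_{x_2}\zeta(\cdot,1)$, producing a duality pairing of the form $\big\langle h,\, \sigma\partial_{x_1}^2\partial_{x_2}\zeta(\cdot,1)\big\rangle_{H^1, H^{-1}}$.

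Collecting terms then leaves three ingredients: a bulk contribution $\int_\Omega y\,\Delta\zeta\diff x$, a term already in duality form with $h$ that identifies $(\*A^*(\zeta,\ell))_2 = \sigma\partial_{x_1}^2\partial_{x_2}\zeta(\cdot, 1)$, and a residual boundary contribution of the form $\int_\T \partial_{x_2}y(\cdot, 1)\zeta(\cdot, 1)\diff x_1 + \langle \partial_{x_2} y(\cdot, 1), \ell\rangle_{H^1, H^{-1}}$. Because $\partial_{x_2} y(\cdot, 1)$ is an essentially arbitrary element of $H^1(\T)$ as $y$ varies in $\mathfrak{D}(\*A)$, the residual must vanish identically, which forces the boundary coupling $\zeta(\cdot, 1) = \ell$ (with the sign dictated by the convention adopted for the $H^1$--$H^{-1}$ pairing). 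Matching the bulk term then identifies $(\*A^*(\zeta,\ell))_1 = \Delta\zeta$.

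The last step is to pin down the regularity conditions defining $\mathfrak{D}(\*A^*)$. Requiring $\Delta\zeta\in L^2(\Omega)$ together with well-defined boundary traces forces $\zeta\in H^2(\Omega)$; the coupling $\ell = \zeta(\cdot, 1)$ combined with the trace theorem $H^2(\Omega)\hookrightarrow H^{\sfrac{3}{2}}(\T)$ then gives $\ell\in H^{\sfrac{3}{2}}(\T)$; and for the image to land in $\mathscr{H}^*$ one must additionally impose $\sigma\partial_{x_1}^2\partial_{x_2}\zeta(\cdot, 1)\in H^{-1}(\T)$, since from $\zeta\in H^2(\Omega)$ alone this quantity would only be guaranteed to lie in $H^{-\sfrac{3}{2}}(\T)$. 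I expect the main technical obstacle to be precisely this final extra regularity constraint, and more broadly the careful bookkeeping of the $H^1$--$H^{-1}$ duality throughout the integrations by parts, so that each trace and each distributional derivative is interpreted consistently and every term in the identity is genuinely finite. As a sanity check, I would also recompute $\*A^*_n$ directly on $\mathscr{H}_n$ using the Fourier decomposition of Proposition~\ref{prop:resolvent-An}, and verify that reassembling the $\{\*A^*_n\}_{n\in\Z}$ via $\tfrac{1}{\sqrt{2\pi}}\sum_n(\cdot)e^{inx_1}$ reproduces the formula stated for $\*A^*$.
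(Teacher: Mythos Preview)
Your integration-by-parts computation for the inclusion $\mathfrak{Z}\subset\mathfrak{D}(\*A^*)$ (writing $\mathfrak{Z}$ for the claimed domain) matches the paper's, and your identification of the formula for $\*A^*$ and of the boundary couplings is correct. The approaches diverge on the reverse inclusion $\mathfrak{D}(\*A^*)\subset\mathfrak{Z}$.

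You propose to deduce $\zeta\in H^2(\Omega)$ directly from the adjoint identity, writing that ``$\Delta\zeta\in L^2(\Omega)$ together with well-defined boundary traces forces $\zeta\in H^2(\Omega)$''. This is the weak point: a priori one only knows $(\zeta,\ell)\in L^2(\Omega)\times H^{-1}(\T)$, and testing against compactly supported $y$ gives $\Delta\zeta\in L^2(\Omega)$ in the distributional sense, nothing more. To upgrade to $\zeta\in H^2(\Omega)$ via elliptic regularity you need boundary data of adequate regularity (say $\partial_{x_2}\zeta(\cdot,1)\in H^{1/2}(\T)$), and extracting that from the adjoint identity alone requires a bootstrap you have not spelled out.

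The paper avoids this by an abstract resolvent argument: it first proves, via the Fourier decomposition in $x_1$ (essentially your ``sanity check''), that for $\lambda$ large the problem $(\lambda\,\mathrm{Id}-\*A^*)(\zeta,\ell)=(f,g)$ is uniquely solvable in $\mathfrak{Z}$ for every $(f,g)\in\mathscr{H}^*$. Given any $(\zeta,\ell)\in\mathfrak{D}(\*A^*)$, one then sets $(\varphi,s):=(\lambda\,\mathrm{Id}-\*A^*\big|_{\mathfrak{Z}})^{-1}(\lambda\,\mathrm{Id}-\*A^*)(\zeta,\ell)\in\mathfrak{Z}$ and uses the surjectivity of $\lambda\,\mathrm{Id}-\*A$ on $\mathscr{H}$ to conclude $(\zeta,\ell)=(\varphi,s)$. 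This completely sidesteps the regularity bootstrap. In other words, your Fourier idea is not merely a sanity check but the actual engine of the paper's proof; promoting it from verification to a genuine surjectivity lemma for the candidate adjoint resolvent would close your gap.
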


In order to prove the proposition, we first show that the operator $\*A^{*}$ defined above is surjective in $\mathscr{H}^{*}.$ Let us consider the associated resolvent problem 
\begin{equation} \label{sys-resolvent-adj}
\begin{cases}
\lambda \zeta - \Delta \zeta =  f &\text{ on } \Omega,\\
\lambda \ell - \sigma \partial_{x_{1}}^{2}\partial_{x_{2}} \zeta(\cdot,1)  =  g & \mbox{ on } \T,\\
\zeta(\cdot,-1) = 0 &  \mbox{ on } \T,\\
\zeta(\cdot,1) = \ell & \mbox{ on } \T. 
\end{cases}
\end{equation}
We first prove the following lemma. 

\begin{lemma}
Let $\lambda > \beta,$ where $\beta$ is the constant appearing in \Cref{prop:resolvent-An}. If $(f, g) \in \mathscr{H}^{*},$ then the system \eqref{sys-resolvent-adj} admits a unique solution $(\zeta, \ell) \in \mathfrak{D}(\*A^{*}).$ 
\end{lemma}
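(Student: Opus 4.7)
The plan is to decompose the problem by Fourier series in the horizontal variable $x_1 \in \T$, paralleling the strategy deployed in the proof of \Cref{prop: analytic.semigroup}. Writing $\zeta = \frac{1}{\sqrt{2\pi}}\sum_{n\in\Z}\zeta_n(x_2) e^{inx_1}$ and expanding $\ell, f, g$ in the same way, the system \eqref{sys-resolvent-adj} decouples into a family of one-dimensional problems parametrized by $n\in\Z$:
\begin{equation*}
\begin{cases}
\lambda \zeta_n + n^2\zeta_n - \partial_{x_2}^2\zeta_n = f_n &\text{ in } (-1,1),\\
\lambda \ell_n + \sigma n^2 \partial_{x_2}\zeta_n(1) = g_n,\\
\zeta_n(-1) = 0,\\
\zeta_n(1) = \ell_n.
\end{cases}
\end{equation*}

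For $n=0$ the system decouples completely: one reads off $\ell_0 = g_0/\lambda$ and then solves the standard one-dimensional elliptic equation for $\zeta_0$ with Dirichlet data, producing $\zeta_0 \in H^2(-1,1)$ with the expected bound. For $n \neq 0$, the key observation is that the change of variables $h_n := -\ell_n/(\sigma n^2)$ converts the system into precisely the resolvent problem \eqref{sys-resolvent-n.1} for $\*A_n$, with modified right-hand side $(f_n, \tilde g_n)$ where $\tilde g_n := -g_n/(\sigma n^2)$. Since $\lambda > \beta$ belongs to $\Sigma_{\theta,\beta}$, \Cref{prop:resolvent-An} yields a unique $(\zeta_n, h_n) \in \mathfrak{D}(\*A_n)$ together with the uniform bound $|\lambda|\,\|(\zeta_n, h_n)\|_{\mathscr{H}_n} \leqslant C\,\|(f_n, \tilde g_n)\|_{\mathscr{H}_n}$. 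Reverting to $\ell_n = -\sigma n^2 h_n$, the identities $\sigma n^2|h_n|^2 = |\ell_n|^2/(\sigma n^2)$ and $\sigma n^2|\tilde g_n|^2 = |g_n|^2/(\sigma n^2) \lesssim (1+\sigma n^2)^{-1}|g_n|^2$ allow one to sum in $n$ and deduce that $(\zeta,\ell) \in \mathscr{H}^*$, with $|\lambda|\,\|(\zeta,\ell)\|_{\mathscr{H}^*} \leqslant C\,\|(f,g)\|_{\mathscr{H}^*}$.

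The main obstacle is the last step: promoting this $\mathscr{H}^*$-solution to an element of $\mathfrak{D}(\*A^*)$, i.e.\ showing that $\zeta \in H^2(\Omega)$ and $\ell \in H^{\sfrac32}(\T)$. I would first extract refined $n$-weighted bounds from the variational identity already exploited in the proof of \Cref{prop:resolvent-An}: testing the first equation against $\zeta_n$ yields $\|\partial_{x_2}\zeta_n\|_{L^2}^2 + n^2\|\zeta_n\|_{L^2}^2 \lesssim \|(f_n,\tilde g_n)\|_{\mathscr{H}_n}^2 / |\lambda|$, and the equation $\partial_{x_2}^2\zeta_n = (\lambda+n^2)\zeta_n - f_n$ then controls $\|\partial_{x_2}^2\zeta_n\|_{L^2}$. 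Combining these frequency-by-frequency estimates with a standard lifting argument --- subtracting an $H^2(\Omega)$ extension of the boundary datum $\ell$ and applying elliptic regularity to the resulting problem with homogeneous Dirichlet data --- promotes $\zeta$ to $H^2(\Omega)$, whence $\ell = \zeta(\cdot,1) \in H^{\sfrac32}(\T)$ via the trace theorem, while $\sigma\partial_{x_1}^2\partial_{x_2}\zeta(\cdot,1) \in H^{-1}(\T)$ follows directly from the equation $\lambda\ell - \sigma\partial_{x_1}^2\partial_{x_2}\zeta(\cdot,1) = g$. Uniqueness is inherited frequency-by-frequency from the invertibility of $\lambda\,\mathrm{Id} - \*A_n$ granted by \Cref{prop:resolvent-An}.
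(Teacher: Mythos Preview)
Your reduction to the one-dimensional problems is correct, and the change of variables $h_n=-\ell_n/(\sigma n^2)$ that recasts the adjoint Fourier blocks as instances of \eqref{sys-resolvent-n.1} is a clean way to obtain the $\mathscr{H}^*$-estimate; this part matches the paper's argument (which simply says ``following the arguments of \Cref{prop:resolvent-An}'') and is fine.

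The gap is in the regularity upgrade. Your proposed lifting argument is circular: to subtract an $H^{2}(\Omega)$ extension of the Dirichlet datum $\ell$ you would already need $\ell\in H^{\sfrac{3}{2}}(\T)$, which is precisely what you are trying to establish. And the frequency-by-frequency bounds you extract are not strong enough on their own: the energy identity gives $n^{2}\|\zeta_n\|_{L^2}^2\lesssim\|f_n\|_{L^2}^2+|g_n|^2/(\sigma n^2)$, whereas $\zeta\in H^2(\Omega)$ requires control of $\sum_n n^{4}\|\zeta_n\|_{L^2}^2$, which would force $\sum_n n^{2}\|f_n\|_{L^2}^2<\infty$ and $\sum_n|g_n|^2<\infty$ --- neither of which follows from $f\in L^2(\Omega)$, $g\in H^{-1}(\T)$.

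The paper resolves this by reversing the order of the bootstrap: rather than lifting the Dirichlet datum $\ell$, it reads off the \emph{Neumann} datum from the coupling equation. Indeed $-\sigma\partial_{x_1}^2\partial_{x_2}\zeta(\cdot,1)=g-\lambda\ell\in H^{-1}(\T)$, which (since $\partial_{x_1}^2:H^1(\T)\to H^{-1}(\T)$ is an isomorphism modulo constants) yields $\partial_{x_2}\zeta(\cdot,1)\in H^1(\T)$. One then views $\zeta$ as solving the elliptic problem $\lambda\zeta-\Delta\zeta=f\in L^2(\Omega)$ with homogeneous Dirichlet condition at $x_2=-1$ and Neumann data in $H^1(\T)\hookrightarrow H^{\sfrac12}(\T)$ at $x_2=1$; standard elliptic regularity for this mixed problem gives $\zeta\in H^2(\Omega)$, and only then does the trace theorem yield $\ell=\zeta(\cdot,1)\in H^{\sfrac32}(\T)$. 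The remaining condition $\sigma\partial_{x_1}^2\partial_{x_2}\zeta(\cdot,1)\in H^{-1}(\T)$ is exactly the equation you already used.
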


\begin{proof}
The proof is similar to that of \Cref{prop:resolvent-An}  and \Cref{prop: analytic.semigroup}. We first  decompose all functions appearing in the above eigenvalue problem in Fourier series with respect to the periodic, $x_1$--variable. Then for any $n \in \mathbb{Z},$ $(\zeta_{n}, \ell_{n}),$ the pair of Fourier coefficients of $(\zeta, \ell)$ solves
 \begin{equation*} 
\begin{cases}
\lambda \zeta_{n} -\partial_{x_{2}}^{2} \zeta_{n} + n^{2} \zeta_{n} =  f_{n} &\text{ on } \Omega,\\
\lambda \ell_{n} +  \sigma n^{2} \partial_{x_{2}} \zeta_{n}(1)  =  g_{n} & \mbox{ on } \T,\\
\zeta_{n}(-1) = 0 &  \mbox{ on } \T,\\
\zeta_{n}(1) = \ell_{n} & \mbox{ on } \T,
\end{cases}
\end{equation*}
where $(f_{n}, g_{n}) \in L^{2}(-1,1) \times \mathbb{C}.$ Following the arguments of \Cref{prop:resolvent-An}, we conclude that  for any $n \in \mathbb{Z}$ the above system admits a unique solution and 
\begin{equation*}
\norm{\zeta_{0}}^{2}_{H^{1}(-1,1)} + |\ell_{0}|^{2} \leqslant C \left( \norm{f_{0}}_{L^{2}(-1,1)}^{2} + |g_{0}|^{2} \right), 
\end{equation*}
and 
\begin{equation*}
\norm{\zeta_{n}}_{L^{2}(-1,1)}^{2} + \frac{|\ell_{n}|^{2}}{n^{2}} \leqslant  \norm{f_{n}}_{L^{2}(-1,1)}^{2} + \frac{|g_{n}|^{2}}{n^{2}}\qquad (n \neq 0),
\end{equation*}
for some $C>0$ independent of $n.$ The above two estimates imply that \eqref{sys-resolvent-adj} admits a unique solution $(\zeta, \ell) \in \mathscr{H}^{*}.$ Note that, from $\eqref{sys-resolvent-adj}_{2}$ we also have 
\begin{equation*}
- \sigma \partial_{x_{1}^{2}} \partial_{x_{2}} \zeta(\cdot, 1) = g - \lambda \ell \in H^{-1}(\T). 
\end{equation*}
Therefore, $\partial_{x_{2}} \zeta(\cdot, 1) \in H^{1}(\T).$ Solving the elliptic problem corresponding to $\zeta$ we get $\zeta \in H^{2}(\Omega),$ and from  $\eqref{sys-resolvent-adj}_{4},$ we infer that $\ell \in H^{3/2}(\T).$
\end{proof}

We are now in a position to prove \Cref{prop-main-adj}.
\begin{proof} [Proof of \Cref{prop-main-adj}]
Let $\lambda > \beta,$ and let us set 
\begin{align} 
\mathfrak{Z} = \Bigg\{(\zeta, \ell) \in H^{2}(\Omega) \times H^{\sfrac{3}{2}}(\T)\,\,\Biggm|\,\,&\zeta(x_1,1) = \ell \hspace{0.15cm} \text{ on } \T, \quad \zeta(x_1,-1) = 0 \hspace{0.15cm} \text{ on } \T,\nonumber \\
&\sigma \partial_{x_{1}}^{2} \partial_{x_{2}} \zeta(\cdot,1) \in H^{-1}(\T) \Bigg\}\nonumber.
\end{align}
First of all, by integrating by parts and using the Schwarz theorem vis-à-vis the symmetry of second derivatives, we may readily find 
\begin{equation*}
\Big\langle (\lambda{\mathrm{Id}} - \*A)\big(y, h\big), \big(z, r\big)\Big\rangle_{\mathscr{H}, \mathscr{H^{*}}} = \Big\langle \big(y, h\big), (\lambda{\mathrm{Id}} - \*A^{*}) \big(z, r\big)\Big\rangle_{\mathscr{H}, \mathscr{H^{*}}}  
\end{equation*}
for all $(y, h) \in  \mathfrak{D}(\*A)$  and $(\zeta, \ell) \in \mathfrak{Z}$. 
This shows that $\mathfrak{Z} \subset \mathfrak{D}(\*A^{*})$. To conclude, we show that $\mathfrak{D}(\*A^{*})\subset\mathfrak{Z}$. 
To this end, pick an arbitrary $(\zeta,\ell)\in\mathfrak{D}(\*A^{*})$. 
Then, there exists $(f, g) \in{\mathscr{H}^{*}}$ such that 
\begin{equation*}
\Big\langle (\lambda{\mathrm{Id}}  - \*A)\big(y, h\big), \big(\zeta, \ell\big)\Big\rangle_{\mathscr{H}, \mathscr{H^{*}}} = \Big\langle \big(y, h\big), \big(f, g\big)\Big\rangle_{\mathscr{H}, \mathscr{H^{*}}} 
\end{equation*}
holds for all $(y, h)  \in \mathfrak{D}(\*A)$.
Let us now set
\begin{equation*}
(\varphi,s):=(\lambda \mathrm{Id} - \*A^{*})^{-1}(f, g) \in \mathfrak{Z}. 
\end{equation*}
Then from the above identities, we may infer that
\begin{align*}
\Big\langle\big(y, h\big), \big(f, g\big)\Big\rangle_{\mathscr{H}, \mathscr{H^{*}}} &= \Big\langle \big(y, h\big),(\lambda \mathrm{Id} -  \*A^{*}) \big(\varphi,s \big)\Big\rangle_{{\mathscr{H}, \mathscr{H^{*}}}} \\
&=   \Big\langle (\lambda{\mathrm{Id}} - \*A)\big(y, h\big), \big(\varphi,s \big)\Big\rangle_{{\mathscr{H}, \mathscr{H^{*}}}},
\end{align*}
holds for all $(y, h)\in\mathfrak{D}(\*A) $.
Therefore, we may also conclude that 
\begin{equation*}
\Big\langle (\lambda{\mathrm{Id}} - \*A) \big(y, h\big), \big(\varphi, s \big) -  \big(\zeta,\ell \big) \Big\rangle_{\mathscr{H}} = 0,
\end{equation*}
for all $(y, h)  \in \mathfrak{D}(\*A) $,
whence $(\zeta,\ell) = (\varphi,s) \in \mathfrak{Z}$.
 This completes the proof. 
\end{proof}

}

\section{Proof of \Cref{thm: frequency.obs}} \label{sec: projected.systems}

	{\color{black}
		The full proof of \Cref{thm: frequency.obs} may be found in \Cref{sec: prooof.prop31}. More specifically, when $n\neq 0$, we use the observation made for \eqref{eq: frequency.by.frequency.obs.2}, and the observability inequality can be shown by means of spectral arguments (based on results presented in \Cref{sec: section31}, which come with some degree of difficulty). 
		For the zeroth mode $n=0$, we shall note that the eigenfunctions of the governing linear operator are not orthogonal (the operator is not self-adjoint), but the system is of cascade type and falls into the setting of \cite{geshkovski2021controllability}. The estimate of $\mathcal{K}(T,\sigma)$ for the zeroth mode comes from an adapted moment argument.
		}
		
		\subsection{The spectrum of $\*A_n$} \label{sec: section31}
		Recall that, when $n\in \Z\setminus\{0\}$, the operator $\*A_n$ is self-adjoint due to the specific inner product we endowed to $\mathscr{H}_{n}$ (see \Cref{prop:resolvent-An}). And since $\*A_n$ has compact resolvents, by the Hilbert-Schmidt theorem, it may be diagonalized to find an orthonormal basis of $\mathscr{H}_{n}$ consisting of eigenfunctions of $\*A_n$, associated to a decreasing sequence of eigenvalues. 
		
		{\color{black} 
		\begin{lemma}[The spectrum of $\*A_{0}$] \label{lem-spec-A0}
		The spectrum of $\* A_{0}$ is contained in $(-\infty,0]$. 
		\end{lemma}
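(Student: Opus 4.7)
The plan is to exploit the degeneration of the boundary condition $y(1) = -\sigma n^2 h$ when $n=0$, which becomes simply $y(1) = 0$, thereby decoupling the eigenvalue problem for the spatial component $y$ from the scalar variable $h$. I would show that any $\lambda \in \mathbb{C} \setminus (-\infty, 0]$ belongs to the resolvent set of $\*A_0$, from which the claim follows immediately by taking complements.

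More precisely, given $\lambda \in \mathbb{C} \setminus (-\infty, 0]$ and an arbitrary $(f, g) \in \mathscr{H}_0 = L^2(-1,1) \times \mathbb{R}$, I would solve the resolvent problem $(\lambda \mathrm{Id} - \*A_0)(y, h) = (f, g)$, which amounts to
\begin{equation*}
\lambda y - y'' = f \text{ in } (-1,1), \qquad y(-1) = y(1) = 0, \qquad \lambda h - y'(1) = g.
\end{equation*}
The Dirichlet Laplacian on $(-1,1)$ has spectrum $\{-(k\pi/2)^2\}_{k \geq 1} \subset (-\infty, 0)$, so since $\lambda \notin (-\infty, 0]$ avoids all these values, the first (elliptic) equation admits a unique solution $y \in H^2(-1,1) \cap H_0^1(-1,1)$ depending continuously on $f$. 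Since moreover $\lambda \neq 0$, the scalar equation then yields $h = (g + y'(1))/\lambda \in \mathbb{R}$. The resulting pair $(y, h) \in \mathfrak{D}(\*A_0)$ solves the resolvent equation, and the boundedness of the inverse is then automatic from the closed graph theorem.

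I do not expect any serious obstacle in this proof. The main observation is simply that when $n = 0$, the coupling at the top boundary vanishes, and $\*A_0$ reduces to the Dirichlet Laplacian on $(-1,1)$ together with a trivial one-dimensional block driven by the Neumann trace $y'(1)$. In fact the same analysis delivers the full spectrum: $\lambda = 0$ is an eigenvalue with eigenspace $\{0\} \times \mathbb{R}$ (explaining why $0$ lies in the spectrum, in contrast with the strict negativity for $n \neq 0$ proved in \Cref{prop:resolvent-An}), and the remaining eigenvalues are exactly the Dirichlet eigenvalues $-(k\pi/2)^2$ with eigenvectors $(y_k, y_k'(1)/\lambda_k)$ where $y_k$ is the corresponding Dirichlet eigenfunction. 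Hence one actually has $\mathrm{spec}(\*A_0) = \{0\} \cup \{-(k\pi/2)^2 : k \geq 1\} \subset (-\infty, 0]$.
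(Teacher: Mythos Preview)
Your argument is correct and complete. You directly exhibit the resolvent of $\*A_0$ at every $\lambda \in \mathbb{C}\setminus(-\infty,0]$ by exploiting the decoupling that occurs at $n=0$: the boundary condition $y(1)=-\sigma n^2 h$ collapses to $y(1)=0$, so the $y$-component is governed by the Dirichlet Laplacian on $(-1,1)$, and the scalar $h$ is then recovered algebraically. This is a clean and self-contained route.

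The paper proceeds differently. It writes the eigenvalue problem $\lambda\varphi_0-\varphi_0''=0$, $\varphi_0(\pm1)=0$, $\lambda\ell_0=\varphi_0'(1)$, multiplies the first equation by $\varphi_0$, and integrates by parts: the boundary terms vanish (because $\varphi_0(\pm1)=0$), yielding $\lambda\|\varphi_0\|_{L^2}^2+\|\varphi_0'\|_{L^2}^2=0$, which forces $\lambda\leqslant0$ whenever $\varphi_0\not\equiv0$; if $\varphi_0\equiv0$ then $\lambda\ell_0=0$ gives $\lambda=0$. This energy argument mirrors what is done for $n\neq0$ in \Cref{prop:resolvent-An}, and implicitly relies on the spectrum being purely discrete (which follows from the compact embedding $\mathfrak{D}(\*A_0)\hookrightarrow\mathscr{H}_0$, already used in that proposition). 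Your approach avoids this reliance by constructing the resolvent outright, and as a bonus identifies the spectrum exactly as $\{0\}\cup\{-(k\pi/2)^2:k\geqslant1\}$, which the paper does not state.
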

		
		\begin{proof}
		Let $\lambda \in \mathbb{C},$ and we consider the eigenvalue problem:
			\begin{equation*} 
			\begin{cases}
			\lambda \varphi_{0} - \del_{x_2}^2 \varphi_0 = 0 &\text{ in }(-1,1), \\
			\lambda \ell_{0} = \del_{x_2}\varphi_0(1) \\
			\varphi_0(\pm1) = 0.
			\end{cases}
			\end{equation*}
			The conclusion follows by multiplying the first equation by $\varphi_0$ and integrating by parts.
		\end{proof}
		

		}
		
		To prove the observability of \eqref{eq: fourier.projected.system.intro} when $n\neq0$ by using spectral arguments, we need to explicitly characterize the spectrum of $\*A_n$.
		This is the goal of the following result.
		
		\begin{lemma}[The spectrum of $\*A_n$] \label{lem: spectrum}
			Let $\sigma>0$ and $n \in \Z\setminus\{0\}$ be fixed. Then, the sequence $\{ \lambda_{n, k} \}_{k=0}^{+\infty}$, with $\lambda_{n,k}<0$, of eigenvalues of $\*A_n$, reads as follows:
			\begin{equation*}
			\{\lambda_{n,k}\}_{k=0}^{+\infty}=\left\{-\Bigg(\frac{(k+1)\pi}{2}+\frac{\pi}{4}-\varepsilon_{k+1}\Bigg)^2-n^2\right\}_{k=0}^{+\infty}
			\end{equation*}
			if $\sigma>2$, for some $\varepsilon_k\in\left(0,\frac{\pi}{4}\right)$, and
			\begin{equation*}
			\{ \lambda_{n, k} \}_{k=0}^{+\infty}=\left\{-\Bigg(\frac{k\pi}{2}+\frac{\pi}{4}-\varepsilon_k\Bigg)^2-n^2\right\}_{k=1}^{+\infty} \bigcup\,
			\begin{cases}
			\nu_0^2-n^2 &\text{ if } \sigma<2 \\
			-n^2 &\text{ if } \sigma=2
			\end{cases}
			\end{equation*}
			otherwise, for some $\nu_0\in\left(0,\frac{1}{\sigma}\right)$ independent of $k$\footnote{{\color{black}The constant $\nu_0$ might depend on $n$, but this dependence is at worst benign: the quantities of interest, namely the separation constant $\gamma$ in \eqref{eq: regular.eigs} and the lower bound $C$ in \eqref{eq: eigenfunc.lower.bound}, do not depend on $n$. Furthermore, should $\nu_0$ depend on $n$, it ought to converge to some value $\nu_0^\star\in\left[0,\frac{1}{\sigma}\right]$, which again implies nothing of relevance vis-à-vis the conclusions of \Cref{lem: spectrum}.}}. (Here, we imply that the second "set" in the union is indexed as $k=0$.)
			
			Furthermore, the following properties hold.
			\begin{enumerate}
			\item
			The sequence $\{ \lambda_{n, k} \}_{k=0}^{+\infty}$ is separated uniformly with respect to $n \in \Z\setminus\{0\}$, in the sense that
			\begin{equation} \label{eq: regular.eigs}
				\inf_{k\geqslant0} \Big|\lambda_{n, k+1}-\lambda_{n, k}\Big|>\gamma
			\end{equation} 
			holds for some $\gamma>0$ independent of $n$.
			\smallskip
			\item
			Moreover,
			\begin{equation} \label{eq: eigenvals}
				-\lambda_{n, k} = rk^2 + n^2 + \underset{k\rightarrow+\infty}{\mathcal{O}}(k)
			\end{equation}
			for some $r>0$ independent of $n$ and $\sigma$.
			\smallskip
			
			\item
			Suppose that $(c,d)\subset(-1,1)$ is fixed, with $c<d$. Then, there exists some constant $C=C(\sigma,c-d)>0$ such that for any $n \in \Z\setminus\{0\}$ and $k \geqslant0$, the normalized eigenfunctions 
			\begin{equation*}
			\Phi_{n, k}:=\left(\varphi_{n, k},\, -\frac{1}{\sigma n^2}\varphi_{n,k}(1)\right)
			\end{equation*}
			of $ \*A_n$ are such that
			\begin{equation} \label{eq: eigenfunc.lower.bound}
				\| \varphi_{n, k}\|_{L^2\left(c,d\right)} \geqslant C
			\end{equation}
			holds.
			\end{enumerate}
		\end{lemma}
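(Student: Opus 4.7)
The plan is to reduce the eigenvalue equation $\*A_n(\varphi,\eta)=\lambda(\varphi,\eta)$ to a single second-order ODE with a Robin-type boundary condition at the top, which can be solved by hand. Eliminating $\eta=-\varphi(1)/(\sigma n^{2})$ through the domain constraint and combining with $\partial_{x_2}\varphi(1)=\lambda\eta$ produces
$$\partial_{x_2}^2\varphi=(\lambda+n^2)\varphi\text{ on }(-1,1),\qquad \varphi(-1)=0,\qquad \sigma n^2\partial_{x_2}\varphi(1)+\lambda\,\varphi(1)=0.$$
Since $\mathrm{spec}(\*A_n)\subset(-\infty,0)$ by \Cref{prop:resolvent-An}, the sign of $\lambda+n^2$ splits the analysis into two regimes. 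In the oscillatory regime, $\lambda=-\mu^2-n^2$ with $\mu>0$ forces $\varphi(x_2)=\sin(\mu(x_2+1))$ and the secular equation
$$\tan(2\mu)=\frac{\sigma n^2\mu}{\mu^2+n^2},$$
whereas in the exponential regime $\lambda=\nu^2-n^2$ with $\nu\in(0,n)$ gives $\varphi(x_2)=\sinh(\nu(x_2+1))$ and
$$\tanh(2\nu)=\frac{\sigma n^2\nu}{n^2-\nu^2}.$$

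\textbf{Enumeration of the roots.} On each positive branch $(k\pi/2,\,k\pi/2+\pi/4)$ with $k\geq 1$, the left-hand side of the oscillatory equation increases strictly from $0$ to $+\infty$ while the right-hand side is bounded and positive, so by monotonicity exactly one root $\mu_k=k\pi/2+\pi/4-\varepsilon_k$ with $\varepsilon_k\in(0,\pi/4)$ exists. The exceptional low-mode eigenvalue is governed by a slope comparison at the origin, where both secular equations are linear with slopes $2$ and $\sigma$ respectively; the trichotomy $\sigma\lessgtr 2$ then decides whether the exceptional eigenvalue is oscillatory, exponential with parameter $\nu_0$, or the degenerate value $\lambda=-n^2$ at $\sigma=2$ (with eigenfunction $\varphi(x_2)=x_2+1$, read off directly from the ODE). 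The refined bound $\nu_0<\sfrac{1}{\sigma}$ in the case $\sigma<2$ follows by using $\tanh\leq 1$ in the exponential equation to derive $\nu_0^2+\sigma n^2\nu_0<n^2$, and then applying $\sqrt{1+x}\leq 1+\sfrac{x}{2}$ in the resulting quadratic formula.

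\textbf{The three properties.} The strict enclosure $\mu_k\in(k\pi/2,\,k\pi/2+\pi/4)$ immediately forces $\mu_{k+1}-\mu_k>\pi/4$ and $\mu_{k+1}+\mu_k>\pi/2$, so
$$|\lambda_{n,k+1}-\lambda_{n,k}|=(\mu_{k+1}+\mu_k)(\mu_{k+1}-\mu_k)>\tfrac{\pi^2}{8}$$
uniformly in $n$ and $k$, giving \eqref{eq: regular.eigs}; likewise \eqref{eq: eigenvals} with $r=\sfrac{\pi^2}{4}$ follows from $\mu_k=k\pi/2+O(1)$. For the uniform eigenfunction bound \eqref{eq: eigenfunc.lower.bound}, I would normalize $\Phi_{n,k}$ in $\mathscr{H}_n$, so that the coefficient $A_{n,k}$ in $\varphi_{n,k}(x_2)=A_{n,k}\sin(\mu_k(x_2+1))$ satisfies
$$A_{n,k}^{-2}=1-\frac{\sin(4\mu_k)}{4\mu_k}+\frac{\sin^2(2\mu_k)}{\sigma n^2},$$
which is uniformly bounded above and away from $0$ in $(n,k)$ because $\mu_k\geq\pi/2$ on the regular branch. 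The explicit evaluation
$$\int_c^d\sin^2\bigl(\mu_k(x_2+1)\bigr)\diff x_2=\tfrac{d-c}{2}-\frac{\sin(2\mu_k(d+1))-\sin(2\mu_k(c+1))}{4\mu_k}$$
then produces a uniform lower bound of roughly $\sfrac{(d-c)}{4}$ for all sufficiently large $\mu_k$. The exceptional eigenfunction (oscillatory, exponential $\sinh(\nu_0(x_2+1))$ with $\nu_0<\sfrac{1}{\sigma}$, or linear $x_2+1$ at $\sigma=2$) is handled separately via a direct computation, using that its characteristic parameter is controlled uniformly in $n$ and that the corresponding (hyperbolic) sine does not vanish identically on the fixed subinterval $(c,d)$ by real-analyticity.

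\textbf{Main obstacle.} The most delicate step is extracting the uniform-in-$n$ lower bound \eqref{eq: eigenfunc.lower.bound}: one must ensure that the normalization of the exceptional eigenfunction, whose functional form changes across the threshold $\sigma=2$, remains independent of $n$, and that no cluster of trigonometric zeros of $\sin(\mu_k(x_2+1))$ collapses onto $(c,d)$ as $k\to\infty$. This is precisely what forces the detailed case distinction outlined above, and also the need to track the position of $\mu_k$ with the precision encoded in $\varepsilon_k\in(0,\pi/4)$.
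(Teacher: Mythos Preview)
Your approach is essentially identical to the paper's: both reduce to the same mixed Dirichlet--Robin problem, derive the same transcendental equation $\tan(2\mu)=\sigma n^{2}\mu/(\mu^{2}+n^{2})$ in the oscillatory regime, locate the roots branch-by-branch as $\mu_k\in(k\pi/2,\,k\pi/2+\pi/4)$, and obtain the normalization constant and the $L^{2}(c,d)$ lower bound through the very same explicit integrals. Two minor differences are worth flagging. First, for the exponential regime $\sigma<2$ the paper establishes uniqueness of $\nu_0$ by a longer strict-concavity argument on the function $(-\nu^{2}-\sigma n^{2}\nu+n^{2})+(\nu^{2}-\sigma n^{2}\nu-n^{2})e^{-4\nu}$, whereas your $\tanh$ formulation together with the convexity of $\nu\mapsto \sigma n^{2}\nu/(n^{2}-\nu^{2})$ gives the same conclusion more directly. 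Second, your claim that the trichotomy at $\sigma=2$ yields an \emph{oscillatory} exceptional root in $(0,\pi/4)$ when $\sigma>2$ is correct (near $0$ one has $\tan(2\mu)\sim 2\mu<\sigma\mu\sim g(\mu)$, while $\tan(2\mu)\to+\infty$ at $\pi/4^{-}$), but it actually deviates from the lemma as written: the paper's enumeration for $\sigma>2$ begins at $\mu_1\in(\pi/2,3\pi/4)$ and does not record a root in $(0,\pi/4)$. Your version is thus slightly more complete on this point, and the discrepancy is harmless for the downstream properties \eqref{eq: regular.eigs}--\eqref{eq: eigenfunc.lower.bound}.
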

		
		Before proceeding with the proof of \Cref{lem: spectrum}, let us make the following important observation.

\begin{lemma}[Spectral gap] \label{rem: spectral.gap}
Suppose $\sigma>0$ and $n\in\mathbb{Z}\setminus\{0\}$. Then $\*A_n$ has a spectral gap -- namely, 
\begin{equation*}
\lambda\leqslant-\min\left\{\frac{\sigma}{2}, 1\right\}n^2 
\end{equation*}
for all $\lambda\in\mathrm{spec}(\*A_n)$. 
\end{lemma}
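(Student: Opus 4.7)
The plan is to express $\lambda$ as a weighted Rayleigh quotient on $\mathscr{H}_{n}$, and then exploit the trace constraint $\varphi(1)=-\sigma n^2\ell$ together with the Dirichlet condition $\varphi(-1)=0$ to extract an $n^2$-lower bound.

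First, since $\sigma>0$ and $n\neq 0$, \Cref{prop:resolvent-An} guarantees that $\*A_{n}$ is self-adjoint on $\mathscr{H}_{n}$ with compact resolvents, so $\mathrm{spec}(\*A_{n})\subset\R$ and one may fix a real-valued eigenpair $(\varphi,\ell)\in\mathfrak{D}(\*A_{n})\setminus\{0\}$ associated to $\lambda$. This pair satisfies
\begin{equation*}
-\del_{x_{2}}^{2}\varphi+n^{2}\varphi=-\lambda\varphi\quad\text{in }(-1,1),\qquad \del_{x_{2}}\varphi(1)=\lambda\ell,\qquad \varphi(-1)=0,\qquad \varphi(1)=-\sigma n^{2}\ell.
\end{equation*}
Testing the bulk equation against $\varphi$, integrating by parts on $(-1,1)$, and eliminating the boundary term via $\del_{x_{2}}\varphi(1)\varphi(1)=-\sigma n^{2}\lambda\ell^{2}$, I would obtain the identity
\begin{equation*}
\lambda\Big(\|\varphi\|_{L^{2}(-1,1)}^{2}+\sigma n^{2}\ell^{2}\Big)=-\|\del_{x_{2}}\varphi\|_{L^{2}(-1,1)}^{2}-n^{2}\|\varphi\|_{L^{2}(-1,1)}^{2}.
\end{equation*}
Note that the left-hand parenthesis is precisely $\|(\varphi,\ell)\|_{\mathscr{H}_{n}}^{2}$, and this quantity is strictly positive, since $\varphi\equiv 0$ would force $\ell=-\varphi(1)/(\sigma n^{2})=0$, contradicting $(\varphi,\ell)\neq(0,0)$.

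The next step is to lower-bound the right-hand side by $\min\{\sigma/2,1\}\,n^{2}\,\|(\varphi,\ell)\|_{\mathscr{H}_{n}}^{2}$. The $\|\varphi\|_{L^{2}}^{2}$ part of the $\mathscr{H}_{n}$-norm is handled trivially: since $\min\{\sigma/2,1\}\leqslant 1$, I keep the $n^{2}\|\varphi\|_{L^{2}}^{2}$ term on the right. For the $\sigma n^{2}\ell^{2}$ part, I would invoke the Dirichlet trace: since $\varphi(-1)=0$, the fundamental theorem of calculus and Cauchy--Schwarz yield
\begin{equation*}
\sigma^{2}n^{4}\ell^{2}=|\varphi(1)|^{2}=\left|\int_{-1}^{1}\del_{x_{2}}\varphi\,\diff x_{2}\right|^{2}\leqslant 2\,\|\del_{x_{2}}\varphi\|_{L^{2}(-1,1)}^{2}.
\end{equation*}
Hence $\|\del_{x_{2}}\varphi\|_{L^{2}}^{2}\geqslant \tfrac{\sigma}{2}\,\sigma n^{4}\ell^{2}\geqslant \min\{\sigma/2,1\}\,n^{2}\cdot\sigma n^{2}\ell^{2}$, where the last inequality uses $n^{2}\geqslant 1$ together with the definition of the minimum. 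Adding the two contributions and dividing by $\|(\varphi,\ell)\|_{\mathscr{H}_{n}}^{2}$ gives $-\lambda\geqslant \min\{\sigma/2,1\}\,n^{2}$, as claimed.

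There is no genuine obstacle: the lemma is essentially a trace-weighted Poincaré inequality adapted to the particular inner product on $\mathscr{H}_{n}$. The only place where one has to be careful is the split of the numerator between the $\|\varphi\|_{L^{2}}^{2}$ and $\ell^{2}$ contributions, and the choice $c=\min\{\sigma/2,1\}$ is exactly what makes both inequalities pass simultaneously (the constant $\sigma/2$ is the bottleneck for small $\sigma$, while the constant $1$ is the bottleneck for large $\sigma$).
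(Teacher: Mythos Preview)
Your proof is correct and follows essentially the same approach as the paper: derive the energy identity by testing the eigenvalue equation against the eigenfunction, then control the $\sigma n^{2}\ell^{2}$ part of the $\mathscr{H}_{n}$-norm via the trace estimate $|\varphi(1)|^{2}\leqslant 2\|\del_{x_{2}}\varphi\|_{L^{2}}^{2}$ coming from $\varphi(-1)=0$, and combine. The only cosmetic remark is that the inequality $\tfrac{\sigma}{2}\,\sigma n^{4}\ell^{2}\geqslant \min\{\sigma/2,1\}\,n^{2}\cdot\sigma n^{2}\ell^{2}$ follows directly from $\tfrac{\sigma}{2}\geqslant\min\{\sigma/2,1\}$ and does not actually require $n^{2}\geqslant 1$.
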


In particular, $\lambda\leqslant-n^2$ if $\sigma\geqslant2$.
We shall make use of this property in the proof of \Cref{lem: spectrum}, in particular distinguishing the cases $\sigma\in(0,2)$ and $\sigma\geqslant2$, for convenience.
The factor $2$ in the denominator refers to the Lebesgue measure of the interval $[-1,1]$, so more general intervals can be envisaged.

\begin{proof}[Proof of \Cref{rem: spectral.gap}]
Let $\lambda<0$ be such that 
\begin{equation} \label{eq: eig.system}
\begin{cases}
\lambda y -\del_{x_2}^2 y + n^2 y = 0 &\text{ in }(-1,1),\\
\lambda h -\del_{x_2}y(1) = 0,\\
y(-1) = 0,\\
y(1)=-\sigma n^2 h,
\end{cases}
\end{equation}
holds for some $(y,h)\in\mathfrak{D}(\*A_n)\setminus\{(0,0)\}$. This entails the identity
\begin{equation} \label{eq: identity.eig}
-\lambda\left(\int_{-1}^1 |y|^2\diff x_2 + \sigma n^2 h^2\right) = \int_{-1}^1\left|\del_{x_2} y\right|^2\diff x_2 + n^2 \int_{-1}^1|y|^2\diff x_2.
\end{equation}
Now note that from the boundary condition $y(1)=-\sigma n^2 h$ and an elementary Sobolev embedding\footnote{We simply use $y(x)=y(x)-y(-1)=\int_{-1}^x\del_{z}y(z)\diff z$ combined with the Cauchy-Schwarz inequality. The factor $2$ occurs as $\mathrm{meas}([-1,1])=2$.} for solutions to \eqref{eq: eig.system}, we derive
\begin{equation}\label{eq: sobolev.used}
\sigma^2 n^4 h^2 = |y(1)|^2\leqslant 2\int_{-1}^1\left|\del_{x_2} y\right|^2\diff x_2.
\end{equation}
Plugging \eqref{eq: sobolev.used} in \eqref{eq: identity.eig}, we find
\begin{align*}
-\lambda\left(\int_{-1}^1 |y|^2\diff x_2 + \sigma n^2 h^2\right)&\geqslant\frac{\sigma}{2}\sigma n^4 h^2 + n^2\int_{-1}^1 |y|^2\diff x_2\\
&\geqslant\min\left\{\frac{\sigma}{2},1\right\} n^2\left(\sigma n^2 h^2+\int_{-1}^1 |y|^2\diff x_2 \right),
\end{align*}
as desired. 
\end{proof}
		
		\begin{proof}[Proof of \Cref{lem: spectrum}]
			
			We recall that $\*A_n: \mathfrak{D}(\*A_n)\to \mathscr{H}_{n}$ is self-adjoint, has compact resolvents, and its spectrum consists of a decreasing sequence of negative eigenvalues, namely a sequence $\{\lambda_{n,k}\}_{k=0}^{+\infty}$ with $-\infty < \ldots \leqslant \lambda_{n,k} \leqslant \ldots \leqslant \lambda_{n,0} < 0$. 
			We shall distinguish three different scenarios for the computation of these eigenvalues.
			\smallskip
			
		\noindent
		\textbf{Case 1: \fbox{$\lambda < -n^2$.}} 
			Suppose that $\lambda\in(-\infty,0)$ is an eigenvalue of $\*A_n$ which also satisfies $\lambda<-n^2$. So, there must exist a vector $(\varphi,\ell)\in \mathfrak{D}(\*A_n)\setminus\{(0,0)\}$ such that
			\begin{equation} \label{eq: eigensystem}
			\begin{cases}
			\del_{x_2}^2 \varphi+(-\lambda-n^2)\varphi = 0 &\text{ in }(-1,1), \\
			\del_{x_2}\varphi(1) = \lambda\ell\\
			\varphi(-1) = 0 \\
			\varphi(1) = -\sigma n^2\ell.
			\end{cases}
			\end{equation}
			In other words, $\varphi$ would solve the mixed Dirichlet-Robin problem
			\begin{equation} \label{eq: eigensystem.2}
			\begin{cases}
			\del_{x_2}^2 \varphi+(-\lambda-n^2)\varphi = 0 &\text{ in } (-1,1),\\
			\varphi(-1) = 0 \\
			\varphi(1)+\frac{\sigma n^2}{\lambda} \del_{x_2}\varphi(1)=0.
			\end{cases}
			\end{equation}
			Since $-\lambda-n^2>0$, one may readily see that the solutions to \eqref{eq: eigensystem.2} are of the form
			\begin{equation*}
			\varphi(x_2) = C\sin\Big(\nu(1+x_2)\Big),
			\end{equation*} 
			with $C>0$, where $\nu:=\sqrt{-\lambda-n^2}$ is the positive root of the transcendental equation 
			\begin{equation} \label{eq: transcendental.1}
			\left(\frac{\nu^2}{n^2}+1\right)\tan(2\nu) = \sigma\nu.
			\end{equation}
			Locating the positive roots of this equation suggest a study of the fixed points of the function $f(\nu) = \left(\frac{\nu^2}{n^2}+1\right)\tan(2\nu)$, defined and increasing on the union of consecutive intervals of the form
			\begin{equation*}
			\bigcup_{k=1}^{+\infty} \left(\frac{\pi}{4}+\frac{(k-1)\pi}{2}, \,\frac{\pi}{4} + \frac{k\pi}{2}\right).
			\end{equation*}
			
			\begin{figure}[h!]
			\centering
				\includegraphics[scale=0.42]{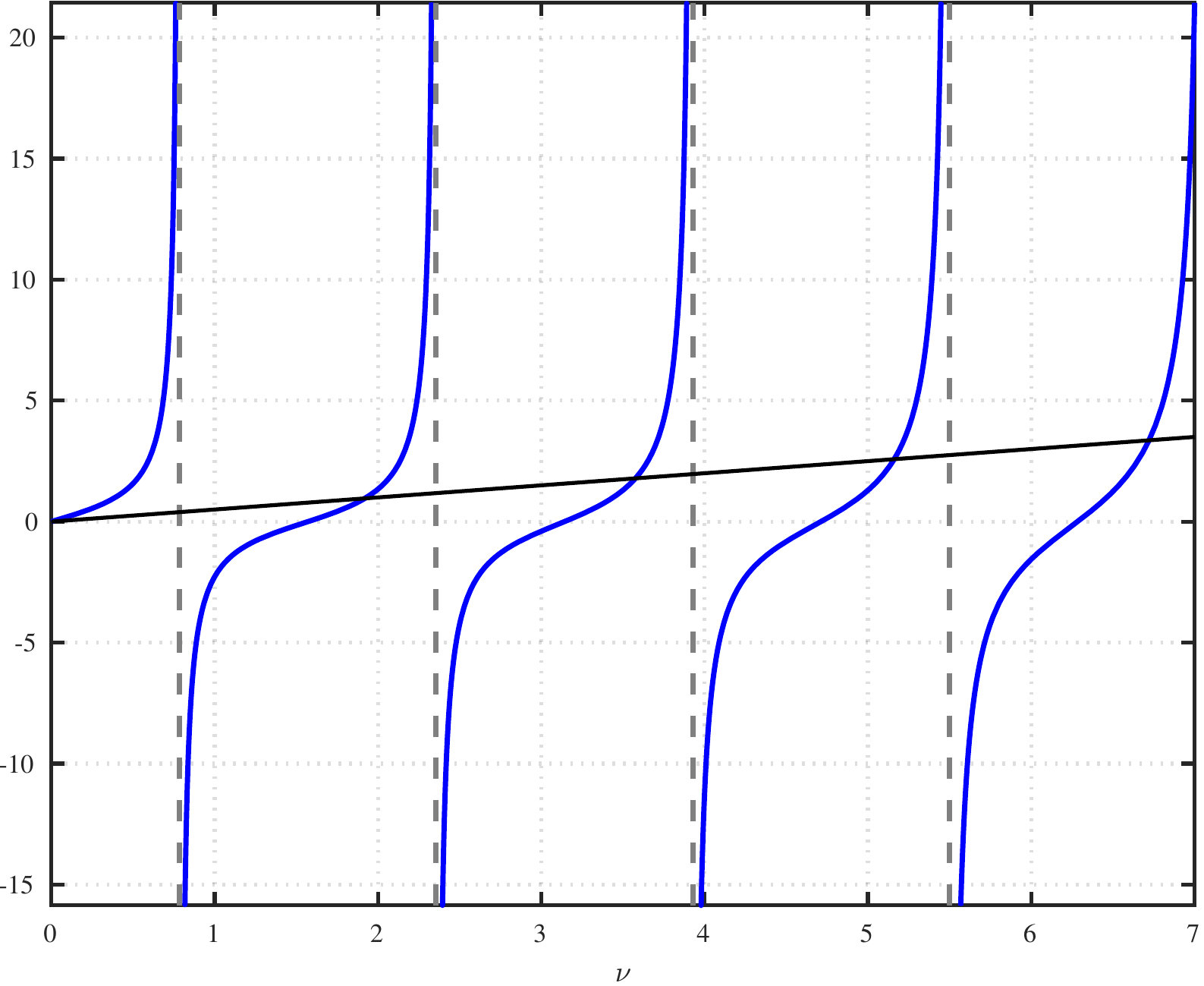}
				\hspace{0.15cm}
				\includegraphics[scale=0.42]{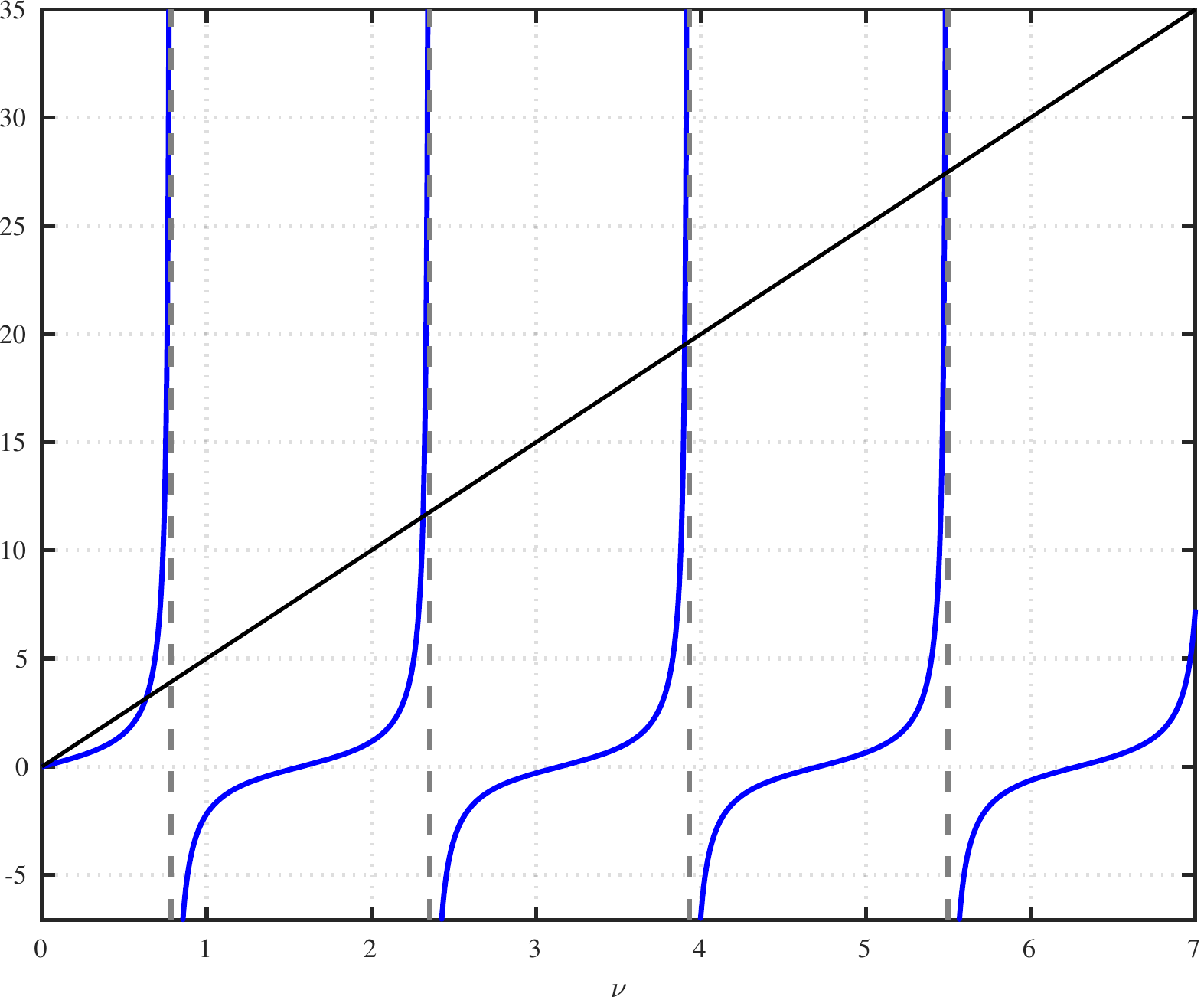}
				\caption{The function $f(\nu) =  \left(\sfrac{\nu^2}{n^2}+1\right)\tan(2\nu)$ (\emph{blue}), with $\nu\mapsto\sigma x$ superposed (\emph{black}), with $n=10^4$ and $\sigma=\sfrac12$ (\emph{left}) and $\sigma=5$ (\emph{right}). We see how the fixed points of $f$ are localized over each subinterval.}
			\end{figure}
			
			\noindent
			Moreover, for $k \geqslant 1$,
			\begin{equation*}
			\lim_{\nu\searrow \frac{\pi}{4}+\frac{(k-1)\pi}{2} } f(\nu) = -\infty, \quad f\left(\frac{k\pi}{2}\right) = 0, \quad \lim_{\nu \nearrow \frac{\pi}{4} + \frac{k\pi}{2}} f(\nu) = +\infty.
			\end{equation*}
			Thus, \eqref{eq: transcendental.1} has a sequence of positive roots $\{\nu_k\}_{k=1}^{+\infty}$ of the form
			\begin{equation} \label{eq: nu_k}
			\nu_k = \frac{k\pi}{2}+ \frac{\pi}{4}-\varepsilon_k
			\end{equation}
			for $k\geqslant 1$, where $\varepsilon_k \in \left(0, \frac{\pi}{4}\right)$ may a priori depend on $\sigma$ and $n$. 
			Consequently, the eigenvalues $\lambda_{n,k}<0$ in this case are of the form
			\begin{equation} \label{eq: lambda.k}
			-\lambda_{n,k} = \left(\frac{k\pi}{2}+ \frac{\pi}{4}-\varepsilon_k\right)^2 +n^2
			\end{equation}
			for $k\geqslant1$.	
			\smallskip
			
			\noindent
			\textbf{Case 2: \fbox{$\lambda=-n^2$.}} We note that $\lambda=-n^2$ is an eigenvalue if and only if $\sigma=2$. Indeed, should $\lambda=-n^2$ be an eigenvalue, then $\varphi$ in \eqref{eq: eigensystem} would be harmonic, and thus an affine function: $\varphi(x_2) = C_0x_2 + C_1$, for some $C_0,C_1\in\R$. Using the boundary conditions, we moreover find that $C_0=C_1$, as well as $2C_0=\lambda\sigma\ell$. Since $\del_{x_2}\varphi(1) =C_0=\lambda\ell$, we are led to $\sigma=2$. Summarizing, we find that when $\lambda=-n^2$,
			\begin{equation*}
			\varphi(x_2) = C_0(1+x_2).
			\end{equation*}
			
			\noindent
			\textbf{Case 3: \fbox{$\lambda \in (-n^2, 0)$.}} Following \Cref{rem: spectral.gap} regarding the spectral gap, this case may only occur if $\sigma<2$. 
			So let us henceforth suppose that $\sigma<2$, and that $\lambda \in (-n^2, 0)$ is an eigenvalue of $ A_{n}$. Consequently, there must exist a vector $(\varphi, \ell) \in \mathfrak{D}(\*A_n)\setminus\{(0,0)\}$ such that \eqref{eq: eigensystem} holds.
			Then $\varphi$ would again solve the mixed Dirichlet-Robin problem \eqref{eq: eigensystem.2}. 
			Since now $-\lambda - n^2 <0$, one may readily see that the solutions to \eqref{eq: eigensystem.2} are now of the form 
			\begin{equation*}
			\varphi(x_2) = C\Big(e^{\nu x_2} - e^{-\nu(2+x_2)}\Big),
			\end{equation*}
			for some $C>0$, where $\nu:=\sqrt{n^2+\lambda}$ denotes the positive root(s) of the transcendental equation
			\begin{equation*}
			e^{\nu}-e^{-3\nu} -\frac{\sigma n^2}{n^2-\nu^2}\Big(\nu e^{\nu} + \nu e^{-3\nu}\Big) = 0
			\end{equation*}
			in $(0, |n|)$. We may equivalently rewrite the above equation as
			\begin{equation} \label{eq: f(nu)}
			\Big(-\nu^2-\sigma n^2\nu+n^2\Big)+\Big(\nu^2-\sigma n^2 \nu-n^2\Big) e^{-4\nu} = 0.
			\end{equation}
			We designate \eqref{eq: f(nu)} as $f(\nu)=0$, and we claim that $f$ has a unique root\footnote{One may try to compute this root by using the Lambert $W$ function and its generalizations (\cite{mezHo2017generalization}). We omit this from our work as it is not needed for our analysis.} in $(0, |n|)$. Let us henceforth focus on proving this claim. Existence follows from the fact that $f$ is increasing and positive near $0$, and decreasing and negative near $|n|$. To ensure uniqueness, 
			 we will look to show that $f$ is strictly concave in $\big[0,|n|\big]$ (see \Cref{fig.zoom}).
			We shall designate
			\begin{equation*}
			f_1(\nu):=-\nu^2-\sigma n^2\nu+n^2
			\end{equation*} 
			and 
			\begin{equation*}
			f_2(\nu):=\Big(\nu^2-\sigma n^2 \nu-n^2\Big) e^{-4\nu},
			\end{equation*} 
			so that $f=f_1+f_2$. 
			We see that 
			\begin{equation} \label{eq: f1''}
			f_1''(\nu) = -2.
			\end{equation}
			On another hand, we also have 
			\begin{equation} \label{eq: f2''}
			f_2''(\nu) = \Big(2-16\nu+8\sigma n^2+16\nu^2-16\sigma n^2\nu-16n^2\Big)e^{-4\nu}.
			\end{equation}
			As $f=f_1+f_2$, and taking \eqref{eq: f1''} into account, we see that it suffices to ensure that 
			$f_2''(\nu)<2$ in $[0,|n|]$. This may be shown without too much difficulty. 
			Indeed, whenever $\nu\in\left[0,\frac{1}{2}\right]$ we see that $8\sigma\nu^2-16n^2<0$ as well as $16\nu^2-16\nu<0$ in \eqref{eq: f2''}. 
			Similarly, whenever $\nu\in\left[\frac12,|n|\right]$, we find that $8\sigma n^2-16\sigma n^2\nu<0$ as well as $16\nu^2-16n^2<0$ in \eqref{eq: f2''}. This yields $f_2''(\nu)<2$ in $[0,|n|]$, whence $f$ is strictly concave in $[0,|n|]$. Consequently, it follows that $f$ has at most $2$ roots in $[0,|n|]$. (This elementary property is readily shown by arguing by contradiction and Rolle's theorem.) 
			And since $f(0)=0$, we conclude that any root of $f$ in $(0,|n|)$ is unique. 
			On another hand, we may readily see that
			\begin{equation*}
			f_2(\nu)<0 \hspace{1cm} \text{ for } \nu\in\big[0,|n|\big],
			\end{equation*}
			as well as 
			\begin{equation*}
			f_1(\nu)\leqslant 0 \hspace{1cm} \text{ for } \nu\in\left[\frac12 \left(\sqrt{n^4\sigma^2+4n^2}-n^2\sigma\right), |n|\right).
			\end{equation*}
			Consequently we find that
			\begin{equation*}
			f(\nu)<0 \hspace{1cm} \text{ for } \nu\in\left[\frac12 \left(\sqrt{n^4\sigma^2+4n^2}-n^2\sigma\right), |n|\right),
			\end{equation*}
			and therefore the unique root $\nu_0$ of $f$ must be located in the complement of the above interval, namely $\nu_0\in\left(0,\frac12 \left(\sqrt{n^4\sigma^2+4n^2}-n^2\sigma\right)\right)$. 
			Hence, in this case, the first eigenvalue $\lambda_{n,0}<0$ of $\*A_n$ will have the form
			\begin{equation} \label{eq: lambda.zero}
			\lambda_{n,0} = \nu_0^2 - n^2.
			\end{equation} 
			Note that, moreover,
			\begin{equation*}
			\nu_0 \leqslant \frac12\left(\sqrt{n^4\sigma^2+4n^2}-n^2\sigma\right) = \frac{4n^2}{2\left(\sqrt{n^4\sigma^2+4n^2}+n^2\sigma\right)} \leqslant \frac{1}{\sigma},
			\end{equation*}
			for all $n \in \Z\setminus\{0\}$.
			
			\begin{figure}[h!]
			\includegraphics[scale=0.385]{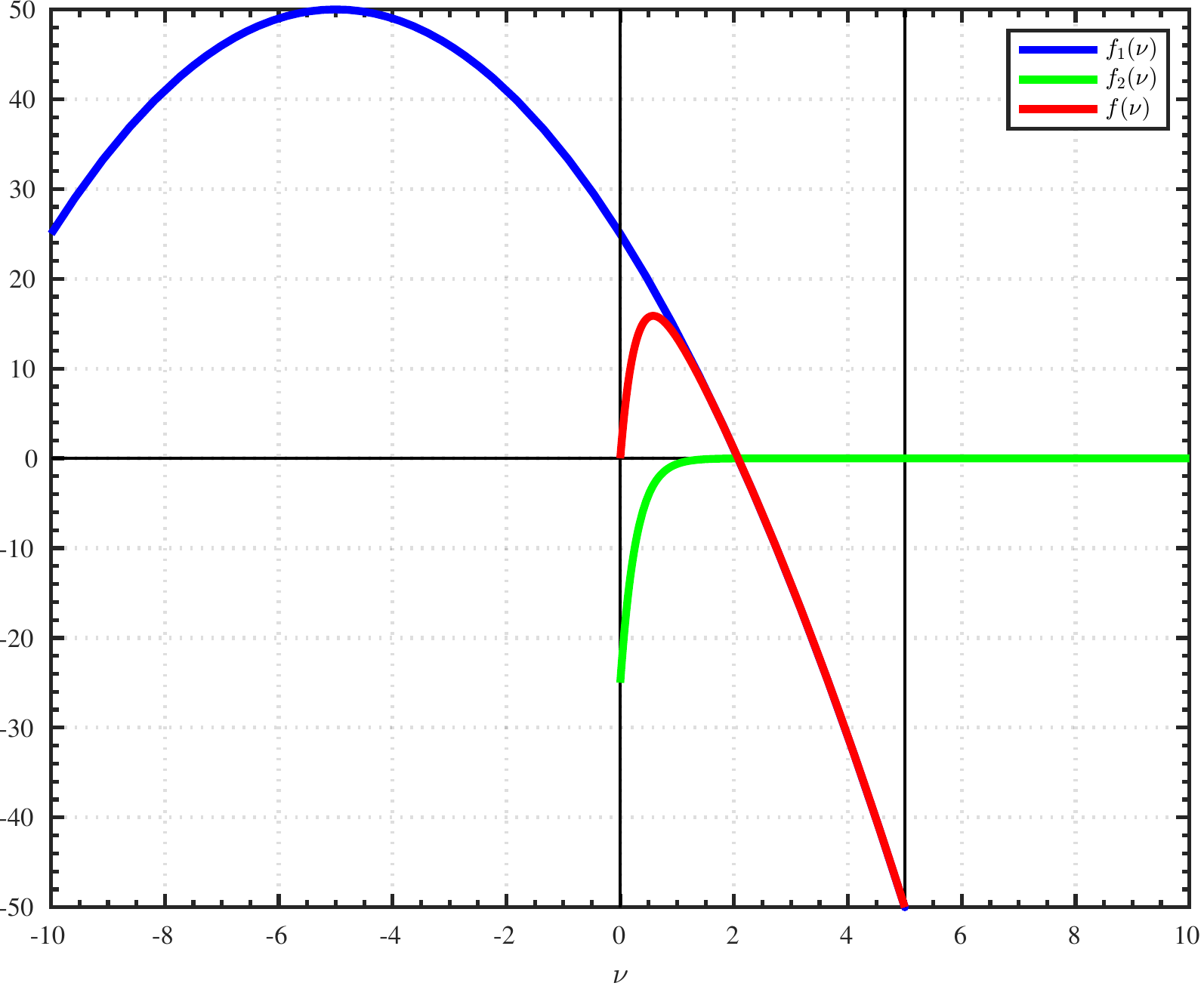}
			\hspace{0.15cm}
			\includegraphics[scale=0.385]{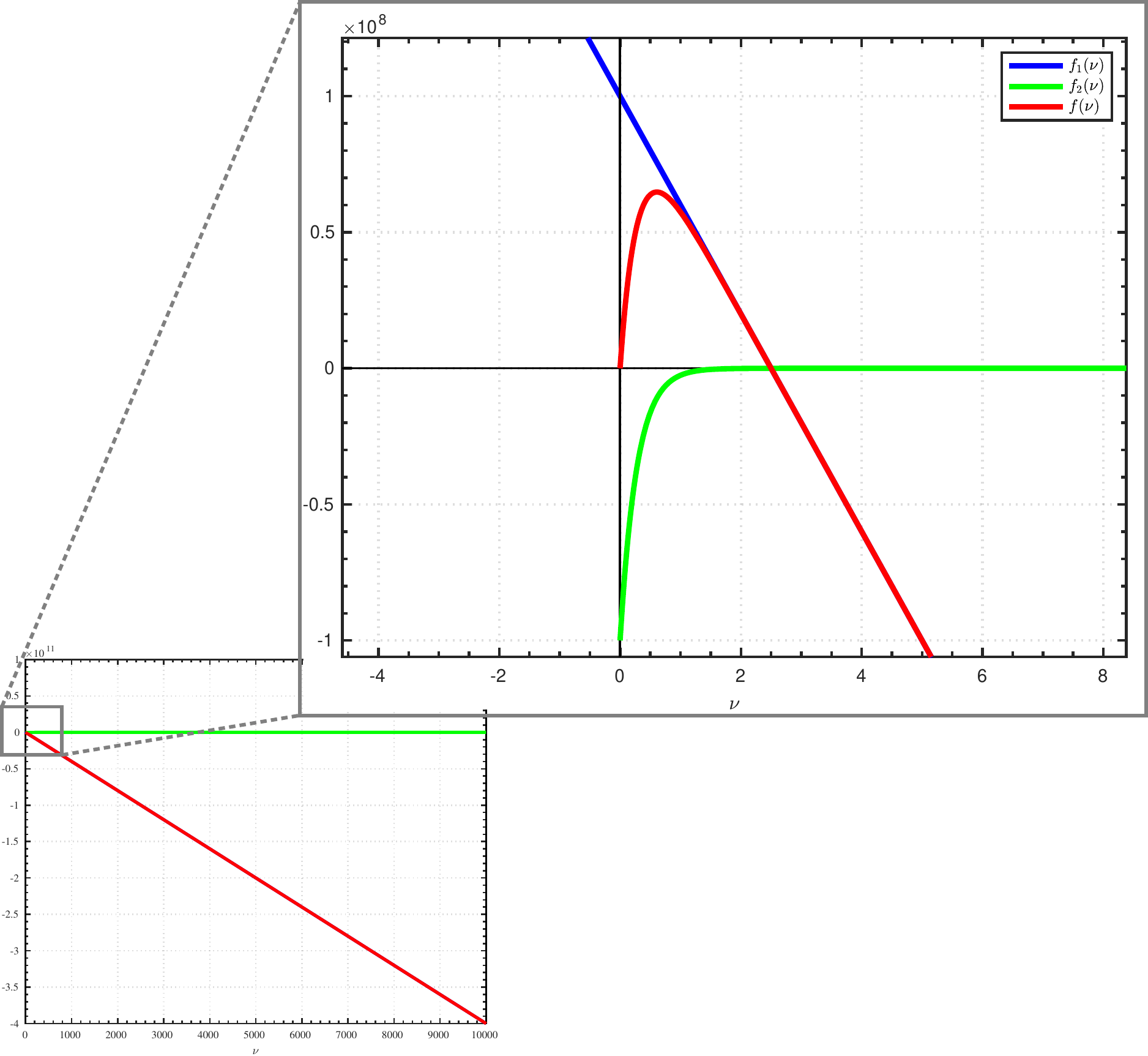}
			\caption{Plot of $f_1$ (\emph{blue}), $f_2$ (\emph{green}), and $f=f_1+f_2$ (\emph{red}), for $\sigma=\frac{2}{5}$, with $n=5$ (\emph{left}) and $n=10^4$ (\emph{right}). We see that $f$ has a unique root $\nu_0\in(0,|n|)$, and this root seemingly does not collapse to $0$ as $n\to+\infty$.}
			\label{fig.zoom}
			\end{figure}
			
			Having analyzed all the possible scenarios, we collect the sequence of eigenvalues 
			$\{\lambda_{n,k}\}_{k=0}^{+\infty}$, which if $\sigma>2$ may actually be indexed as $\{\lambda_{n,k}\}_{k=1}^{+\infty}$ with $\lambda_{n,k}$ with $k\geqslant1$ defined in \eqref{eq: lambda.k}; if $\sigma=2$, we also have $\lambda_{n,0}=-n^2$, and if $\sigma\in(0,2)$, we have $\lambda_{n,0}<0$ defined in \eqref{eq: lambda.zero}. 
			One thus readily sees that \eqref{eq: eigenvals} holds. 
			On another hand, since $\varepsilon_k \in\left(0,\frac{\pi}{4}\right)$, we see that for $k \geqslant 1$,
			\begin{align*}
			-\lambda_{n,k+1} + \lambda_{n,k} &= \Bigg(\frac{(k+1)\pi}{2}+\frac{\pi}{4}-\varepsilon_{k+1}\Bigg)^2 - \Bigg(\frac{k\pi}{2}+\frac{\pi}{4}-\varepsilon_{k}\Bigg)^2 \\
			&= \Bigg(k\pi + \pi - \varepsilon_{k+1} - \varepsilon_k\Bigg)\Bigg(\frac{\pi}{2} - \varepsilon_{k+1} + \varepsilon_k\Bigg) \\
			&\geqslant\frac{3\pi}{2}\cdot\frac{\pi}{4} = \frac{3\pi^2}{8}.
			\end{align*}
			Furthermore, if $\sigma\in(0,2)$, 
			\begin{align*}
			-\lambda_{n,1} + \lambda_{n,0} = \Bigg(\frac{\pi}{2} + \frac{\pi}{4} - \varepsilon_k\Bigg)^2+ \nu_0^2 \geqslant \frac{\pi^2}{4},
			\end{align*}
			whereas if $\sigma=2$, we similarly find
			\begin{align*}
			-\lambda_{n,1} + \lambda_{n,0} = \Bigg(\frac{\pi}{2} + \frac{\pi}{4} - \varepsilon_k\Bigg)^2 \geqslant \frac{\pi^2}{4}.
			\end{align*}
			Hence, the separation condition \eqref{eq: regular.eigs} holds as well (actually, also uniformly in $\sigma>0$).
			
			Let us finally prove \eqref{eq: eigenfunc.lower.bound}. 
			We recall that the normalized eigenfunctions $\Phi_{n, k}$ have the form
			\begin{equation*}
				\Phi_{n,k} = \left(
				\varphi_{n,k},\, -\frac{1}{\sigma n^2}\varphi_{n,k}(1)
				\right),
			\end{equation*}
			where $\varphi_{n,k}$ is given by
			\begin{equation*}
				\varphi_{n,k}(x_2) = C_{n,k} \sin\left(\sqrt{-\lambda_{n,k}-n^2}\, (1+x_2)\right)
			\end{equation*}
			for $k\geqslant1$ (following the indexing of the eigenvalues depending on where $\sigma$ is located), while 
			\begin{equation*}
			\varphi_{n,0}(x_2) = C_{n,0} \left(e^{\sqrt{n^2+\lambda_{n,0}}\, x_2} - e^{-\sqrt{n^2+\lambda_{n,0}}(2+x_2)} \right)
			\end{equation*}
			if $\sigma\in(0,2)$, and 
			\begin{equation*}
			\varphi_{n,0}(x_2) = C_{n,0}\Big(1+x_2\Big)
			\end{equation*}
			if $\sigma=2$.
			Let us first suppose $k\geqslant 1$. Reusing the notation $\nu_k:= \sqrt{-\lambda_{n,k} - n^2}>0$, we note that in order to ensure that the eigenfunctions $\Phi_{n, k}$ are of norm $1$ in $\mathscr{H}_{n}$, namely $\|\Phi_{n,k}\|_{\mathscr{H}_{n}}=1$, we see that $C_{n,k}>0$ needs to satisfy
			\begin{equation*}
				C_{n,k}^2 \left(1-\frac{\sin(4\nu_k)}{4\nu_k}\right) + \frac{C_{n,k}^2}{n^2\sigma} \sin^2(2\nu_k) = 1
			\end{equation*}
			for all $k \geqslant 1$. 
			On the other hand, using elementary trigonometric identities, we may also find
			\begin{equation} \label{eq: expr.bphik}
				\|\varphi_{n,k} \|_{L^2\left(c,d\right)}^2 = \frac{C_{n,k}^2}{2} \left((d-c)+\frac{\sin\left(2\nu_k(c+1)\right)}{2\nu_k}-\frac{\sin\left(2\nu_k(d+1)\right)}{2\nu_k}\right).
			\end{equation}
			In view of \eqref{eq: nu_k}, namely the asymptotics of $\nu_k$ when $k\to+\infty$, we find that there exists $\delta=\delta(d-c)>0$ independent of $n \in \Z\setminus\{0\}$ and $k$ such that 
			\begin{equation} \label{eq: expr.bphik2}
			(d-c)+\frac{\sin\left(2\nu_k(c+1)\right)}{2\nu_k}-\frac{\sin\left(2\nu_k(d+1)\right)}{2\nu_k}\geqslant\delta
			\end{equation}
			holds for all $k\geqslant1$.
			Therefore, we see from \eqref{eq: expr.bphik} and \eqref{eq: expr.bphik2} that, in order to obtain \eqref{eq: eigenfunc.lower.bound}, it suffices to have an appropriate lower bound on $C_{n,k}$ for all $k \geqslant 1$. 
			To this end, we note that
			\begin{equation*}
				C_{n,k}^2 = \left( \left(1-\frac{\sin(4\nu_k)}{4\nu_k}\right) + \frac{\sin^2(2\nu_k)}{n^2\sigma} \right)^{-1}.
			\end{equation*}
			By virtue of \eqref{eq: nu_k}, we see that 
			\begin{equation*}
				\frac{\sin(4\nu_k)}{4\nu_k} \underset{k\rightarrow+\infty}{\longrightarrow} 0 \hspace{1cm} \text{ and } \hspace{1cm} \sin^2(2\nu_k) \underset{k\rightarrow+\infty}{\longrightarrow} 0, 
			\end{equation*}
			hence 
			\begin{equation*}
				C_{n,k}^2 \geqslant C_*
			\end{equation*}
			for some $C_*>0$ independent of $n, k$ and $\sigma$. This concludes the proof of \eqref{eq: eigenfunc.lower.bound} when 
			$k\geqslant1$.
			
			Let us now consider the case $k=0$ and $\sigma\in(0,2]$. First suppose that $\sigma\in(0,2)$. 
			We see that to ensure orthonormality, $C_{n,0}>0$ needs to satisfy
			\begin{equation*}
			C_{n,0}^2 \,\frac{\sinh(2\nu_0)-4\nu_0}{\nu_0} e^{-2\nu_0} + \frac{C_{n,0}^2}{n^2\sigma} \Big(e^{\nu_0} - e^{-3\nu_0}\Big)^2=1,
			\end{equation*}
			thus 
			\begin{equation} \label{eq: c02}
			C_{n,0}^2 = \left(\frac{\sinh(2\nu_0)-4\nu_0}{\nu_0} e^{-2\nu_0} + \frac{\Big(e^{\nu_0} - e^{-3\nu_0}\Big)^2}{n^2\sigma}\right)^{-1}.
			\end{equation}
			Taking \eqref{eq: c02} into stock, we see that since $\nu_0\in\left(0,\frac{1}{\sigma}\right)$ and $\frac{\sinh(2x)}{x}$ is positive and continuous for $x \in \left[0, \frac{1}{\sigma}\right]$, 
			\begin{equation*}
			\frac{\sinh(2\nu_0)-4\nu_0}{\nu_0} e^{-2\nu_0} + \frac{\Big(e^{\nu_0} - e^{-3\nu_0}\Big)^2}{n^2\sigma}\leqslant\frac{\sinh(2\nu_0)}{\nu_0}+\frac{1}{\sigma}e^{\frac{1}{\sigma}}\leqslant C_1(\sigma)+\frac{1}{\sigma}e^{\frac{1}{\sigma}}
			\end{equation*}
			holds for some $C_1(\sigma)>0$ independent of $n$. Consequently, 
			\begin{equation}\label{eq: Cn0}
			C_{n,0}^2\geqslant\frac{1}{C_1(\sigma)+\frac{1}{\sigma}e^{\frac{1}{\sigma}}}.
			\end{equation}
			We also have
			\begin{equation*}
			\|\varphi_{n,0}\|^2_{L^2\left(c,d\right)} = C_{n,0}^2\, \frac{\Big(-2\nu_0(d-c)-\sinh\big(2(c+1)\nu_0\big)+\sinh\big(2(d+1)\nu_0\big)\Big)}{\nu_0} e^{-2\nu_0}.
			\end{equation*}
			Similarly, using the continuity and the positivity of the function 
			\begin{equation*}
			x \mapsto \frac{\sinh(2(d+1)x)-\sinh(2(c+1)x)-2(d-c)x}{x} 
			\end{equation*}
			on $\left(0, \frac{1}{\sigma}\right)$, and using \eqref{eq: Cn0}, we conclude that there exists $C(\sigma,d-c)>0$ independent of $n$ such that
			\begin{equation*}
			\|\varphi_{n,0}\|^2_{L^2\left(c,d\right)} \geqslant C(\sigma,d-c),
			\end{equation*}
			holds for all $n\in\Z\setminus\{0\}$. This is precisely \eqref{eq: eigenfunc.lower.bound}.
			
			Finally, in the case $k=0$ and $\sigma=2$, since the (normalized) eigenfunction is an affine function, the proof of \eqref{eq: eigenfunc.lower.bound} is straightforward. 
			This concludes the proof.
		\end{proof}

	\subsection{Proof of \Cref{thm: frequency.obs}} \label{sec: prooof.prop31}
		We may conclude the study of the family of projected systems. 
							
		\begin{proof}[Proof of \Cref{thm: frequency.obs}]
			
		We distinguish two separate cases.
		\smallskip
		
		{\color{black}
		\noindent
		\textbf{Case 1:} $n=0.$
		By virtue of the results in \cite{geshkovski2021controllability, Er19}, we know that \eqref{eq: frequency.by.frequency.obs} holds when $n=0$.
		The proof of this result consists in looking at the dual controllability problem: since $n=0$, the resulting control system is of cascade type, with 
		\begin{equation} \label{eq: expression.h}
		h_0(t) = h^0_0 + \int_0^t \del_{x_2}y_0(s, 1)\diff s,
		\end{equation}
		with $y_0$ solving the linear heat equation with Dirichlet boundary conditions.
		Controlling $h_0(t)$ to $0$ in time $T$ can then be seen as a one-dimensional constraint on the heat control found by HUM (\cite{lions1988exact}), and may be achieved by a compactness-uniqueness argument within the global Carleman inequality for the heat operator. We refer to \cite{geshkovski2021controllability, Er19} for all the necessary details.
		
		The monotonicity properties of the constant ${\mathcal{K}}(T,\sigma)$ are presented in \cite{miller2010direct}. But due to the compactness argument, the proof of controllability does not yield an explicit exponential bound of the form \eqref{eq:  zero.mode.estimate.cost}, which is of essence for using the source term method argument. 
		We present a brief moment method argument which yields the desired bound. 
		Let $\phi_j(x_2) = \sin(j\pi x_2)$ and $\lambda_j=j^2\pi^2$ denote the $j$-th eigenfunction and eigenvalue of the Dirichlet Laplacian on $(-1,1)$ for $j\geqslant1$.
		We have
		\begin{equation*}
		y_0(t, x_2) = \sum_{j=1}^{+\infty}\left(e^{-\lambda_j t}\left\langle y^0_0, \phi_j\right\rangle_{L^2(-1,1)} + \int_0^t e^{-\lambda_j(t-s)}\langle u_0(s), \phi_j\rangle_{L^2(c,d)}\diff s\right)\phi_j(x_2).
		\end{equation*}
		We see that the condition $y_0(T)=0$ (which holds true) is equivalent to
		\begin{equation} \label{eq: cond.1}
		\int_0^T e^{\lambda_j s} \langle u_0(s), \phi_j\rangle_{L^2(c,d)}\diff s = -\left\langle y_0^0,\phi_j\right\rangle_{L^2(-1,1)} \hspace{0.5cm} \text{ for all } j\geqslant 1.
		\end{equation}
		By a similar argument, using the expression for $h_0(t)$ in \eqref{eq: expression.h}, we see that the condition $h_0(T)=0$ (which also holds true) is equivalent to
		\begin{align*}
		&\sum_{j=1}^{+\infty} \Bigg(\left\langle y_0^0,\phi_j\right\rangle_{L^2(-1,1)}\int_0^T e^{-\lambda_j t}\diff t+\int_0^T \int_0^t e^{-\lambda_j(t-s)}\langle u_0(s), \phi_j\rangle_{L^2(c,d)}\diff s\diff t\Bigg)\partial_{x_2}\phi_j(1) = -h^0_0.
		\end{align*}
		By integrating by parts in the above identity, we find
		\begin{align*}
		\sum_{j=1}^{+\infty}\frac{\partial_{x_2}\phi_j(1)}{\lambda_j}\Bigg(&\left\langle y_0^0, \phi_j\right\rangle_{L^2(-1,1)}\left(1-e^{-\lambda_jT}\right)\\
		&+\int_0^T\langle u_0(t),\phi_j\rangle_{L^2(c,d)}\diff t - e^{-\lambda_jT}\int_0^T e^{\lambda_j s}\langle u_0(s),\phi_j\rangle_{L^2(c,d)}\diff s\Bigg) = -h_0.
		\end{align*}
	Then using \eqref{eq: cond.1} in the above identity, we find 
		\begin{equation} \label{eq: cond.2}
		\sum_{j=1}^{+\infty} \frac{\partial_{x_2}\phi_j(1)}{\lambda_j}\int_0^T\langle u_0(t),\phi_j\rangle_{L^2(c,d)}\diff t = -h^0_0 - \sum_{j=1}^{+\infty}\frac{\partial_{x_2}\phi_j(1)}{\lambda_j}\left\langle y_0^0,\phi_j\right\rangle_{L^2(-1,1)}.
		\end{equation}
		Now let $\{\theta_j\}_{j=0}^{+\infty}$ be a biorthogonal sequence to $\left\{e^{(\lambda_j+1) \cdot}\right\}_{j=0}^{+\infty}$ in $L^2\left(-\frac{T}{2},\frac{T}{2}\right)$, where we've set $\lambda_0:=0$. (The existence of such a sequence is classical, see \cite[Section 3]{fattorini1971exact} and also \cite{tucsnaktenenbaum}).  
		Then we see that we can take $u_0$ as 
		\begin{equation*}
		u_0(t,x_2):= v_0 \theta_0\left(t-\frac{T}{2}\right) e^t + \sum_{j=1}^{+\infty} v_j(x_2) e^{-(\lambda_j+1)\frac{T}{2}} \theta_j\left(t-\frac{T}{2}\right)e^t, 
		\end{equation*}
		where $v_0$ and $v_j$ are such that \eqref{eq: cond.1} and \eqref{eq: cond.2} hold for this $u_0$, namely:
		\begin{equation} \label{eq: v0}
		v_0:=\frac{\displaystyle-h^0_0-\sum_{j=1}^{+\infty}\partial_{x_2}\phi_j(1){\lambda_j}^{-1}\left\langle y^0_0,\phi_j\right\rangle_{L^2(-1,1)}}{\displaystyle\sum_{j=1}^{+\infty} \partial_{x_2}\phi_j(1){\lambda_j}^{-1}\left\langle 1, \phi_j\right\rangle_{L^2(c,d)}}
		\end{equation}
		and 
		\begin{equation*}
		v_j(x_2) = -\frac{\left\langle y^0_0, \phi_j\right\rangle_{L^2(-1,1)}}{\|\phi_j\|^2_{L^2(c,d)}}\phi_j(x_2).
		\end{equation*}
		Then from well-known estimates on this biorthogonal family (see  \cite[Proof of Thm. 3.4]{tucsnaktenenbaum}), we gather that
		\begin{align} \label{eq: c0.u0}
		\|u_0\|_{C^0([0,T]; L^2(c,d))} \leqslant M_1\left(|v_0|e^{\frac{M_2}{T}} e^T + e^{\frac{M_2}{T}} e^T \sum_{j=1}^{+\infty} \left|\left\langle y_0^0, \phi_j\right\rangle_{L^2(-1,1)}\right| \frac{e^{-(\lambda_j+1)\frac{T}{2}}}{(\lambda_j+1)}\right)
		\end{align}
		holds for some $M_1, M_2>0$ independent of $T$. In view of \eqref{eq: c0.u0} and the form of $v_0$ in \eqref{eq: v0}, and since $\del_{x_2}\phi_j(1)\lambda_j^{-1} = (-1)^j (j\pi)^{-2}$, to conclude the proof of \eqref{eq: zero.mode.estimate.cost}, it suffices to show that the series in the denominator in \eqref{eq: v0} is absolutely convergent to some non-zero constant. This follows from \Cref{lem: convergence.series} when $c\neq\pm d$. {\color{black} Finally, by duality \eqref{eq: frequency.by.frequency.obs} holds for $n =0.$}
		}
		
		\smallskip
		\noindent
		\textbf{Case 2:} $n\neq0$.
{\color{black}
		As discussed in \Cref{sec: strategy.proof}, showing \eqref{eq: frequency.by.frequency.obs} for \eqref{eq: fourier.projected.system.intro} is equivalent to proving \Cref{thm: frequency.obs} in the case $n\neq0$. We thus focus on showing the former; namely, we seek to show that
			\begin{equation} \label{eq: obs.ineq}
			Me^{\frac{M}{T}} \int_0^T \left\|\zeta(t,\cdot)\right\|_{L^2\left(c,d\right)}^2 \diff t \geqslant\left\|\big(\zeta(0, \cdot), \ell(0)\big)\right\|_{\mathscr{H}_{n}}^2
			\end{equation}
			holds for some $M>0$ independent of $n$ and $T$, and for all $(\zeta_T, \ell_T) \in \mathscr{H}_{n}$, where $(\zeta,\ell)$ is the unique solution to \eqref{eq: fourier.projected.system.intro}. (We shall stick to the notation $\ell$ for the second component even for \eqref{eq: fourier.projected.system.intro}, and drop the subscripts $n$, for simplicity.)}
			As the governing operator $\*A_n$ of \eqref{eq: fourier.projected.system.intro} has an orthonormal basis of eigenfunctions $\{\Phi_{n, k}\}_{k=0}^{+\infty}$ and corresponding decreasing sequence of negative eigenvalues $\{-\lambda_{n,k}\}_{k=0}^{+\infty}$, we may write the Fourier series decomposition of $\zeta$ as 
			\begin{equation*}
			\zeta(t,x_2) = \sum_{k=0}^{+\infty} e^{-\lambda_{n,k}(T-t)} \Big\langle\big(\zeta_T, \ell_T\big), \Phi_{n,k}\Big\rangle_{\mathscr{H}_{n}} \varphi_{n,k}(x_2).
			\end{equation*} 
			Denoting by $\{\psi_j\}_{j=0}^{+\infty}$ the orthonormal basis of $L^2\left(c,d\right)$, and via the shift $T-t\mapsto t$, we obtain 
			\begin{align} \label{eq: 2.8}
			&\int_0^T \left\|\zeta(t, \cdot)\right\|_{L^2\left(c,d\right)}^2 \diff t = \sum_{j=0}^{+\infty} \int_0^T \left| \sum_{k=0}^{+\infty} e^{-\lambda_{n, k} t} \Big\langle\big(\zeta_T, \ell_T\big), \Phi_{n,k}\Big\rangle_{\mathscr{H}_{n}}  \langle\varphi_{n, k}, \psi_j \rangle_{L^2\left(c,d\right)} \right|^2 \diff t.
			\end{align}
			Now, making use of \eqref{eq: regular.eigs} and \eqref{eq: eigenvals}, we deduce from \cite[Cor. 3.6]{tucsnaktenenbaum} that there exists a constant $M>0$ depending only on $r>0$ and $\gamma>0$ such that 
			\begin{equation} \label{eq: marius.inequality}
			Me^{\frac{M}{T}} \int_0^T \left|\sum_{k=0}^{+\infty}a_k e^{-\big(\lambda_{n,k}-n^2\big)t}\right|^2\diff t \geqslant \sum_{k=0}^{+\infty} |a_k|^2 e^{-2\big(\lambda_{n,k}-n^2\big)T}
			\end{equation}
			for any $\{a_k\}_{k=0}^{+\infty} \in \ell^2(\R)$, and hence
			\begin{align*}
		Me^{\frac{M}{T}} \int_0^T \left|\sum_{k=0}^{+\infty}a_k e^{-\lambda_{n,k}t}\right|^2\diff t &\geqslant Ce^{\frac{C}{T}} e^{-n^2 T} \int_0^T \left|\sum_{k=0}^{+\infty}a_k e^{-\big(\lambda_{n,k}-n^2\big)t}\right|^2\diff t  \\
			&\mathop{\geqslant}_{\eqref{eq: marius.inequality}} e^{-n^2 T} \sum_{k=0}^{+\infty} |a_k|^2 e^{-2\big(\lambda_{n,k}-n^2\big)T} \\
			&= \sum_{k=0}^{+\infty} |a_k|^2 e^{-2\lambda_{n,k}T}.
			\end{align*}
			The above estimate, combined with \eqref{eq: 2.8}, implies that
			\begin{align*}
			M&e^{\frac{M}{T}} \int_0^T \left\|\zeta(t, \cdot)\right\|_{L^2\left(c,d\right)}^2 \diff t\geqslant \sum_{j=0}^{+\infty} \sum_{k=0}^{+\infty} e^{-2\lambda_{n,k}T} \left|\Big\langle\big(\zeta_T, \ell_T\big), \Phi_{n,k}\Big\rangle_{\mathscr{H}_{n}}\right|^2 \left|\left\langle \varphi_{n,k}, \psi_j\right\rangle_{L^2\left(c,d\right)}\right|^2.
			\end{align*}
			After employing the Fubini theorem, we may apply \eqref{eq: eigenfunc.lower.bound} to the above estimate, revert the time shift, and deduce that 
			\begin{equation*}
			Me^{\frac{M}{T}} \int_0^T \left\|\zeta(t, \cdot)\right\|_{L^2\left(c,d\right)}^2 \diff t \geqslant C_2\left\|\big(\zeta(0, \cdot), \ell(0)\big)\right\|^2_{\mathscr{H}_{n}},
			\end{equation*}
			which holds for all $(\zeta_T,\ell_T) \in \mathscr{H}_{n}$. Here, the constant $C_2=C_2(\sigma,c-d)>0$ stems from \eqref{eq: eigenfunc.lower.bound} in \Cref{lem: spectrum}.
			This concludes the proof of \eqref{eq: obs.ineq}, and thus the proof of \Cref{thm: frequency.obs} altogether.
		\end{proof}

\section{Proof of \Cref{thm: main.result.linear}} \label{sec: proof.main.result.linear.new}

Let us now consider $\omega=(a, b)\times(c,d)$, where $(a, b)\subset\mathbb{T}$ and $(c, d)\subset(-1,1)$.  We consider the adjoint system
{\color{black}
\begin{equation} \label{sys-lin-adj}
	\begin{cases}
	-\del_t \zeta - \Delta \zeta = 0 &\text{ in } (0, T) \times \Omega,\\
	-\del_t \ell(t, x_1) -\sigma \partial_{x_{1}}^{2} \del_{x_2} \zeta(t, x_1, 1)=0 &\text{ on } (0, T) \times \T,\\
	\zeta(t, x_1, 0) = 0 &\text{ on } (0, T) \times \T,\\
	\zeta(t, x_1, 1) =  \ell(t, x_1) &\text{ on } (0, T) \times \T,\\
	(\zeta, \ell)_{\mid_{t=T}} = \left(\zeta_T, \ell_T\right) &\text{ in } \Omega \times \T.
	\end{cases}
	\end{equation}
	}
{\color{black}
Using the shorthand
\begin{align*}
\mathscr{H}_n\otimes\left\{\frac{1}{\sqrt{2\pi}}e^{in\cdot}\right\}&=\Bigg\{(\zeta,\ell)\in L^2(\Omega)\times H^1(\T)\, \Biggm|\,\\
&\hspace{0.75cm}\left(\zeta(x_1,x_2),\ell(x_1)\right)=\frac{1}{2\pi}\left(\int_{\T}\zeta(s,x_2)e^{ins}\diff s,\int_{\T}\ell(s)e^{in s}\diff s\right)e^{inx_1}\Bigg\},
\end{align*}
}we also define the space of low frequencies
\begin{align*}
E_\mu&:= \bigoplus_{\substack{|n| \leqslant \mu}} \mathscr{H}_{n} \otimes\left\{\frac{1}{\sqrt{2\pi}}e^{in\cdot}\right\}\\
&=\left\{ \frac{1}{\sqrt{2\pi}}\sum_{|n| \leqslant \mu} (f_{n}(x_{2}), g_{n}) e^{inx_{1}} \, \Biggm|\, (f_{n}, g_{n}) \in L^{2}(-1,1) \times \mathbb{R} \right\}.
\end{align*}
{\color{black}Note that $E_\mu$ is a closed subspace of ${\mathscr{H}}$.}
Let $\mathbf{\Pi}_{\mu}:{\mathscr{H}} \to{\mathscr{H}}$ denote the orthogonal projection from ${\mathscr{H}}$ onto $E_\mu$. 

\subsection{Control of the low frequencies} We first recall the following version of the Lebeau-Robbiano spectral inequality for the eigenfunctions of the Laplacian on $\mathbb{T}$ with periodic boundary conditions. 

	\begin{lemma}[Spectral inequality] \label{lem:LR}
	Let $a,b\in\mathbb{R}$ be such that $a<b$. 
	There exists $C >0$ such that for every $\mu>0$ and $\{a_{n}\}_{n \in \mathbb{Z}}\subset\mathbb{C}$, the inequality
	\begin{equation*}
	\sum_{|n| \leqslant \mu} |a_{n}|^{2} \leqslant Ce^{C\mu}\int_{a}^{b}\left|\sum_{|n| \leqslant \mu} a_{n} e^{inx_{1}}\right|^{2}\,\rd x_{1}
	\end{equation*}
	holds.
\end{lemma}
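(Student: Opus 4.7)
The plan is to first recast the inequality in terms of trigonometric polynomials. Setting
\begin{equation*}
P(x_{1}) := \sum_{|n|\leqslant\mu} a_{n}e^{inx_{1}},
\end{equation*}
Parseval's identity on $\mathbb{T}$ gives $2\pi\sum_{|n|\leqslant\mu}|a_{n}|^{2} = \|P\|_{L^{2}(\mathbb{T})}^{2}$. Hence \Cref{lem:LR} is equivalent to the claim
\begin{equation*}
\|P\|_{L^{2}(\mathbb{T})}^{2} \leqslant C\, e^{C\mu}\,\|P\|_{L^{2}(a,b)}^{2},
\end{equation*}
uniformly over $\mu>0$ and over trigonometric polynomials $P$ of degree at most $\mu$.

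The core observation is that $P$ extends to an entire function $P(z) = \sum_{|n|\leqslant\mu}a_{n}e^{inz}$ of exponential type at most $\mu$, satisfying the Paley-Wiener-type bound $|P(z)|\lesssim \|P\|_{L^{\infty}(\mathbb{R})}\,e^{\mu|\mathrm{Im}\, z|}$. The heart of the proof then rests on a classical Remez/Turán-Nazarov inequality for entire functions of exponential type: if $F$ is entire of exponential type $\mu$, and $I\subset J\subset\mathbb{R}$ are fixed intervals of positive measure, then
\begin{equation*}
\|F\|_{L^{2}(J)} \leqslant C\, e^{C\mu}\,\|F\|_{L^{2}(I)},
\end{equation*}
with $C>0$ depending only on $|I|$ and $|J|$. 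Specializing to $J=[0,2\pi]$ and $I=(a,b)$ delivers the required estimate. Uniformity of $C$ in the position of $(a,b)$ comes for free from the translation invariance of the Fourier basis on $\mathbb{T}$.

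An equivalent route would be to invoke the original Lebeau-Robbiano spectral inequality from \cite{lebeau1995controle} directly: the statement therein applies to finite linear combinations of eigenfunctions of the Laplacian on a compact Riemannian manifold, of which our setting (the flat torus, with eigenfunctions $\{e^{inx_{1}}\}_{n\in\mathbb{Z}}$ associated to the eigenvalues $\{n^{2}\}$) is a textbook particular case.

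The main technical hurdle is pinning down the exponential dependence $e^{C\mu}$ with $C$ independent of $\mu$ and of the placement of $(a,b)$. In the entire-function approach, this is established through a subharmonic-majorant argument combined with a Phragmén-Lindelöf-type principle applied to $\log|P(z)|$; in the Lebeau-Robbiano approach, it follows from local Carleman estimates for the augmented elliptic operator $-\partial_{x_{1}}^{2}-\partial_{s}^{2}$ on a cylinder $\mathbb{T}\times(0,L)$, evaluated on the explicit harmonic extension $\sum_{|n|\leqslant\mu}a_{n}e^{inx_{1}}\sinh(ns)/\sinh(n)$ of $P$. Either route leads to the stated conclusion.
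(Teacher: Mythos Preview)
Your proposal is correct. The paper itself does not give a proof: it simply cites \cite{lebeau1995controle}, \cite[Theorem 5.4]{le2012carleman}, and \cite[Proposition 5]{beauchard2014null}, which is precisely your ``equivalent route'' of invoking the Lebeau--Robbiano spectral inequality for eigenfunctions of the Laplacian on a compact manifold, specialized to the torus. Your primary route via the Tur\'an--Nazarov/Remez inequality for entire functions of exponential type is a genuinely different (and more self-contained) argument: it trades the Carleman-estimate machinery behind the Lebeau--Robbiano inequality for complex-analytic tools, and has the virtue of being elementary and directly tailored to trigonometric polynomials. Both approaches deliver the same exponential constant $e^{C\mu}$, which is all that is needed downstream.
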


\begin{proof} See \cite{lebeau1995controle}, and also \cite[Theorem 5.4]{le2012carleman}, \cite[Proposition 5]{beauchard2014null}.
\end{proof}

Using \Cref{lem:LR}, we may derive the following observability inequality for the low frequencies.

{\color{black}
\begin{proposition}[Low-frequency observability] \label{thm:obs-filtered} 
Let $\sigma>0$ be fixed, and suppose that $\omega=(a, b)\times(c,d)$, where $(a, b)\subset\mathbb{T}$ and $(c, d)\subset (-1,1)$, $c\neq\pm d$. Then, there exists a constant $C>0$ such that for every $0<T\leqslant1$, for every $\mu>0$, and for every $(\zeta_{T},\ell_{T})\in E_\mu$, the unique solution $(\zeta,\ell)\in C^0([0,T]; L^2(\Omega)\times H^{-1}(\T))$ to \eqref{sys-lin-adj} satisfies 
\begin{align*}
\int_{\Omega} \big|\zeta(0)\big|^{2}\diff x & + \norm{\ell(0)}_{H^{-1}(\T)}^{2} \leqslant C e^{C\left(\frac{1}{T} + \mu\right)} \int_{0}^{T} \int_{\omega} |\zeta(t)|^{2} \diff x\diff t.
\end{align*}
\end{proposition}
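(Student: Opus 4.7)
The plan is to combine the frequency-by-frequency observability of \Cref{thm: frequency.obs} with the spectral inequality of \Cref{lem:LR}. Since the data $(\zeta_T,\ell_T)\in E_\mu$ has Fourier support confined to $|n|\leqslant\mu$, and since the governing operator $\*A^*$ preserves Fourier modes in $x_1$ (each mode satisfying an autonomous one-dimensional system), the solution $(\zeta,\ell)$ of \eqref{sys-lin-adj} lies in $E_\mu$ at every time. Writing
\[
\zeta(t,x_1,x_2)=\frac{1}{\sqrt{2\pi}}\sum_{|n|\leqslant\mu}\zeta_n(t,x_2)e^{inx_1},\qquad
\ell(t,x_1)=\frac{1}{\sqrt{2\pi}}\sum_{|n|\leqslant\mu}\ell_n(t)e^{inx_1},
\]
each pair $(\zeta_n,\ell_n)$ solves the system \eqref{eq: fourier.projected.system.new} with terminal data the $n$-th Fourier coefficients of $(\zeta_T,\ell_T)$.

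Since $c\neq\pm d$ and $T\leqslant 1$, \Cref{thm: frequency.obs} yields, uniformly in $n$,
\[
\int_{-1}^{1}|\zeta_n(0,x_2)|^2\diff x_2+\frac{|\ell_n(0)|^2}{1+\sigma n^2}\leqslant Me^{M/T}\int_0^T\int_c^d|\zeta_n(t,x_2)|^2\diff x_2\diff t.
\]
Summing over $|n|\leqslant\mu$ and invoking Plancherel's identity together with the norm equivalence $\|\ell(0)\|_{H^{-1}(\T)}^2\asymp_\sigma\sum_n(1+\sigma n^2)^{-1}|\ell_n(0)|^2$, we obtain
\[
\int_\Omega|\zeta(0)|^2\diff x+\|\ell(0)\|_{H^{-1}(\T)}^2\leqslant C(\sigma)Me^{M/T}\sum_{|n|\leqslant\mu}\int_0^T\int_c^d|\zeta_n(t,x_2)|^2\diff x_2\diff t.
\]

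For the second step, we fix $(t,x_2)\in(0,T)\times(c,d)$ and apply \Cref{lem:LR} to the sequence $\{\zeta_n(t,x_2)\}_{|n|\leqslant\mu}$:
\[
\sum_{|n|\leqslant\mu}|\zeta_n(t,x_2)|^2\leqslant Ce^{C\mu}\int_a^b\left|\sum_{|n|\leqslant\mu}\zeta_n(t,x_2)e^{inx_1}\right|^2\diff x_1.
\]
Because $(\zeta(t),\ell(t))\in E_\mu$, the inner sum equals $\sqrt{2\pi}\,\zeta(t,x_1,x_2)$. Integrating this inequality over $(t,x_2)\in(0,T)\times(c,d)$ and chaining with the previous display yields the announced bound
\[
\int_\Omega|\zeta(0)|^2\diff x+\|\ell(0)\|_{H^{-1}(\T)}^2\leqslant C e^{C(\frac{1}{T}+\mu)}\int_0^T\int_\omega|\zeta(t,x)|^2\diff x\diff t,
\]
after absorbing all constants (including $2\pi$ and the $\sigma$-dependent factor) into a single $C=C(\sigma,a,b,c,d)$ and combining the two exponentials.

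The main obstacle — already dispensed with in \Cref{thm: frequency.obs} — is to obtain the explicit small-time exponential cost $Me^{M/T}$ with constant independent of the Fourier frequency $n$; without such a uniform-in-$n$ estimate, summing the projected observability bounds would produce an uncontrolled dependence on $\mu$. The other delicate point is to ensure that $(\zeta,\ell)$ remains in $E_\mu$ along the flow, so that the spectral inequality can be applied under the time/space integral rather than only to the initial datum; this is guaranteed by the fact that the adjoint operator decouples across Fourier modes in $x_1$.
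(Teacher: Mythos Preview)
Your proof is correct and follows essentially the same route as the paper: decompose in Fourier modes $|n|\leqslant\mu$, apply \Cref{thm: frequency.obs} uniformly in $n$ (using $c\neq\pm d$ and $T\leqslant1$ for the explicit cost $Me^{M/T}$), sum via Plancherel, and then invoke the Lebeau--Robbiano spectral inequality of \Cref{lem:LR} pointwise in $(t,x_2)$ before integrating. The only cosmetic difference is that the paper treats $\sum_n(1+\sigma n^2)^{-1}|\ell_n(0)|^2$ as the \emph{definition} of $\|\ell(0)\|_{H^{-1}(\T)}^2$ rather than a norm equivalence, so no $\sigma$-dependent constant is needed at that step.
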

}

{\color{black}
\begin{proof}
By assumption, we can write
\begin{equation*}
\Big(\zeta_T(x_{1}, x_{2}),\ell_T(x_{1})\Big) = \frac{1}{\sqrt{2\pi}}
\sum_{\substack{|n| \leqslant \mu }}\Big(\zeta_{T,n}(x_{2}),\ell_{T,n}\Big) e^{inx_{1}},
\end{equation*}
with the Fourier coefficients being defined as in \eqref{eq: yn.fourier}.
The solution $(\zeta,\ell)$ to \eqref{sys-lin-adj} is then given by 
\begin{equation*}
\Big(\zeta(t, x_{1}, x_{2}),\ell(t,x_{1})\Big) = \frac{1}{\sqrt{2\pi}}
\sum_{\substack{|n| \leqslant \mu}}\Big(\zeta_{n}(t, x_{2}),\ell_{n}(t)\Big) e^{inx_{1}}, 
\end{equation*}
where for any $n,$ $(\zeta_{n}, \ell_{n})$ solves \eqref{eq: fourier.projected.system.new}. 
Combining \eqref{eq: frequency.by.frequency.obs} (noting that $T\leqslant1$) with \Cref{lem:LR}, we find that 
\begin{align*}
\int_{\Omega} |\zeta(0)|^{2} \diff x + \norm{\ell(0)}_{H^{-1}(\T)}^{2}  &=\sum_{\substack{|n| \leqslant \mu}} \left(  \big\|\zeta_{n}(0)\big\|_{L^{2}(-1,1)}^{2} + \Big(1+ \sigma n^{2}\Big)^{-1}\big|\ell_{n}(0)\big|^{2} \right) \\
&\mathop{\leqslant}_{\eqref{eq: frequency.by.frequency.obs}} Me^{\frac{M}{T}} \sum_{\substack{|n| \leqslant \mu}} \int_{0}^{T} \int_{c}^{d} \Big|\zeta_{n}(t, x_{2})\Big|^{2} \ \rd x_{2} \rd t \\
&\mathop{\leqslant}_{\mathrm{Lem. \ref{lem:LR}}} C_1 e^{C_1\big(\frac{1}{T} + \mu\big)}  \int_{0}^{T} \int_{c}^{d} \int_{a}^{b} \left|\sum_{\substack{|n|\leqslant\mu}}  \zeta_{n}(t, x_{2}) e^{inx_{1}}  \right|^{2} \ \rd x_{1} \rd x_{2} \rd t \\
& = C_1 e^{C_1\big(\frac{1}{T} + \mu\big)}  \int_{0}^{T} \int_{\omega} \big|\zeta(t, x_{1}, x_{2})\big|^{2} \ \rd x_{1} \rd x_{2} \rd t
\end{align*}
holds for some constant $C_1>0$ independent of $T$ and $\mu$. This is the desired conclusion. 
\end{proof}
}
By customary HUM arguments, we also deduce the following result. 

\begin{proposition}[Low-frequency controllability] \label{thm:control-filter}
Let $\sigma>0$ be fixed, and suppose that $\omega=(a,b)\times(c,d)$, where $(a,b)\subset\T$ and $(c,d)\subset(-1,1)$, $c\neq\pm d$. Then, there exists a constant $C > 0$ such that for any $0<T\leqslant1$, for every $\mu>0$, and for every $(y^{0}, h^{0})\in{\mathscr{H}}$, there exists $u_{\mu} \in L^{2}((0,T) \times \omega)$ such that the unique solution $(y, h)\in C^0([0,T];{\mathscr{H}})$ to \eqref{sys-lin-con.intro} with control $u_{\mu}$ satisfies 
\begin{equation} \label{eq: project.control}
\mathbf{\Pi}_{\mu} \left(y(T), h(T)\right) = 0,
\end{equation}
and 
\begin{equation} \label{eq: 48}
\big\|u_{\mu}\big\|_{L^{2}((0,T) \times \omega)} \leqslant C e^{C\big(\frac{1}{T} + \mu\big)} \norm{\Big(y^{0}, h^{0}\Big)}_{{\mathscr{H}}}.
\end{equation}
\end{proposition}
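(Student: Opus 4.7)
The plan is to obtain this controllability result by the Hilbert Uniqueness Method (HUM), exploiting \Cref{thm:obs-filtered} as the exact dual statement. Let $E_\mu^{*} \subset \mathscr{H}^{*} = L^2(\Omega) \times H^{-1}(\T)$ denote the closed subspace consisting of those elements whose Fourier expansion in $x_1$ is supported on $\{|n| \leqslant \mu\}$. Since $\*A^{*}$ is block-diagonal in the Fourier basis (see \Cref{prop-main-adj}), solutions $(\zeta, \ell)$ to the adjoint system \eqref{sys-lin-adj} with terminal data in $E_\mu^{*}$ remain in $E_\mu^{*}$ for all $t \in [0, T]$.

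I would then introduce the HUM functional $J_\mu : E_\mu^{*} \to \R$ defined by
\begin{equation*}
J_\mu(\zeta_T, \ell_T) := \frac{1}{2} \int_0^T \int_\omega |\zeta(t,x)|^2 \diff x \diff t + \Big\langle (y^0, h^0), (\zeta(0), \ell(0)) \Big\rangle_{\mathscr{H}, \mathscr{H}^{*}},
\end{equation*}
and verify that $J_\mu$ is continuous, strictly convex, and coercive on $E_\mu^{*}$. Continuity and strict convexity are immediate. Coercivity is the heart of the matter and is precisely the content of \Cref{thm:obs-filtered}, which combined with Cauchy-Schwarz yields
\begin{equation*}
J_\mu(\zeta_T, \ell_T) \geqslant \frac{1}{2C}e^{-C(1/T+\mu)} \big\|(\zeta(0), \ell(0))\big\|_{\mathscr{H}^{*}}^2 - \big\|(y^0, h^0)\big\|_{\mathscr{H}} \, \big\|(\zeta(0), \ell(0))\big\|_{\mathscr{H}^{*}}.
\end{equation*}
The map $(\zeta_T, \ell_T) \mapsto (\zeta(0), \ell(0))$ is a bounded isomorphism on $E_\mu^{*}$ (upper bound from the analytic semigroup of \Cref{prop: analytic.semigroup}, lower bound by reversing time on the finite-band subspace $E_\mu^{*}$), so the above implies $J_\mu(\zeta_T, \ell_T) \to +\infty$ as $\|(\zeta_T, \ell_T)\|_{\mathscr{H}^{*}} \to +\infty$. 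Thus $J_\mu$ admits a unique minimizer $(\bar\zeta_T, \bar\ell_T) \in E_\mu^{*}$, with corresponding adjoint state $\bar\zeta$.

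I would then set $u_\mu := \bar\zeta \, 1_\omega \in L^2((0,T) \times \omega)$. Writing the first-order optimality condition $\nabla J_\mu(\bar\zeta_T, \bar\ell_T) = 0$ gives
\begin{equation*}
\int_0^T \int_\omega \bar\zeta(t,x)\, \zeta(t,x) \diff x \diff t + \Big\langle (y^0, h^0), (\zeta(0), \ell(0))\Big\rangle_{\mathscr{H}, \mathscr{H}^{*}} = 0
\end{equation*}
for every $(\zeta_T, \ell_T) \in E_\mu^{*}$. A duality computation — multiplying \eqref{sys-lin-con.intro} (with control $u_\mu$) by $\zeta$, integrating on $(0,T) \times \Omega$, and integrating by parts using the explicit form of $\*A^{*}$ from \Cref{prop-main-adj} — rewrites the above as $\langle (y(T), h(T)), (\zeta_T, \ell_T) \rangle_{\mathscr{H}, \mathscr{H}^{*}} = 0$ for every $(\zeta_T, \ell_T) \in E_\mu^{*}$, which is precisely the projection identity \eqref{eq: project.control}. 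Finally, plugging $(\bar\zeta_T, \bar\ell_T)$ itself into the optimality condition, applying Cauchy-Schwarz, and invoking \Cref{thm:obs-filtered} one more time on $\bar\zeta$ produces \eqref{eq: 48}.

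The main technical point I anticipate is carrying out the duality pairing cleanly: the boundary coupling between $y$ and $h$ via the Gibbs-Thomson trace $y = \sigma \del_{x_1}^2 h$ on $\Gamma$, together with the dynamic boundary condition for $h$, generates several boundary terms upon integration by parts. The explicit identification of $\*A^{*}$ carried out in \Cref{prop-main-adj} is exactly what makes these terms cancel in the right way. Once that computation is in place, the remainder of the argument is standard HUM machinery, and the $e^{C(1/T + \mu)}$ dependence in \eqref{eq: 48} is inherited directly from the corresponding factor in the observability inequality.
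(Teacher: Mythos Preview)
Your proposal is correct and follows essentially the same HUM approach as the paper's own proof: define the quadratic-plus-linear functional on the low-frequency adjoint data, use \Cref{thm:obs-filtered} for coercivity, take the minimizer, and read off both \eqref{eq: project.control} and \eqref{eq: 48} from the Euler--Lagrange equation. Your write-up is in fact more detailed than the paper's (which simply cites \cite{zuazua1997finite} and sketches the argument in a few lines), in particular regarding the coercivity step and the boundary-term bookkeeping in the duality identity; the only cosmetic difference is a sign convention in the linear term of the functional and your explicit introduction of $E_\mu^{*}\subset\mathscr{H}^{*}$ where the paper works directly with $E_\mu$.
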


\begin{proof} We proceed by using customary HUM arguments,  closely following \cite{zuazua1997finite}. Let us define the functional
\begin{equation*}
\mathscr{J}(\zeta_T,\ell_T):=\frac12\int_0^T\int_\omega |\zeta|^2\diff x\diff t-\left\langle\Big(\zeta(0),\ell(0)\Big),\Big(y^0,h^0\Big)\right\rangle_{{\mathscr{H}^{*}, \mathscr{H}}},
\end{equation*}
for $(\zeta_T,\ell_T)\in E_\mu$. The functional $\mathscr{J}$ has a unique minimizer $(\zeta_T^*,\ell_T^*)\in E_\mu$ by virtue of the observability inequality of \Cref{thm:obs-filtered}. Let $(\zeta^*,\ell^*)$ denote the corresponding solution to \eqref{sys-lin-adj}.
By writing down the Euler-Lagrange equation, one quickly finds that the minimizer $(\zeta_T^*,\ell_T^*)$ is such that
\begin{equation*}
\left\langle\Big(y(T),h(T)\Big),\Big(\varphi_T,s_T\Big)\right\rangle_{{\mathscr{H}}}=0
\end{equation*}
for all $(\varphi_T,s_T)\in E_\mu$. This yields \eqref{eq: project.control}, by definition of $\*\Pi_\mu$. Estimate \eqref{eq: 48} follows similarly by virtue of \Cref{thm:obs-filtered}.
\end{proof}

\subsection{Decay of the high frequencies}
To go beyond the above theorem, as is common with the Lebeau-Robbiano method, we will need to make use of the following exponential decay result for the high frequency components.

\begin{lemma}[Exponential decay of high-frequencies] \label{lem:exp-projection}
Suppose $\sigma>0$. The following statements hold.
\begin{enumerate} 
\item 
The operator $\*A$ generates a bounded semigroup on $\mathscr{H}$. More precisely, there exists a constant $M \geqslant 1$ such that
\begin{equation*}
\Big\|e^{t\*A}\Big\|_{\mathscr{L}({\mathscr{H}})} \leqslant M,
\end{equation*}
holds for all $t\geqslant 0$.
\smallskip
\item 
Let $\mu>0$ be fixed, and suppose that $\left(y^{0}, h^{0}\right)\in\mathscr{H}$ is such that $\mathbf{\Pi}_{\mu} \left(y^{0}, h^{0}\right) = 0$. Then the solution of \eqref{sys-lin-con.intro} with $u \equiv 0$ satisfies 
\begin{equation*}
\Big\|\Big(y(t), h(t)\Big)\Big\|_{ \mathscr{H}} \leqslant e^{-\min\big\{\frac{\sigma}{2},1\big\}\mu^{2} t} \Big\|\Big(y^{0}, h^{0}\Big)\Big\|_{ \mathscr{H}}
\end{equation*}
for all $t\geqslant 0$.
\end{enumerate}
\end{lemma}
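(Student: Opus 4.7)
The plan is to exploit the block-diagonal structure of the semigroup $\{e^{t\*A}\}_{t\geqslant0}$ with respect to the Fourier decomposition in $x_1$, which was established in Proposition~3.3. Writing $(y(t),h(t)) = (2\pi)^{-\sfrac12}\sum_{n\in\Z} e^{t\*A_n}(y^0_n,h^0_n)\,e^{inx_1}$ and using the definition of $\|\cdot\|_{\mathscr{H}}$ via Plancherel, one has
\begin{equation*}
\left\|\big(y(t),h(t)\big)\right\|_{\mathscr{H}}^2 = \sum_{n\in\Z}\left\|e^{t\*A_n}(y^0_n,h^0_n)\right\|_{\mathscr{H}_n}^2,
\end{equation*}
so both claims reduce to uniform (in $n$) bounds on $\|e^{t\*A_n}\|_{\mathscr{L}(\mathscr{H}_n)}$.

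For the second statement, the assumption $\*\Pi_\mu(y^0,h^0)=0$ eliminates all Fourier modes with $|n|\leqslant\mu$; in particular the delicate zero-th mode is absent. For each remaining $n$, so $n\neq0$, $\*A_n$ is self-adjoint on $\mathscr{H}_n$ with spectrum contained in $(-\infty,-\min\{\sigma/2,1\}n^2]$ by the spectral gap (Lemma~4.2). The functional calculus then yields $\|e^{t\*A_n}\|_{\mathscr{L}(\mathscr{H}_n)}\leqslant e^{-\min\{\sigma/2,1\}n^2t}\leqslant e^{-\min\{\sigma/2,1\}\mu^2 t}$ for $|n|>\mu$, and summing in $n$ gives the stated exponential decay of $\|(y(t),h(t))\|_{\mathscr{H}}$.

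For the first (boundedness) statement, the same self-adjointness argument already yields $\|e^{t\*A_n}\|_{\mathscr{L}(\mathscr{H}_n)}\leqslant 1$ for every $n\neq0$, since the spectrum is strictly negative there. The only non-trivial case is $n=0$, for which the system decouples: the mean in $x_1$ of the Gibbs--Thomson boundary term $\sigma\del_{x_1}^2 h$ vanishes, so $y_0$ solves the standard heat equation on $(-1,1)$ with homogeneous Dirichlet conditions, while Duhamel gives
\begin{equation*}
h_0(t) = h_0^0 + \int_0^t \del_{x_2}y_0(s,1)\diff s.
\end{equation*}
Expanding $y_0$ in the $L^2(-1,1)$-orthonormal Dirichlet basis $\varphi_j(x_2)=\sin(j\pi(1+x_2)/2)$, with eigenvalues $-\lambda_j=-j^2\pi^2/4$, and noting that $|\varphi_j'(1)|=j\pi/2$, Cauchy--Schwarz then yields
\begin{equation*}
\int_0^{\infty}\left|\del_{x_2}y_0(s,1)\right|\diff s \,\leqslant\, \sum_{j\geqslant1}\frac{|\langle y_0^0,\varphi_j\rangle|\,(j\pi/2)}{\lambda_j} \,\leqslant\, C\|y_0^0\|_{L^2(-1,1)},
\end{equation*}
so $|h_0(t)|\leqslant |h_0^0|+C\|y_0^0\|_{L^2}$ uniformly in $t\geqslant0$. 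Combined with $\|y_0(t)\|_{L^2}\leqslant e^{-\pi^2 t/4}\|y_0^0\|_{L^2}$, this gives a uniform bound on $\|e^{t\*A_0}\|_{\mathscr{L}(\mathscr{H}_0)}$, which together with the $n\neq0$ estimates concludes the proof.

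The main obstacle is the zero-mode analysis in part~(1): because $\mathscr{H}$ only provides the weight $1+\sigma n^2$ on $|g_n|^2$, there is no coercivity whatsoever on the scalar $h_0$, and the energy identity \eqref{eq: energy.dissipation} gives no control of it either. The cascade Duhamel argument above, combined with the summability of $(\varphi_j'(1)/\lambda_j)$, is exactly what is needed to prevent $h_0(t)$ from drifting, and it is this summability that underlies uniform boundedness of the semigroup on $\mathscr{H}$.
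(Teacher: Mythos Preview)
Your argument is correct. For Part~(2) it is essentially identical to the paper's: both write $(y(t),h(t))$ as the sum over $|n|>\mu$ of $e^{t\*A_n}(y_n^0,h_n^0)$, invoke the self-adjointness of $\*A_n$ for $n\neq0$ and the spectral gap $\mathrm{spec}(\*A_n)\subset(-\infty,-\min\{\sigma/2,1\}n^2]$ from \Cref{rem: spectral.gap}, and sum in~$n$.

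For Part~(1) the two approaches diverge. The paper argues abstractly: since $\mathrm{spec}(\*A)\subset(-\infty,0]$ (by \Cref{lem-spec-A0} and \Cref{lem: spectrum}) and $\*A$ generates an analytic semigroup with compact resolvent, boundedness follows. This is terse and implicitly relies on $0$ being a semisimple eigenvalue of~$\*A$ (equivalently, of~$\*A_0$). Your route is more hands-on: you split off $n=0$ explicitly, observe that the zero-mode system is an upper-triangular cascade (Dirichlet heat equation for $y_0$, then $h_0$ driven by the Neumann trace), and bound $\sup_{t\geqslant0}|h_0(t)|$ via the absolute summability of $\varphi_j'(1)/\lambda_j\sim(j\pi)^{-1}$ together with Cauchy--Schwarz. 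This is entirely elementary and makes the mechanism transparent: the only potential source of semigroup growth is drift in the scalar $h_0$, and the Duhamel integral converges absolutely because the Dirichlet spectrum is coercive enough. The paper's argument is shorter but leans on spectral folklore; yours is self-contained and exposes exactly where the uniform bound comes from.

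One small remark: your displayed Plancherel identity uses the $\mathscr{H}_n$ norm (weight $\sigma n^2$ on the scalar component for $n\neq0$), whereas the $\mathscr{H}$ norm carries weight $1+\sigma n^2$. These are equivalent for $n\neq0$ with ratio at most $1+\sigma^{-1}$, so this only affects the constant; the paper's proof makes the same silent identification when it writes ``summing over $|n|>\mu$ yields the conclusion''.
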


\begin{proof} 
{\color{black} Regarding the first claim, from \Cref{lem-spec-A0} and \Cref{lem: spectrum}, we infer that supremum of the spectrum of $\* A$ is zero. Thus $\* A$ generates a bounded semigroup on $\mathscr{H}.$}

Regarding the second claim, since $\mathbf{\Pi}_{\mu} \left(y^{0}, h^{0}\right) = 0$, we may write
\begin{equation*}
\Big(y^{0}(x_{1}, x_{2}),h^0(x_1)\Big) = \frac{1}{\sqrt{2\pi}}
\sum_{ |n| > \mu} \Big(y^0_{n}(x_{2}), h^0_{n}\Big) e^{inx_{1}}, 
\end{equation*}
with the Fourier coefficients being again defined as in \eqref{eq: yn.fourier}.
The solution $(y,h)$ to \eqref{sys-lin-con.intro} with $u \equiv 0$ is then given by 
\begin{equation*}
\Big(y(t, x_{1}, x_{2}),h(t,x_1)\Big) = \frac{1}{\sqrt{2\pi}}
\sum_{|n| > \mu} \Big(y_{n}(t, x_{2}),h_{n}(t)\Big)e^{inx_{1}},
\end{equation*}
where $(y_{n},h_{n})$ is the unique solution to \eqref{eq: fourier.projected.system.intro} with $u_{n}\equiv0$. Since $n\neq0$, the latter reads
\begin{equation} \label{eq: 50}
\Big(y_n(t,x_2),h_n(t)\Big)=\sum_{k=0}^{+\infty}e^{\lambda_{n,k}t}\left\langle\Big(y_n^0,h_n^0\Big),\Phi_{n,k}\right\rangle_{\mathscr{H}_n}\Phi_{n,k}(x_2).
\end{equation}
(Here $\{\Phi_{n,k}\}_{k=0}^{+\infty}$ is the sequence of eigenfunctions of $\*A_n$ forming an orthobasis of $\mathscr{H}_n$.)
We make use of \eqref{eq: 50} and \Cref{rem: spectral.gap} to find that
\begin{align*}
\norm{\Big(y_{n}(t), h_{n}(t)\Big)}_{\mathscr{H}_{n}} \leqslant e^{\lambda_{0, n}t}\norm{\Big(y_{n}^{0}, h_{n}^{0}\Big)}_{\mathscr{H}_{n}} &\leqslant e^{-\min\big\{\frac{\sigma}{2},1\big\} n^{2} t} \norm{\Big(y_{n}^{0}, h_{n}^{0}\Big)}_{\mathscr{H}_{n}} \\
 &\leqslant e^{-\min\big\{\frac{\sigma}{2},1\big\} \mu^{2} t} \norm{\Big(y_{n}^{0}, h_{n}^{0}\Big)}_{\mathscr{H}_{n}}
\end{align*}
holds for all $t\geqslant0$. Summing up over $|n|>\mu$ yields the desired conclusion. 
\end{proof}

Combining \Cref{thm:control-filter} and \Cref{lem:exp-projection}, we obtain the following result.

\begin{proposition} \label{pr:control-filter-estimate}
Let $\sigma>0$ be fixed, and suppose $\omega=(a,b)\times(c,d)$, where $(a,b)\subset\T$ and $(c,d)\subset(-1,1)$, $c\neq\pm d$. Then, there exists $C > 0$ such that for any $0<T\leqslant1$, for any $(y^{0}, h^{0}) \in{\mathscr{H}},$ and for any $\mu > 0,$ there exists a control $u_{\mu} \in L^{2}((0,T) \times \omega)$ such that the unique solution $(y, h)\in C^0([0,T];{\mathscr{H}})$ to \eqref{sys-lin-con.intro} with control $u_{\mu}$ satisfies 
\begin{equation} \label{eq:est-control-filter}
\big\|u_{\mu}\big\|_{L^{2}((0,T) \times \omega)} \leqslant C e^{C\big(\frac{1}{T} + \mu\big)} \Big\|\Big(y^{0}, h^{0}\Big)\Big\|_{{\mathscr{H}}},
\end{equation}
and 
\begin{equation} \label{eq:est-state-filter}
\Big\|\Big(y(T), h(T)\Big)\Big\|_{{\mathscr{H}}} \leqslant C \left(1 + \sqrt{T} \right)  e^{C\big(\frac{1}{T} + \mu\big) - \min\big\{\frac{\sigma}{2},1\big\} \mu^{2} \frac{T}{2}} \Big\|\Big(y^{0}, h^{0}\Big)\Big\|_{{\mathscr{H}}}.
\end{equation}
\end{proposition}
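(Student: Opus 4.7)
The strategy is the classical Lebeau-Robbiano time-splitting: on the first half-interval $[0,T/2]$ I apply the low-frequency control provided by \Cref{thm:control-filter} to annihilate the projection $\mathbf{\Pi}_{\mu}(y(T/2),h(T/2))$; on the second half-interval $[T/2,T]$ I switch the control off and invoke the exponential decay of high-frequency components furnished by \Cref{lem:exp-projection}. All the analytical work has already been done in those two statements, so the proof will be little more than a bookkeeping exercise.

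Concretely, first apply \Cref{thm:control-filter} on the interval $[0,T/2]$ (which still satisfies $T/2 \leqslant 1$) with initial datum $(y^{0},h^{0})$ and frequency cut-off $\mu$. This produces a control $\widetilde{u}_{\mu}\in L^{2}((0,T/2)\times\omega)$ such that the associated solution $(\widetilde{y},\widetilde{h})$ satisfies $\mathbf{\Pi}_{\mu}(\widetilde{y}(T/2),\widetilde{h}(T/2))=0$, together with
\begin{equation*}
\|\widetilde{u}_{\mu}\|_{L^{2}((0,T/2)\times\omega)}\leqslant C_{1}e^{C_{1}(\sfrac{2}{T}+\mu)}\|(y^{0},h^{0})\|_{\mathscr{H}}.
\end{equation*}
Extend $\widetilde{u}_{\mu}$ by zero to $[T/2,T]$ and call the extension $u_{\mu}$; let $(y,h)$ be the solution of \eqref{sys-lin-con.intro} driven by $u_{\mu}$ starting from $(y^{0},h^{0})$. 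Estimate \eqref{eq:est-control-filter} then follows immediately, absorbing the factor $2$ in the exponent into the constant $C$ (permissible since $T\leqslant1$).

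It remains to bound $\|(y(T),h(T))\|_{\mathscr{H}}$. By the Duhamel formula and the bounded-semigroup assertion of \Cref{lem:exp-projection}(i), with $M\geqslant1$ the uniform bound on $\{e^{t\*A}\}_{t\geqslant0}$,
\begin{equation*}
\|(y(T/2),h(T/2))\|_{\mathscr{H}}\leqslant M\|(y^{0},h^{0})\|_{\mathscr{H}}+M\sqrt{T/2}\,\|\widetilde{u}_{\mu}\|_{L^{2}((0,T/2)\times\omega)}.
\end{equation*}
Combining this with the previous control estimate yields
\begin{equation*}
\|(y(T/2),h(T/2))\|_{\mathscr{H}}\leqslant C_{2}\bigl(1+\sqrt{T}\bigr)e^{C_{2}(\sfrac{1}{T}+\mu)}\|(y^{0},h^{0})\|_{\mathscr{H}}.
\end{equation*}
Since $u_{\mu}\equiv 0$ on $[T/2,T]$ and $\mathbf{\Pi}_{\mu}(y(T/2),h(T/2))=0$ by construction, the second assertion of \Cref{lem:exp-projection} applied on $[T/2,T]$ delivers
\begin{equation*}
\|(y(T),h(T))\|_{\mathscr{H}}\leqslant e^{-\min\{\sfrac{\sigma}{2},1\}\mu^{2}\sfrac{T}{2}}\|(y(T/2),h(T/2))\|_{\mathscr{H}},
\end{equation*}
and chaining the two inequalities above produces \eqref{eq:est-state-filter}.

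There is no genuine obstacle in this step; the only point requiring mild attention is the scaling of the time interval, since \Cref{thm:control-filter} is applied on $(0,T/2)$ rather than $(0,T)$. This introduces a factor $e^{C/(T/2)}=e^{2C/T}$, which is absorbed into the form $e^{C(\sfrac{1}{T}+\mu)}$ up to renaming the constant. The genuinely substantive ingredients — the low-frequency observability uniform in $\mu$ and the high-frequency spectral gap $\min\{\sfrac{\sigma}{2},1\}\mu^{2}$ — have already been established upstream.
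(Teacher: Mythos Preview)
Your proof is correct and follows essentially the same approach as the paper: split the time interval in half, use \Cref{thm:control-filter} on $[0,T/2]$ to kill the low-frequency projection, extend the control by zero, bound the state at $T/2$ via Duhamel and the semigroup bound, and then let the high frequencies decay freely on $[T/2,T]$ via \Cref{lem:exp-projection}. The only cosmetic difference is notation and the explicit remark about absorbing the factor $2$ in the exponent, which the paper handles silently.
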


\begin{proof}
By virtue of \Cref{thm:control-filter}, there exists a control $\overline{u}_{\mu} \in L^{2}\left(\left(0,\frac{T}{2}\right)\times\omega\right)$ such that the solution $(\overline{y}, \overline{h})$ to \eqref{sys-lin-con.intro} with control $\overline{u}_{\mu}$ satisfies
\begin{equation} \label{control-T/2}
\mathbf{\Pi}_{\mu} \left(\overline{y}\left(\frac{T}{2}\right),\overline{h}\left(\frac{T}{2}\right)\right) = 0,
\end{equation}
and 
\begin{equation} \label{eq: 55.5}
\norm{\overline{u}_{\mu}}_{L^{2}\left(\left(0,\frac{T}{2}\right)\times\omega\right)} \leqslant C e^{C\big(\frac{1}{T} + \mu\big)} \Big\|\Big(y^{0}, h^{0}\Big)\Big\|_{{\mathscr{H}}}.
\end{equation}
The constant $C>0$, stemming from \Cref{thm:control-filter}, is independent of both $T>0$ and $\mu>0$.
By virtue of the Duhamel formula, the semigroup boundedness per \Cref{lem:exp-projection}, the Cauchy-Schwarz inequality, and \eqref{eq: 55.5}, we find 
\begin{align} \label{est-state-T/2}
\left\|\left(\overline{y}\left(\frac{T}{2}\right), \overline{h}\left(\frac{T}{2}\right)\right)\right\|_{{\mathscr{H}}} & \leqslant M \Big\|\Big(y^{0},h^{0}\Big)\Big\|_{{\mathscr{H}}} + M \sqrt{T} \left\|\overline{u}_{\mu}\right\|_{L^{2}\left(\left(0,\frac{T}{2}\right)\times \omega\right)} \notag \\
& \leqslant C \left(1 + \sqrt{T} \right)  e^{C\big(\frac{1}{T} + \mu\big)}  \Big\|\Big(y^{0}, h^{0}\Big)\Big\|_{{\mathscr{H}}}.
\end{align}
Between $\frac{T}{2}$ and $T,$ we let the system dissipate according to \Cref{lem:exp-projection}. Namely, let us define 
\begin{equation*}
u_{\mu}(t) := \begin{cases}
\overline{u}_{\mu}(t) & \mbox{ if } t \in\left(0,\frac{T}{2}\right)\\
0 & \mbox{ if } t\in\left(\frac{T}{2}, T\right).
\end{cases}
\end{equation*}
From the above definition, we immediately have \eqref{eq:est-control-filter}. Finally, using the second claim in \Cref{lem:exp-projection} combined with \eqref{control-T/2}, and subsequently \eqref{est-state-T/2}, 
we deduce 
\begin{align*}
\Big\|\Big(y(T), h(T)\Big)\Big\|_{{\mathscr{H}}} & \leqslant e^{-\min\big\{\frac{\sigma}{2},1\big\}\mu^{2} \frac{T}{2}}\left\|\left(\overline{y}\left(\frac{T}{2}\right),\overline{h}\left(\frac{T}{2}\right)\right)\right\|_{{\mathscr{H}}}  \\
& \leqslant C \left(1 +\sqrt{T} \right)   e^{C\big(\frac{1}{T} + \mu\big)}  e^{-\min\big\{\frac{\sigma}{2},1\big\}\mu^{2} \frac{T}{2}} \Big\|\Big(y^{0}, h^{0}\Big)\Big\|_{{\mathscr{H}}}.
\end{align*}
This concludes the proof.
\end{proof}

\begin{remark}
In the constants appearing in the estimates \eqref{eq:est-control-filter} and \eqref{eq:est-state-filter}, the term $T$ actually indicates the length of the interval, rather than the endpoint. In other words, the interval $(0,T)$ may be replaced by an arbitrary interval $(\tau_1,\tau_2)$ in the above result, in which case the $T$ appearing in the constants of these estimates would be replaced by $\tau_2-\tau_1$.
\end{remark}

\subsection{Proof of \Cref{thm: main.result.linear}}
We may now complete the proof of our second main result.

\begin{proof}[Proof of \Cref{thm: main.result.linear}]

{\color{black}
Without loss of generality, we can suppose $0<T\leqslant1$ (indeed, otherwise, simply set the control equal to $0$ beyond time $1$).
Let $\beta > 0$ be fixed and to be chosen suitably later on. 
For $j\geqslant1$, we consider the dyadic sequences 
\begin{equation*}
T_{j} := 2^{-j}T \hspace{0.5cm} \text{ and } \hspace{0.5cm} \mu_{j}:= 2^{j}\beta,
\end{equation*}
and we define the sequence
\begin{equation*}
\tau_j:= 
\begin{dcases}
0 &\text{ for } j=0,\\
\sum_{k=1}^j T_k &\text{ for } j\geqslant1.
\end{dcases}
\end{equation*}
We note that
\begin{equation*}
(0,T) = \bigcup_{j=0}^{+\infty}(\tau_{j}, \tau_{j+1}).
\end{equation*}
For $j\geqslant1$, on any interval $(\tau_{j-1}, \tau_{j})$, by virtue of \Cref{pr:control-filter-estimate}, with $\mu=\mu_{j}$, we may build a control $u_j\in L^2((\tau_{j-1},\tau_{j})\times\omega)$ with corresponding state $(y_{j}, h_{j}) \in C^0([\tau_{j-1},\tau_j];{\mathscr{H}})$ such that 
\begin{equation} \label{eq: 6.12-bis}
\|u_j\|_{L^2((\tau_{j-1},\tau_{j})\times\omega)} \leqslant Ce^{C\left(\frac{1}{T_{j}}+\mu_{j}\right)}\left\|\Big(y_{j}(\tau_{j-1}), h_{j}(\tau_{j-1})\Big)\right\|_{{\mathscr{H}}},
\end{equation} 
as well as 
\begin{align} \label{eq: 6.13-bis}
&\left\|\Big(y_{j}(\tau_{j}), h_{j}(\tau_{j})\Big)\right\|_{{\mathscr{H}}}\nonumber\\
&\qquad\leqslant C\left(1+\sqrt{T_j}\right)e^{C\left(\frac{1}{T_j}+\mu_j\right)-\min\left\{\frac{\sigma}{2},1\right\}\mu_j^2\frac{T_j}{2}}\left\|\Big(y_{j}(\tau_{j-1}),h_{j}(\tau_{j-1})\Big)\right\|_{{\mathscr{H}}}
\end{align}
hold, with the convention $(y_{1}(0), h_{1}(0)) = \left(y^{0}, h^{0}\right).$ 
From \eqref{eq: 6.12-bis} and \eqref{eq: 6.13-bis}, we gather 
\begin{align} 
&\norm{u_{j}}_{L^{2}\left(\left(\tau_{j-1},\tau_{j}\right) \times \omega\right)}\label{eq:control-fj-1} \\ 
& \leqslant C^{j} \left(1+\sqrt{T}\right)^{j-1} 
\exp\left(\,\sum_{k=1}^{j} C\left(\frac{1}{T_{k}} + \mu_{k} \right)   - \min\left\{\frac{\sigma}{2},1\right\} \sum_{k=1}^{j-1} \mu_{k}^{2} \frac{T_{k}}{2} \right)\left\|\Big(y^{0}, h^{0}\Big)\right\|_{{\mathscr{H}}}\notag.
\end{align}
as well as
\begin{align} \label{eq:sol-fj}
&\left\|\Big(y_{j}(\tau_{j}), h_{j}(\tau_{j})\Big)\right\|_{{\mathscr{H}}} \\
 &\leqslant  C^{j}\left(1+\sqrt{T}\right)^{j}\exp\left(\,\sum_{k=1}^{j} \left[ C\left(\frac{1}{T_{k}} +\mu_{k}\right)   - 
 \min\left\{\frac{\sigma}{2},1\right\} 
 \mu_{k}^{2} \frac{T_{k}}{2} \right]\right)\left\|\Big(y^{0}, h^{0}\Big)\right\|_{{\mathscr{H}}}\notag.
\end{align}
Let us now define 
\begin{align*}
\beta_{0}:=-\frac{2C}{T}-2\beta C+\min\left\{\frac{\sigma}{2},1\right\} \beta^{2} T,
\end{align*}
and
\begin{equation*}
\beta_{1}:=-\frac{2C}{T}-2\beta C+\min\left\{\frac{\sigma}{2},1\right\} \frac{\beta^{2} T}{2}.
\end{equation*}
We may observe that 
\begin{align*}
\sum_{k=1}^{j}\left[C\left(\frac{1}{T_{k}} + \mu_{k} \right) -  \min\left\{\frac{\sigma}{2}, 1\right\} \mu_{k}^{2} \frac{T_{k}}{2} \right]&=\left(\frac{2C}{T}+2\beta C- \min\left\{\frac{\sigma}{2},1\right\} T\right)\left(2^{j} - 1\right) \\
&= -\beta_{0}(2^{j}-1),
\end{align*}
as well as
\begin{align*}
 \sum_{k=1}^{j}  C\left(\frac{1}{T_{k}} + \mu_{k}\right)   - \min\left\{\frac{\sigma}{2},1\right\} \sum_{k=1}^{j-1}  
 \mu_{k}^{2} \frac{T_{k}}{2} 
 =  - 2^{j}\beta_{1} + \beta_{0}.
\end{align*}
Thus, from \eqref{eq:control-fj-1} and \eqref{eq:sol-fj}, we infer that  
\begin{equation} \label{eq:control-fj-2}
\norm{u_{j}}_{L^{2}((\tau_{j-1},\tau_{j}) \times \omega)}  \leqslant \frac{C^{j}\left(1+\sqrt{T}\right)^{j}}{1+\sqrt{T}}  e^{\beta_{0}} e^{-2^{j}\beta_1} \left\|\Big(y^{0}, h^{0}\Big)\right\|_{{\mathscr{H}}}.
\end{equation}
and 
\begin{equation} \label{eq:sol-fj-2}
\left\|\Big(y_{j}(\tau_{j}), h_{j}(\tau_{j})\Big)\right\|_{{\mathscr{H}}} \leqslant C^{j} \left(1+\sqrt{T}\right)^{j}  e^{\beta_{0}} e^{-\beta_{0} 2^{j}} \left\|\Big(y^{0}, h^{0}\Big)\right\|_{{\mathscr{H}}},
\end{equation}
We select $m=m(T)\in\mathbb{N}$ sufficiently large so that
\begin{equation} \label{cond-m}
C \left(1+\sqrt{T}\right) < 2^{m}.
\end{equation}
Moreover, without loss of generality let us suppose $\sigma\in(0,2]$, and set
\begin{equation*}
\beta:=\frac{\alpha C}{\sigma T},
\end{equation*}
where $\alpha>0$ is large enough so that $\beta_1\geqslant1$ (The case $\sigma>2$ will follow analogously, as then $\min\{\sfrac{\sigma}{2},1\}=1$, and we can choose $\beta:=\sfrac{\alpha C}{T}$.)
We see that
\begin{equation*}
\beta_0 = \frac{C^2}{T\sigma}\left(\frac{\alpha^2}{2}-2\alpha T-\frac{2\sigma}{C}\right)
\end{equation*}
and
\begin{equation*}
\beta_1 = \frac{C^2}{T\sigma}\left(\frac{\alpha^2}{4}-2\alpha T-\frac{2\sigma}{C}\right).
\end{equation*}
Hence, from \eqref{eq:control-fj-2} and since $\beta_1\geqslant1$, we have 
\begin{align} \label{eq:control-fj-3} 
\norm{u_{j}}_{L^{2}((\tau_{j-1},\tau_{j}) \times \omega)}  
& \leqslant e^{\beta_{0}} \beta_{1}^{-m}  \left(C\left(1+\sqrt{T}\right) 2^{-m}\right)^{j}  (\beta_{1} 2^{j})^{m} e^{-\beta_{1} 2^{j}} \left\|\Big(y^{0}, h^{0}\Big)\right\|_{\mathscr{H}} \notag \\
& \leqslant C_{1} e^{\frac{C_1}{T}} \left(\left(1+C\sqrt{T}\right) 2^{-m}\right)^{j} \left\|\Big(y^{0}, h^{0}\Big)\right\|_{\mathscr{H}},
\end{align}
where $C_{1}=C_1(T,\sigma)>0$ is independent of $j.$  We define the control $u:(0,T) \times \omega \to \mathbb{R}$ by pasting all the $u_j$ as
\begin{equation*} \label{def-control}
u = \sum_{j=1}^{\infty} u_{j} 1_{(\tau_{j-1}, \tau_{j})}.
\end{equation*}
By virtue of \eqref{eq:control-fj-3} and \eqref{cond-m}, we get
\begin{align*}
\norm{u}^{2}_{L^{2}(0,T;L^{2}(\omega))} = \sum_{j=1}^{\infty} \norm{u_{j}}_{L^{2}((\tau_{j-1},\tau_{j}) \times \omega)}^{2} & = C_{1}^{2}e^{\frac{2C_1}{T}} \left\|\Big(y^{0}, h^{0}\Big)\right\|_{\mathscr{H}}^2  \sum_{j=1}^{\infty} 
 \left(\left(1+C\sqrt{T}\right) 2^{-m}\right)^{2j} \\
 &= C_{2} e^{\frac{C_2}{T}}  \left\|\Big(y^{0}, h^{0}\Big)\right\|_{\mathscr{H}}^2,
\end{align*}
for some
$C_{2}=C_2(T,\sigma)>0$, and therefore $u \in L^{2}((0,T)\times\omega).$ 
Now consider the unique solution $(y,h)\in C^0([0,T];L^{2}(\Omega)\times H^1(\T))$ to \eqref{sys-lin-con.intro}, with control $u$. 
Clearly 
\begin{equation*}
\big(y(t), h(t)\big) = \big(y_{j}(t), h_{j}(t)\big) \hspace{0.5cm} \text{ for } t\in[\tau_{j-1},\tau_j].
\end{equation*}
In particular, 
\begin{equation*}
\big(y(\tau_{j}), h(\tau_{j})\big) = \big(y_{j}(\tau_{j}), h_{j}(\tau_{j})\big),
\end{equation*} 
so from \eqref{eq:sol-fj-2} we deduce that $\big(y(T,\cdot), h(T,\cdot)\big)\equiv0$ in $\Omega\times\T$. 
This concludes the proof.
}
\end{proof}

\section{Proof of \Cref{thm:main-nonlinear}} \label{sec: nonlinear.section}

\subsection{Control in spite of source terms} \label{sec: source.term.method}

{\color{black}
In view of tackling the controllability of the nonlinear system, we look to add the source terms over which we aim to apply a fixed point argument. 
	Let us therefore consider the following linear control system with non-homogeneous source terms 
\begin{equation} \label{sys-lin}
\begin{dcases}
\partial_{t} y - \Delta y =  u 1_{\omega} + f_1 & \mbox{ in } (0,T) \times \Omega, \\
\partial_{t}  h(t,x_1) -  \partial_{x_{2}} y(t,x_1,1) = f_{2}(t,x_1)  & \mbox{ on } (0,T) \times \mathbb{T}, \\
y(t,x_{1}, -1) = 0 &  \mbox{ on } (0,T) \times \mathbb{T}, \\
y(t, x_{1}, 1) = \sigma \del_{x_1}^2 h(t,x_1) + f_{3}(t,x_1) & \mbox{ on } (0,T) \times \mathbb{T}, \\
\left(y, h\right)_{|_{t=0}} = \left(y^0, h^0\right) & \mbox{ in } \Omega \times \T.
\end{dcases}
\end{equation}

\subsubsection{Improved regularity for \eqref{sys-lin}}
Before proceeding with the control analysis, let us provide a necessary regularity result for \eqref{sys-lin}. 
We consider the subset of refined initial data 
\begin{equation*}
\mathscr{I} := \left\{\begin{bmatrix}y^0\\h^0\end{bmatrix}
 \in H^{1}(\Omega) \times H^{\sfrac{5}{2}}(\T) \,\,\Biggm|\,\, y^{0}(x_{1}, -1) = 0 \mbox{ in } \T \right\},
\end{equation*}
as well as
\begin{multline*}
\mathfrak{E}_{f}(0,T) := \left\{ (f_{1}, f_{2}, f_{3}) \in L^{2}((0,T)\times\Omega)\times H^{\sfrac{1}{2}, \sfrac{1}{4}}((0,T) \times \T) \times H^{\sfrac{3}{2}, \sfrac{3}{4}}((0,T) \times \T) \right\},
\end{multline*}
where we used 
\begin{equation*}
H^{r,s}((0,T) \times \T) : = L^{2}((0, T); H^{r}(\T)) \cap H^{s}((0, T);L^{2}(\T))
\end{equation*}
for $r,s\geqslant0$.
We also introduce the (higher order) energy spaces
\begin{equation} \label{eq: Ey}
\mathfrak{E}_{y}(0,T) := L^{2}\Big((0,T);H^{2}(\Omega)\Big) \cap H^{1}\Big((0,T);L^{2}(\Omega)\Big) \cap C^0\Big([0,T];H^{1}(\Omega)\Big),
\end{equation}
and
\begin{align} \label{eq: Eh}
\mathfrak{E}_{h}(0,T) := L^{2}\Big((0,T);H^{\sfrac{7}{2}}(\T)\Big) &\cap H^{\sfrac{3}{4}}\Big((0,T);H^{2}(\T)\Big) \cap H^{1}\Big((0,T);H^{1}(\T)\Big) \\
&\cap H^{\sfrac{5}{4}}\Big((0,T);L^{2}(\T)\Big) \cap C^0\Big([0,T];H^{\sfrac{5}{2}}(\T)\Big).\notag
\end{align}
The following improved well-posedness result then holds.

\begin{proposition} \label{thm:lin-source}
Suppose $\sigma>0$ and $T>0$. 
For any $\left(y^{0}, h^{0}\right)\in L^{2}(\Omega) \times H^{1}(\T)$ and $(f_{1}, f_{2}, f_{3}) \in \mathfrak{E}_{f}(0,T)$, System \eqref{sys-lin} with control $u\equiv0$ admits a unique mild solution $(y, h) \in C^0([0,T];L^{2}(\Omega) \times H^{1}(\T))$, and there exists
some constant $C(T,\sigma)>0$ such that 
\begin{equation} \label{est-linear-main}
\norm{\left(y, h\right)}_{ C^0([0,T];L^{2}(\Omega) \times H^{1}(\T))} \leqslant C(T,\sigma) \left( \norm{\left(y^{0}, h^{0}\right)}_{L^{2}(\Omega) \times H^{1}(\T)} + \norm{\left(f_{1}, f_{2}, f_{3}\right)}_{\mathfrak{E}_{f}(0,T)}\right).
\end{equation}
If moreover $\left(y^0, h^0\right) \in \mathscr{I}$, and satisfies the compatibility condition 
\begin{equation} \label{cc1}
y^{0}(x_{1}, 1) = \sigma \del_{x_1}^2 h^{0}(x_{1}) + f_{3}(0, x_{1}) \hspace{0.5cm} \mbox{ in } \T,
\end{equation}
then \eqref{sys-lin} admits a unique strong solution $(y, h) \in \mathfrak{E}_{y}(0,T) \times \mathfrak{E}_{h}(0,T)$, and there exists some constant $C(T,\sigma)>0$ such that
\begin{equation*} 
\norm{\left(y, h\right)}_{ \mathfrak{E}_{y}(0,T) \times \mathfrak{E}_{h}(0,T)} \leqslant C(T,\sigma) \left(\norm{\left(y^0, h^{0}\right)}_{\mathscr{I}} + \norm{\left(f_{1}, f_{2}, f_{3}\right)}_{\mathfrak{E}_{f}(0,T)}\right).
\end{equation*}
\end{proposition}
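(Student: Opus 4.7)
The plan is to reduce \eqref{sys-lin} to an abstract Cauchy problem governed by $\*A$ on $\mathscr{H}$ via a parabolic lifting of the Dirichlet boundary source $f_3$, and then invoke semigroup theory for the mild assertion and maximal parabolic regularity for the strong one.

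First, by the classical inverse trace theorem for anisotropic parabolic Sobolev spaces, I will construct a lifting $w \in \mathfrak{E}_y(0,T)$ satisfying $w(\cdot, x_1, 1) = f_3$, $w(\cdot, x_1, -1) = 0$, $w(0, x_1, x_2) = \tfrac{1+x_2}{2} f_3(0, x_1)$, together with the continuity bound
\begin{equation*}
\norm{w}_{\mathfrak{E}_y(0,T)} \lesssim \norm{f_3}_{H^{\sfrac{3}{2},\sfrac{3}{4}}((0,T)\times\T)}.
\end{equation*}
Setting $\tilde y := y - w$, the pair $(\tilde y, h)$ then satisfies a system of the form \eqref{sys-lin} with $f_3 \equiv 0$, modified interior source
\begin{equation*}
\tilde f_1 := f_1 - \partial_t w + \Delta w \in L^2\big((0,T) \times \Omega\big),
\end{equation*}
modified $h$-source $\tilde f_2 := f_2 + \partial_{x_2} w(\cdot, \cdot, 1) \in H^{\sfrac{1}{2},\sfrac{1}{4}}((0,T)\times\T)$, and translated initial datum $(\tilde y(0), h^0) = (y^0 - w(0), h^0)$. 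The role of the compatibility condition \eqref{cc1} is to guarantee $\tilde y(0)(x_1, 1) = \sigma \partial_{x_1}^2 h^0(x_1)$, so that $(\tilde y(0), h^0)$ is admissible as the initial trace of a solution in $\mathfrak{E}_y \times \mathfrak{E}_h$.

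With the boundary source absorbed, the reduced problem reads $\partial_t (\tilde y, h) = \*A(\tilde y, h) + (\tilde f_1, \tilde f_2)$. For the mild assertion, I would apply Duhamel's formula: by \Cref{prop: analytic.semigroup} and \Cref{lem:exp-projection}, the semigroup $\{e^{t\*A}\}_{t\geqslant 0}$ is analytic and uniformly bounded on $\mathscr{H}$, so a unique $C^0([0,T];\mathscr{H})$-solution to the reduced problem exists provided $(\tilde f_1, \tilde f_2) \in L^2(0,T;\mathscr{H})$. The source bound $\norm{(\tilde f_1, \tilde f_2)}_{L^2(0,T;\mathscr{H})} \lesssim \norm{(f_1,f_2,f_3)}_{\mathfrak{E}_f(0,T)}$ is then verified after Fourier decomposition in $x_1$, using the $(1+\sigma n^2)$-weighted norm in \eqref{eq: H.norm} to absorb the fractional spatial regularity of $\tilde f_2$. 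For the strong assertion, I would invoke the maximal $L^2$-parabolic regularity of $\*A$, a direct consequence of its analyticity (cf. \cite{benoussan_book}); identifying $\mathscr{I}$ with the interpolation trace space $[\mathscr{H}, \mathfrak{D}(\*A)]_{\sfrac{1}{2}}$ via \eqref{eq:DA}, the compatibility of the translated initial datum ensures its membership in this space, and maximal regularity then upgrades $(\tilde y, h)$ to $L^2(0,T;\mathfrak{D}(\*A)) \cap H^1(0,T;\mathscr{H}) \cap C^0([0,T];\mathscr{I})$. Translating back via $y = \tilde y + w$ and recovering the higher regularity of $h$ from the Dirichlet relation $y(\cdot,1) = \sigma \partial_{x_1}^2 h + f_3$ combined with elliptic regularity on $\T$ yields membership in \eqref{eq: Ey}--\eqref{eq: Eh} with the claimed estimate.

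The principal obstacle I anticipate is the mismatch between the spatial regularity $H^{\sfrac{1}{2}}(\T)$ of $\tilde f_2$ and the $H^1(\T)$-component hard-wired into $\mathscr{H}$; this gap is to be bridged either by a secondary, analogous lifting absorbing $f_2$ into a correction of the $h$-variable, or at the Fourier level by exploiting the spectral gap $|\lambda_{n,0}| \gtrsim n^2$ of $\*A_n$ (cf. \Cref{rem: spectral.gap}) combined with the $H^{\sfrac{1}{4}}$-time regularity of $f_2$. A subsidiary technicality will be verifying the finer scales present in \eqref{eq: Eh}, most notably the $H^{\sfrac{5}{4}}(0,T;L^2(\T))$ and $H^{\sfrac{3}{4}}(0,T;H^2(\T))$ components, which are to be obtained by differentiating the $h$-equation in time and transferring time regularity from $\tilde f_2$ and from the Neumann trace of $\tilde y$. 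Uniqueness in both parts is a direct consequence of linearity and the energy identity \eqref{eq: energy.dissipation} applied to the difference of two solutions.
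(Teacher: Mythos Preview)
Your approach is essentially the same as the paper's --- lift the boundary data to reduce to an abstract Cauchy problem for $\*A$, then apply semigroup boundedness for the mild part and maximal parabolic regularity for the strong part --- but the paper executes the lifting more efficiently and thereby sidesteps the obstacle you anticipate.

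Specifically, the paper's lifting $y^{[2]}\in\mathfrak{E}_y(0,T)$ is chosen to satisfy \emph{both} the Dirichlet condition $y^{[2]}(\cdot,\cdot,1)=f_3$ \emph{and} the Neumann condition $\partial_{x_2}y^{[2]}(\cdot,\cdot,1)=f_2$ simultaneously (along with $y^{[2]}(\cdot,\cdot,-1)=0$). This is permitted by the standard inverse trace theorem for anisotropic parabolic spaces, since $(f_3,f_2)\in H^{\sfrac32,\sfrac34}\times H^{\sfrac12,\sfrac14}$ are exactly the Dirichlet--Neumann trace regularities compatible with $\mathfrak{E}_y(0,T)$. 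With this choice the source in the $h$-equation for the reduced pair $(y^{[1]},h)$ vanishes identically, so the reduced system is governed by $\*A$ with forcing $(f_1^{[1]},0)\in L^2(0,T;\mathscr{H})$ and the regularity mismatch you flag between $H^{\sfrac12}(\T)$ and the $H^1(\T)$-component of $\mathscr{H}$ never arises. Your proposed ``secondary lifting absorbing $f_2$'' is precisely this step, just performed separately rather than in one stroke; the spectral-gap alternative you mention would also work but is unnecessarily indirect.

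One minor point: you prescribe the initial profile $w(0,x_1,x_2)=\tfrac{1+x_2}{2}f_3(0,x_1)$ explicitly. The paper does not fix $y^{[2]}(0,\cdot)$ but simply takes whatever the trace extension produces; what matters is only that $y^{[1],0}:=y^0-y^{[2]}(0,\cdot)$ satisfies the homogeneous boundary compatibility $y^{[1],0}(\cdot,1)=\sigma\partial_{x_1}^2 h^0$, which follows from \eqref{cc1} regardless. Your explicit choice is harmless but not needed.
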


\begin{remark}
Note that the constant $C(T,\sigma)$ in \Cref{thm:lin-source} is of the form $\mathcal{O}(e^{T})$ with respect to $T$. In particular, it does not blow up if $T$ goes to zero.
\end{remark}

\begin{proof}[Proof of \Cref{thm:lin-source}]
Uniqueness of solutions follows easily. Thus we focus on showing existence. From well-known trace results (see for instance \cite{lions2012non}), there exists some $y^{[2]} \in \mathfrak{E}_{y}(0,T)$ such that 
\begin{equation*}
y^{[2]}(t, x_{1}, 1) = f_{3}, \hspace{0.25cm} y^{[2]}(t, x_{1}, -1) = 0, \quad \mbox{ and } \quad  \partial_{x_{2}}y^{[2]}(t, x_{1}, 1) = f_{2} \quad \mbox{ on }  (0, T) \times \T.
\end{equation*}
Moreover, there exits a positive constant $C_{1}=C_1(T) > 0$ such that 
\begin{equation} \label{est-trace}
\norm{y^{[2]}}_{\mathfrak{E}_{y}(0,T)} \leqslant C_{1}\norm{\left(f_{1}, f_{2}, f_{3}\right)}_{\mathfrak{E}_{f}(0,T)}.
\end{equation}
We now look for a decomposition of $y$ of the form $y  = y^{[1]} + y^{[2]}.$ In turn, $(y^{[1]}, h)$ must solve
\begin{equation*} \label{sys-lin-bis}
\begin{dcases}
\partial_{t} y^{[1]} - \Delta y^{[1]} =  f_{1}^{[1]} & \mbox{ in } (0,T) \times \Omega, \\
\partial_{t}  h(t,x_1) -  \partial_{x_{2}} y^{[1]}(t,x_1,1) = 0 & \mbox{ on } (0,T) \times \mathbb{T}, \\
y^{[1]}(t,x_{1}, -1) = 0 &  \mbox{ on } (0,T) \times \mathbb{T}, \\
y^{[1]}(t, x_{1}, 1) = \sigma \del_{x_1}^2 h(t,x_1)  & \mbox{ on } (0,T) \times \mathbb{T}, \\
\left(y^{[1]}, h\right)_{|_{t=0}} = \left(y^{{[1]}, 0}, h^0\right) & \mbox{ in } \Omega \times \T,
\end{dcases}
\end{equation*}
where 
\begin{equation*}
f^{[1]}_{1}:=f_{1} - \partial_{t} y^{[2]} + \Delta y^{[2]}, \quad y^{{[1]},0}:= y^0 - y^{[2]}(0, \cdot).
\end{equation*}
From \eqref{est-trace}, we gather that there exists a positive constant $C_{2}=C_2(T,\sigma)>0$ such that 
\begin{equation} \label{eq: above.estimate}
\norm{ f^{[1]}_{1} }_{L^{2}((0,T)\times\Omega)} + \norm{y^{{[1]},{0}}}_{H^{1}(\Omega)} \leqslant C_{2} \left(\norm{\left(y^{0}, h^{0}\right)}_{\mathscr{I}} + \norm{\left(f_{1}, f_{2}, f_{3}\right)}_{\mathfrak{E}_{f}(0,T)}\right).
\end{equation}
Moreover, the compatibility condition \eqref{cc1} implies that the corrected initial datum lives in an interpolation space: 
\begin{equation*}
\left(y^{{[1]},0}, h^0\right) \in \left[\mathfrak{D}(\mathbf{A}), \mathscr{H} \right]_{\frac{1}{2}}.
\end{equation*}
(Recall the definition of $(\mathbf{A},\mathfrak{D}(\mathbf{A}))$ in \eqref{eq:DA}.)
Therefore, by standard maximal regularity results (\cite[Theorem 3.1, pp. 143]{benoussan_book}), we have 
\begin{equation*}
\left(y^{[1]}, h\right) \in L^{2}((0,T);\mathfrak{D}(\mathbf{A})) \cap H^{1}((0,T);\mathscr{H}).
\end{equation*}
Combining estimate \eqref{eq: above.estimate} with \eqref{est-trace} and well-known interpolation estimates (\cite{lions2012non}), we deduce \eqref{est-linear-main}.
\end{proof}	

\subsubsection{Adding the source terms} 

We are now in a position to provide an adaptation of the source term method first introduced in \cite{tucsnak_burgers} (see also \cite{le2020local, geshkovski2019null}), in the specific setting of the problem we consider containing boundary source terms, which will allow us to then apply a fixed point method for tackling the nonlinear system. 

Herein, we shall work specifically in the regime $T\leqslant1$ for simplicity, to exploit the special, exponential character of the controllability cost $\mathcal{K}(T,\sigma)$. 
Now fix 
\begin{equation*}
q\in\left(1,\sqrt[4]{2}\right),
\end{equation*}
(recall that $\sqrt[4]{2}\approx1.1892\ldots$) and consider 
\begin{equation*} \label{eq: rho.F.def}
\rho_{\mathfrak{F}}(t):=\exp\left(-\frac{\beta}{(T-t)^2}\right)
\hspace{1cm} \text{ for } t \in [0,T],
\end{equation*}
and 
\begin{equation*} \label{eq: rho.F.def}
\rho_{0}(t):= M \exp\left(\frac{M}{(q-1)(T-t)}-\frac{\beta}{q^{4}(T-t)^2}\right)
\hspace{1cm} \text{ for } t \in [0,T],
\end{equation*}
where $M$ is the constant appearing in the control cost $\mathcal{K}(T, \sigma)$ is \Cref{thm: main.result.linear}, and 
\begin{equation*}
\beta > \frac{Mq^{4}}{2(q-1)}
\end{equation*}
is fixed.
Note that $\rho_{0}$ and $\rho_{\mathfrak{F}}$ are decreasing functions.
We also consider
\begin{equation} \label{eq: rho.alpha.def}
\rho_\alpha(t):=\exp\left(-\frac{\alpha}{(T-t)^2}\right) \hspace{1cm} \text{ for } t\in[0,T],
\end{equation}	
where 
\begin{equation*}
\alpha\in\left(\frac{\beta}{2},\frac{\beta}{q^4}\right)
\end{equation*}
is fixed. Most importantly, we also have $\rho_\alpha(T)=0$.
We now define the weighted space of source terms 
\begin{align*}
\mathfrak{F}&:= \left\{f=(f_1,f_2,f_3) \in \mathfrak{E}_{f}(0,T) \,\Biggm|\, \frac{f}{\rho_{\mathfrak{F}}}\in \mathfrak{E}_{f}(0,T) \right\}. \\
\end{align*}

The following lemma will be central in what follows.

\begin{lemma} \label{lem: weights.facts}
Let $T>0$, $\beta > \frac{Mq^{4}}{2(q-1)},$ and  $\alpha\in\left(\frac{\beta}{2},\frac{\beta}{q^4}\right)$ be fixed. The following facts hold true.
\begin{enumerate}
\item $\displaystyle \frac{\rho_{\mathfrak{F}}}{\rho_\alpha}\in W^{1,\infty}(0,T)$.
\smallskip
\item  $\displaystyle \frac{\rho_\alpha^2}{\rho_{\mathfrak{F}}}\in W^{1,\infty}(0,T)$.
\smallskip
\item For $j\geqslant1$, set $T_j:=T\left(1-q^{-j}\right)$. Then 
\begin{align*}
\rho_{0}(T_{j+2}) = \rho_{\mathfrak{F}}(T_{j})M\exp\left( \frac{M}{T_{j+2}-T_{j+1}}\right)
\end{align*}
and
\begin{align*}
\norm{\frac{1}{\rho_{\alpha}}}_{W^{1,\infty}(T_{j}, T_{j+1})}^{2}\frac{\rho_{0}^2(T_{j+1})}{(T - T_{j+1})^{6}}  \leqslant C(T),
\end{align*}
hold for some constant $C(T)>0$ independent of $j$.
\end{enumerate}
\end{lemma}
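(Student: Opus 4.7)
The plan is to treat each of the three claims by direct manipulation of the explicit weights, relying at each step on the strict sign of a single exponent inherited from the conditions $\frac{\beta}{2}<\alpha<\frac{\beta}{q^{4}}$.

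For claims (1) and (2), I would compute the two ratios pointwise. On $[0,T)$ one has
\begin{equation*}
\frac{\rho_{\mathfrak{F}}(t)}{\rho_{\alpha}(t)} = \exp\!\left(\frac{\alpha-\beta}{(T-t)^{2}}\right), \qquad \frac{\rho_{\alpha}^{2}(t)}{\rho_{\mathfrak{F}}(t)} = \exp\!\left(\frac{\beta-2\alpha}{(T-t)^{2}}\right).
\end{equation*}
Since $\alpha<\beta/q^{4}<\beta$, the first exponent is strictly negative, and since $\alpha>\beta/2$, so is the second. In both cases the ratio is bounded on $[0,T]$ and extends continuously to $0$ at $t=T$; differentiating introduces an extra factor of $(T-t)^{-3}$, which is still swamped by the super-polynomial decay of $\exp(-c/(T-t)^{2})$ at $t=T$. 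Hence both ratios belong to $W^{1,\infty}(0,T)$.

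For claim (3), I would use the elementary identities $T-T_{j}=Tq^{-j}$ and $T_{j+2}-T_{j+1}=T(q-1)q^{-j-2}$. The first equality in (3) then reduces to substitution:
\begin{equation*}
\rho_{0}(T_{j+2}) = M\exp\!\left(\frac{Mq^{j+2}}{(q-1)T}-\frac{\beta q^{2j}}{T^{2}}\right), \qquad \rho_{\mathfrak{F}}(T_{j}) = \exp\!\left(-\frac{\beta q^{2j}}{T^{2}}\right),
\end{equation*}
while $M\exp\!\left(M/(T_{j+2}-T_{j+1})\right)=M\exp\!\left(Mq^{j+2}/((q-1)T)\right)$, and multiplying the last two reproduces $\rho_{0}(T_{j+2})$. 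For the inequality, I would use that $1/\rho_{\alpha}$ is increasing on $(T_{j},T_{j+1})$, so its $L^{\infty}$ norm is $\exp(\alpha q^{2j+2}/T^{2})$, and its derivative on the same interval is bounded by $C\exp(\alpha q^{2j+2}/T^{2})\, q^{3(j+1)}/T^{3}$. Combining with $(T-T_{j+1})^{-6}=q^{6(j+1)}/T^{6}$ and the explicit form of $\rho_{0}(T_{j+1})^{2}$, the quantity to estimate is controlled by
\begin{equation*}
C(T)\, q^{Aj}\exp\!\left(\frac{2(\alpha q^{4}-\beta)\, q^{2j-2}}{T^{2}} + \frac{2M q^{j+1}}{(q-1)T}\right)
\end{equation*}
for some fixed $A>0$. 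Because $\alpha q^{4}-\beta<0$, this has the structure $q^{Aj}\exp\!\left(-cq^{2j}+c'q^{j}\right)$ with $c>0$, and is therefore uniformly bounded in $j$.

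The main obstacle is the bookkeeping in the last step: one must keep track of all polynomial-in-$q^{j}$ prefactors (from $(T-T_{j+1})^{-6}$ and from the derivative of $1/\rho_{\alpha}$), of the linear-in-$q^{j}$ correction in the exponent (produced by $\rho_{0}(T_{j+1})$), and of the dominant quadratic-in-$q^{j}$ decay coming from the strict inequality $\alpha q^{4}<\beta$, and then verify that the Gaussian-like decay wins. Once this is organized cleanly, the uniform bound is immediate.
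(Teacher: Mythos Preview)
Your proposal is correct and is precisely the direct computation from the explicit weights that the paper has in mind; the paper itself does not give details beyond stating that the proof ``readily follows from the explicit form of the weights.'' Your bookkeeping in part (3), reducing everything to $q^{Aj}\exp(-cq^{2j}+c'q^{j})$ with $c>0$ forced by $\alpha q^{4}<\beta$, is exactly the mechanism that makes the lemma work.
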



The proof readily follows from the explicit form of the weights.
The following version of the source term method then holds.

\begin{theorem}[Source term method]  \label{thm: source.term.method} 
Suppose $0<T\leqslant1$ and $\sigma>0$. 
There exist a constant $C(T,\sigma)>0$ and a continuous linear map 
$\mathfrak{L}: L^{2}(\Omega) \times H^{1}(\T)\times\mathfrak{F}\to\mathfrak{U}$ such that for any initial data $\left(y^0, h^0\right)\in L^{2}(\Omega) \times H^{1}(\T)$ and $f=(f_1, f_2, f_3)\in\mathfrak{F}$, the unique solution $(y,h)\in C^0([0,T];L^{2}(\Omega) \times H^{1}(\T))$ to \eqref{sys-lin} with control $u=\mathfrak{L}(y^0, h^0, f)$ satisfies
\begin{align*}
&\norm{\left(\frac{y}{\rho_{\alpha} }, \frac{h}{\rho_{\alpha} }\right)}_{ C^0([0,T];L^{2}(\Omega) \times H^{1}(\T))}^{2} + \norm{\frac{u}{\sqrt{\rho_{0}}}}^{2}_{L^{2}((0, T)\times\omega)} \\
&\qquad\leqslant C(T,\sigma) \Bigg(  \norm{\left(y^{0}, h^{0}\right)}^{2}_{L^{2}(\Omega) \times H^{1}(\T)}  + \norm{\left(\frac{f_{1}}{\rho_{\mathfrak{F}}}, \frac{f_{2}}{\rho_{\mathfrak{F}}}, \frac{f_{3}}{\rho_{\mathfrak{F}}}\right)}_{\mathfrak{E}_{f}(0,T)}^{2} \Bigg).
\end{align*}
In particular, $y(T, \cdot)\equiv0$ in $\Omega$ and $h(T,\cdot)\equiv0$ on $\T$.

If moreover $\left(y^0, h^0\right) \in \mathscr{I}$, and satisfies the compatibility condition \eqref{cc1}, then $(y, h)$ additionally satisfies 
\begin{align} 
&\norm{\left(\frac{y}{\rho_{\alpha} }, \frac{h}{\rho_{\alpha} }\right)}_{ \mathfrak{E}_{y}(0,T) \times \mathfrak{E}_{h}(0,T)} + \norm{\frac{u}{\sqrt{\rho_{0}}}}^{2}_{L^{2}((0, T)\times\omega)} \notag \\
&\qquad\leqslant C(T,\sigma) \Bigg( \norm{\left(y^{0}, h^{0}\right)}_{\mathscr{I}}  + \norm{\left(\frac{f_{1}}{\rho_{\mathfrak{F}}}, \frac{f_{2}}{\rho_{\mathfrak{F}}}, \frac{f_{3}}{\rho_{\mathfrak{F}}}\right)}_{\mathfrak{E}_{f}(0,T)} \Bigg). \label{est:main-nh-source}
\end{align}
\end{theorem}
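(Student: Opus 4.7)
The plan is to adapt the dyadic iteration of \cite{tucsnak_burgers} to our boundary-source setting, using the partition $T_j := T(1 - q^{-j})$ with $q \in (1, \sqrt[4]{2})$, so that $T_j \nearrow T$ and $T_{j+1} - T_j = T(q-1)q^{-j-1}$. First, I would dispatch the initial data by invoking Theorem \ref{thm: main.result.linear} on $[0,T_1]$ to produce a control $u_\star$ of cost $\lesssim_{T,\sigma}\|(y^0,h^0)\|_{L^2 \times H^1}$ driving $(y^0,h^0)$ to zero at $T_1$, and setting $u_\star \equiv 0$ afterwards; the associated state is then identically zero on $[T_1,T]$.

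For the source, I would proceed piece by piece. For each $j \geqslant 0$, let $(z_j, m_j)$ solve \eqref{sys-lin} on $[T_j, T_{j+1}]$ with zero initial condition at $T_j$, source $f\mathbf{1}_{[T_j,T_{j+1}]}$, and no control. Proposition \ref{thm:lin-source} gives $(z_j(T_{j+1}), m_j(T_{j+1})) \in L^2(\Omega) \times H^1(\T)$ with norm controlled by $\|f\|_{\mathfrak{E}_f(T_j,T_{j+1})}$. The boundary source $f_3$ is accommodated by first invoking the lift construction of Proposition \ref{thm:lin-source}, which absorbs $(f_2,f_3)$ into an interior $L^2$-source, leaving a system with homogeneous boundary data. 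Then, on $[T_{j+1},T_{j+2}]$, Theorem \ref{thm: main.result.linear} produces a control $v_j$ of cost $Me^{M/(T_{j+2}-T_{j+1})}\|(z_j(T_{j+1}), m_j(T_{j+1}))\|_{L^2 \times H^1}$ that extinguishes the homogeneous evolution started from $(z_j(T_{j+1}),m_j(T_{j+1}))$ by time $T_{j+2}$. The full state is the superposition $(y_\star,h_\star) + \sum_{j \geqslant 0}(Z_j,M_j)$, where $(Z_j,M_j)$ is the piece driven by $f\mathbf{1}_{[T_j,T_{j+1}]}$ on $[T_j,T_{j+1}]$ and then by $v_j$ on $[T_{j+1},T_{j+2}]$, extended by zero; the full control is
\begin{equation*}
\mathfrak{L}(y^0,h^0,f) \;=\; u_\star \mathbf{1}_{[0,T_1]} + \sum_{j \geqslant 0} v_j \mathbf{1}_{[T_{j+1},T_{j+2}]}.
\end{equation*}
Since each piece is null at its right endpoint and subsequently extended by zero, $(y(T),h(T)) = 0$. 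Linearity of the entire construction in $(y^0,h^0,f)$ yields the linearity of $\mathfrak{L}$.

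The weighted bounds pivot on Lemma \ref{lem: weights.facts}(3). The identity $\rho_0(T_{j+2}) = \rho_{\mathfrak{F}}(T_j) \cdot Me^{M/(T_{j+2}-T_{j+1})}$ is tailored so that, using the monotonicity of $\rho_{\mathfrak{F}}$ and $\rho_0$,
\begin{equation*}
\Big\|\frac{v_j}{\sqrt{\rho_0}}\Big\|_{L^2((T_{j+1},T_{j+2}) \times \omega)}^2 \;\lesssim\; \Big\|\frac{f}{\rho_{\mathfrak{F}}}\Big\|_{\mathfrak{E}_f(T_j,T_{j+1})}^2,
\end{equation*}
and summation over $j$ delivers the control bound. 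For the state weight, I would apply Proposition \ref{thm:lin-source} on each window $(T_j,T_{j+2})$ with $v_j$ treated as an interior source in $\omega$; the pointwise estimate $\|(Z_j(t),M_j(t))\|_{\mathscr{H}} \lesssim \rho_0(T_{j+1}) \|f/\rho_{\mathfrak{F}}\|_{\mathfrak{E}_f(T_j,T_{j+1})}$ for $t \in [T_j,T_{j+2}]$ combined with the second inequality in Lemma \ref{lem: weights.facts}(3) shows that dividing by $\rho_\alpha$ (with $\alpha \in (\beta/2,\beta/q^4)$) produces a $j$-independent bound, whence summation yields the $\rho_\alpha$-weighted state estimate in the $C^0([0,T];L^2 \times H^1)$ norm.

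The main obstacle is two-fold. First, the boundary source $f_3$ lies outside the framework of Theorem \ref{thm: main.result.linear}; the trace lift from Proposition \ref{thm:lin-source} provides the standard workaround, but one must verify that this lift is compatible with the weights, which holds since $\rho_{\mathfrak{F}}$ depends only on $t$ and is smooth on every compact subinterval of $[0,T)$, so multiplication commutes with the lift up to harmless multiplicative constants. Second, upgrading to the higher-order bound \eqref{est:main-nh-source} requires using the strong well-posedness half of Proposition \ref{thm:lin-source} on each window, together with compatibility at the transition times $T_j$; compatibility is automatic on the interior transitions since $(z_j,m_j)$ starts from $0$ at $T_j$ (so all boundary sources vanish there), and at $t=0$ it is guaranteed by the hypothesis $(y^0,h^0) \in \mathscr{I}$ with \eqref{cc1}. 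The uniformity in $j$ of the constants appearing in Proposition \ref{thm:lin-source} applied on $(T_j,T_{j+2})$, ensured by the length control $T_{j+2}-T_j \leqslant T$, then makes the summation identical in structure to that of the $L^2 \times H^1$ case.
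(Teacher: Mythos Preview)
Your overall architecture is the same as the paper's---the dyadic partition $T_j=T(1-q^{-j})$, the source-then-control alternation on successive windows, and the use of the weight identity $\rho_0(T_{j+2})=\rho_{\mathfrak F}(T_j)\,M e^{M/(T_{j+2}-T_{j+1})}$ from \Cref{lem: weights.facts}(3) to absorb the control cost. Your superposition organization (separating the initial-data piece from the source pieces) is equivalent, by linearity, to the paper's sequential iteration in which the initial data enters as $a_0$.

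There is, however, a genuine gap in your treatment of the strong estimate \eqref{est:main-nh-source}. You claim that ``compatibility is automatic on the interior transitions since $(z_j,m_j)$ starts from $0$ at $T_j$ (so all boundary sources vanish there)''. This is false: the compatibility condition \eqref{cc1} for $(z_j,m_j)$ on $[T_j,T_{j+1}]$ with zero initial data and boundary source $f_3$ reads $0=\sigma\cdot 0+f_3(T_j,\cdot)$, i.e.\ $f_3(T_j,\cdot)\equiv 0$, which is not true in general. A symmetric issue arises at $T_{j+1}$: the terminal state $(z_j(T_{j+1}),m_j(T_{j+1}))$ satisfies $z_j(T_{j+1})(\cdot,1)=\sigma\partial_{x_1}^2 m_j(T_{j+1})+f_3(T_{j+1},\cdot)$, which is \emph{not} the homogeneous compatibility required to launch the controlled piece on $[T_{j+1},T_{j+2}]$ as a strong solution. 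So as written, the higher-order regularity of the pieces is not justified.

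The fix is exactly the lift you mention, but it must be performed \emph{globally and first}, not piecewise. The paper lifts the trace once on all of $[0,T)$: choose $y^{[2]}\in\mathfrak{E}_y(0,T)$ with $y^{[2]}(\cdot,\cdot,1)=f_3/\rho_{\mathfrak F}$, $\partial_{x_2}y^{[2]}(\cdot,\cdot,1)=f_2/\rho_{\mathfrak F}$, set $y=y^{[1]}+\rho_{\mathfrak F}\,y^{[2]}$, and then run the entire dyadic iteration on $(y^{[1]},h)$, which now satisfies the \emph{homogeneous} boundary coupling $y^{[1]}(\cdot,\cdot,1)=\sigma\partial_{x_1}^2 h$ and carries only an interior source $f_1^{[1]}\in L^2$. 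In that lifted system, each $a_j=(y_f,h_f)(T_j^-)$ inherits $a_j^y(\cdot,1)=\sigma\partial_{x_1}^2 a_j^h$ from the homogeneous boundary condition, so compatibility at every $T_j$ is automatic and \Cref{thm:lin-source} (strong part) applies on each window. Once you reorganize in this order, your remaining estimates go through as you sketched.
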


\begin{proof}[Proof of \Cref{thm: source.term.method}]
We shall focus on initial data $\left(y^0, h^0\right) \in \mathscr{I}$, which satisfy the compatibility condition \eqref{cc1}, as the proof of the first part of the statement is  transparent throughout the arguments. 
We shall split the proof in five steps.

{\bf Step 1.} \emph{Lifting traces.}
First of all, from well-known trace results (\cite{lions2012non}) there exists some $y^{[2]} \in \mathfrak{E}_{y}(0,T)$ such that 
\begin{equation*}
y^{[2]}(t, x_{1}, 1) = \frac{f_{3}}{\rho_{\mathfrak{F}}},  \quad  y^{[2]}(t, x_{1}, -1) = 0, \quad \mbox{ and } \quad \partial_{x_{2}}y^{[2]}(t, x_{1}, 1) = \frac{f_{2}}{\rho_{\mathfrak{F}}} \quad \mbox{ on }  (0, T) \times \T.
\end{equation*}
Moreover, there exits a constant $C_1(T) > 0$ such that 
\begin{equation} \label{est-trace-new}
\norm{y^{[2]}}_{\mathfrak{E}_{y}(0,T)} \leqslant C_{1}(T) \norm{\left(\frac{f_{1}}{\rho_{\mathfrak{F}}}, \frac{f_{2}}{\rho_{\mathfrak{F}}}, \frac{f_{3}}{\rho_{\mathfrak{F}}}\right)}_{\mathfrak{E}_{f}(0,T)}.
\end{equation}
Per \Cref{lem: weights.facts}, we have $\frac{\rho_{\mathfrak{F}}}{\rho_{\alpha}}\in W^{1,\infty}(0,T)$, and so from \eqref{est-trace-new} we derive
\begin{equation} \label{est:y-sharp-weight}
\norm{\frac{\rho_{\mathfrak{F}} y^{[2]} }{\rho_{\alpha}}}_{\mathfrak{E}_{y}(0,T)} 
\leqslant C_2(T) \norm{\left(\frac{f_{1}}{\rho_{\mathfrak{F}}}, \frac{f_{2}}{\rho_{\mathfrak{F}}}, \frac{f_{3}}{\rho_{\mathfrak{F}}}\right)}_{\mathfrak{E}_{f}(0,T)}.
\end{equation}
We now look for a decomposition of $y$ of the form 
\begin{equation*}
y = y^{[1]} + \rho_{\mathfrak{F}} y^{[2]}.
\end{equation*}
In turn $(y^{[1]}, h)$ must satisfy
\begin{equation}\label{sys-lin-bis-00}
\begin{dcases}
\partial_{t} y^{[1]} - \Delta y^{[1]} =  f_{1}^{[1]} + u 1_{\omega} & \mbox{ in } (0,T) \times \Omega, \\
\partial_{t}  h(t,x_1) -  \partial_{x_{2}} y^{[1]}(t,x_1,1) = 0 & \mbox{ on } (0,T) \times \mathbb{T}, \\
y^{[1]}(t,x_{1}, -1) = 0 &  \mbox{ on } (0,T) \times \mathbb{T}, \\
y^{[1]}(t, x_{1}, 1) = \sigma \del_{x_1}^2 h(t,x_1)  & \mbox{ on } (0,T) \times \mathbb{T}, \\
\left(y^{[1]}, h\right)_{|_{t=0}} = \left(y^{{[1]}, 0}, h^0\right) & \mbox{ in } \Omega \times \T,
\end{dcases}
\end{equation}
where 
\begin{equation} \label{def-f-dag}
f^{[1]}_{1} = f_{1} - \partial_{t} \left(\rho_{\mathfrak{F}} y^{[2]} \right) + \Delta  \left(\rho_{\mathfrak{F}} y^{[2]} \right) , \quad y^{{[1]},0} = y^0 - \rho_{\mathfrak{F}}(0) y^{[2]}(0, \cdot).
\end{equation}
Now observe that
\begin{align} \label{est-y-dag-0}
\left\|y^{{[1]},0}\right\|_{H^1(\Omega)}&\stackrel{\eqref{est-trace-new}}{\leqslant} \left\|y^0\right\|_{H^1(\Omega)}+C_1(T)|\rho_{\mathfrak{F}}(0)|\norm{\left(\frac{f_{1}}{\rho_{\mathfrak{F}}}, \frac{f_{2}}{\rho_{\mathfrak{F}}}, \frac{f_{3}}{\rho_{\mathfrak{F}}}\right)}_{\mathfrak{E}_{f}(0,T)}\nonumber\\
&\leqslant C_3(T)\left( \norm{\left(y^{0}, h^{0}\right)}_{\mathscr{I}}  + \norm{\left(\frac{f_{1}}{\rho_{\mathfrak{F}}}, \frac{f_{2}}{\rho_{\mathfrak{F}}}, \frac{f_{3}}{\rho_{\mathfrak{F}}}\right)}_{\mathfrak{E}_{f}(0,T)}\right),
\end{align}
where $C_3(T):=\max\{1,C_1(T)|\rho_{\mathfrak{F}}(0)|\}$. If $(y^0,h^0)$ also satisfies the compatibility condition \eqref{cc1}, we additionally gather that
\begin{equation*}
y^{{[1]}, 0}(x_{1}, 1) = \sigma \del_{x_1}^2 h^{0}(x_{1})  \hspace{1cm} \mbox{ in } \T.
\end{equation*}

{\bf{Step 2.}} \emph{Splitting over geometrically-shrinking time intervals.}
We now study the controllability of the lifted system \eqref{sys-lin-bis-00}.  
Let us set
\begin{equation*}
a_{0}=\left(y^{{[1]}, 0}, h^{0}\right),
\end{equation*}
and for $j\geqslant0$, we define
\begin{equation*}
T_j:=T\left(1-q^{-j}\right),
\end{equation*}
as well as 
\begin{equation*}
a_{j+1}:=\left(y_{f}\left(T_{j+1}^{-}, \cdot\right), h_{f}\left(T_{j+1}^{-}, \cdot\right)\right),
\end{equation*}
where $\left(y_{f}, h_{f}\right)$ is the unique solution to 
\begin{equation*}  
\begin{dcases}
\partial_{t} y_{f} - \Delta y_{f} =  f_{1}^{{[1]}}  & \mbox{ in } (T_{j},T_{j+1}) \times \Omega, \\
\partial_{t}  h_{f}(t,x_1) -  \partial_{x_{2}} y_{f}(t,x_1,1) = 0  & \mbox{ on } (T_{j},T_{j+1})  \times \mathbb{T}, \\
y_{f}(t,x_{1}, -1) = 0 &  \mbox{ on } (T_{j},T_{j+1}) \times \mathbb{T}, \\
y_{f}(t, x_{1}, 1) = \sigma \del_{x_1}^2 h_f(t,x_1) & \mbox{ on } (T_{j},T_{j+1}) \times \mathbb{T}, \\
\left(y_{f}, h_{f}\right)_{|{t=T_{j}^{+}}} = 0 & \mbox{ in } \Omega \times \T.
\end{dcases}
\end{equation*}
Fix $j\geqslant0$. From \Cref{thm:lin-source}, we gather that there exists some $C(T,\sigma)>0$ (independent of $j$) such that
\begin{equation} \label{est-ak}
\norm{a_{j+1}}_{H^{1}(\Omega) \times H^{\sfrac{5}{2}}(\T)}^{2} \leqslant  C({T},\sigma) \norm{f_{1}^{{[1]}}}_{L^{2}((T_{j}, T_{j+1})\times\Omega)}^{2}.
\end{equation}
On another hand, we also consider the homogeneous control system 
\begin{equation*}  
\begin{dcases}
\partial_{t} y_{u} - \Delta y_{u} =  u_{j} 1_{\omega}  & \mbox{ in } (T_{j},T_{j+1}) \times \Omega, \\
\partial_{t}  h_{u}(t,x_1) -  \partial_{x_{2}} y_{u}(t,x_1,1) = 0  & \mbox{ on } (T_{j},T_{j+1})  \times \mathbb{T}, \\
y_{u}(t,x_{1}, -1) = 0 &  \mbox{ on } (T_{j},T_{j+1}) \times \mathbb{T}, \\
y_{u}(t, x_{1}, 1) = \sigma \del_{x_1}^2 h_u(t,x_1)  & \mbox{ on } (T_{j},T_{j+1}) \times \mathbb{T}, \\
\left(y_{u}, h_{u}\right)_{|_{t=T_{j}^{+}}} = a_{j} & \mbox{ in } \Omega \times \T.
\end{dcases}
\end{equation*}
where $u_{j} \in L^{2}((T_{j}, T_{j+1})\times\omega)$ is such that 
\begin{equation*}
(y_{u}, h_{u})(T_{j+1}^{-}, \cdot)\equiv0 \hspace{1cm} \text{ in } \Omega \times\T,
\end{equation*}
and
\begin{equation} \label{est-control-k0}
\norm{u_{j}}^{2}_{L^{2}((T_{j}, T_{j+1})\times\omega)} \leqslant M^2\exp\left(\frac{2M}{T_{j+1}-T_j}\right) \norm{a_{j}}^{2}_{L^{2}(\Omega) \times H^{1}(\T)}.
\end{equation}
We readily see that 
\begin{align} \label{est-control-k}
\norm{u_{j+1}}^{2}_{L^{2}((T_{j+1}, T_{j+2})\times\omega)} &\stackrel{\eqref{est-control-k0}}{\leqslant}M^2\exp\left(\frac{2M}{T_{j+2}-T_{j+1}}\right) \norm{a_{j+1}}^{2}_{L^{2}(\Omega) \times H^{1}(\T)} \notag \\
&\stackrel{\eqref{est-ak}}{\leqslant} C(T,\sigma) M^2\exp\left(\frac{2M}{T_{j+2}-T_{j+1}}\right) \norm{f_{1}^{{[1]}}}_{L^{2}((T_{j}, T_{j+1})\times\Omega)}^{2}.
\end{align}
Now using the definition of $f_{1}^{{[1]}}$ in \eqref{def-f-dag}, and the fact that $\rho_{\mathfrak{F}}$ is decreasing, we find 
\begin{align} \label{eq: rhoF.H1}
\norm{f_{1}^{{[1]}}}_{L^{2}((T_{j}, T_{j+1})\times\Omega)}^{2}&\leqslant C_4(T) \norm{\rho_{\mathfrak{F}}}_{L^{\infty}(T_{j}, T_{j+1})}^{2} \left( \norm{\frac{f_{1}}{\rho_{\mathfrak{F}}}}_{L^{2}((T_{j}, T_{j+1})\times\Omega)}^{2} + \norm{y^{{[2]}}}^{2}_{\mathfrak{E}_{y}(T_{j}, T_{j+1})}  \right) \nonumber \\
&\hspace{0.5cm}+ C_4(T)  \norm{\rho'_{\mathfrak{F}}}_{L^{\infty}(T_{j}, T_{j+1})}^{2} \norm{y^{{[2]}}}^{2}_{\mathfrak{E}_{y}(T_{j}, T_{j+1})} \nonumber \\
& \leqslant C_4(T)\norm{\rho_{\mathfrak{F}}}_{W^{1, \infty}(T_{j}, T_{j+1})}^{2} \left( \norm{\frac{f_{1}}{\rho_{\mathfrak{F}}}}_{L^{2}((T_{j}, T_{j+1})\times\Omega)}^{2} + \norm{y^{{[2]}}}^{2}_{\mathfrak{E}_{y}(T_{j}, T_{j+1})}  \right) \nonumber \\
& \leqslant C_4(T)\frac{ \rho_{\mathfrak{F}}(T_{j})^{2} }{(T - T_{j+1})^{6}}\left( \norm{\frac{f_{1}}{\rho_{\mathfrak{F}}}}_{L^{2}((T_{j}, T_{j+1})\times\Omega)}^{2} + \norm{y^{{[2]}}}^{2}_{\mathfrak{E}_{y}(T_{j}, T_{j+1})}  \right) 
\end{align}
for some $C_4(T)>0$ independent of $j$.
Chaining \eqref{est-control-k} and \eqref{eq: rhoF.H1}, using the monotonicity of $\rho_{0}$, and \Cref{lem: weights.facts}
we deduce 
\begin{align*}
\norm{u_{j+1}}^{2}_{L^{2}((T_{j+1}, T_{j+2})\times\omega)} &\leqslant C_{5}(T)\frac{ \rho_{0}(T_{j+2})^{2} }{(T - T_{j+2})^{6}} \Bigg(\norm{\frac{f_{1}}{\rho_{\mathfrak{F}}}}_{L^{2}((T_{j}, T_{j+1})\times\Omega)}^{2} + 
\norm{y^{{[2]}}}^{2}_{\mathfrak{E}_{y}(T_{j}, T_{j+1})} \Bigg).
\end{align*}
for some $C_5(T)>0$ independent of $j$. Using the monotonicity of $\rho_{0}$, we conclude that
\begin{equation} \label{est-control-k2}
\norm{\frac{u_{j+1}}{\sqrt{\rho_{0}}}}^{2}_{L^{2}((T_{j+1}, T_{j+2})\times\omega)} \leqslant C_{5}(T)  \Bigg(\norm{\frac{f_{1}}{\rho_{\mathfrak{F}}}}_{L^{2}((T_{j}, T_{j+1})\times\Omega)}^{2} + 
\norm{y^{{[2]}}}^{2}_{\mathfrak{E}_{y}(T_{j}, T_{j+1})} \Bigg).
\end{equation}

{\bf{Step 3.}} \emph{Gluing the controls.} We now define the control $u$ as
\begin{equation*}
u := \sum_{j=0}^{+\infty} u_{j} 1_{[T_{j}, T_{j+1}]}.
\end{equation*}
As $\frac{1}{\sqrt{\rho_{0}}}\in L^\infty(0,T)$, from \eqref{est-control-k0} we have 
\begin{equation} \label{eq: u0.est}
\norm{\frac{u_{0}}{\sqrt{\rho_{0}}}}^{2}_{L^{2}((T_{0}, T_{1})\times\omega)} \leqslant  C_{6}(T) \norm{a_{0}}^{2}_{L^{2}(\Omega) \times H^{1}(\T)}.
\end{equation} 
Combining \eqref{eq: u0.est} with \eqref{est-control-k2}, \eqref{est-trace-new} and \eqref{est-y-dag-0} we get 
\begin{equation*} \label{est-control-nonh}
\norm{\frac{u}{\sqrt{\rho_{0}}}}^{2}_{L^{2}((0, T)\times\omega)} \leqslant C_{7}(T) \Bigg( \norm{\left(y^{0}, h^{0}\right)}^{2}_{\mathscr{I}}  + \norm{\left(\frac{f_{1}}{\rho_{\mathfrak{F}}}, \frac{f_{2}}{\rho_{\mathfrak{F}}}, \frac{f_{3}}{\rho_{\mathfrak{F}}}\right)}_{\mathfrak{E}_{f}(0,T)}^{2} \Bigg).
\end{equation*}

{\bf Step 4.} \emph{Weighted estimates for the lifted state.}
We now look to estimate the controlled state. Let us set 
\begin{equation*}
(y^{{[1]}}, h) = (y_{f}, h_{f}) + (y_{u}, h_{u}).
\end{equation*} 
Then clearly for every $j\geqslant 0,$ $(y^{{[1]}}, h)_{|(T_j,T_{j+1})}$ satisfies 
\begin{equation*}
\begin{dcases}
\partial_{t} y^{[1]} - \Delta y^{[1]} =  f_{1}^{[1]} + u_{j} 1_{\omega} & \mbox{ in } (T_{j}, T_{j+1}) \times \Omega, \\
\partial_{t}  h(t,x_1) -  \partial_{x_{2}} y^{[1]}(t,x_1,1) = 0 & \mbox{ on } (T_{j}, T_{j+1}) \times \mathbb{T}, \\
y^{[1]}(t,x_{1}, -1) = 0 &  \mbox{ on } (T_{j}, T_{j+1}) \times \mathbb{T}, \\
y^{[1]}(t, x_{1}, 1) = \sigma \del_{x_1}^2 h(t,x_1)  & \mbox{ on } (T_{j}, T_{j+1})  \times \mathbb{T}, \\
\left(y^{[1]}, h\right)_{|_{t=T_{j}^{+}}} = a_{j} & \mbox{ in } \Omega \times \T.
\end{dcases}
\end{equation*}
Moreover,
\begin{align*}
(y^{{[1]}}, h)(T_{j}^{-}) &= (y_{f}, h_{f})(T_{j}^{-}) + (y_{u}, h_{u})(T_{j}^{-})\\
&= a_{j}\\
&= (y_{f}, h_{f})(T_{j}^{+}) + (y_{u}, h_{u})(T_{j}^{+}) = (y^{{[1]}}, h)(T_{j}^{+}),
\end{align*}
so that $(y^{[1]}, h)$ is continuous at each $T_{j}.$  Standard energy estimates yield
\begin{align} \label{est-lin-k}
&\norm{\left(y^{{[1]}}, h\right)}_{L^{2}((T_{j}, T_{j+1});\mathfrak{D}(\mathbf{A})) \cap H^{1}((T_{j}, T_{j+1});\mathscr{H})}^{2} \nonumber \\
&\hspace{1cm}\leqslant C(T,\sigma)\Bigg( \norm{a_{j}}^{2}_{\mathscr{I}} +  \norm{u_{j}}^{2}_{L^{2}((T_{j}, T_{j+1})\times\omega)}
+ \norm{f_{1}^{{[1]}}}_{L^{2}((T_{j}, T_{j+1})\times\Omega)}^{2} \Bigg).
\end{align}
Plugging \eqref{est-control-k0} in \eqref{est-lin-k}, we infer that 
\begin{align} \label{eq: the.above.estimate}
&\norm{\left(y^{{[1]}}, h\right)}_{L^{2}((T_{j}, T_{j+1});\mathfrak{D}(\mathbf{A})) \cap H^{1}(T_{j}, T_{j+1};\mathscr{H})}^{2} \nonumber\\
&\hspace{1cm}\leqslant C(T,\sigma)\left(\norm{a_{j}}^{2}_{\mathscr{I}} + \exp\left(\frac{2M}{T_{j+1}-T_j}\right)\norm{a_{j}}^{2}_{\mathscr{I}} + \norm{f_{1}^{{[1]}}}_{L^{2}((T_{j}, T_{j+1})\times\Omega)}^{2} \right). 
\end{align}
Using \eqref{est-ak}, \eqref{eq: the.above.estimate} can be rewritten as 
\begin{align*}
&\norm{\left(y^{{[1]}}, h\right)}_{L^{2}((T_{j}, T_{j+1});\mathfrak{D}(\mathbf{A})) \cap H^{1}((T_{j}, T_{j+1});\mathscr{H})}^{2}\\
&\hspace{4cm}\leqslant C(T,\sigma)\exp\left(\frac{2M}{T_{j+1}-T_j}\right)  \norm{f_{1}^{{[1]}}}_{L^{2}((T_{j-1}, T_{j+1})\times\Omega)}^{2}.
\end{align*}
Using \eqref{eq: rhoF.H1}, we find 
\begin{align*}
&\norm{\left(y^{{[1]}}, h\right)}_{L^{2}((T_{j}, T_{j+1});\mathfrak{D}(\mathbf{A})) \cap H^{1}((T_{j}, T_{j+1});\mathscr{H})}^{2}\\
&\hspace{0.25cm}\leqslant C_8(T) 
\exp\left(\frac{2M}{T_{j+1}-T_j}\right)\frac{\rho_{\mathfrak{F}}^{2}(T_{j-1})}{(T - T_{j+1})^{6}} \Bigg(\norm{\frac{f_{1}}{\rho_{\mathfrak{F}}}}_{L^{2}((T_{j-1}, T_{j+1})\times\Omega)}^{2} + 
\norm{y^{{[2]}}}^{2}_{\mathfrak{E}_{y}(T_{j-1}, T_{j+1})} \Bigg),
\end{align*}
for some $C_8(T)>0$ independent of $j$. 
Using \Cref{lem: weights.facts} we then gather that 
\begin{multline*}
\norm{\left(y^{{[1]}}, h\right)}_{ L^{2}((T_{j}, T_{j+1});\mathfrak{D}(\mathbf{A})) \cap H^{1}((T_{j}, T_{j+1});\mathscr{H})}^{2}  \\
\leqslant C_8(T) \frac{\rho_{0}^2(T_{j+1})}{(T - T_{j+1})^{6}} 
\Bigg(\norm{\frac{f_{1}}{\rho_{\mathfrak{F}}}}_{L^{2}((T_{j-1}, T_{j+1})\times\Omega)}^{2} + 
\norm{y^{{[2]}}}^{2}_{\mathfrak{E}_{y}(T_{j-1}, T_{j+1})} \Bigg).
\end{multline*}
From the above estimate, and the definitions of $\rho_{0}$ and $\rho_{\alpha},$ we infer that 
\begin{align} \label{eq: the.above.estimate.2}
& \norm{\left(\frac{y^{{[1]}}}{\rho_{\alpha}}, \frac{h}{\rho_{\alpha}}\right)}_{L^{2}((T_{j}, T_{j+1});\mathfrak{D}(\mathbf{A})) \cap H^{1}((T_{j}, T_{j+1});\mathscr{H})}^{2} \nonumber \\
& \leqslant \norm{\frac{1}{\rho_{\alpha}}}_{W^{1,\infty}(T_{j}, T_{j+1})}^{2} \norm{\left(y^{[1]}, h\right)}_{ L^{2}((T_{j}, T_{j+1});\mathfrak{D}(\mathbf{A})) \cap H^{1}((T_{j}, T_{j+1});\mathscr{H})}^{2} \nonumber \\
&\leqslant C_8(T) \norm{\frac{1}{\rho_{\alpha}}}_{W^{1,\infty}(T_{j}, T_{j+1})}^{2}   \frac{\rho_{0}^2(T_{j+1})}{(T - T_{j+1})^{6}} 
\Bigg(\norm{\frac{f_{1}}{\rho_{\mathfrak{F}}}}_{L^{2}((T_{j-1}, T_{j+1})\times\Omega)}^{2} + 
\norm{y^{{[2]}}}^{2}_{\mathfrak{E}_{y}(T_{j-1}, T_{j+1})} \Bigg).
\end{align}
Recall that from \Cref{lem: weights.facts},
\begin{equation} \label{eq: the.above.estimate.3}
 \norm{\frac{1}{\rho_{\alpha}}}_{W^{1,\infty}(T_{j}, T_{j+1})}^{2}\frac{\rho_{0}^2(T_{j+1})}{(T - T_{j+1})^{6}}  \leqslant C_{9}(T),
\end{equation}
for some positive constant $C_{9}(T)$ which does not depend on $j.$ 
Combining \eqref{eq: the.above.estimate.2} and \eqref{eq: the.above.estimate.3} together with \eqref{est-control-k0}, \eqref{est-trace-new} and \eqref{est-lin-k} (for $j=0$), we deduce that 
\begin{align} \label{eq: the.above.estimate.4}
&\norm{\left(\frac{y^{{[1]}}}{\rho_{\alpha}}, \frac{h}{\rho_{\alpha}}\right)}_{ L^{2}((0, T);\mathfrak{D}(\mathbf{A})) \cap H^{1}((0, T);\mathscr{H})}^{2}\nonumber\\
&\hspace{2cm}\leqslant C_{10}(T) \Bigg(  \norm{\left(y^{{[1]}, 0}, h^{0}\right)}^{2}_{\mathscr I}  + \norm{\left(\frac{f_{1}}{\rho_{\mathfrak{F}}}, \frac{f_{2}}{\rho_{\mathfrak{F}}}, \frac{f_{3}}{\rho_{\mathfrak{F}}}\right)}_{\mathfrak{E}_{f}(0,T)}^{2} \Bigg).
\end{align}

{\bf Step 5.} \emph{Controllability and weighted-estimate for true state.}
As $y=y^{[1]}+\rho_{\mathfrak{F}}y^{{[2]}}$, we may conclude the proof of \eqref{est:main-nh-source} by combining \eqref{eq: the.above.estimate.4} and \eqref{est:y-sharp-weight}. Null-controllability for $(y,h)$ then follows from \eqref{est:main-nh-source} since $\rho_{\alpha}(T)=0$.
\end{proof}

\subsection{Change of variables} \label{sec: fixing.domain}
We now describe a  simple change of variables which allows us to pass from \eqref{eq: stefan.surface.tension}, set in the moving domain $\Omega(t)$, to a nonlinear problem in the time-independent reference domain $\Omega$ defined in \eqref{eq: def.omega} (and used throughout).  

Given $(c, d) \subset (-1, 1),$ we select an arbitrary $d_{0} \in (d, 1),$ and we  assume that 
\begin{equation} \label{no-contact-1}
1 + h(t, x_{1}) \geqslant d_{0} \hspace{1cm} \mbox{ for all } \quad (t,x_1)\in [0, T]\times\T.  
\end{equation}
Consider a cut-off function $\chi\in C_{c}^{\infty}(-2,2)$ such that 
\begin{equation*}
\chi(x_{2}) = 1 \quad \mbox{ for } x_{2} \in [d_{0}, 1], \quad \mbox{ and } \quad  \chi(x_{2}) = 0 \quad \mbox{ for } x_{2} \in [-1, d].
\end{equation*}
For $t\geqslant0$, we introduce the map $\Psi(t, \cdot): \overline{\Omega}\to\overline{\Omega(t)}$ by 
\begin{equation*}
\Psi(t,x) :=  \Big(x_{1}, x_{2}\left(1 + \chi(x_{2}) h\left(t,x_{1}\right)\right) \Big), \hspace{1cm} x = (x_{1}, x_{2}) \in  \overline{\Omega}.
\end{equation*}
Observe that, whenever $h$ is sufficiently regular, $\Psi(t,\cdot)$ is a $C^{1}$-diffeomorphism from $\overline{\Omega}$ to $\overline{\Omega(t)}.$ In this case, we denote by 
$X(t, \cdot) = [\Psi(t, \cdot)]^{-1}$ the inverse of $\Psi(t,\cdot)$ for all $t\geqslant0$. We consider the following change of coordinates: 
\begin{equation*}
y(t,x) = \varrho(t, \Psi(t,x)) \hspace{1cm} \text{ for } (t,x) \in (0,T)\times \Omega.
\end{equation*}
In other words, 
\begin{equation*}
\varrho(t, z) = y(t, X(t,z)) \hspace{1cm} \text{ for } (t,z) \in (0,T)\times \Omega(t).
\end{equation*}
We also introduce the standard notation 
\begin{equation*}
\mathfrak{B}_{\Psi} := \mathrm{Cof}(\nabla_x \Psi), \qquad \text{ and } \qquad \mathfrak{A}_{\Psi} := \frac{1}{\mathrm{det}(\nabla_x \Psi)} \mathfrak{B}_{\Psi}^{\top}  \mathfrak{B}_{\Psi},
\end{equation*}
where $\delta_{\Psi} := \mathrm{det}(\nabla_x \Psi)$ denotes the Jacobian determinant of $\nabla_x \Psi$, and $\mathrm{Cof}(M)$ denotes the cofactor matrix of $M$, satisfying $M (\mathrm{Cof}(M))^\top = (\mathrm{Cof}(M))^\top M = \mathrm{det}(M)\text{Id}$. System \eqref{eq: stefan.surface.tension} can then be equivalently rewritten as 
\begin{equation} \label{eq: stefan.transformed}
\begin{cases}
\partial_{t} y - \Delta y = \mathcal{N}_{1}(y,h) & \mbox{ in } (0,T) \times \Omega, \\
\partial_{t} h(t,x_1) =  \left(\nabla_x y(t,x_1,1) \cdot \mathbf{e}_{2} \right)+ \left(\mathcal{N}_{2}\big(y(t,x_1,1)\big)\cdot \mathbf{e}_{2} \right) &\text{ on } (0,T)\times\T, \\
y(t,x_1,-1) = 0 &\text{ on } (0,T) \times\T, \\
y(t,x_1,1) = \sigma \del_{x_1}^2 h(t,x_1) + \mathcal{N}_{3}(h(t,x_1)) &\text{ on } (0,T)\times\T, \\
\left(y, h\right)_{|_{t=0}} = \left(y^{0}, h^0\right)  & \text{ in } \Omega \times \T,
\end{cases}
\end{equation}
where $y^0(\cdot) := \varrho^0(\Psi(0,\cdot))$, $\*e_2:=(0,1)^\top$, with the nonlinear terms having the form
\begin{align*}
\mathcal{N}_{1}(y,h):= - (\mathrm{det}(\nabla_x \Psi) - 1) \partial_{t} y + \mathrm{div}(\mathcal{N}_{2}), \hspace{0.5cm} \mathcal{N}_{2}(y):= \left(\mathfrak{A}_{\Psi} - \text{Id}\right) \nabla_x y.
\end{align*}
and
\begin{equation*}
\mathcal{N}_{3}(h):= \sigma \left(\kappa(h) - \del_{x_1}^2 h \right).
\end{equation*}
Using the above change of variables, our main result, \Cref{thm:main-nonlinear}, can equivalently be rephrased as 

\begin{theorem} \label{thm:main-nonlinear-2}
Suppose $T>0$ and $\sigma>0$ are fixed, and suppose that $\omega=\mathbb{T}\times(c,d)$, where $(c,d)\subset(-1,1)$. There exists some small enough $r>0$ such that for every initial data $\left(y^{0}, h^0\right) \in H^{1}(\Omega)\times H^{\sfrac52}(\mathbb{T})$ satisfying 
\begin{align*}
h_{0}(x_{1}) > -1 &\quad\mbox{ for }  x_1\in\mathbb{T}, \notag\\
y^{0}(x_{1}, -1) = 0  &\quad\mbox{ for } x_1\in\mathbb{T}, 
\\ 
y^{0}(x_{1}, 1) + \sigma \kappa(h^{0}(x_{1})) = 0 &\quad\mbox{ for } x_1\in\mathbb{T}, \notag
\end{align*}
and 
\begin{equation*} 
\norm{y^0}_{H^{1}(\Omega)} + \norm{h^{0}}_{H^{\sfrac52}(\mathbb{T})}\leqslant r, 
\end{equation*}
there exists some control $u \in L^{2}((0, T)\times\omega)$ such that the unique solution\footnote{Recall the definitions of the high-order energy spaces $\mathfrak{E}_y$ and $\mathfrak{E}_h$ in \eqref{eq: Ey} and \eqref{eq: Eh} respectively.} $(y,h)\in\mathfrak{E}_{y}(0,T) \times \mathfrak{E}_{h}(0,T)$ to \eqref{eq: stefan.transformed} satisfies $y(T, \cdot)\equiv0$ in $\Omega$ and $h(T, \cdot)\equiv0$ on $\mathbb{T}$. Moreover, $h(t, x_{1})>d$ for all $(t,x)\in[0, T]\times\mathbb{T}.$
\end{theorem}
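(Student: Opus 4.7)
The plan is to cast the controllability of \eqref{eq: stefan.transformed} as a fixed-point problem in the weighted source-term space $\mathfrak{F}$ introduced in \Cref{sec: source.term.method}, and then apply the Banach fixed-point theorem in a sufficiently small ball of $\mathfrak{F}$, using the source term method (\Cref{thm: source.term.method}) as a black box. Concretely, given $(f_{1},f_{2},f_{3})\in\mathfrak{F}$, \Cref{thm: source.term.method} yields a control $u=\mathfrak{L}(y^{0},h^{0},f)$ and a unique strong solution $(y,h)\in\mathfrak{E}_{y}(0,T)\times\mathfrak{E}_{h}(0,T)$ to the linearized non-homogeneous system \eqref{sys-lin} satisfying $y(T,\cdot)\equiv0$ and $h(T,\cdot)\equiv 0$, together with the weighted estimate \eqref{est:main-nh-source}. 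This defines the map
\begin{equation*}
\Lambda(f_{1},f_{2},f_{3}):=\Big(\mathcal{N}_{1}(y,h),\ \mathcal{N}_{2}(y(\cdot,\cdot,1))\cdot\mathbf{e}_{2},\ \mathcal{N}_{3}(h)\Big),
\end{equation*}
and any fixed point $f^{*}=\Lambda(f^{*})$ yields, via the corresponding $(y,h)$ and $u$, a solution of the nonlinear problem \eqref{eq: stefan.transformed} driven to rest at time $T$.

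The heart of the argument consists in showing that, for $r>0$ sufficiently small, $\Lambda$ is a contraction on a closed ball $B_{R}\subset\mathfrak{F}$ centred at $0$, where $R=R(r)$ is itself small. The key observation is that the nonlinearities $\mathcal{N}_{1},\mathcal{N}_{2},\mathcal{N}_{3}$ are of at least quadratic order in $(y,h)$ near the equilibrium: indeed $\mathfrak{A}_{\Psi}-\mathrm{Id}$ and $\det(\nabla_{x}\Psi)-1$ are $\mathcal{O}(h)$ in terms of $h$ and $\nabla h$, and $\kappa(h)-\partial_{x_{1}}^{2}h$ is quadratic in $\partial_{x_{1}}h$. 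Consequently, one aims at nonlinear estimates of the schematic form
\begin{equation*}
\Big\|\Lambda(f)\Big\|_{\mathfrak{F}}\ \lesssim\ \Big\|\rho_{\alpha}^{2}/\rho_{\mathfrak{F}}\Big\|_{W^{1,\infty}(0,T)}\,\left(\Big\|y/\rho_{\alpha}\Big\|_{\mathfrak{E}_{y}}+\Big\|h/\rho_{\alpha}\Big\|_{\mathfrak{E}_{h}}\right)^{2},
\end{equation*}
and an analogous Lipschitz-type bound for $\Lambda(f)-\Lambda(\tilde f)$. Here, the crucial structural point is that the weight ratio $\rho_{\alpha}^{2}/\rho_{\mathfrak{F}}$ belongs to $W^{1,\infty}(0,T)$ by \Cref{lem: weights.facts}, which is precisely what lets the quadratic gain in the state norm absorb the singular weight appearing when one rewrites $\mathcal{N}_{i}(y,h)=\rho_{\mathfrak{F}}\cdot\big(\mathcal{N}_{i}(y,h)/\rho_{\mathfrak{F}}\big)$. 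Combined with \eqref{est:main-nh-source} applied to the linear system, this gives
\begin{equation*}
\Big\|\Lambda(f)\Big\|_{\mathfrak{F}}\ \lesssim\ C(T,\sigma)\left(\Big\|(y^{0},h^{0})\Big\|_{\mathscr{I}}+\|f\|_{\mathfrak{F}}\right)^{2},
\end{equation*}
so that taking $R\sim r^{1/2}$ and $r$ small yields both stability of $B_{R}$ under $\Lambda$ and contractivity.

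The main technical obstacle is the verification of the quadratic (and Lipschitz) nonlinear estimates in the anisotropic trace spaces $H^{\sfrac{1}{2},\sfrac{1}{4}}((0,T)\times\T)$ and $H^{\sfrac{3}{2},\sfrac{3}{4}}((0,T)\times\T)$ appearing in $\mathfrak{E}_{f}(0,T)$. This requires careful product estimates and multiplier/composition estimates in these fractional, mixed-regularity spaces, exploiting the embedding $\mathfrak{E}_{h}(0,T)\hookrightarrow C^{0}([0,T];H^{\sfrac{5}{2}}(\T))$ for $h$ (which gives $\partial_{x_{1}}h$ continuously in $H^{\sfrac{3}{2}}(\T)$, an algebra on $\T$) and standard trace estimates for $y\in\mathfrak{E}_{y}(0,T)$ at $x_{2}=1$. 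The curvature nonlinearity $\mathcal{N}_{3}(h)$ is the most delicate since it lives in $H^{\sfrac{3}{2},\sfrac{3}{4}}$, but factoring $\kappa(h)-\partial_{x_{1}}^{2}h=\partial_{x_{1}}^{2}h\cdot G(\partial_{x_{1}}h)$ with $G$ smooth and $G(0)=0$ reduces it to a product estimate in the algebra $\mathfrak{E}_{h}$.

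Once the fixed point $f^{*}$ is obtained, the smallness ensures that the no-contact condition \eqref{no-contact-1} (in particular $h(t,x_{1})>d$) holds throughout $[0,T]$ by virtue of the embedding $\mathfrak{E}_{h}(0,T)\hookrightarrow C^{0}([0,T]\times\T)$ and the smallness of $\|h/\rho_{\alpha}\|_{\mathfrak{E}_{h}}$ together with $\rho_{\alpha}(T)=0$. Compatibility of the initial data \eqref{cc1} with $f_{3}(0,\cdot)=\mathcal{N}_{3}(h^{0})$ is automatic from the sign convention in $\mathcal{N}_{3}$ and the assumption $y^{0}(x_{1},1)+\sigma\kappa(h^{0}(x_{1}))=0$. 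Finally, pulling back through $\Psi(t,\cdot)$ converts the result on the fixed domain into the statement of \Cref{thm:main-nonlinear}, completing the proof.
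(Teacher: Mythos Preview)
Your proposal is correct and follows essentially the same strategy as the paper: a Banach fixed-point argument on the weighted source-term space $\mathfrak{F}$, using \Cref{thm: source.term.method} to build the linear solution-plus-control map, and then exploiting the quadratic nature of $\mathcal{N}_{1},\mathcal{N}_{2},\mathcal{N}_{3}$ together with the weight relation $\rho_\alpha^2/\rho_{\mathfrak{F}}\in W^{1,\infty}$ from \Cref{lem: weights.facts} to close the contraction. The paper organizes the quadratic and Lipschitz bounds into \Cref{prop-nl-estimate} and \Cref{prop-lip-estimate} and takes the ball radius equal to $r=\|(y^0,h^0)\|_{\mathscr{I}}$ rather than your $R\sim r^{1/2}$, but this is purely cosmetic.
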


Thus, our final goal is to prove \Cref{thm:main-nonlinear-2}. This is done by a Banach fixed point argument in what follows.

}

\subsection{Fixed point argument}

{\color{black}	
For $r>0$ we define the ball $\mathfrak{F}_{r}\subset\mathfrak{F}$ centered at $0$ with radius $r$: 
\begin{equation*}
\mathfrak{F}_{r} = \left\{ (f_{1}, f_{2}, f_{3})\in\mathfrak{F} \,\,\Biggm|\,\ \norm{\left(\frac{f_{1}}{\rho_{\mathfrak{F}}}, \frac{f_{2}}{\rho_{\mathfrak{F}}}, \frac{f_{3}}{\rho_{\mathfrak{F}}}\right)}_{\mathfrak{E}_{f}} \leqslant r \right\}.
\end{equation*} 
Our first goal is to estimate the nonlinear terms in \eqref{eq: stefan.transformed}. 

\begin{proposition}[Quadratic nonlinearities] 
\label{prop-nl-estimate}
There exists some $r\in(0, 1)$ such that  for any $\left(y^{0}, h^{0}\right) \in \mathscr{I}$ with
\begin{equation*} 
r=\norm{\left(y^{0}, h^{0}\right)}_{\mathscr{I}},
\end{equation*}
and for any $(f_{1}, f_{2}, f_{3}) \in \mathfrak{F}_{r}$ satisfying \eqref{cc1}, the controlled trajectory $(y, h)$ for the system \eqref{sys-lin} constructed in \Cref{thm: source.term.method} satisfies 
\begin{equation*}
\norm{\frac{\mathcal{N}_{1}(y,h)}{\rho_{\mathfrak F}}}_{L^{2}((0, T)\times\Omega)}  + \norm{\frac{\mathcal{N}_{2}\big(y(\cdot,\cdot,1)\big)\cdot \mathbf{e}_{2}  }{\rho_{\mathfrak F}}}_{ H^{\frac{1}{2}, \frac{1}{4}}((0,T) \times \T)} +  \norm{\frac{\mathcal{N}_{3}(h)}{\rho_{\mathfrak F}}}_{ H^{\frac{3}{2},\frac{3}{4}}((0,T) \times \T)}   \leqslant C r^{2}
\end{equation*}
for some constant $C=C(T,\sigma)>0$ independent of $r$.
\end{proposition}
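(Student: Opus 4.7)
The plan is to first invoke \Cref{thm: source.term.method} to get a controlled trajectory $(y,h)$ satisfying the weighted energy estimate
\begin{equation*}
\left\|\left(\frac{y}{\rho_\alpha},\frac{h}{\rho_\alpha}\right)\right\|_{\mathfrak{E}_y(0,T)\times\mathfrak{E}_h(0,T)} \leqslant C(T,\sigma)\bigl(\|(y^0,h^0)\|_{\mathscr{I}} + \|(f_1,f_2,f_3)/\rho_{\mathfrak{F}}\|_{\mathfrak{E}_f}\bigr) \leqslant 2C(T,\sigma) r.
\end{equation*}
The rest of the proof then reduces to a structural analysis of each nonlinear term: because $\Psi(t,x)=(x_1,x_2(1+\chi(x_2)h(t,x_1)))$, the entries of $\nabla_x\Psi-\mathrm{Id}$, $\det(\nabla_x\Psi)-1$, and of $\mathfrak{A}_\Psi-\mathrm{Id}$ can be written as $h \cdot G_1(h,\partial_{x_1}h)$ plus higher-order analytic terms in $h,\partial_{x_1}h$, where $G_1$ is smooth in a neighborhood of $0$ and vanishes nowhere badly thanks to $\|h\|_\infty\leqslant r\ll 1$. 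Likewise,
\begin{equation*}
\kappa(h)-\partial_{x_1}^2 h = \partial_{x_1}^2 h\cdot\bigl[(1+|\partial_{x_1}h|^2)^{-3/2}-1\bigr] = \partial_{x_1}^2 h \cdot |\partial_{x_1} h|^2 \cdot G_2(\partial_{x_1}h),
\end{equation*}
with $G_2$ smooth. Thus each $\mathcal{N}_i$ is a product of at least two factors, each of which is (a derivative or trace of) $y$ or $h$.

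Next I would distribute the weight $\rho_{\mathfrak{F}}^{-1}$ among the factors using the bound $\rho_\alpha^2/\rho_{\mathfrak{F}}\in W^{1,\infty}(0,T)$ from \Cref{lem: weights.facts}, writing for instance $\rho_{\mathfrak{F}}^{-1}(h \cdot\partial_t y) = (\rho_\alpha^2/\rho_{\mathfrak{F}})\cdot(h/\rho_\alpha)\cdot(\partial_t y/\rho_\alpha)$, and similarly for the other products. This reduces every term to be controlled to a product of two (weighted) factors, each bounded uniformly in $\mathfrak{E}_y$ or $\mathfrak{E}_h$ by $\lesssim r$. The desired $r^2$ bound will then follow from tame product (algebra/multiplier) estimates in the relevant Sobolev spaces. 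Specifically, for $\mathcal{N}_1$ I would use the embedding $\mathfrak{E}_h(0,T)\hookrightarrow L^\infty(0,T;C^1(\T))$ and $\mathfrak{E}_y(0,T)\hookrightarrow C^0([0,T];H^1(\Omega))$, which together with $\partial_t y\in L^2_tL^2_x$ and $\nabla^2 y \in L^2_tL^2_x$ give an $L^2_tL^2_x$ bound of order $r^2$.

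For the boundary nonlinear terms, which I expect to be the main obstacle, I would use the trace map $\mathfrak{E}_y(0,T)\to H^{3/2,3/4}((0,T)\times\T)$ (and its derivative $\to H^{1/2,1/4}$) to transfer $y|_{x_2=1}$ and $\partial_{x_2}y|_{x_2=1}$ into the anisotropic Sobolev spaces on the fixed boundary. Then I would apply the multiplier lemma asserting that $H^{3/2,3/4}((0,T)\times\T)$ is an algebra (since the time-space scaling puts both factors above the critical exponent), and that pointwise multiplication $H^{3/2,3/4}\times H^{1/2,1/4}\to H^{1/2,1/4}$ is continuous when one factor is sufficiently regular. In this manner $\mathcal{N}_3(h)$ is rewritten as $\partial_{x_1}^2 h\cdot(\partial_{x_1}h)^2\cdot G_2(\partial_{x_1}h)$, with $\partial_{x_1}^2 h\in H^{3/2,3/4}$ (coming from $h\in\mathfrak{E}_h$) multiplied by two copies of $\partial_{x_1}h$ which sit in a better space, yielding a net $H^{3/2,3/4}$ bound of order $r^3\leqslant r^2$; similarly, $\mathcal{N}_2\cdot\mathbf{e}_2$ at the top boundary is a product of $\nabla y|_{x_2=1}\in H^{1/2,1/4}$ with a smooth function of $(h,\partial_{x_1}h)$ that belongs to $H^{3/2,3/4}$ and vanishes at $h=0$, giving an $H^{1/2,1/4}$ bound of order $r^2$. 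The main technical subtleties will be (i) verifying the tame product estimates in the anisotropic spaces $H^{r,s}((0,T)\times\T)$, and (ii) ensuring that the composition estimates for smooth functions $G_i$ applied to small $\mathfrak{E}_h$ functions produce constants uniform in small $r$; both are handled by a Moser-type argument combined with the embedding $\mathfrak{E}_h\hookrightarrow C^0([0,T];C^1(\T))$, which guarantees the composition is well-defined and tame near $h\equiv 0$.
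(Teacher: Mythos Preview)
Your proposal is correct and follows essentially the same route as the paper: invoke \Cref{thm: source.term.method} for the weighted bound $\|(y/\rho_\alpha,h/\rho_\alpha)\|_{\mathfrak{E}_y\times\mathfrak{E}_h}\lesssim r$, write each $\mathcal{N}_i$ as a product of at least two small factors, distribute the weight via $\rho_\alpha^2/\rho_{\mathfrak{F}}\in W^{1,\infty}$ from \Cref{lem: weights.facts}, and close with product/algebra estimates in the relevant (anisotropic) Sobolev spaces together with trace theorems for the boundary terms. The only notable difference is packaging: the paper computes $\nabla_x\Psi$, $\mathfrak{A}_\Psi-\mathrm{Id}$, etc.\ explicitly and handles the $H^{\sfrac{3}{2},\sfrac{3}{4}}$ product estimates by splitting into the two algebras $L^\infty_tH^{\sfrac{3}{2}}(\T)$ and $H^{\sfrac{3}{4}}_tL^\infty(\T)$ separately (citing \cite[Proposition~B.1]{GS91}), rather than asserting directly that $H^{\sfrac{3}{2},\sfrac{3}{4}}$ is an algebra, but this is the same mechanism you identify as subtlety~(i).
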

}

\begin{proof}
{\color{black}
Note that, from \eqref{est:main-nh-source}, we have 
\begin{equation} \label{est-yh-1}
\norm{\left(\frac{y}{\rho_{\alpha} }, \frac{h}{\rho_{\alpha} }\right)}_{ \mathfrak{E}_{y}(0,T) \times \mathfrak{E}_{h}(0,T)} \lesssim_{T,\sigma} r.
\end{equation}
Using the definition of $\rho_{\alpha}$ in \eqref{eq: rho.alpha.def}, and \eqref{est-yh-1}, we obtain 
\begin{equation} \label{est-h}
\norm{h}_{\mathfrak{E}_{h}(0,T)} \lesssim_{T,\sigma} \norm{\frac{h}{\rho_{\alpha}}}_{\mathfrak{E}_{h}(0,T)} \norm{\rho_{\alpha}}_{W^{2,\infty}(0,T)} \lesssim_{T,\sigma} r.
\end{equation}
Thus, for $r$ small enough, $h$ satisfies \eqref{no-contact-1} for all $t \in [0,T].$
Then, from the definition of $\Psi,$ and setting $\zeta(x_{2}) := \chi(x_{2})  +x_{2}\chi'(x_{2})$ it is readily seen that 
\begin{equation*}
\nabla_x \Psi(t,x) = \begin{bmatrix}
1 & 0 \\ x_{2} \partial_{x_{1}} h(t,x_1)  \chi(x_{2}) & 1+ \zeta(x_{2}) h(t,x_1)
\end{bmatrix}, 
\end{equation*}
as well as
\begin{equation*}
\mathrm{det}(\nabla \Psi - 1) =\zeta(x_{2}) h(t,x_1), 
\end{equation*}
and 
\begin{equation*}
\mathfrak{A}_{\Psi} - \text{Id} =  
\frac{1}{\mathrm{det}(\nabla_x \Psi)}
\begin{bmatrix}
\zeta^{2} h^{2} + \zeta h + x_{2}^{2} \chi^{2} (\partial_{x_{1}} h)^{2} & - x_{2} \chi \partial_{x_{1}} h \\
- x_{2} \chi \partial_{x_{1}} h & - \zeta h 
\end{bmatrix}
  \end{equation*}
and
\begin{equation*} 
\mathfrak{A}_{\Psi}(t, x_{1}, 1) = \frac{1}{1+h(t,x_1)}
\begin{bmatrix}
(1+h(t,x_1))^{2} + (\partial_{x_{1}} h(t,x_1))^{2} & -\partial_{x_{1}} h(t,x_1) \\ -\partial_{x_{1}} h(t,x_1) & 1
\end{bmatrix}.
\end{equation*}
Consequently, from \eqref{est-yh-1} and \eqref{est-h} we gather that
\begin{align}
\norm{\frac{1}{\rho_{\alpha}} \left( \nabla_x \Psi - \text{Id} \right)}_{L^{\infty}((0, T) \times \Omega)}&\lesssim_{T,\sigma}r, \nonumber\\ 
\norm{\frac{1}{\rho_{\alpha}} \left(  \mathrm{det}(\nabla_x \Psi) - 1\right)}_{L^{\infty}((0, T) \times \Omega)}&\lesssim_{T,\sigma} r,  \notag \\
\norm{\frac{1}{\mathrm{det}(\nabla_x \Psi)}}_{L^{\infty}((0,T);H^{\sfrac{3}{2}}(\Omega))} &\lesssim_{T,\sigma} r,\nonumber\\
\norm{\frac{1}{\rho_{\alpha}} \left(  \mathfrak{A}_{\Psi} - \text{Id} \right) }_{L^{\infty}((0,T);H^{\sfrac{3}{2}}(\Omega))}&\lesssim_{T,\sigma}r. \label{est-psi-all}
\end{align}
Note that, in the above estimates we have used the fact that $H^{\sfrac{3}{2}}(\Omega)$ is an algebra in dimension $d=2$. 
}{\color{black}
Next, using well-known trace results (see \cite{lions2012non}) and \eqref{est-yh-1}, we deduce that 
\begin{equation} \label{est-trace-nh}
\norm{\frac{\partial_{x_{j}} y(\cdot, \cdot, 1)}{\rho_{\alpha}}}_{ H^{\sfrac{1}{2}, \sfrac{1}{4}}((0,T) \times \T)}  \lesssim_{T,\sigma} r, \qquad j=1,2.
\end{equation}
We are now in a position to estimate the nonlinear terms. To estimate the product terms we will use \cite[Proposition B.1]{GS91}. 
Combining \Cref{lem: weights.facts}, \eqref{est-yh-1}, \eqref{est-h} and the bounds in \eqref{est-psi-all}, we estimate $\mathcal{N}_{1}(y,h)$ as follows: 
\begin{align*}
&\norm{\frac{\mathcal{N}_{1}(y,h)}{\rho_{\mathfrak F}}}_{L^{2}((0, T)\times\Omega)}\\
&\hspace{1.75cm}\lesssim_{T,\sigma}\Bigg(\norm{\frac{1}{\rho_{\alpha}} \left(  \mathrm{det}(\nabla_x \Psi) - 1\right)}_{L^{\infty}((0, T) \times \Omega)}  \norm{\frac{\partial_{t} y}{\rho_{\alpha}}}_{L^{2}((0, T)\times\Omega)} 
\norm{\frac{\rho_{\alpha}^{2}}{\rho_{\mathfrak F}}}_{L^{\infty}(0, T)} \\
&\hspace{2.65cm}+\norm{\frac{1}{\rho_{\alpha}} \left(  \mathfrak{A}_{\Psi} - \text{Id} \right) }_{L^{\infty}((0,T);H^{\sfrac{3}{2}}(\Omega))}  \norm{\frac{\nabla_x y}{\rho_{\alpha}}}_{L^{2}((0, T); H^{1}(\Omega))}
 \norm{\frac{\rho_{\alpha}^{2}}{\rho_{\mathfrak F}}}_{L^{\infty}(0, T)}\Bigg) \\
 &\hspace{1.75cm}\lesssim_{T,\sigma} r^{2}.
\end{align*}
Using the definition of $\Psi$, we note that  
\begin{align}
\norm{\frac{\mathcal{N}_{2}\big(y(\cdot,\cdot,1)\big)\cdot \mathbf{e}_{2}  }{\rho_{\mathfrak F}}}_{ H^{\sfrac{1}{2}, \sfrac{1}{4}}((0,T) \times \T)}&\lesssim_{T,\sigma}\Bigg(\norm{\left(\frac{\partial_{x_{1}} h}{1+ h}\right)\left(\frac{\partial_{x_{1}} y(t, \cdot, 1)}{\rho_{\mathfrak F}}\right)}_{H^{\sfrac{1}{2}, \sfrac{1}{4}}((0,T) \times \T)} \nonumber \\
&\hspace{0.9cm}+ \norm{\left(\frac{ h}{1+ h}\right)\left(\frac{\partial_{x_{2}} y(t, \cdot, 1)}{\rho_{\mathfrak F}}\right)}_{H^{\sfrac{1}{2}, \sfrac{1}{4}}((0,T) \times \T)}\Bigg). \label{n2-trace-1}
\end{align}
Combing \Cref{lem: weights.facts}, \eqref{est-h} and \eqref{est-trace-nh} then yields 
\begin{align*}
&\norm{\left(\frac{\partial_{x_{1}} h}{1+ h}\right)\left(\frac{\partial_{x_{1}} y(t, \cdot, 1)}{\rho_{\mathfrak F}}\right)}_{L^{2}(0, T; H^{\sfrac{1}{2}}(\mathbb{T}))}\\
&\lesssim_{T,\sigma} \norm{\frac{1}{1+h}}_{L^{\infty}(0, T; H^{1}(\T))} \norm{\frac{\partial_{x_{1}} h}{\rho_{\alpha}}}_{L^{\infty}(0, T; H^{1}(\T))}\norm{\frac{\partial_{x_{1}} y(t, \cdot, 1)}{\rho_{\mathfrak F}}}_{L^{2}(0, T; H^{\sfrac{1}{2}}(\mathbb{T}))} \norm{\frac{\rho_{\alpha}^{2}}{\rho_{\mathfrak F}}}_{L^{\infty}(0, T)}\\
&\lesssim_{T,\sigma} r^{2}, 
\end{align*}
and 
\begin{align*}
&\norm{\left(\frac{\partial_{x_{1}} h}{1+ h}\right)\left(\frac{\partial_{x_{1}} y(t, \cdot, 1)}{\rho_{\mathfrak F}}\right)}_{H^{\sfrac{1}{4}}(0,T; L^{2}(\mathbb{T}))}\\
&\lesssim_{T,\sigma} \norm{\frac{1}{1+h}}_{H^{1}((0, T)\times\T)} \norm{\frac{\partial_{x_{1}} h}{\rho_{\alpha}}}_{H^{\sfrac{3}{4}}(0, T; H^{1}(\T))} \norm{\frac{\partial_{x_{1}} y(t, \cdot, 1)}{\rho_{\mathfrak F}}}_{H^{\sfrac{1}{4}}(0, T; L^{2}(\mathbb{T}))} \norm{\frac{\rho_{\alpha}^{2}}{\rho_{\mathfrak F}}}_{W^{1, \infty}(0, T)}\\
&\lesssim_{T,\sigma} r^{2}.
\end{align*}
Arguing in a similar manner, we can show that 
\begin{equation*}
\norm{\left(\frac{ h}{1+ h}\right)\left(\frac{\partial_{x_{2}} y(t, \cdot, 1)}{\rho_{\mathfrak F}}\right)}_{H^{\sfrac{1}{2}, \sfrac{1}{4}}((0,T) \times \T)}  \lesssim_{T,\sigma} r^{2}.
\end{equation*}
Combining the preceding three estimates together with \eqref{n2-trace-1}, we obtain 
\begin{equation*}
\norm{\frac{\mathcal{N}_{2}\big(y(\cdot,\cdot,1)\big)\cdot \mathbf{e}_{2}  }{\rho_{\mathfrak F}}}_{ H^{\sfrac{1}{2}, \sfrac{1}{4}}((0,T) \times \T)} \lesssim_{T,\sigma} r^{2}.
\end{equation*}
To estimate $\mathcal{N}_{3}(h)$, we first note that 
\begin{equation*}
\left(1+|\partial_{x_{1}} h|^{2}\right)^{-\frac{3}{2}}-1 = - \frac{3}{2} \left|\partial_{x_{1}} h \right|^{2}  \left(1 + s |\partial_{x_{1}} h|^{2}\right)^{-\frac{5}{2}}
\end{equation*}
for some $s\in(0,1)$.
Using Sobolev embeddings,  \eqref{est-yh-1} and \eqref{est-h}, we find
\begin{equation*}
\norm{ \frac{\partial_{x_{1}} h}{\rho_{\alpha}} }_{L^{\infty}((0,T);H^{\sfrac{3}{2}}(\T)) \cap H^{\sfrac{3}{4}}((0, T); L^{\infty}(\T)) } + \norm{\partial_{x_{1}} h}_{L^{\infty}((0,T);H^{\sfrac{3}{2}}(\T)) \cap H^{\sfrac{3}{4}}((0, T); L^{\infty}(\T)) } \lesssim_{T,\sigma}  r.
\end{equation*}
Since $L^{\infty}((0,T);H^{\sfrac{3}{2}}(\T))$ and $H^{\sfrac{3}{4}}((0, T); L^{\infty}(\T))$ are both algebras, we also know that
\begin{equation*}
\norm{ \left(1 + s |\partial_{x_{1}} h|^{2}\right)^{-\sfrac{5}{2}}}_{{L^{\infty}((0,T);H^{\sfrac{3}{2}}(\T))} \cap H^{\sfrac{3}{4}}((0, T);L^{\infty}(\T))} \leqslant C,
\end{equation*}
for some $C=C(T,s)>0$ independent of $r.$
Using the above estimates, together with \Cref{lem: weights.facts}, \eqref{est-yh-1}, \eqref{est-h}, we estimate $\mathcal{N}_{3}$ as follows  
\begin{align*}
\norm{\frac{\mathcal{N}_{3}(h)}{\rho_{\mathfrak F}}}_{L^{2}((0,T);H^{\sfrac{3}{2}}(\T)} & \lesssim_{T,\sigma} \norm{ \frac{|\partial_{x_{1}} h|^{2}}{\rho_{\alpha}} }_{L^{\infty}((0,T);H^{\sfrac{3}{2}}(\T))} \norm{\frac{\partial_{x_{1}}^{2} h}{\rho_{\alpha}}}_{L^{2}((0,T);H^{\sfrac{3}{2}}(\T))} \\
&\quad\cdot\norm{ \left(1 + s |\partial_{x_{1}} h|^{2}\right)^{-\sfrac{5}{2}}}_{{L^{\infty}((0,T);H^{\sfrac{3}{2}}(\T))} } \norm{\frac{\rho_{\alpha}^{2}}{\rho_{\mathfrak F}}}_{L^{\infty}(0, T)}\\
&\lesssim_{T,\sigma} r^{2}, 
\end{align*}
and 
\begin{align*}
\norm{\frac{\mathcal{N}_{3}(h)}{\rho_{\mathfrak F}}}_{H^{\sfrac{3}{4}}((0,T); L^{2}(\T))} & \lesssim_{T,\sigma} \norm{ \frac{|\partial_{x_{1}} h|^{2}}{\rho_{\alpha}} }_{H^{\sfrac{3}{4}}((0,T); L^{\infty}(\T))} \norm{\frac{\partial_{x_{1}}^{2} h}{\rho_{\alpha}}}_{H^{\sfrac{3}{4}}((0,T);L^{2}(\T))} \\
&\quad\cdot\norm{ \left(1 + s |\partial_{x_{1}} h|^{2}\right)^{-\sfrac{5}{2}}}_{{H^{\sfrac {3}{4}}((0,T);L^{\infty}(\T))} } \norm{\frac{\rho_{\alpha}^{2}}{\rho_{\mathfrak F}}}_{W^{1,\infty}(0, T)}\\
&\lesssim_{T,\sigma} r^{2}.
\end{align*}
This concludes the proof. 
}
\end{proof}

{\color{black}
In an exactly analogous manner, we can also show the following Lipschitz estimates of the nonlinear terms.  We omit the proof. 

\begin{proposition}[Lipschitz bounds for nonlinearities] 
\label{prop-lip-estimate}
There exists some $r\in(0, 1)$ such that  for any $\left(y^{0}, h^{0}\right) \in \mathscr{I}$ with
\begin{equation*} 
r=\norm{\left(y^{0}, h^{0}\right)}_{\mathscr{I}},
\end{equation*}
and for any $(f_{1}^{i}, f_{2}^{i}, f_{3}^{i}) \in \mathfrak{F}_{r},$ with $i =1,2,$ satisfying \eqref{cc1}, the controlled trajectory $(y_{i}, h_{i})$ for the system \eqref{sys-lin} constructed in \Cref{thm: source.term.method} satisfies 
\begin{multline*}
\norm{\frac{\mathcal{N}_{1}(y_{1},h_{1})}{\rho_{\mathfrak F}} - \frac{\mathcal{N}_{1}(y_{2},h_{2})}{\rho_{\mathfrak F}}}_{L^{2}((0, T)\times\Omega)} 
 \norm{\frac{\mathcal{N}_{3}(h_{1})}{\rho_{\mathfrak F}} - \frac{\mathcal{N}_{3}(h_{2})}{\rho_{\mathfrak F}}}_{ H^{\sfrac{3}{2},\sfrac{3}{4}}((0,T) \times \T)}  \\
 + \norm{\frac{\mathcal{N}_{2}\big(y_{1}(\cdot,\cdot,1)\big)\cdot \mathbf{e}_{2}  }{\rho_{\mathfrak F}} - \frac{\mathcal{N}_{2}\big(y_{2}(\cdot,\cdot,1)\big)\cdot \mathbf{e}_{2}  }{\rho_{\mathfrak F}}}_{ H^{\sfrac{1}{2}, \sfrac{1}{4}}((0,T) \times \T)}\\
 \leqslant C r 
  \norm{\left(\frac{f_{1}^{1}}{\rho_{\mathfrak{F}}}, \frac{f_{2}^{1}}{\rho_{\mathfrak{F}}}, \frac{f_{3}^{1}}{\rho_{\mathfrak{F}}}\right) - \left(\frac{f_{1}^{2}}{\rho_{\mathfrak{F}}}, \frac{f_{2}^{2}}{\rho_{\mathfrak{F}}}, \frac{f_{3}^{2}}{\rho_{\mathfrak{F}}}\right) }_{\mathfrak{E}_{f}(0,T)}
\end{multline*}
for some constant $C=C(T,\sigma)>0$ independent of $r$.
\end{proposition}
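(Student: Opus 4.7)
The proof follows the same blueprint as that of \Cref{prop-nl-estimate}, exploiting the quadratic nature of the nonlinearities. The key preliminary observation is that the map $\mathfrak{L}$ constructed in \Cref{thm: source.term.method} is linear in $(y^0, h^0; f_1, f_2, f_3)$, so the difference $(y_1 - y_2, h_1 - h_2)$ is itself the controlled trajectory of \eqref{sys-lin} corresponding to zero initial data and source term $(f_1^1 - f_1^2,\, f_2^1 - f_2^2,\, f_3^1 - f_3^2)$. Applying \eqref{est:main-nh-source} to this difference yields directly
\begin{equation*}
\left\|\left(\frac{y_1 - y_2}{\rho_\alpha},\, \frac{h_1 - h_2}{\rho_\alpha}\right)\right\|_{\mathfrak{E}_y(0,T)\times\mathfrak{E}_h(0,T)} \lesssim_{T,\sigma} \left\|\left(\frac{f_1^1 - f_1^2}{\rho_\mathfrak{F}},\, \frac{f_2^1 - f_2^2}{\rho_\mathfrak{F}},\, \frac{f_3^1 - f_3^2}{\rho_\mathfrak{F}}\right)\right\|_{\mathfrak{E}_f(0,T)}.
\end{equation*}
In particular, all quantities built linearly from $(y_1 - y_2, h_1 - h_2)$ (including their traces at $x_2 = 1$) obey analogous estimates, via the same trace results used in \Cref{prop-nl-estimate}.

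The next step is to rewrite each nonlinear difference as a telescoping sum of products, each of whose factors is either a difference (bounded by the right-hand side above) or an original-trajectory quantity (bounded by $r$ through \Cref{prop-nl-estimate} and its proof). For $\mathcal{N}_1$, writing $\Psi_i$ for the diffeomorphism associated with $h_i$ and using $\det\nabla_x \Psi_i - 1 = \zeta(x_2) h_i$, one splits
\begin{equation*}
\left(\det\nabla_x \Psi_1 - 1\right)\partial_t y_1 - \left(\det\nabla_x \Psi_2 - 1\right)\partial_t y_2 = \zeta(x_2)(h_1 - h_2)\,\partial_t y_1 + \zeta(x_2)h_2\left(\partial_t y_1 - \partial_t y_2\right),
\end{equation*}
and analogously decomposes $\mathfrak{A}_{\Psi_1} \nabla_x y_1 - \mathfrak{A}_{\Psi_2} \nabla_x y_2$ factor by factor, noting that the entries of $\mathfrak{A}_\Psi$ are rational in $h, \partial_{x_1} h$ with denominators bounded away from zero (since \eqref{no-contact-1} holds uniformly for $r$ small). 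The weighted products are then handled exactly as in \Cref{prop-nl-estimate}, using \cite[Proposition B.1]{GS91}, the algebra property of $H^{\sfrac{3}{2}}(\Omega)$, and the weight identities of \Cref{lem: weights.facts}. The factor of $r$ on the right-hand side arises because each product contains one \emph{undifferenced} trajectory factor whose relevant weighted norm was shown to be $\lesssim_{T,\sigma} r$ in \Cref{prop-nl-estimate}.

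For the curvature term, I would use the fundamental theorem of calculus:
\begin{equation*}
\mathcal{N}_3(h_1) - \mathcal{N}_3(h_2) = \sigma\int_0^1 D\kappa\Big(\tau h_1 + (1-\tau) h_2\Big)\big[h_1 - h_2\big]\,\rd\tau,
\end{equation*}
observing that $D\kappa(h)[k] = \partial_{x_1}^2 k \,(1+|\partial_{x_1} h|^2)^{-\sfrac{3}{2}} - 3 \partial_{x_1}^2 h\, \partial_{x_1}h\, \partial_{x_1}k\, (1+|\partial_{x_1} h|^2)^{-\sfrac{5}{2}}$, which is linear in $k$. Since subtracting the "Laplacian" part $\sigma\partial_{x_1}^2 h$ removes the purely linear-in-$h$ contribution, each term in $D\kappa(\tau h_1 + (1-\tau) h_2)[h_1 - h_2] - \partial_{x_1}^2(h_1 - h_2)$ contains a factor of $\partial_{x_1}(\tau h_1 + (1-\tau) h_2)$ (giving the factor of $r$) multiplied by a factor built linearly from $h_1 - h_2$. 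The required $H^{\sfrac{3}{2},\sfrac{3}{4}}((0,T)\times\T)$ norm is then estimated by the exact same algebra arguments used for $\mathcal{N}_3$ in \Cref{prop-nl-estimate}, uniformly in $\tau \in [0,1]$.

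The main obstacle, as in \Cref{prop-nl-estimate}, is purely bookkeeping: tracking which factor is assigned the weight $\rho_\alpha^{-1}$ and which carries $\rho_\mathfrak{F}^{-1}$, and verifying that the leftover weight always reduces (via the identities of \Cref{lem: weights.facts}, notably $\rho_\alpha^2/\rho_\mathfrak{F} \in W^{1,\infty}$) to an $L^\infty$ or $W^{1,\infty}$ function in time. No genuinely new estimate is needed beyond those of \Cref{prop-nl-estimate}, which is precisely why the authors omit the proof.
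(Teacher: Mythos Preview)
Your proposal is correct and follows precisely the route the paper intends: the authors explicitly state that the proof is ``exactly analogous'' to that of \Cref{prop-nl-estimate} and omit it, and your plan---exploiting the linearity of $\mathfrak{L}$ to control the difference trajectory via \eqref{est:main-nh-source}, then telescoping each nonlinearity into products of an undifferenced factor (bounded by $r$) and a difference factor---is exactly that analogy. The one point worth making explicit is that the compatibility condition \eqref{cc1} for the difference system is automatically satisfied (since both $f_3^i$ share the same initial trace $y^0(\cdot,1)-\sigma\partial_{x_1}^2 h^0$, their difference vanishes at $t=0$), which legitimizes your application of the second part of \Cref{thm: source.term.method} to $(y_1-y_2,h_1-h_2)$.
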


We are now in position to prove \Cref{thm:main-nonlinear-2}.

\begin{proof}[Proof of \Cref{thm:main-nonlinear-2}]
We use a Banach fixed point argument.
Let us take 
\begin{equation*} 
r=\norm{\left(y^{0}, h^{0}\right)}_{\mathscr{I}}.
\end{equation*}
Consider the map 
\begin{equation*}
\mathfrak{N} : \mathfrak{F}_{r} \mapsto \mathfrak{F}_{r}, \qquad  \left( f_{1}, f_{2}, f_{3} \right) \mapsto  \left(\mathcal{N}_{1}(y,h), \mathcal{N}_{2}\big(y(t,x_1,1)\big)\cdot \mathbf{e}_{2}, \mathcal{N}_{3}(h) \right),
\end{equation*}
where $(y, h)$ is the controlled trajectory constructed in \Cref{thm: source.term.method}. According to \Cref{prop-nl-estimate} and \Cref{prop-lip-estimate}, there exists some $r > 0$ small enough such that 
 $\mathfrak{N}$ is a strict contraction on $\mathfrak{F}_{r}$. Finally, by \Cref{thm: source.term.method} we obtain  
 $$ \left(\frac{y}{\rho_{\alpha} }, \frac{h}{\rho_{\alpha} }\right) \in \mathfrak{E}_{y}(0,T) \times \mathfrak{E}_{h}(0,T),$$
and the null-controllability for $(y,h)$ then follows since $\rho_{\alpha}(T)=0$. This concludes the proof. 
\end{proof}

}

		\section{Epilogue} \label{sec: conclusion} We have shown that the  Stefan problem with surface tension (Gibbs-Thomson correction) is locally null-controllable, in the sense that both the temperature and the height function are controllable to zero under smallness assumptions on the initial data.
		
		There are, however, several questions and problems which we believe merit further attention and clarity, even in the linearized regime. In addition to addressing the case $\sigma=0$, discussed in greater depth in \Cref{sec: discussion}, other problems may include
		  
		\begin{enumerate}
		
		\item{\bf{Control on the free boundary.}} While not exactly perfectly clear to interpret in physical terms, one can consider the problem of controlling \emph{through} the free boundary, which would mean putting a control $u1_\omega$ in the second equation in \eqref{sys-lin-con.intro} (namely the evolution equation for $h$). This would be in the spirit of works in control of water waves (\cite{Al18}), and also the simplified piston problem (\cite{tucsnak_burgers_jmpa}). We expect this problem to be significantly more challenging to address than the one we had considered here.
		
		\smallskip
		\item{\bf{Spectral optimization.}} The governing operator $\*A$ of \eqref{sys-lin-con.intro} appears somewhat opaque, and a variety of alternative (control) problems can be envisaged and studied regarding \eqref{sys-lin-con.intro}. We believe that further analysis is warranted in the analysis of the spectral properties of $\*A$, in particular with regard to extension of various control results to general geometries (beyond strips). 
		
		As we have noted, $-\*A$ is self-adjoint when considered on the space $\mathscr{H}$ with functions of zero mean on $\mathbb{T}$. In particular, one could define the first eigenvalue $\lambda_1>0$ of $-\*A$ through the min-max theorem as the Rayleigh quotient 
		\begin{equation*}
		\lambda_1:=\inf_{f\in\mathfrak{D}(\*A)\setminus\{0\}}\frac{\langle -\*Af,f\rangle_{\mathscr{H}}}{\|f\|_{\mathscr{H}}^2}.
		\end{equation*}
		But as is typical for the Laplacian, one seeks to use the symmetry and obtain a more tractable representation. In \cite[Lemma 4.5]{hadvzic2010stability} (see also \cite[Chapter 3, Section 4, Lemma 4.5]{hadzic2010stability}), it also is stated that
		\begin{equation} \label{eq: rayleigh}
		\lambda_1=\inf_{f=(f_1,f_2)\in\mathcal{S}\setminus\{0\}}\frac{\displaystyle\int_\Omega |\nabla f_1|^2\diff x}{\displaystyle\int_\Omega |f_1|^2\diff x+\sigma\int_{\T}|\del_{x_1}f_2|^2\diff x_1},
		\end{equation} 
		where the space $\mathcal{S}$ is defined as
		\begin{equation*}
		\mathcal{S}:=\left\{f=(f_1,f_2)\in H^1(\Omega)\times H^{\sfrac{5}{2}}(\T)\,\Biggm|\,\int_{\Omega} f_1(x_1,\cdot)\diff x_1+\int_{\T} f_2(x_1)\diff x_1=0\right\}.
		\end{equation*}
		Taking stock of \eqref{eq: rayleigh}, there are a variety of different spectral optimization problems one could then envisage for \eqref{sys-lin-con.intro}, such as characterizing optimal actuator and observer domains $\omega$ in the spirit of \cite{privat2015optimal, privat2016optimal, privat2017actuator, geshkovski2021optimal}, and in particular, comparing how these designs differ from that of the classical heat equation, or the limit of these designs as $\sigma\searrow0$ (should controllability hold for the latter).
		
		\smallskip
		\item{\bf{The obstacle problem.}} It is by now well-known that the classical Stefan problem ($\sigma=0$), without source terms, is related to the \emph{parabolic obstacle problem} through the so-called \emph{Duvaut transform} (see \cite{figalli2019regularity, ros2018obstacle} and the references therein). For control purposes, one could envisage transferring results from the Stefan problem to the parabolic obstacle problem (which is actually a problem to be studied in its own right, \cite{ros2018obstacle, geshkovskiobstacle}). But this is highly nontrivial due to the fact that the Duvaut transform applies to non-negative solutions of the Stefan problem, and it is not clear if existing techniques on controllability under positivity constraints (\cite{loheac2017minimal}, or the so-called staircase method \cite{pighin2018controllability, ruiz2020control, mazari2020constrained}) would be applicable here. The bottom line is that the controllability properties of the parabolic obstacle problem remain widely open. (See \cite[Section 1.5.1]{geshkovski2021control}.)
		\end{enumerate}
		
\subsection*{Acknowledgments}
This research began with discussions between both authors during the “VIII Partial Differential Equations, Optimal Design and Numerics” workshop that took place in Benasque, in August 2019. We thank the organizing committee, and the center "Pedro Pascual" for their hospitality.
We thank the reviewers for their scrutiny, which has greatly improved the results of this work.

\subsection*{Funding}
B.G. has received funding from the European Union's Horizon 2020 research and innovation programme under the Marie Sklodowska-Curie grant agreement No.765579-ConFlex. Debayan Maity was partially supported by INSPIRE faculty fellowship (IFA18-MA128) and by Department of  Atomic Energy, Government of India, under project no. 12-R \& D-TFR-5.01-0520. 

\begin{appendix}

\section{Toolkit} \label{sec: toolkit}

{\color{black}
		
		\begin{lemma} \label{lem: convergence.series}
		The identity  
		\begin{equation*}
		\sum_{j=1}^{+\infty} \partial_{x_2}\phi_j(1)\lambda_j^{-1}\langle 1, \phi_j\rangle_{L^2(c,d)} = \frac14(c^2-d^2).
		\end{equation*}
		holds true. In particular, the series (appearing in the denominator in \eqref{eq: v0}) is absolutely convergent, and non-zero when $c\neq\pm d$.
		\end{lemma}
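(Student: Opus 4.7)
The plan is to reduce the series to a classical Fourier series identity, after first computing the three factors explicitly. Recall that $\phi_j(x_2)=\sin(j\pi x_2)$ and $\lambda_j=j^2\pi^2$, so
\begin{equation*}
\partial_{x_2}\phi_j(1)=j\pi(-1)^j,\qquad \langle 1,\phi_j\rangle_{L^2(c,d)}=\frac{\cos(j\pi c)-\cos(j\pi d)}{j\pi}.
\end{equation*}
Consequently, each summand equals $\frac{(-1)^j}{\pi^2}\cdot\frac{\cos(j\pi c)-\cos(j\pi d)}{j^2}$, which is dominated in absolute value by $\frac{2}{\pi^2 j^2}$. Absolute convergence is thus immediate, and we may rearrange at will.

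The main step will be to recognize the resulting series as evaluations of the classical Fourier expansion
\begin{equation*}
\frac{x^2}{4}=\frac{\pi^2}{12}+\sum_{j=1}^{+\infty}\frac{(-1)^j\cos(jx)}{j^2},\qquad x\in[-\pi,\pi],
\end{equation*}
which holds pointwise everywhere on $[-\pi,\pi]$ by standard Dirichlet-Jordan convergence, since the $2\pi$-periodic extension of $x\mapsto x^2/4$ is continuous and piecewise $C^1$. Specializing at $x=\pi c$ and $x=\pi d$ (both admissible since $c,d\in(-1,1)$) and subtracting yields
\begin{equation*}
\sum_{j=1}^{+\infty}\frac{(-1)^j\bigl(\cos(j\pi c)-\cos(j\pi d)\bigr)}{j^2}=\frac{\pi^2(c^2-d^2)}{4}.
\end{equation*}

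Multiplying by $1/\pi^2$ gives the claimed identity $\tfrac14(c^2-d^2)$. The non-vanishing assertion then reduces to observing that $c^2-d^2=0$ with $c<d$ in $(-1,1)$ forces $c=-d$, so the hypothesis $c\neq\pm d$ (together with $c\neq d$, which is built into the standing assumption $(c,d)\subset(-1,1)$ with $c<d$) guarantees the sum is nonzero. I do not expect any real obstacle here: the only mild subtlety is invoking pointwise convergence of the Fourier series at the two specific points $\pi c$ and $\pi d$, but this is immediate from the regularity of the periodic extension and could alternatively be bypassed by Abel summation.
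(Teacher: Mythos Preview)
Your proof is correct and is in fact more elementary than the one in the paper. After the same initial computation of the three factors (the paper rewrites $(-1)^j\cos(j\pi c)$ as $\cos(j\pi(c+1))$, but this is the same thing), the paper does not invoke the classical Fourier expansion of $x^2$. Instead it sets $f(x)=\sum_{j\geqslant1}\frac{\cos(j\pi x)}{\pi^2 j^2}$, expresses $f$ in terms of the dilogarithm $\mathrm{Li}_2$, differentiates using the identity $z\,\mathrm{Li}_2'(z)=\mathrm{Li}_1(z)=-\mathrm{Log}(1-z)$, simplifies the resulting complex logarithms to obtain $f'(x)=\frac{x-1}{2}$ on $(0,2)$, and then integrates back to recover $f(x)=\frac{(x-1)^2}{4}+\kappa$. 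Your route bypasses all of this by recognizing the series directly as two evaluations of the standard cosine series for $x^2/4$ on $[-\pi,\pi]$; this avoids the branch-tracking for the complex logarithm that the paper has to justify, and the pointwise convergence you need is indeed immediate from continuity and piecewise $C^1$ regularity of the periodic extension. Both arguments buy exactly the same conclusion, but yours is shorter and stays within real analysis.
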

		
		\begin{proof}[Proof of \Cref{lem: convergence.series}]
		The series is clearly absolutely convergent, and it is not difficult to see that
		\begin{equation*}
		\sum_{j=1}^{+\infty} \frac{\partial_{x_2}\phi_j(1)}{\lambda_j}\langle 1, \phi_j\rangle_{L^2(c,d)}=\sum_{j=1}^{+\infty} \frac{\cos(j\pi (c+1))-\cos(j\pi (d+1))}{\pi^2j^2} =: f(c+1)-f(d+1),
		\end{equation*}
		where $f(x) = \sum_{j=1}^{+\infty} \frac{\cos(j\pi x)}{\pi^2 j^2}$. 
		In fact, 
		\begin{equation} \label{eq: f.Li}
		f(x) = \frac{\mathrm{Li}_2(e^{i\pi x})-\mathrm{Li}_2(e^{-i\pi x})}{2\pi^2},
		\end{equation}
		where $\mathrm{Li}_n(x) = \sum_{j=1}^{+\infty} \frac{x^j}{j^n}$ is the polylogarithm function. 
		Using well-known identities for this special function, for $x\in(0,2)$ we find
		\begin{align*}
		f'(x) = i\frac{e^{i\pi x}\mathrm{Li}_2'(e^{i\pi x})-e^{-i\pi x}\mathrm{Li}_2'(e^{-i\pi x})}{2\pi}&= \frac{i}{2\pi}(\mathrm{Li}_1(e^{i\pi x})-\mathrm{Li}_1(e^{-i\pi x})) \\
		&=-\frac{i}{2\pi}\left(\mathrm{Log}(1-e^{i\pi x})-\mathrm{Log}(1-e^{-i\pi x})\right),
		\end{align*}
		where $\mathrm{Log}:\mathbb{C}^*\to\mathbb{C}$ is the principal value of the complex logarithm. Then formal computations yield
		\begin{align*}
		\mathrm{Log}(1-e^{i\pi x})-\mathrm{Log}(1-e^{-i\pi x})&=\mathrm{Log}\left(\frac{e^{i\frac{\pi}{2}\left(e^{-i\frac{\pi}{2}x}-e^{i\frac{\pi}{2}x}\right)}}{e^{-i\frac{\pi}{2} }\left(e^{i\frac{\pi}{2}x}-e^{-i\frac{\pi}{2}x}\right)}\right)\\
		&=\mathrm{Log}(-e^{i\pi x})\\
		&=\mathrm{Log}(-1)+\mathrm{Log}(e^{i\pi x}) =\pi i(x-1),
		\end{align*}
		for $x\in(0,2)$.
		(In the last step, we have chosen $\mathrm{Log}(-1)=-i\pi$.)
		Hence, formally,
		\begin{align} \label{eq: no.change.branch}
		f'(x) = -\frac{i}{2\pi}\left(\mathrm{Log}(1-e^{i\pi x})-\mathrm{Log}(1-e^{-i\pi x})\right)= \frac{x-1}{2}.
		\end{align}
		These identities are all only formal due to the possibility that the complex logarithm changes a branch. Such a change would only amount to adding an integer multiple (independent of $x$) of $i\pi$ to the final result. To show that this multiple is $0$ (and thus no change of branch happens), we simply observe that the identity in \eqref{eq: no.change.branch} exactly holds for $x=1$, hence any such constant multiple would have to be $0$. 
		Integrating \eqref{eq: no.change.branch}, we find
		\begin{equation*}
		f(x) = \frac{(x-1)^2}{4}+\kappa
		\end{equation*}
		for some $\kappa\in\mathbb{R}$. 
		(Using the form of $f$ in \eqref{eq: f.Li} and known results from classical analysis, it can be shown that $\kappa=-\frac{1}{12}$.) 
		Whence 
		\begin{equation*}
		\sum_{j=1}^{+\infty} \frac{\partial_{x_2}\phi_j(1)}{\lambda_j}\langle 1, \phi_j\rangle_{L^2(c,d)}= \frac{c^2}{4} - \frac{d^2}{4}.
		\end{equation*}
		This yields the desired conclusion.
		\end{proof}
		}

\section{Numerics} \label{sec: numerics}

\subsection{Discretizing \eqref{sys-lin-con.intro}}
As we did not find precisely the same scheme in the literature, for completeness and future reproducibility purposes, let us briefly discuss the numerical discretization we used for computing, and obtaining the simulations presented in \Cref{fig: hum.1} and \Cref{fig: hum.2}. 

\begin{enumerate}
\item {\bf Setup.}
We shall focus on $\Omega=(0,2)\times(-1,1)$ for simplicity. The control domain $\omega$ is a thin neighborhood of a line ranging from $x_1=\frac12$ to $x_1=\frac32$, tilted at an angle of $45^\circ$ (i.e. with slope $1$) with respect to the horizontal axis (chosen for the relative simplicity of numerical implementation, and the sparsity of the resulting matrix).
We choose the same mesh-size $\bigtriangleup x>0$ for the horizontal and vertical variables, and we set $n_x:=\frac{2}{\bigtriangleup{x}}-1$. Since we are working with periodic boundary conditions in the $x_1$-variable, and Dirichlet boundary conditions in the $x_2$-variables, $y(t,x_1,x_2)$ will be unknown at $(n_x+1)n_x$ points, and $h(t,x_1)$ at $(n_x+1)$ points.
\smallskip 
\item {\bf Finite difference semi-discretization.} We define an equi-distributed grid $\left\{x_1^i, x_2^j\right\}_{\substack{i\in\{0, \ldots, n_x+1\}\\j\in\{0,\ldots,n_x+1\}}}$ of $\Omega$ through 
\begin{equation*}
x_1^i=i{\bigtriangleup{x}} \hspace{1cm} \text{ and } \hspace{1cm} x_2^j=-1+j{\bigtriangleup{x}}.
\end{equation*}
We discretize the two-dimensional Laplacian $\Delta_{x_1, x_2}$ with the classical $5$-point finite-difference stencil, and the Neumann trace $\partial_{x_2}y(t,x_1,1)$ with a centered difference scheme.
Henceforth denoting 
\begin{equation*}
y_{i,j}(t):=y\left(t,x_1^i, x_2^j\right),
\end{equation*} 
with analog definitions for $u_{i,j}$ and $h_i$, the finite-difference semi-discretization of \eqref{sys-lin-con.intro} reads as 
\begin{equation} \label{eq: finite.diff.stefan}
\begin{dcases}
\dot{y}_{i,j}-\frac{y_{i+1,j}+y_{i-1,j}+y_{i,j+1}+y_{i,j-1}-4y_{i,j}}{\bigtriangleup{x}^2}=u_{i,j}1_{\omega_{n_x}} &\{1,\ldots,n_x+1\}\times\{1,\ldots, n_x\},\\
\dot{h}_i=\frac{y_{i,n_x+1}-y_{i,n_x-1}}{2{\bigtriangleup{x}}} &\{1,\ldots,n_x+1\},\\
y_{0,j}=y_{n_x+1,j} &\{1,\ldots,n_x\},\\
y_{i,0}=0 &\{1,\ldots,n_x+1\},\\
y_{i,n_x+1}=\sigma\frac{h_{i+1}+h_{i-1}-2h_i}{\bigtriangleup{x}^2} &\{1,\ldots,n_x+1\},
\end{dcases}
\end{equation}
for $t\in(0,T)$. With analog definitions for $u_{[j]}$ and $\bf{h}$, setting
\begin{equation*}
y_{[j]}:=
\begin{bmatrix}
y_{1,j}\\
y_{2,j}\\
\vdots\\
y_{n_x+1,j}
\end{bmatrix}
\end{equation*}
for $j\in\{1,\ldots,n_x\}$, as well as ${\bf{z}}:=(y_{[1]},\ldots,y_{[n_x]},{\bf{h}})$ and then, similarly, setting ${\bf{u}}:=(u_{[1]},\ldots,u_{[n_x]},0_{\mathbb{R}^{n_x+1}})$, we may rewrite \eqref{eq: finite.diff.stefan} as a canonical finite-dimensional linear system $\dot{\bf{z}}={\bf{A}}_{n_x}{\bf{z}}+{\bf{B}}_{n_x}{\bf{u}}$, where
\begin{equation*}
{\bf{A}}_{n_x}:=
\sbox0{$\begin{matrix}A_0&A_1&&\\A_1&A_0&A_1&\\&\ddots&\ddots&\ddots&\\ &&A_1&A_0&A_1\\&&&A_1&A_0\end{matrix}$}
\sbox1{$\begin{matrix}  \\  \\  \\  \\[0.5em] A_2\end{matrix}$}
\sbox2{$\begin{matrix}  &  &  & \hspace{2.5em}A_3 &\end{matrix}$}
\left[
\begin{array}{c|c}
\usebox{0}& \usebox{1}\\
\hline
 \usebox{2}& A_4
\end{array}
\right]
\end{equation*}
where $A_1=\frac{1}{{\bigtriangleup{x}^2}}\text{Id}_{n_x+1}$ and $A_3=-\frac{1}{2{\bigtriangleup{x}}}\text{Id}_{n_x+1}$, whereas
\begin{equation*}
A_0=
\frac{1}{{\bigtriangleup{x}^2}}\begin{bmatrix}
-4&1& & & 1\\
1&-4&1& & \\
&\ddots&\ddots&\ddots& \\
 & &1&-4 &1 \\
1    &    &     & 1&-4    
\end{bmatrix}
\quad
\text{ and }
\quad
A_\ell=
c_\ell\begin{bmatrix}
-2&1& & & 1\\
1&-2&1& & \\
&\ddots&\ddots&\ddots& \\
 & &1&-2 &1 \\
1    &    &     & 1&-2    
\end{bmatrix},
\end{equation*}
for $\ell\in\{2,4\}$, with $c_2=\frac{\sigma}{{\bigtriangleup{x}}^4}$ and $c_4=\frac{2\sigma}{{\bigtriangleup{x}}^3}$.
\begin{figure}[h!]
\includegraphics[scale=0.5]{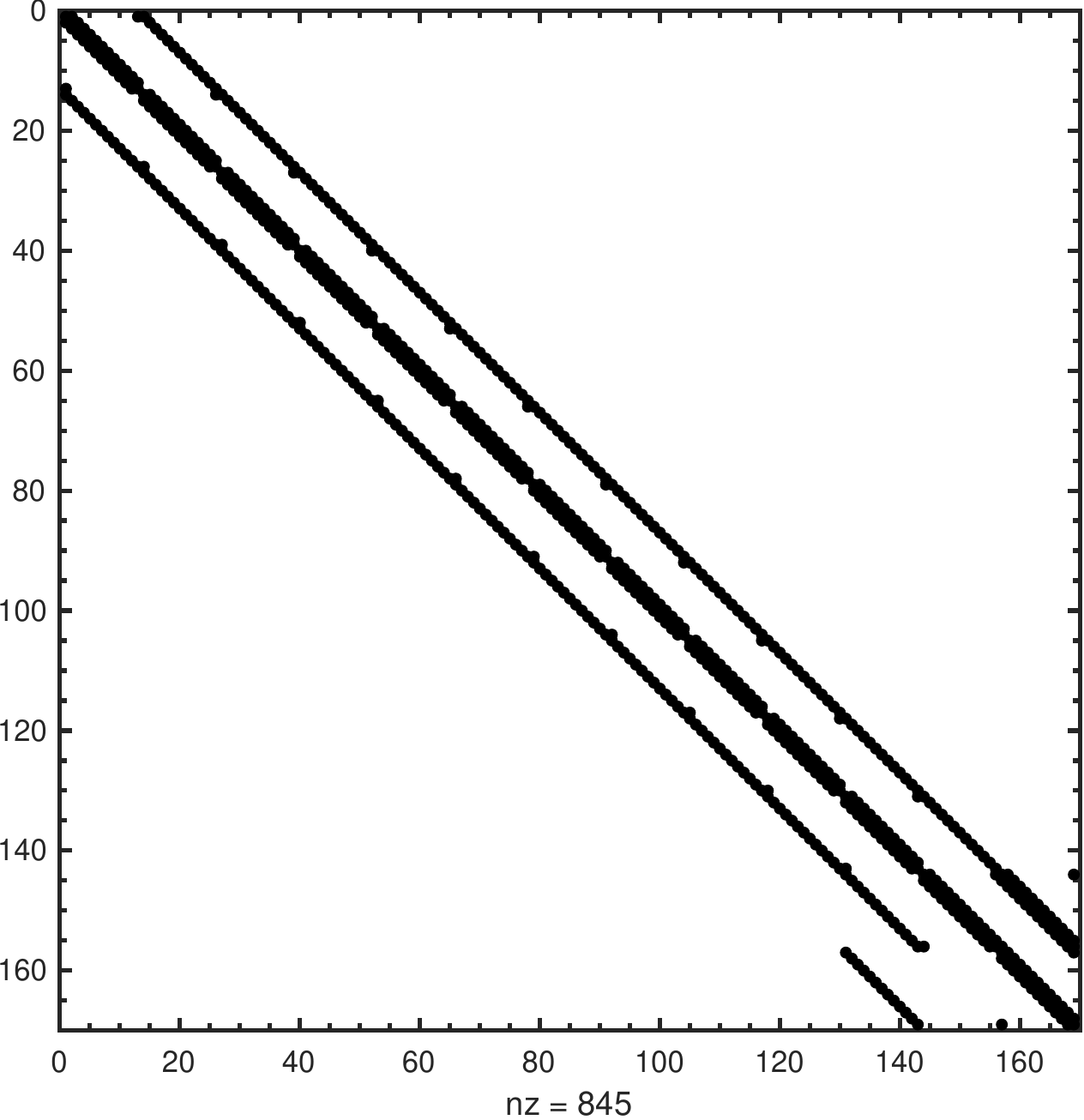}
\hspace{0.5cm}
\includegraphics[scale=0.5]{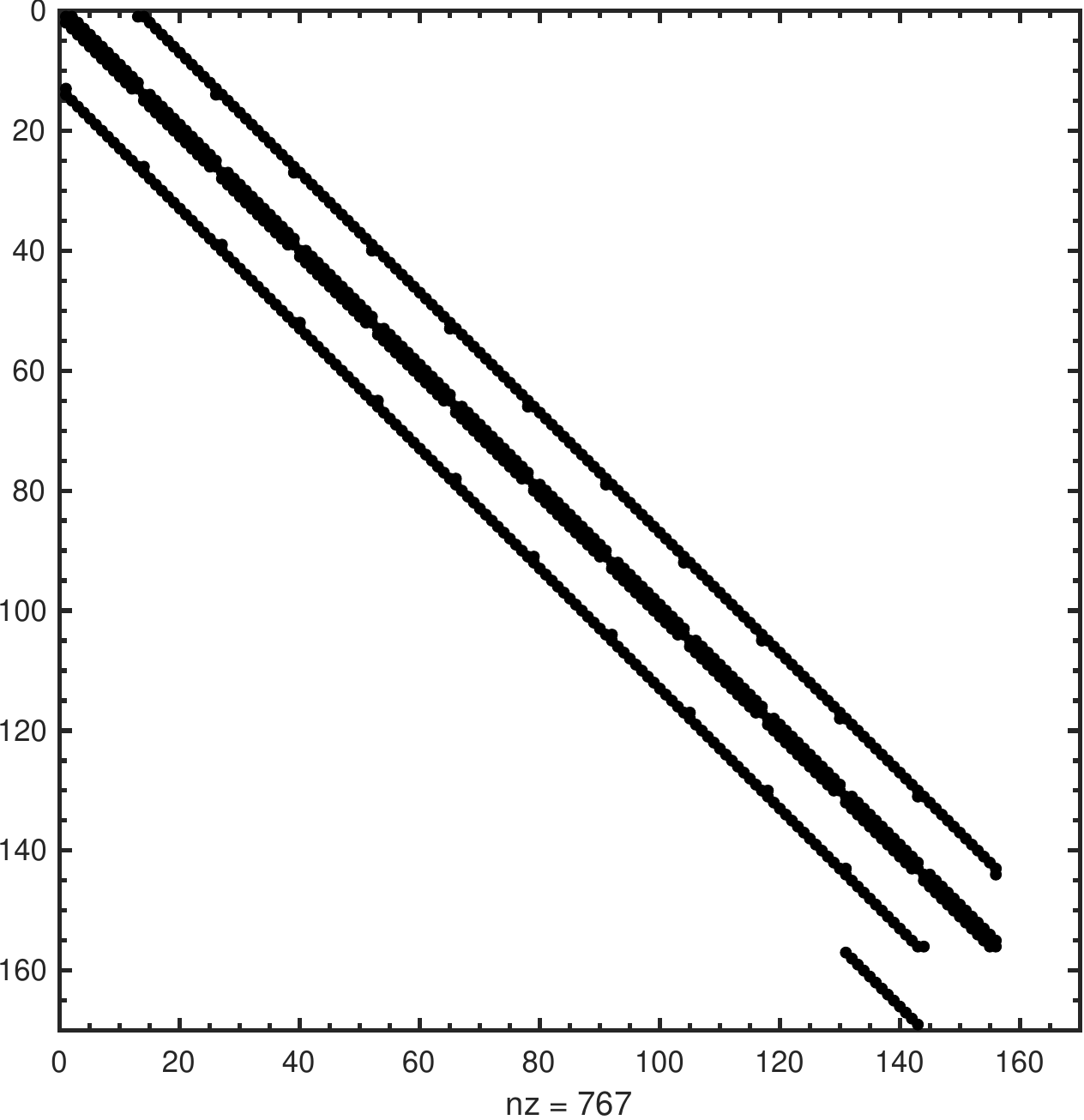}
\caption{The sparsity pattern of the matrix ${\bf A}_{n_x}$ when $\sigma=10$ (\emph{left}) and $\sigma=0$ (\emph{right}).}
\end{figure}
\smallskip
\item {\bf Time-stepping.} Given nodes $\{t_k\}_{k\in\{0,\ldots,n_t\}}$ with $t_k=k{\bigtriangleup{t}}$ for some ${\bigtriangleup t}=\frac{T}{n_t}$, we set ${\bf z}^k={\bf z}(t_k)$. Time-stepping for \eqref{sys-lin-con.intro} is done with a Crank-Nicolson method
\begin{equation*}
\frac{{\bf z}^{k+1}-{\bf z}^k}{{\bigtriangleup t}} = \frac12{\bf A}_{n_x}\left({\bf z}^k+{\bf z}^{k+1}\right) + \frac12 {\bf B}_{n_x}\left({\bf{u}}^k+{\bf{u}}^{k+1}\right)
\end{equation*}
for $k\in\{0,\ldots,n_t\}$, which is unconditionally stable.
\end{enumerate} 

\subsection{Computing} We solve
\begin{equation*}
\min_{\substack{u\in L^2((0,T)\times\omega)\\ (y,h) \text{ solve } \eqref{sys-lin-con.intro} \\ (y(T),h(T))\equiv0}} \|u\|_{L^2((0,T)\times\omega)}^2
\end{equation*}
by making use of the above discretization for parametrizing the PDE constraints.
This is a convex program, which can be solved using interior point methods. We use the IpOpt solver embedded into \texttt{Casadi} for Matlab (\cite{andersson2019casadi}). 
For the experiments in \Cref{fig: hum.1} and \Cref{fig: hum.2}, we took $\bigtriangleup x = \frac{2}{13}$, and thus $n_x=12$, with $n_t=200$. 
All codes are openly made available at \href{https://github.com/borjanG/2022-stefan-control}{{\color{dukeblue}\texttt{https://github.com/borjanG/2022-stefan-control}}}.

\end{appendix}

\bibliographystyle{acm} 
\bibliography{Stefan}    

\bigskip

\begin{minipage}[t]{.5\textwidth}
  {\bf Borjan Geshkovski}\par
  Department of Mathematics\par
  Massachusetts Institute of Technology\par
  Simons Building, Room 246C\par
  77 Massachusetts Avenue\par
  Cambridge\par
  MA\par
  02139-4307 USA\par
  e-mail: \href{mailto:borjan@mit.edu}{\textcolor{dukeblue}{\texttt{borjan@mit.edu}}}
\end{minipage}
\begin{minipage}[t]{.5\textwidth}
  {\bf Debayan Maity}\par
  TIFR Centre for Applicable Mathematics\par
  560065 Bangalore\par
  Karnataka, India\par 
  e-mail:  \href{mailto:debayan.maity@cnrs.fr}{\textcolor{dukeblue}{\texttt{debayan@tifrbng.res.in}}}
\end{minipage} 

\end{document}